\setlist[itemize]{wide = 0pt, labelwidth = 2em, labelsep*=0em, itemindent = 0pt, leftmargin = \dimexpr\labelwidth + \labelsep\relax, noitemsep,topsep = 1ex,}
\setlist[enumerate]{wide = 0pt, labelwidth = 2em, labelsep*=0em, itemindent = 0pt, leftmargin = \dimexpr\labelwidth + \labelsep\relax, noitemsep,topsep = 1ex}
\theoremstyle{plain}
 \newtheorem{thm}{Theorem}[section]   
\newtheorem{lem}[thm]{Lemma}
\newtheorem{claim}[thm]{Claim} 
\newtheorem{proposition}[thm]{Proposition}
\newtheorem{cor}[thm]{Corollary} 
\newtheorem{conjecture}[thm]{Conjecture} 
\newtheorem{thmx}{Theorem}
\renewcommand{\thethmx}{\Alph{thmx}} 
\newtheorem{corx}[thmx]{Corollary} 
\theoremstyle{definition}
\newtheorem{dfn}[thm]{Definition}
\theoremstyle{remark}
\newtheorem{rem}[thm]{Remark} 
\newtheorem{example}[thm]{Example}
\numberwithin{equation}{section}  
\theoremstyle{plain}
\newlist{thmlist}{enumerate}{1}
\setlist[thmlist]{wide = 0pt, labelwidth = 2em, labelsep*=0em, itemindent = 0pt, leftmargin = \dimexpr\labelwidth + \labelsep\relax, noitemsep,topsep = 1ex, font=\normalfont, label=(\roman*), ref=\thethm.(\roman{thmlisti})}
\newlist{thmenum}{enumerate}{1} % also creates a counter called 'propenumi'
\setlist[thmenum]{wide = 0pt, labelwidth = 2em, labelsep*=0em, itemindent = 0pt, leftmargin = \dimexpr\labelwidth + \labelsep\relax, noitemsep,topsep = 1ex, font=\normalfont, label=(\roman*), ref=\thethmx.(\roman{thmenumi})}%{label=\alph*), ref=\thethmx~(\alph*)}
\newlist{corlist}{enumerate}{1} % also creates a counter called 'propenumi'
\setlist[corlist]{wide = 0pt, labelwidth = 2em, labelsep*=0em, itemindent = 0pt, leftmargin = \dimexpr\labelwidth + \labelsep\relax, noitemsep,topsep = 1ex, font=\normalfont, label=(\roman*), ref=\thecorx.(\roman{corlisti})}%{label=\alph*), ref=\thethmx~(\alph*)}
\crefname{lem}{Lemma}{Lemmas} 
\crefname{conjecture}{Conjecture}{Conjectures}
\crefname{thm}{Theorem}{Theorems}
\crefname{proposition}{Proposition}{Propositions}
\crefname{dfn}{Definition}{Definitions}
\crefname{rem}{Remark}{Remarks}
\crefname{cor}{Corollary}{Corollaries}
\crefname{corx}{Corollary}{Corollaries}
\crefname{problem}{Problem}{Problems}
\crefname{thmx}{Theorem}{Theorems}
\crefname{claim}{Claim}{Claims}
\crefname{assumption}{Assumption}{Assumptions}
\crefname{main}{Main Theorem}{Main Theorems}
\newcommand{\cS}{\mathcal{S}}
\newcommand{\Spab}{\mathrm{Sp}_{\mathrm{sab}}}
\newcommand{\Sph}{\mathrm{Sp}_{\mathrm{h}}}
\newcommand{\Spp}{\mathrm{Sp}_{\mathrm{p}}}
\newcommand{\Spalg}{\mathrm{Sp}_{\mathrm{alg}}}
\newcommand*{\rom}[1]{\expandafter\@slowromancap\romannumeral #1@}
\newcommand{\crefnames}[3]{%
	\@for\next:=#1\do{%
		\expandafter\crefname\expandafter{\next}{#2}{#3}%
	}%
}
\newcommand{\cD}{\mathcal D}
\newcommand{\bA}{\mathbb{A}}
\newcommand{\bC}{\mathbb{C}}
\newcommand{\bD}{\mathbb{D}}
\newcommand{\bF}{\mathbb{F}}
\newcommand{\bG}{\mathbb{G}}
\newcommand{\bP}{\mathbb{P}}
\newcommand{\bR}{\mathbb{R}}
\newcommand{\bZ}{\mathbb{Z}}
\newcommand{\mxp}{M_{\rm B}(X,N)_{\bF_p}}
\newcommand{\myp}{M_{\rm B}(Y,N)_{\bF_p}}
\newcommand{\xsp}{X^{\! \rm sp}}
  \def\spec{\textrm{Spec}\,}
\def\hess{{\rm d}{\rm d}^{\rm c}}
\def\Sym{{\text{Sym}}}
\def\alb{{\rm alb}}
\def\Alb{{\rm Alb}}
\def\btau{{\bm{\tau}}}
\newcommand{\ord}{{\rm ord}\,}
\newcommand{\ram}{{\rm ram}\,}
\newcommand{\GL}{{\rm GL}}
\begin{document} 
	\title[Linear Shafarevich conjecture, Hyperbolicity \& Applications]{Linear Shafarevich Conjecture in positive characteristic, Hyperbolicity and Applications}

	\author[Y. Deng]{Ya Deng}
	
	\email{ya.deng@math.cnrs.fr, deng@imj-prg.fr}
	\address{CNRS, Institut \'Elie Cartan de Lorraine, Universit\'e de Lorraine, 54506, Nancy \& 
		Institut de Math\'ematiques de Jussieu-Paris Rive Gauche,
		Sorbonne Universit\'e, Campus Pierre et Marie Curie,
		4 place Jussieu, 75252 Paris Cedex 05, France}
	\urladdr{https://ydeng.perso.math.cnrs.fr}

	\author[K. Yamanoi]{Katsutoshi Yamanoi}
	
	\email{yamanoi@math.sci.osaka-u.ac.jp}
	\address{Department of Mathematics, Graduate School of Science,Osaka University, Toyonaka, Osaka 560-0043, Japan} 
	\urladdr{https://sites.google.com/site/yamanoimath/}

	\keywords{Shafarevich conjecture, holomorphic  convexity, Shafarevich morphism,  Green-Griffiths-Lang conjecture, special loci, pseudo Picard hyperbolicity,  compatifiable universal covering, Campana's special varieties, Campana's abelianity conjecture} 
 \begin{abstract}  
  Given a  complex quasi-projective normal variety $X$ and  a linear representation $\varrho:\pi_1(X)\to {\rm GL}_{N}(K)$ with $K$ any field of positive characteristic, we mainly establish the following results:
 	\begin{enumerate}[label*={\rm (\alph*)}]
 		\item the construction of the Shafarevich morphism ${\rm sh}_\varrho:X\to {\rm Sh}_\varrho(X)$ associated with $\varrho$. 
 			\item In cases where $X$ is projective, $\varrho$ is faithful and the $\Gamma$-dimension of $X$ is  at most two (e.g. $\dim X=2$), we prove that the Shafarevich conjecture holds for $X$: the universal covering of $X$ is holomorphically convex. 
 		\item In cases where $\varrho$ is big,  we prove that  the   Green-Griffiths-Lang conjecture  holds for   $X$: $X$ is of log general type if and only it is pseudo Picard or Brody hyperbolic. 
  	\item 	When $\varrho$ is big and the Zariski closure of $\varrho(\pi_1(X))$ is a semisimple algebraic group, we prove that  $X$ is pseudo Picard hyperbolic, and strongly of log general type.  
  	\item If $X$ is special or $h$-special, then $\varrho(\pi_1(X))$ is virtually abelian. 
 	\end{enumerate} 
   We also prove    Claudon-H\"oring-Koll\'ar's conjecture for complex projective manifolds with linear fundamental groups of any characteristic.
\end{abstract}   
\maketitle
\tableofcontents	
	\section{Main results}
\subsection{Existence of the Shafarevich morphism}
The Shafarevich conjecture stipulates that the universal covering of a complex projective variety is holomorphically convex. If this conjecture holds true, it implies the existence of the \emph{Shafarevich morphism}. Over the past three decades, this conjecture has been extensively studied when considering cases where   fundamental groups are  subgroups of  \emph{complex} general linear groups,   referred to as the \emph{linear Shafarevich conjecture}.  Drawing upon the robust techniques of non-abelian Hodge theories established  by Simpson \cite{Sim88,Sim92} and Gromov-Schoen \cite{GS92},  linear Shafarevich conjecture has been studied in \cite{Kat97,KR98,Eys04,EKPR12,CCE15,DY23},   to quote only a few. It is natural to ask whether the conjecture holds when the fundamental groups of algebraic varieties are subgroups of  general linear groups in positive characteristic.  In this paper we address this question along with the exploration of hyperbolicity and  algebro-geometric properties of these algebraic variety.     The  first result of this paper is the construction of  the Shafarevich morphism.   %As an application,  we establish the generalized Green-Griffiths-Lang conjecture for varieties admitting a big linear representation in positive characteristic.
\begin{thmx}[=\cref{thm:Sha22}]\label{main}
	Let $X$ be a  quasi-projective normal variety and $$\varrho:\pi_1(X)\to \GL_{N}(K)$$ be a linear representation, where $K$ is a field of positive characteristic. Then there exists a dominant (algebraic) morphism  ${\rm sh}_\varrho:X\to {\rm Sh}_\varrho(X)$  over a quasi-projective normal variety ${\rm Sh}_\varrho(X)$ with connected general fibers  such that for any connected Zariski closed subset  $Z\subset X$, the following properties are equivalent:
	\begin{enumerate}[label=\textup{(\alph*)}]
		\item ${\rm sh}_\varrho(Z)$ is a point;
		\item $ \varrho({\rm Im}[\pi_1(Z)\to \pi_1(X)])$ is finite;
		\item for each irreducible component $Z_o$ of $Z$, $ \varrho^{ss}({\rm Im}[\pi_1(Z_o^{\rm norm})\to \pi_1(X)])$ is finite, where $\varrho^{ss}:\pi_1(X)\to \GL_{N}(\bar{K})$ is the semisimplification of $\varrho$ and   $Z_o^{\rm norm}$ denotes the normalization of $Z_o$. 
	\end{enumerate}  
\end{thmx}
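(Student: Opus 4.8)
\emph{Proof strategy.}
The plan is to adapt the approach of Eyssidieux and Eyssidieux--Katzarkov--Pantev--Ramachandran \cite{EKPR12} to the complex linear Shafarevich conjecture, replacing period maps and harmonic maps to symmetric spaces by Gromov--Schoen harmonic maps to Bruhat--Tits buildings \cite{GS92}. Two elementary features of characteristic $p$ will be used throughout. A unipotent matrix in $\GL_N$ over a field of characteristic $p$ has order a power of $p$, so a finitely generated virtually unipotent subgroup of $\GL_N(K)$ is nilpotent with finitely many finite successive abelian quotients of exponent $p$, hence finite; this is what will let us pass freely between $\varrho$ and its semisimplification $\varrho^{\rm ss}$. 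And a spreading-out argument will reduce the transcendental input to finitely many non-archimedean local fields.

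Step 1: \emph{spreading out.} Since $\pi_1(X)$ is finitely generated, $\varrho$ and $\varrho^{\rm ss}$ factor through $\GL_N(F)$ for a finitely generated subfield $F\subset\bar K$ of characteristic $p$. I would invoke a standard spreading-out argument to produce a finite set $S$ of places (divisorial valuations) of $F$ with the following property: \emph{for every subgroup $H\le\pi_1(X)$, the image $\varrho^{\rm ss}(H)$ is finite if and only if, for each $v\in S$, the induced representation $\pi_1(X)\to\GL_N(\hat F_v)$ carries $H$ into a bounded subgroup --- equivalently, into the stabiliser of a point of the Euclidean (Bruhat--Tits) building $\Delta_v$ of $\GL_N(\hat F_v)$.}

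Step 2: \emph{a non-archimedean Shafarevich map at each place.} I would first prove the theorem for a single ``building representation'': fix $v\in S$ and consider the action of $\pi_1(X)$ on $\Delta_v$ through $\varrho^{\rm ss}\otimes\hat F_v$. After passing to a log-smooth quasi-projective model --- harmless for the statement, and from which the eventual morphism descends to the original normal $X$ because the exceptional and boundary fibres are contracted to points of $X$ and hence carry trivial $\varrho$-image --- the Gromov--Schoen theory \cite{GS92} together with its quasi-projective/logarithmic refinement from \cite{DY23} and the earlier sections yields a $\pi_1(X)$-equivariant pluriharmonic map $u_v$ from the universal cover to $\Delta_v$, with controlled asymptotics along the boundary divisor, that is constant exactly when the action is bounded. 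Feeding the spectral holomorphic one-forms of $u_v$ into the Eyssidieux--Katzarkov--Zuo machinery then produces a dominant morphism ${\rm sh}_{\varrho,v}\colon X\to Y_v$ onto a normal quasi-projective variety with connected general fibres such that, for every connected Zariski closed $W\subset X$: ${\rm sh}_{\varrho,v}(W)$ is a point $\iff$ $u_v$ is locally constant along (the preimage of) $W$ $\iff$ $\varrho^{\rm ss}\otimes\hat F_v$ is bounded on ${\rm Im}[\pi_1(W)\to\pi_1(X)]$ $\iff$ $\varrho^{\rm ss}\otimes\hat F_v$ is bounded on ${\rm Im}[\pi_1(W_o^{\rm norm})\to\pi_1(X)]$ for every irreducible component $W_o$ of $W$.

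Step 3: \emph{assembling, and the three equivalences.} I would then take ${\rm sh}_\varrho\colon X\to{\rm Sh}_\varrho(X)$ to be the normalised Stein factorisation of the product $\prod_{v\in S}{\rm sh}_{\varrho,v}\colon X\to\prod_{v\in S}Y_v$ (so that ${\rm Sh}_\varrho(X)\to\prod_{v\in S}Y_v$ is finite): a dominant morphism to a normal quasi-projective variety with connected general fibres, and a connected Zariski closed $Z$ is contracted by ${\rm sh}_\varrho$ if and only if it is contracted by every ${\rm sh}_{\varrho,v}$. Hence for connected $Z$ with irreducible components $Z_o$: ${\rm sh}_\varrho(Z)$ is a point $\iff$ (Step 2, applied to each $Z_o$) for all $o$ and all $v\in S$, $\varrho^{\rm ss}\otimes\hat F_v$ is bounded on ${\rm Im}[\pi_1(Z_o^{\rm norm})\to\pi_1(X)]$ $\iff$ (Step 1) each $\varrho^{\rm ss}({\rm Im}[\pi_1(Z_o^{\rm norm})\to\pi_1(X)])$ is finite --- one direction also using that the irreducible components of a connected $Z$ form a connected chain and that a connected chain whose links are all contracted maps to a single point. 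This is (a)$\Leftrightarrow$(c). The implication (b)$\Rightarrow$(c) is immediate, since ${\rm Im}[\pi_1(Z_o^{\rm norm})\to\pi_1(X)]\subseteq{\rm Im}[\pi_1(Z)\to\pi_1(X)]$ and the semisimplification of a finite group is finite. For (a)$\Rightarrow$(b): if ${\rm sh}_\varrho(Z)$ is a point then $Z$ lies in a connected component $F$ of a fibre of ${\rm sh}_\varrho$, which is contracted by every ${\rm sh}_{\varrho,v}$, so by Step 2 (in the form involving $\pi_1(F)$) and Step 1 the group $\varrho^{\rm ss}({\rm Im}[\pi_1(F)\to\pi_1(X)])$ is finite, whence $\varrho({\rm Im}[\pi_1(F)\to\pi_1(X)])$ is finitely generated and virtually unipotent in characteristic $p$, hence finite, and so is its subgroup $\varrho({\rm Im}[\pi_1(Z)\to\pi_1(X)])$. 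I expect the entire difficulty to sit in Step 2: constructing and controlling the equivariant pluriharmonic map to $\Delta_v$ in the quasi-projective/logarithmic setting with enough precision to build ${\rm sh}_{\varrho,v}$, to descend it to the original normal $X$ as a genuine algebraic morphism, and --- most delicately --- to identify its fibres with the prescribed boundedness loci, which forces the comparison between $\pi_1$ of a subvariety and the $\pi_1$'s of the normalisations of its irreducible components via the local structure of $u_v$ near the non-normal locus and the boundary; the rest is then formal.
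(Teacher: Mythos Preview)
Your strategy coincides with the paper's at the level of ideas: reduce to Katzarkov--Eyssidieux reductions built from Gromov--Schoen harmonic maps to Bruhat--Tits buildings, take a simultaneous Stein factorisation, and exploit the characteristic-$p$ fact that a finitely generated linear group whose semisimplification has finite image is itself finite (the paper's \cref{lem:finite group}). Your Step~1 claim is also correct and can be proved exactly as you suggest: compactify $\operatorname{Spec}A$ (for a finitely generated normal $\mathbb F_p$-algebra $A$ carrying the matrix entries of $\varrho^{\mathrm{ss}}$) to a normal projective $\bar V$ with divisorial boundary, and take $S$ to be the boundary valuations; then boundedness at all $v\in S$ forces every characteristic-polynomial coefficient into $\Gamma(\bar V,\mathcal O)\subset\overline{\mathbb F_p}$, so every element is torsion and Burnside--Schur finishes.

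There is, however, a real gap in Step~2. When $\operatorname{tr.deg}_{\mathbb F_p}F\ge 2$, the residue field of a divisorial valuation $v$ is infinite, so $\hat F_v$ is \emph{not} a local field: it fails to be locally compact, and the associated building $\Delta_v$ is not locally finite. The Gromov--Schoen existence theorem for equivariant harmonic maps uses local compactness of the target in an essential way (to extract limits of energy-minimising sequences), and the Katzarkov--Eyssidieux reduction you invoke --- the paper's \cref{thm:KZ}, relying on \cite{CDY22,BDDM} --- is only available over non-archimedean \emph{local} fields. So Step~2, and hence the construction of ${\rm sh}_{\varrho,v}$, does not go through as written.

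The paper sidesteps this by working not with valuations on a fixed $F$ but with the character variety $M_{\rm B}(X,N)_{\mathbb F_p}$: it chooses a Zariski closed $M$ containing $[\varrho]$ (see \eqref{eq:kc2}) and defines ${\rm sh}_\varrho$ as the simultaneous Stein factorisation of the reductions $s_\tau$ for \emph{all} reductive $\tau$ with $[\tau]\in M(K)$, $K$ a genuine local field of characteristic $p$. That enough such $\tau$ exist to recover the full Shafarevich morphism is precisely the curve-selection argument of \cref{lem:20240304}: any affine curve in the representation scheme defined over $\overline{\mathbb F_p}$ compactifies over some $\mathbb F_q$, and its points at infinity give completions $\mathbb F_q((t))$ --- honest local fields. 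Your approach can be repaired by replacing the divisorial valuations in Step~1 with finitely many embeddings of $F$ into local fields, but producing such embeddings with the required separation property essentially \emph{is} this curve argument in the character variety.
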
   
The above morphism ${\rm sh}_\varrho:X\to {\rm Sh}_\varrho(X)$ will be called the \emph{Shafarevich morphism} associated with $\varrho$.  We remark that in our previous work \cite{DY23}, \cref{main} was proved when ${\rm char}\, K=0$ and $\varrho$ is semisimple, with a weaker statement that ${\rm sh}_\varrho:X\to {\rm Sh}_\varrho(X)$   is algebraic in the function field level.  See also the paper by Brunebarbe \cite{Bru23}, which independently obtained a similar result.

We also prove the following theorem on the Shafarevich conjecture.
\begin{thmx}[=\cref{thm:convexity}]\label{main:Sha}
	Let $X$ be a projective normal variety and let $$\varrho:\pi_1(X)\to \GL_{N}(K)$$ be a faithful representation where $K$ is a field of positive characteristic. If the $\Gamma$-dimension (see \cref{def:Gamma}) of $X$  is at most two (e.g. when $\dim X\leq 2$), then the universal covering $\widetilde{X}$  of $X$ is holomorphically convex.  
\end{thmx}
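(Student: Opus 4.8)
The plan is to convert the Shafarevich morphism of \cref{main} into a geometric decomposition of the universal covering $\widetilde X$, and then to assemble a proper plurisubharmonic exhaustion of $\widetilde X$ (modulo its compact analytic subsets) out of Gromov--Schoen harmonic maps to Euclidean buildings together with the Albanese map; the bound on the $\Gamma$-dimension will be exactly what keeps the geometry transverse to ${\rm sh}_\varrho$ two-dimensional, hence classifiable.

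First, as $\varrho$ is faithful, the equivalence of (a) and (b) in \cref{main} shows that ${\rm sh}_\varrho\colon X\to {\rm Sh}_\varrho(X)$ contracts precisely the connected Zariski closed subsets with finite fundamental-group image; thus, up to birational modification, ${\rm sh}_\varrho$ is the $\Gamma$-reduction of \cref{def:Gamma}, so $\dim {\rm Sh}_\varrho(X)$ is the $\Gamma$-dimension of $X$, which is at most $2$. In particular each fibre of ${\rm sh}_\varrho$ has finite $\varrho$-image, so its preimage in $\widetilde X$ is, componentwise, a finite \'etale cover of the fibre, hence compact; these compact subvarieties are the candidates for the fibres of the Remmert reduction of $\widetilde X$. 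It therefore suffices to build on $\widetilde X$ a $\pi_1(X)$-invariant plurisubharmonic exhaustion modulo these compact subvarieties and invoke the standard holomorphic convexity criterion for coverings. If the $\Gamma$-dimension of $X$ is $\le 1$, the geometry transverse to ${\rm sh}_\varrho$ is that of a curve, whose universal covering is $\bP^1$, $\bC$, or the disc, and one concludes quickly; so assume it equals $2$.

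Next, replace $K$ by the subfield generated by the matrix entries of $\varrho(\pi_1(X))$, so $K$ is finitely generated over its prime field; if $K$ is finite then $\varrho$ has finite image and $\widetilde X$ is compact, so we may assume $K$ has positive transcendence degree over $\bF_p$, hence carries discrete valuations. Let $\varrho^{ss}$ be the semisimplification of $\varrho$. If $\varrho^{ss}$ has finite image, then a finite-index subgroup of $\pi_1(X)$ is isomorphic, via $\varrho$, to a group of unipotent matrices, hence nilpotent, so $\widetilde X$ is holomorphically convex by the classical case of virtually nilpotent fundamental groups, through the Albanese map (see \cite{CCE15,EKPR12}). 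Otherwise — since in characteristic $p$ an infinite finitely generated linear group has an element with a non-torsion, hence transcendental, eigenvalue — $\varrho^{ss}$ is unbounded at some discrete valuation; fixing valuations $v$ at which $\varrho^{ss}$ is unbounded, \cite{GS92} provides $\pi_1(X)$-equivariant pluriharmonic maps $u_v\colon\widetilde X\to\Delta_v$ into the Bruhat--Tits building of $\GL_N(K_v)$, and the Gromov--Schoen factorization shows each $u_v$ is holomorphic-geometric: subordinate to a fibration $X\to C_v$ over an orbifold curve, or generically immersive in the directions transverse to ${\rm sh}_\varrho$ (cf. \cite{Kat97,KR98}). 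The pullbacks under the $u_v$ of convex functions on the $\Delta_v$, together with the pullback of the standard exhaustion of the Albanese covering (which accounts for the unipotent part of $\varrho$ invisible to the $u_v$), form a system of $\pi_1(X)$-invariant plurisubharmonic functions on $\widetilde X$ that — since ${\rm sh}_\varrho$ is the $\Gamma$-reduction — are not all constant along any positive-dimensional subvariety transverse to ${\rm sh}_\varrho$. Since that transverse geometry is two-dimensional — so the fibrations $X\to C_v$ are finitely many, fibre over curves, and their combination is generically finite onto a product of curves unless a generically immersive $u_v$ is already present — a surface-level analysis of these fibrations and of the ends of the Albanese factor upgrades this non-degeneracy to a genuine proper plurisubharmonic exhaustion of $\widetilde X$ modulo its compact analytic subsets, whence the holomorphic convexity of $\widetilde X$. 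This is, in spirit, the positive-characteristic counterpart of the complex picture of \cite{Eys04,CCE15,EKPR12,DY23}, with the buildings $\Delta_v$ replacing the period domains.

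The main obstacle is the final step: proving that the assembled system of plurisubharmonic functions is a genuine exhaustion. One must control how the $u_v$ and the Albanese map escape to infinity in the directions transverse to ${\rm sh}_\varrho$, and exclude ``bounded, flat'' directions seen by none of them; this is precisely where the hypothesis that the $\Gamma$-dimension is at most two is indispensable, since in higher dimension the same strategy leaves a core of dimension $\ge 3$ whose holomorphic convexity is open. A further difficulty, specific to characteristic $p$, is that there is no Hodge-theoretic supplement to the buildings — no variation of Hodge structure is available over $K$ — so the interaction between $\varrho^{ss}$ and the unipotent radical of $\varrho$ must be handled purely through the structure theory of linear groups and the Albanese.
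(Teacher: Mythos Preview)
Your outline has the right skeleton --- harmonic maps to buildings, plurisubharmonic potentials, reduction to a two-dimensional base --- but the crucial middle is missing, and one step is actually wrong.

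First, the case ``$\varrho^{ss}$ has finite image'' is mishandled. You pass to a finite-index unipotent subgroup and invoke \cite{CCE15,EKPR12}, which are characteristic-zero results. In positive characteristic a finitely generated unipotent linear group is \emph{finite} (a successive extension of finitely generated subgroups of $\mathbb G_a$, each a finite $p$-group); this is exactly \cref{lem:finite group}. So $\varrho^{ss}$ finite forces $\varrho(\pi_1(X))$ finite, hence $\pi_1(X)$ finite by faithfulness, and $\widetilde X$ is compact. There is no ``unipotent part invisible to the $u_v$'' that the Albanese needs to absorb; that whole mechanism, central to your sketch, is a characteristic-zero phenomenon.

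Second, and more seriously, the sentence ``a surface-level analysis of these fibrations \ldots\ upgrades this non-degeneracy to a genuine proper plurisubharmonic exhaustion'' is where the entire content lies, and you have not identified the tools that make it work. The paper does not try to patch together the $u_v$ directly on $X$. Instead it first descends (\cref{prop:factor}): after a finite \'etale cover, $\varrho$ factors through ${\rm sh}_\varrho$, giving a \emph{large} faithful representation on the surface $S:={\rm Sh}_\varrho(X)$, and it suffices to show $\widetilde S$ is Stein. On $S$ the Shafarevich morphism of the full character variety is realised by a single reductive $\btau$ over a local field (\cref{lem:20240218}), and one studies the spectral one-forms of $\btau$ on a spectral cover. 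The dichotomy is then sharp: if these forms have rank~$2$, the canonical current $T_{\btau}$ (\cref{def:canonical}) is big and strictly nef (\cref{lem:strictly nef}), hence K\"ahler by Demailly--P\u{a}un, and \cref{prop:Eys,prop:stein} give Steinness directly. If they have rank~$1$, either their span is $\ge 2$-dimensional and Castelnuovo--De Franchis yields a contradiction, or the span is one-dimensional; in that last case Simpson's Lefschetz theorem for integral leaves of a holomorphic one-form together with \cref{lem:BT} forces the semisimple part of $\btau$ to be \emph{bounded}, so the reduction map is governed by the torus quotient alone and one obtains a finite morphism $S\to A$ to an abelian variety, whence $\widetilde S$ is Stein by \cref{lem:Stein}. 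None of Demailly--P\u{a}un, Simpson's Lefschetz theorem, or \cref{lem:BT} appears in your proposal, and without them the ``exhaustion'' step does not go through.
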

\subsection{On the Green-Griffiths-Lang conjecture}
Building on the methods utilized in establishing \cref{main},   together with the techniques  developed in \cite{CDY22}, we  prove the following theorem on the generalized Green-Griffiths-Lang conjecture.  A stronger and more refined result will be stated in \cref{corx}.
\begin{thmx}[=\cref{thm:GGL} $\subsetneqq$ \cref{corx}]\label{main:GGL}
	Let $X$ be a  complex quasi-projective normal variety. Let $\varrho:\pi_1(X)\to \GL_{N}(K)$ be a big  representation where $K$ is a field of  positive characteristic.  Then  the following properties are equivalent:
	\begin{enumerate}[label=(\alph*)]
		\item   $X$ is of log general type;
		\item   $X$ is strongly of log general type; 
		\item   $X$ is pseudo Picard hyperbolic, that is,  there exists a proper Zariski closed subset $\Xi\subsetneq X$ such that   any holomorphic map $f:\bD^*\to X$ from the punctured disk with essential singularity at the origin has image $f(\bD^*)\subset \Xi$. 
		\item  $X$ is pseudo Brody hyperbolic, that is,  there exists a proper Zariski closed subset $\Xi\subsetneq X$ such that any non-constant holomorphic map $f:\bC\to X$  has image $f(\bC)\subset \Xi$.   
	\end{enumerate} 
\end{thmx}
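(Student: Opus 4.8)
The plan is to establish the implications (i)$\Rightarrow$(ii), (ii)$\Rightarrow$(i), (i)$\Rightarrow$(iii), (iii)$\Rightarrow$(iv) and (iv)$\Rightarrow$(i), of which the second and fourth are formal and the other three rely on the non-abelian Hodge theory in positive characteristic developed for \cref{main}. As a preliminary reduction, by \cref{main} one may replace $\varrho$ by its semisimplification $\varrho^{\rm ss}$: this affects neither big-ness nor any of (i)--(iv), because by \cref{main} the loci along which $\varrho$ and $\varrho^{\rm ss}$ have infinite image coincide up to normalization; and after a log resolution we fix a smooth compactification $(\overline{X},D)$ with $D$ a simple normal crossing divisor. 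The implication (ii)$\Rightarrow$(i) is immediate from the definition of strongly of log general type. For (iii)$\Rightarrow$(iv): given a non-constant holomorphic $f\colon\bC\to X$, the composition $g(z)=f(\exp(1/z))$ on $\bD^*$ has an essential singularity at $0$ (by the big Picard theorem $g$ attains every value of $f(\bC^*)$ in each punctured neighbourhood of $0$), while $g(\bD^*)=f(\bC^*)$ is Zariski dense in the irreducible set $\overline{f(\bC)}$; hence if $X$ is pseudo Picard hyperbolic with exceptional locus $\Xi$ then $f(\bC)\subset\Xi$, so $X$ is pseudo Brody hyperbolic with the same $\Xi$.

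The technical core --- and the main obstacle --- is the construction, from $\varrho$, of a Finsler pseudo-metric of negative holomorphic sectional curvature. Since $\varrho(\pi_1(X))\subset\GL_N(R)$ for a finitely generated $\bZ$-subalgebra $R\subset K$ of characteristic $p$, choosing discrete valuations of $R$ yields both representations $\pi_1(X)\to\GL_N(L)$ into non-archimedean local fields $L$ of characteristic $p$ and, by specialising the residual data, a family of complex representations $\tau\colon\pi_1(X)\to\GL_N(\bC)$. From each $\tau$, Mochizuki's tame non-abelian Hodge correspondence on $(\overline{X},D)$ together with the $\bC^*$-deformation to a complex variation of Hodge structure produces a system of logarithmic Higgs bundles; from each $\GL_N(L)$-representation, the equivariant harmonic map to the Bruhat--Tits building of $\GL_N(L)$ (Gromov--Schoen \cite{GS92}, extended to the ramified positive-characteristic setting as in \cref{main}) together with the Katzarkov--Zuo spectral construction produces logarithmic multivalued one-forms on a spectral cover. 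Assembling these data as in the proof of \cref{main} yields a Finsler pseudo-metric $h$ on a bundle of logarithmic jets of $(\overline{X},D)$ whose holomorphic sectional curvature is bounded above by a negative constant on the locus where $h$ is positive; write $Z_0(h)\subsetneq\overline{X}$ for the complement of that locus. Big-ness of $\varrho$ enters here: it prevents the Higgs fields and the spectral one-forms from vanishing identically along any subvariety through a very general point, which is what one uses to bound $Z_0(h)$.

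Assume now (i), i.e. $K_{\overline{X}}+D$ is big. First, combining this with the Higgs-theoretic input and Demailly-style jet-differential techniques as in \cite{CDY22}, one shows that $Z_0(h)$ is a proper Zariski closed subset of $X$; this step is essential, since without it the construction is vacuous (for instance on abelian varieties big linear representations abound while $h$ degenerates everywhere --- consistently, abelian varieties are neither of log general type nor pseudo Brody hyperbolic). Second, the negative holomorphic sectional curvature of $h$, combined with the bigness of $K_{\overline{X}}+D$, drives an Ahlfors--Schwarz estimate and a logarithmic Second Main Theorem as in \cite{CDY22}: any non-constant $f\colon\bC\to X$ satisfies $f^*h\equiv 0$, whence $f(\bC)\subset Z_0(h)$; and any $f\colon\bD^*\to X$ with an essential singularity at $0$ has $f^*h$ dominated by the Poincaré metric of $\bD^*$, hence finite jet-area near $0$ outside $Z_0(h)$, and a removable-singularity argument (using positivity of $h$) then forces either that $f$ extends holomorphically across $0$, contradicting the essential singularity, or that $f(\bD^*)\subset Z_0(h)$. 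Taking $\Xi$ to be a proper Zariski closed subset containing $Z_0(h)$ proves (iii), and (iv) follows by the formal implication above.

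For (iv)$\Rightarrow$(i) I argue by contraposition: if $X$ is not of log general type, the log-Iitaka fibration of $(\overline{X},D)$ gives a dominant rational map $X\to Y$ with $\dim Y<\dim X$ whose general fibre $F$ has $\bar\kappa(F)=0$ and, since $F$ passes through a very general point of $X$, satisfies that $\varrho(\mathrm{Im}[\pi_1(F)\to\pi_1(X)])$ is infinite; by the structural results underlying \cref{main} together with the abelianity of special varieties carrying an infinite linear representation, such an $F$ is, after a finite étale cover and a birational modification, an abelian variety (big-ness forces its Albanese map to be generically finite, and Kawamata's theorem then forces it birational), hence carries a Zariski-dense entire curve, and as $F$ moves in a covering family these entire curves sweep out $X$ while avoiding any prescribed proper Zariski closed subset --- contradicting (iv). Finally (i)$\Rightarrow$(ii): a general positive-dimensional irreducible subvariety $Z\subset X$ through a very general point inherits a big representation, so if $Z$ were not of log general type then, by (iv)$\Rightarrow$(i) applied to a model of $Z$, $Z$ would carry entire curves with positive-dimensional Zariski closure inside $X$ avoiding any fixed proper closed subset, contradicting the already-proved equivalence (iii)$\Leftrightarrow$(iv) for $X$; hence, off a proper Zariski closed subset, every positive-dimensional subvariety of $X$ is of log general type. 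The principal obstacle is the second paragraph together with the first assertion of the third: developing non-abelian Hodge theory with target $\GL_N$ over a field of positive characteristic --- the harmonic-map theory into Euclidean buildings in the ramified case, and the organisation of the archimedean and non-archimedean data into a single negatively curved Finsler jet-metric --- and then proving that big-ness of $\varrho$ together with bigness of $K_{\overline{X}}+D$ confines the degeneracy locus of that metric to a proper Zariski closed subset; the remaining steps are formal or reprise the value-distribution arguments of \cite{CDY22}.
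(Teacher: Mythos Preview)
Your proposal contains a fundamental error in the second paragraph that invalidates the core construction. You write that from $\varrho(\pi_1(X))\subset\GL_N(R)$ with $R$ a finitely generated $\bZ$-algebra of characteristic $p$ one obtains, ``by specialising the residual data, a family of complex representations $\tau\colon\pi_1(X)\to\GL_N(\bC)$''. This is impossible: every quotient and every completion of $R$ still has characteristic $p$, so no specialisation produces a representation over $\bC$ (or any characteristic-zero field). Consequently there is no archimedean side at all --- no Mochizuki correspondence, no $\bC^*$-deformation to a complex variation of Hodge structure, no Higgs fields --- and your Finsler jet-metric, which is built from precisely these objects, does not exist. This is exactly the distinction between the characteristic-zero situation of \cite{CDY22} (where both archimedean and non-archimedean data are available) and the present positive-characteristic case.

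The paper's proof is purely non-archimedean. One first shows (\cref{lem:20240226}) that the Shafarevich morphism ${\rm sh}_\varrho$ coincides with a single Katzarkov--Eyssidieux reduction map $s_\tau$ for some reductive $\tau:\pi_1(X)\to\GL_{N'}(K')$ with $K'$ a local field of characteristic $p$. The Gromov--Schoen harmonic map to the Bruhat--Tits building of $\GL_{N'}(K')$ then yields, via the spectral construction, a finite Galois cover $\pi:X^{\rm sp}\to X$ equipped with $H$-invariant logarithmic one-forms and a morphism $a:X^{\rm sp}\to A$ to a semi-abelian variety; big-ness of $\varrho$ forces $a$ to be generically finite. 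The implications are then run as (log general type) $\Rightarrow$ (pseudo Picard) $\Rightarrow$ (pseudo Brody) $\Rightarrow$ (strongly log general type) $\Rightarrow$ (log general type). The first uses Nevanlinna theory for $X^{\rm sp}$ relative to the semi-abelian map (\cite[Theorem~4.1, Proposition~6.9]{CDY22}): since $X$ log general type implies $X^{\rm sp}$ log general type, the ramification estimate and the Second Main Theorem give extension of punctured-disk maps outside a proper closed set. The third lifts pseudo Brody hyperbolicity to $X^{\rm sp}$, applies \cite[Corollary~4.2]{CDY22} to get strongly log general type on $X^{\rm sp}$, and then descends to $X$ by constructing $H$-invariant symmetric differentials from the spectral one-forms and invoking Campana--P\u{a}un \cite{CP19}. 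No jet metrics, Higgs bundles, or complex VHS enter.

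Your contrapositive argument for (iv)$\Rightarrow$(i) is closer in spirit to the paper's \cref{prop:sab}, but the step ``such an $F$ is, after a finite \'etale cover and a birational modification, an abelian variety'' is not immediate: it again requires the generically finite map $F^{\rm sp}\to A$ coming from the non-archimedean spectral data, together with \cite[Lemma~3.5]{CDY22}. Similarly, your (i)$\Rightarrow$(ii) relies on the already-problematic (iv)$\Rightarrow$(i) for subvarieties. The paper avoids this circularity by proving (pseudo Brody)$\Rightarrow$(strongly log general type) directly on $X^{\rm sp}$ and descending.
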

We say a quasi-projective variety $X$ is \emph{strongly of log general type} if there exists a proper Zariski closed subset $\Xi\subsetneq X$ such that   any positive dimensional closed subvariety $V\subset X$ is of log general type provided that $V\not\subset \Xi$. 
Recall that a representation $\varrho:\pi_1(X)\to G(K)$ is said to be \emph{big}, (or \emph{generically large} in \cite{Kol95}), if for any closed irreducible subvariety $Z\subset X$ containing a \emph{very general} point of $X$, $\varrho({\rm Im}[\pi_1(Z^{\rm norm})\to \pi_1(X)])$ is infinite.   
It is worth noticing that a stronger notion   exists: a representation $\varrho$ is called \emph{large}   if $\varrho({\rm Im}[\pi_1(Z^{\rm norm})\to \pi_1(X)])$ is infinite for any closed subvariety $Z$ of $X$. 
We remark that in \cite{CDY22} we prove \cref{main:GGL}  when ${\rm char}\, K=0$ and $\varrho$ is semisimple. It is worthwhile to mention that in the case ${\rm char}\, K>0$, $\varrho$ is not required to be semisimple.   

We would like to refine \cref{main:GGL} to compare  the \emph{non-hyperbolicity locus} of the hyperbolicity notions in \cref{main:GGL}.  We first introduce   a notion of special loci ${\rm Sp}(\varrho)$ for any  big representation $\varrho:\pi_1(X)\to \GL_{N}(K)$ which measures the "non-large locus"  of $\varrho$.
\begin{dfn}\label{dfn:special}
	Let $X$ be a smooth quasi-projective variety. Let $\varrho:\pi_1(X)\to \GL_{N}(K)$ be a    representation where $K$ is a field.	We define
	$$
	{\rm Sp}(\varrho):= \overline{\bigcup_{\iota:Z\hookrightarrow X}Z}^{\rm Zar},
	$$ 
	where $\iota:Z\hookrightarrow X$ ranges over all positive dimensional closed subvarieties of $X$ such that $\iota^*\varrho(\pi_1(Z))$ is finite. 
\end{dfn}

Additionally,  as in \cite[Definition 0.1]{CDY22}, we can introduce four \emph{special subsets} $\Spab$, $\Spalg(X)$, $\Sph(X)$ and $\Spp(X)$ of $X$ that measure the \emph{non-hyperbolicity locus} of the hyperbolicity notions in \cref{main:GGL} from different perspectives.   
\begin{dfn}\label{def:special2}
	Let $X$ be a   quasi-projective normal variety. We define
	\begin{thmlist} 
		\item $\Spab(X) := \overline{\bigcup_{f}f(A_0)}^{\mathrm{Zar}}$, where $f$ ranges over all non-constant rational maps $$f:A\dashrightarrow X$$ from all semi-abelian varieties $A$ to $X$ such that $f$ is regular on a Zariski open subset $A_0\subset A$ whose complement $A\backslash A_0$ has codimension at least two;
		\item $\Sph(X) := \overline{\bigcup_{f}f(\mathbb{C})}^{\mathrm{Zar}}$, where $f$ ranges over all non-constant holomorphic maps from $\mathbb{C}$ to $X$;
		\item $\Spalg(X) := \overline{\bigcup_{V} V}^{\mathrm{Zar}}$, where $V$ ranges over all positive-dimensional closed subvarieties of $X$ which are not of log general type;
		\item $\Spp(X) := \overline{\bigcup_{f}f(\bD^*)}^{\mathrm{Zar}}$, where $f$ ranges over all holomorphic maps from the punctured disk $\bD^*$ to $X$ with essential singularity at the origin, i.e., $f$ has no holomorphic extension $\bar{f}:\mathbb D\to\overline{X}$ to a projective compactification $\overline{X}$.
	\end{thmlist}
\end{dfn}
Subsequently, we establish a theorem concerning these special subsets, thereby  refining \cref{main:GGL}.  
\begin{thmx}[=\cref{lem:proper,cor:GGL}]\label{corx}
	Let $X$ be a   quasi-projective normal variety. Let $\varrho:\pi_1(X)\to \GL_{N}(K)$ be a big   representation where $K$ is a field of  positive characteristic.  Then  ${\rm Sp}(\varrho)$ is a proper Zariski closed subset of $X$, and  we have
	\begin{align*} 
		\Spab(X)\setminus {\rm Sp}(\varrho) =	  \Spalg(X)\setminus        {\rm Sp}(\varrho)= \Spp(X)\setminus        {\rm Sp}(\varrho)= \Sph(X)\setminus        {\rm Sp}(\varrho).
	\end{align*}
	We have ${\rm Sp}_\bullet(X)\subsetneq X$ if and only if $X$ is of log general type, 	where ${\rm Sp}_{\bullet}$  denotes any of $\Spab$, $\Spalg$,  $\Sph$ or $ \Spp$. 
\end{thmx}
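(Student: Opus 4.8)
The plan is to pass to a log resolution, so that we may assume $X$ smooth (extending the definition of ${\rm Sp}(\varrho)$ from \cref{dfn:special} by pushing forward along the resolution), and then to reduce everything to \cref{main} and to \cref{main:GGL} applied to subvarieties of $X$. First, to see that ${\rm Sp}(\varrho)$ is a proper Zariski closed subset: by \cref{main}, a positive-dimensional irreducible $Z\subseteq X$ with $\varrho({\rm Im}[\pi_1(Z)\to\pi_1(X)])$ finite is exactly one contracted by ${\rm sh}_\varrho$, so ${\rm Sp}(\varrho)$ is contained in the locus $E\subseteq X$ of points lying on a positive-dimensional fibre of ${\rm sh}_\varrho$. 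Since $\varrho$ is big, a fibre $F$ through a very general point has $\varrho^{ss}({\rm Im}[\pi_1(F^{\rm norm})\to\pi_1(X)])$ finite by \cref{main}, hence $\varrho({\rm Im}[\pi_1(F^{\rm norm})\to\pi_1(X)])$ finite — a finitely generated unipotent subgroup of $\GL_N(K)$ is finite when ${\rm char}\,K>0$ — contradicting bigness unless $\dim F=0$. Thus $E\subsetneq X$, so ${\rm Sp}(\varrho)\subseteq E$ is proper Zariski closed and ${\rm sh}_\varrho$ is birational onto its image.

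Next, bigness descends off ${\rm Sp}(\varrho)$: if $V\subseteq X$ is irreducible, positive-dimensional and $V\not\subseteq{\rm Sp}(\varrho)$, then the induced representation $\varrho_V$ on a smooth model $\widetilde V$ of $V$ is again big. Indeed $V\not\subseteq E$, so ${\rm sh}_\varrho|_V$ is generically finite; given an irreducible positive-dimensional $W\subseteq\widetilde V$ through a very general point, its image $\overline W$ in $X$ meets $X\setminus E$, hence is not contracted by ${\rm sh}_\varrho$, so $\varrho_V({\rm Im}[\pi_1(W^{\rm norm})\to\pi_1(\widetilde V)])$ is infinite by \cref{main} (comparing $\pi_1$-images along the generically finite map $W^{\rm norm}\to\overline W$, and again using that unipotent parts of finitely generated subgroups are finite). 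Hence \cref{main:GGL} applies to every such $V$. Now each of $\Spalg(X)$, $\Sph(X)$, $\Spp(X)$ is a finite union of irreducible components, each component being the Zariski closure of those members of its defining family that it contains; so it suffices to treat one component $Z\not\subseteq{\rm Sp}(\varrho)$ at a time, analysing the members $V$ of the family with $V\not\subseteq{\rm Sp}(\varrho)$ (those inside ${\rm Sp}(\varrho)$ only feed ${\rm Sp}(\varrho)$). If $V=\overline{f(\bC)}^{\rm Zar}$ (resp. $\overline{f(\bD^*)}^{\rm Zar}$) then $V$ is not pseudo Brody (resp. pseudo Picard) hyperbolic, so by \cref{main:GGL} $V$ is not of log general type, i.e.\ $V\subseteq\Spalg(X)$; conversely if $V$ is not of log general type then $\Sph(V)$ and $\Spp(V)$ cannot be proper subsets of $V$ (else $V$ would be pseudo Brody, resp.\ pseudo Picard, hyperbolic, hence of log general type by \cref{main:GGL}), so $V\subseteq\Sph(X)\cap\Spp(X)$. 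Running over components gives $\Spalg(X)\setminus{\rm Sp}(\varrho)=\Sph(X)\setminus{\rm Sp}(\varrho)=\Spp(X)\setminus{\rm Sp}(\varrho)$; and $\Spab(X)\subseteq\Spalg(X)$ unconditionally, since the image of a semi-abelian variety under a rational map regular in codimension one is never of log general type.

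The heart of the matter is the reverse inclusion $\Spalg(X)\setminus{\rm Sp}(\varrho)\subseteq\Spab(X)$. Fix a component $Z\not\subseteq{\rm Sp}(\varrho)$ of $\Spalg(X)$; then $\varrho|_Z$ is big by the descent step, and $Z$ is not of log general type (otherwise \cref{main:GGL} would make it strongly of log general type, forcing $\Spalg(Z)\subsetneq Z$, impossible for a component of $\Spalg(X)$ lying off ${\rm Sp}(\varrho)$). Take the Campana core $c\colon Z\dashrightarrow \mathrm{Core}(Z)$; since $Z$ is not of log general type its general fibre $F$ is positive-dimensional and special, and $\varrho|_F$ is big by the descent step. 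By the abelianity theorem established in this paper (applied to the special variety $F$), $\varrho|_F$ has virtually abelian image; after a finite étale cover $F'\to F$ the Shafarevich morphism of $\varrho|_{F'}$ is generically finite onto a subvariety $T$ of a semi-abelian variety, speciality of $F'$ together with Kawamata's structure theorem for subvarieties of semi-abelian varieties forces $T$ to be semi-abelian, and speciality of $F'$ together with purity of the branch locus forces $F'\to T$ to be étale; hence $F'$, and therefore $F$ and $Z$, is bimeromorphic to a semi-abelian variety, so $Z\subseteq\Spab(X)$. Sweeping out over the core fibres of each component yields $\Spalg(X)\setminus{\rm Sp}(\varrho)\subseteq\Spab(X)$, completing the chain of equalities.

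Finally, if $X$ is of log general type then $X$ is strongly of log general type by \cref{main:GGL}, so $\Spalg(X)\subsetneq X$, whence $\Spab(X)\subseteq\Spalg(X)\subsetneq X$, while $\Sph(X),\Spp(X)\subsetneq X$ follow directly from pseudo Brody and pseudo Picard hyperbolicity; conversely if $X$ is not of log general type then $\dim X\ge1$ and $X$ is itself a positive-dimensional subvariety not of log general type, so $\Spalg(X)=X$, and since ${\rm Sp}(\varrho)\subsetneq X$ the established equalities force every ${\rm Sp}_\bullet(X)$ to contain the dense set $X\setminus{\rm Sp}(\varrho)$, hence ${\rm Sp}_\bullet(X)=X$. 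I expect the genuine obstacle to be the last assertion of the third paragraph: that a special variety carrying a big virtually abelian linear representation in positive characteristic is bimeromorphic to a semi-abelian variety — this is where the abelianity input and the dichotomy between étale covers and the orbifold-general-type obstruction to speciality do the real work; the characteristic-$p$ bookkeeping of $\pi_1$-images through normalizations and generically finite maps in the descent step is the other delicate point.
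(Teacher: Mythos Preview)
Your treatment of ${\rm Sp}(\varrho)\subsetneq X$, of the equalities among $\Spalg$, $\Sph$, $\Spp$ modulo ${\rm Sp}(\varrho)$, and of the final ``iff log general type'' clause matches the paper's: one descends bigness to any $V\not\subset{\rm Sp}(\varrho)$ and invokes \cref{main:GGL} on $V$. The divergence is entirely in how you handle $\Spab$.

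The paper never touches the core fibration or the abelianity theorem here. Instead it proves directly (\cref{prop:sab}) that if $X$ carries a big positive-characteristic representation and a Zariski-dense map $f:\bD^*\to X$ with essential singularity, then $\Spab(X)=X$. The argument reuses the spectral cover $\pi:\xsp\to X$ and the morphism $a:\xsp\to A$ to a semi-abelian variety with $\dim\xsp=\dim a(\xsp)$, both already constructed inside the proof of \cref{thm:GGL}. Since $X$ is not pseudo Picard hyperbolic, $\xsp$ is not of log general type (\cref{rem}), so the log Iitaka fibration of a smooth model of $\xsp$ has positive-dimensional very general fibre $F$ with $\bar\kappa(F)=0$ and $\dim F=\dim a(F)$; then \cite[Lemma~3.5]{CDY22} gives $\Spab(F)=F$, hence $\Spab(\xsp)=\xsp$ and $\Spab(X)=X$. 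In \cref{cor:GGL} one applies this proposition to the Zariski closure $Z$ of any $f(\bD^*)\not\subset{\rm Sp}(\varrho)$, obtaining $\Spp(X)\setminus{\rm Sp}(\varrho)\subset\Spab(X)\setminus{\rm Sp}(\varrho)$; combined with the a priori chain $\Spab\subset\Sph\subset\Spp$ from \cite[Lemma~4.3]{CDY22}, this closes the loop without passing through $\Spalg$ at all.

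Your route has two problems. First, it is circular as written: in this paper \cref{main:abelian} is deduced from \cref{thm:fun}, whose proof explicitly invokes \cref{cor:GGL} at its last line (see also Figure~\ref{fig:relationships}). One can unwind this---the only consequence of \cref{thm:fun} actually used in \cref{thm:abelian} is that $Y_1$ is pseudo Picard hyperbolic and of log general type, and both follow from \cref{main:GGL} alone---but you have not made that reduction, and the paper's logical order is the opposite of yours. Second, and more seriously, the passage from ``$F$ special with big virtually abelian $\varrho|_F$'' to ``$F\subset\Spab(X)$'' is not established. Even granting \cref{thm:char}(i), you obtain that an \'etale cover $F'$ has \emph{birational} quasi-Albanese $\alpha:F'\to A$; to place $F$ inside $\Spab(X)$ you need a rational map $A\dashrightarrow X$ regular off a codimension-two set and landing in $X$ rather than in a compactification, and your appeal to ``purity of branch locus forces $F'\to T$ \'etale'' does not supply this (nor is $\bar\kappa(F)=0$ known for a general core fibre, so \cref{thm:char}(ii) is unavailable). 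The paper's spectral-cover argument sidesteps all of this: there the relevant fibres automatically satisfy $\bar\kappa=0$ and come equipped with a generically finite map to a semi-abelian variety, which is exactly the input to \cite[Lemma~3.5]{CDY22}.
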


\subsection{How fundamental groups determine hyperbolicity}
It is natural to explore how the fundamental groups of algebraic varieties determine their hyperbolicity properties. In our previous work  \cite{CDY22}, we provided a characterization based on representations of fundamental groups into complex general linear groups. In this paper, we    establish analogous results concerning representations in  positive characteristic fields.  
\begin{thmx}[=\cref{thm:fun}]\label{main2}
	Let $X$ be a  quasi-projective normal variety. Let $$\varrho:\pi_1(X)\to \GL_{N}(K)$$ be a big   representation where $K$ is an algebraically closed  field of  positive characteristic.  If the Zariski closure $\varrho(\pi_1(X))$ is a semisimple algebraic group over $K$, then  ${\rm Sp}_\bullet(X)\subsetneq X$, 	where ${\rm Sp}_{\bullet}$  denotes any of $\Spab$, $\Spalg$,  $\Sph$ or $ \Spp$, in particular, $X$ is of log general type. 
\end{thmx}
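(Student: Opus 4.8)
I would deduce \cref{main2} from \cref{corx} together with the non-archimedean harmonic map techniques underlying \cref{main}. First, by \cref{corx} applied to the big representation $\varrho$, one has ${\rm Sp}_\bullet(X)\subsetneq X$ for every ${\rm Sp}_\bullet\in\{\Spab,\Spalg,\Sph,\Spp\}$ precisely when $X$ is of log general type, so it suffices to prove that $X$ is of log general type. As this is a birational invariant, I fix a smooth log compactification $(\bar X,D)$ of a resolution of $X$ and work there. Note also that since $\varrho$ is big, the Shafarevich morphism ${\rm sh}_\varrho\colon X\to{\rm Sh}_\varrho(X)$ of \cref{main} is generically finite: a general fibre $F$ meets a very general point of $X$, is contracted by ${\rm sh}_\varrho$, hence $\varrho\big({\rm Im}[\pi_1(F)\to\pi_1(X)]\big)$ is finite by the equivalence (a)$\Leftrightarrow$(b) of \cref{main}, and bigness forces $\dim F=0$. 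Thus $X$ is birational to ${\rm Sh}_\varrho(X)$, and the task is to put enough positivity on this model.

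The second step is group-theoretic. Put $\Gamma:=\varrho(\pi_1(X))$, an infinite, finitely generated, Zariski-dense subgroup of $G=\overline{\Gamma}$, which is reductive since it is semisimple. Let $K_0\subset K$ be the finitely generated subfield generated by the matrix entries of a finite generating set of $\Gamma$; since $\Gamma$ is infinite, $\mathrm{tr.deg}_{\bF_p}K_0>0$ and the relative algebraic closure of $\bF_p$ in $K_0$ is a finite field $\bF_q$. By the \og going to enough places\fg arguments behind the construction of ${\rm sh}_\varrho$ (cf. \cite{Eys04,EKPR12}), there is a finite set of discrete valuations $v_1,\dots,v_r$ of $K_0$ such that, writing $\mathbb K_i$ for the completion of $K_0$ at $v_i$ and $\varrho_i\colon\pi_1(X)\to G(\mathbb K_i)$ for the induced representation, each $\varrho_i$ has unbounded image and a subgroup of $\Gamma$ is finite if and only if its image in $G(\mathbb K_i)$ is bounded for every $i$ (the last point reflecting that a subgroup of $\Gamma$ bounded at every valuation of $K_0$ lies in $\GL_N(\bF_q)$, hence is finite). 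Consequently each $\varrho_i$ acts without global fixed point on the locally finite $\mathrm{CAT}(0)$ Euclidean building $\Delta_i$ of $G(\mathbb K_i)$.

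Third comes the analytic core, parallel to the complex case \cite{CDY22} with the buildings $\Delta_i$ in place of period domains. Since $G$ is reductive, the Gromov--Schoen existence theory \cite{GS92}, in its tame/logarithmic form over the quasi-projective base $X$, provides for each $i$ a $\varrho_i$-equivariant pluriharmonic map $u_i\colon\widetilde X\to\Delta_i$; I then form the joint map $u=(u_1,\dots,u_r)\colon\widetilde X\to\prod_i\Delta_i$. Two facts drive the argument. First, $u$ is generically immersive: a positive-dimensional component $F$ of a general fibre would pass through a very general point and be contracted by every $u_i$, so $\varrho_i\big({\rm Im}[\pi_1(F^{\rm norm})\to\pi_1(X)]\big)$ would fix a point of $\Delta_i$ and be bounded for all $i$, hence finite by the detection property above --- contradicting bigness of $\varrho$. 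Second, semisimplicity of $G$ prevents $u$ from degenerating into an \og abelian\fg piece: the building of the semisimple group $G$ has no Euclidean de Rham factor, so no $u_i$ can collapse onto a flat. Feeding these inputs into the curvature estimates of Gromov--Schoen and the Katzarkov--Zuo--Brunebarbe positivity package, exactly as over $\mathbb C$ in \cite{CDY22,DY23}, yields that $K_{\bar X}+D$ is big. Hence $X$ is of log general type, and \cref{corx} gives ${\rm Sp}_\bullet(X)\subsetneq X$.

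I expect the genuine obstacle to be this final step: turning pluriharmonic maps to Euclidean buildings into bigness of $K_{\bar X}+D$. Two difficulties stand out. First, the logarithmic one: the equivariant harmonic maps must be tame at infinity so that the Higgs-type data extracted from $\partial u_i$ extend to $(\bar X,D)$ with a good parabolic structure and the relevant curvature integrals converge --- the non-archimedean counterpart of the tame non-abelian Hodge theory used in \cite{CDY22}. Second, and more serious, Euclidean buildings are flat along their apartments, so no pointwise strict negativity is available; the generic strict negativity required by Brunebarbe's criterion must be squeezed out of Zariski density --- the image of $u$ lies in no single apartment of any $\Delta_i$, and semisimplicity rules out a drop to a proper flat --- and then combined with the generic immersivity above. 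Making all of this uniform over the finite set of places $\{v_1,\dots,v_r\}$, consistently with the construction of ${\rm sh}_\varrho$, is the crux of the proof.
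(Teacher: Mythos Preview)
Your reduction via \cref{corx} to showing that $X$ is of log general type is correct, and your group-theoretic step producing unbounded representations $\varrho_i$ into $G(\mathbb K_i)$ over local fields is essentially how the Shafarevich morphism is built in the paper. The genuine gap is exactly where you locate it: the ``Katzarkov--Zuo--Brunebarbe positivity package'' you invoke does not exist in the form you need. Euclidean buildings are flat along apartments, so pluriharmonic maps to them produce \emph{multivalued holomorphic one-forms} (the spectral forms) rather than a metric of strictly negative curvature; there is no Ahlfors--Schwarz type inequality to feed into Brunebarbe's criterion. The generic immersivity and the absence of a Euclidean de Rham factor that you isolate are true and relevant, but they do not by themselves give bigness of $K_{\bar X}+D$: they only give that the spectral one-forms are generically of maximal rank, i.e.\ $\bar\kappa(X^{\rm sp})\geq 0$ for the associated spectral cover $\pi:X^{\rm sp}\to X$.

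The paper's route is different and avoids this analytic obstacle entirely. Rather than proving $X$ is of log general type directly, it proves that the \emph{spectral cover} $X^{\rm sp}$ is of log general type, which by \cref{rem} already suffices for pseudo Picard hyperbolicity of $X$ (and then \cref{cor:GGL} closes). The argument is purely algebraic: the spectral forms give a morphism $a:X^{\rm sp}\to A$ to a semi-abelian variety with $\dim X^{\rm sp}=\dim a(X^{\rm sp})$, so $\bar\kappa(X^{\rm sp})\geq 0$. Run the logarithmic Iitaka fibration $j:Y\to J(Y)$ on a smooth model $Y$ of $X^{\rm sp}$; a very general fibre $F$ has $\bar\kappa(F)=0$ and $\dim F=\dim a(F)$, hence $\pi_1(F)$ is abelian by \cite[Lemma 3.3]{CDY22}. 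Since ${\rm Im}[\pi_1(F)\to\pi_1(Y)]$ is normal (fibre of a fibration), its image under $(\pi\circ\mu)^*\varrho$ has Zariski closure $N\triangleleft G$ with $N^\circ$ a torus; semisimplicity of $G$ forces $N^\circ=\{1\}$, so the image is finite. Bigness then gives $\dim F=0$, i.e.\ $X^{\rm sp}$ is of log general type. The semisimplicity hypothesis is used \emph{only} at this group-theoretic step---to kill a normal abelian subgroup---and not via any curvature of the building.
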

It is worthwhile mentioning that when ${\rm char}\, K=0$,  \cref{main2} was proved in \cite[Theorem A]{CDY22}.   We also remark that the condition in \cref{thm:fun} is sharp (cf. \cref{rem:sharp}).

\subsection{Some applications}
\cref{main2} has various applications. 
Campana's abelianity conjecture \cite{Cam04} predicts that a smooth  projective variety $X$ that is \emph{special} has a virtually abelian fundamental group.  Our first application of \cref{main2} is the proof of  this  conjecture in the context of representations in positive characteristic. 
\begin{thmx}[=\cref{thm:abelian}]\label{main:abelian}
	Let $X$ be a  smooth quasi-projective   variety, and let  $\varrho:\pi_1(X)\to \GL_{N}(K)$ be any   representation where $K$ is a  field of  positive characteristic. If  $X$ is  special or   $h$-special (cf. \cref{def:special,defn:20230407}), then $\varrho(\pi_1(X))$ is virtually abelian. 
\end{thmx}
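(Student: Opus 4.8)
\emph{Setup and plan.} We may assume $K=\bar K$; replacing $X$ by a connected finite étale cover — which preserves being special or $h$-special and leaves the conclusion unchanged — we also assume $X$ connected. Let $G\subset\GL_{N,\bar K}$ be the Zariski closure of $\varrho(\pi_1(X))$, $R(G)$ its solvable radical, $H:=G/R(G)$ the semisimple quotient, and $\tau:\pi_1(X)\to H(\bar K)$ the induced representation, whose image is Zariski-dense in $H$. The plan is to prove in turn: \textbf{(I)} $\tau(\pi_1(X))$ is finite — equivalently $G$ is virtually solvable, equivalently the semisimplification $\varrho^{ss}$ has virtually abelian image; and \textbf{(II)} granting (I), that $\varrho(\pi_1(X))$ is virtually abelian.

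\emph{Stage (I).} Fix a faithful embedding $H\hookrightarrow\GL_M$. Since a normal unipotent subgroup of a semisimple group is finite, the kernel of the semisimplification $\tau^{ss}$ of the resulting $\GL_M(\bar K)$-valued representation is a finite central subgroup of $H$; hence $\tau^{ss}(\pi_1(X))$ is finite iff $\tau(\pi_1(X))$ is. Assume, for contradiction, that $\tau(\pi_1(X))$ is infinite. Then, by the criterion of \cref{main} (condition (c)) applied to $\tau$ and $Z=X$, the Shafarevich morphism ${\rm sh}_\tau:X\to Y:={\rm Sh}_\tau(X)$ has $\dim Y>0$; being dominant with connected general fibres, $Y$ is again special resp.\ $h$-special (Campana \cite{Cam04} and its $h$-analogue). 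The key input, supplied by the construction of the Shafarevich morphism carried out earlier in the paper, is that after a finite étale cover $Y'\to Y$ (or for an appropriate orbifold structure on $Y$) the variety $Y'$ carries a \emph{big} representation $\varrho_{Y'}$ whose image is again Zariski-dense in a semisimple group, being commensurable with $\tau(\pi_1(X))$. Feeding $(Y',\varrho_{Y'})$ into \cref{main2} yields ${\rm Sp}_\bullet(Y')\subsetneq Y'$, in particular $\Spalg(Y')\subsetneq Y'$, so $Y'$ is of log general type; but a positive-dimensional special or $h$-special variety is never of log general type \cite{Cam04}. This contradiction proves (I).

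\emph{Stage (II).} By \cref{main} the Shafarevich morphism of $\varrho$ depends only on $\varrho^{ss}$, so ${\rm sh}_\varrho:X\to Z:={\rm Sh}_\varrho(X)$ equals ${\rm sh}_{\varrho^{ss}}$, and $Z$ is special resp.\ $h$-special. By the construction of ${\rm sh}_\varrho$, and after a finite étale cover (which we absorb into $X$, replacing $Z$ accordingly), $\varrho$ descends modulo a finite subgroup to a representation on $\pi_1(Z)$ whose semisimplification $\sigma:\pi_1(Z)\to T(\bar K)$ is a \emph{big} representation into a torus $T$ — big because, by \cref{main}, the Shafarevich morphism sees only the semisimplification, and abelian because the semisimple part of $G$ is finite by Stage (I). After a further finite étale cover killing the torsion of $H_1(Z,\bZ)$, $\sigma$ factors through the quasi-Albanese map $a:Z\to B$, which is surjective with connected fibres since $Z$ is special \cite{Cam04}. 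As $\sigma$ is big, $a$ can have no positive-dimensional fibre (its $\pi_1$-image in $\pi_1(B)$, hence under $\sigma$, is trivial), so $a$ has zero-dimensional fibres; being a surjective birational morphism between smooth varieties it is then an open immersion $Z\cong B\setminus C$ into the semi-abelian variety $B$. If $C$ had a divisorial component $D_0$, then $\kappa(B,D_0)<\dim B$ (otherwise $Z$ would be of log general type); by the structure of effective divisors on semi-abelian varieties, $D_0$ is then pulled back from a positive-dimensional semi-abelian quotient $B\twoheadrightarrow B'$ on which its image $D_0'$ is big, whence $Z$ dominates the positive-dimensional log-general-type variety $B'\setminus D_0'$ — contradicting that $Z$ is special or $h$-special. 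Therefore $C$ has codimension $\geq 2$ in $B$, so $\pi_1(Z)=\pi_1(B)$ is free abelian; as $\varrho(\pi_1(X))$ is, up to a finite group, a quotient of $\pi_1(Z)$, it is finite-by-abelian, hence virtually abelian. This completes the proof.

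\emph{The main obstacle.} The decisive difficulty is, I expect, the descent step used in Stage (I): producing on a finite cover (or an orbifold structure) of the Shafarevich base ${\rm Sh}_\tau(X)$ a big representation with semisimple monodromy group, so that \cref{main2} may be applied. This rests on the fine structure of the Shafarevich morphism and on the positive-characteristic non-abelian Hodge theory — harmonic maps into Riemannian symmetric spaces and into Bruhat–Tits buildings — developed earlier in the paper; its characteristic-zero counterpart is \cite[Theorem A]{CDY22}. By contrast, the remaining ingredients are comparatively soft: the positive-characteristic facts from Campana's theory of special and $h$-special varieties used above (stability under finite étale covers and under dominant maps with connected fibres, surjectivity of the quasi-Albanese, incompatibility with log general type in positive dimension, the structure of effective divisors on semi-abelian varieties) can be quoted, and the passage from virtually solvable to virtually abelian — which over $\bC$ relies on mixed Hodge theory — reduces here, via the descent of $\varrho$ to the Shafarevich base, to the geometry of the (quasi-)Albanese map.
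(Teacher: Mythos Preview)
Your Stage (I) is essentially the paper's Step 1: take the semisimple quotient $H=G/R(G)$, form the Shafarevich morphism of the induced representation, use the factorization result (Proposition~\ref{lem:factor}) to produce a big representation with semisimple Zariski closure on a variety $Y_1$ generically finite over the Shafarevich base, and apply \cref{thm:fun} to get a contradiction with specialness. You correctly identify this descent as the main technical input.

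Your Stage (II), however, takes a genuinely different route from the paper and contains gaps. The paper's Step 2 is short and purely group-theoretic: once $G$ is solvable one has $[\varrho(\pi_1(X)),\varrho(\pi_1(X))]\subset R_u(G)(K)$; by the $\pi_1$-exactness of the quasi-Albanese (\cref{prop:pi1}), $[\pi_1(X),\pi_1(X)]$ is the image of $\pi_1(F)$ for a general fibre $F$, hence finitely generated; and a finitely generated subgroup of a unipotent group over a field of characteristic $p$ is finite (it sits in a successive extension of $\mathbb{G}_a$'s, and finitely generated subgroups of $\mathbb{G}_a(K)$ are finite $p$-groups). Thus the commutator of the image is finite, and residual finiteness finishes. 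This is the positive-characteristic phenomenon that upgrades ``virtually nilpotent'' (the char-$0$ conclusion) to ``virtually abelian''.

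Your geometric approach via the Shafarevich base $Z$ misses this mechanism and runs into several problems. First, the descent of $\varrho$ to a representation of $\pi_1(Z)$ is only available for \emph{projective} $X$ (\cref{prop:factor}); in the quasi-projective case the paper only has \cref{lem:factor}, which gives a representation on a variety $Y_1$ generically finite over $Z$ after modifying $X$ birationally and by étale covers, and does not assert largeness. Second, even granting a big torus-valued $\sigma$ on $Z$, bigness only controls fibres of the quasi-Albanese through \emph{very general} points, so you do not get that \emph{all} fibres are zero-dimensional; you need largeness for that. Third, your sentence ``being a surjective birational morphism \ldots it is then an open immersion $Z\cong B\setminus C$'' is self-contradictory (a surjective open immersion is an isomorphism); the quasi-Albanese of a special variety is only known to be dominant with connected general fibres, not surjective, and $Z$ need not be smooth. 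Finally, the argument you sketch for $\operatorname{codim} C\geq 2$ is essentially the content of \cref{thm:char}, which in the paper is \emph{derived from} \cref{thm:abelian}, not the other way around. So your Stage (II), while plausibly repairable with substantial extra work, is both longer and currently incomplete; the paper's argument via $\pi_1$-exactness and the char-$p$ finiteness of finitely generated unipotent subgroups is the cleaner path.
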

Note that in cases when $X$ is projective and  ${\rm char}\, K=0$,  \cref{main:abelian} was proved by Campana  \cite{Cam04}   (for $X$ special) and the second author   \cite{Yam10} (for $X$ Brody special).  It is important to mention that  \cref{main:abelian} does not hold when ${\rm char}\, K=0$ as  in \cite[Example 11.26]{CDY22} we constructed a special  and Brody special smooth quasi-projective variety with  nilpotent fundamental group but not virtually abelian.    In cases where ${\rm char}\, K=0$, in \cite{CDY22} we prove that $\varrho(\pi_1(X))$ is virtually nilpotent (cf. \cref{thm:CDY}). 

As a first consequence of \cref{main:abelian}, we can address a conjecture proposed by Claudon, H\"oring, and Koll\'ar concerning algebraic varieties with compactifiable universal coverings (cf. \Cref{conj}).
\begin{corx}[=\cref{thm:univ}]\label{main:kollar}
	Let $X$ be a smooth projective variety  with an infinite fundamental group $\pi_1(X)$, such that its universal covering  $\widetilde{X}$  is a Zariski open subset  of some compact K\"ahler manifold. If  there exists a faithful   representation $\varrho:\pi_1(X)\to \GL_{N}(K),$ where $K$ is any field of any characteristic, then   the Albanese map of $X$ is (up to finite \'etale cover)   locally isotrivial with simply connected fiber $F$.  In particular we have $\widetilde{X} \simeq F \times \mathbb{C}^{q(X)}$ with $q(X)$ the irregularity of $X$.
\end{corx}
As another application of \cref{main:abelian}, we provide a characterization of semiabelian variety. 
\begin{corx} [=\cref{thm:char}]\label{corx2}
	Let $X$ be a  smooth quasi-projective   variety, and let  $\varrho:\pi_1(X)\to \GL_{N}(K)$ be a  big  representation where $K$ is a  field of  positive characteristic.
	\begin{enumerate}[label=(\alph*)]
		\item If $X$ is special or $h$-special, then after replacing $X$ by some finite \'etale cover, its quasi-Albanese map $\alpha:X\to A$ is birational and $\alpha_*:\pi_1(X)\to \pi_1(A)$ is an isomorphism.
		\item If the logarithmic Kodaira dimension vanishes, 
		then   after replacing $X$ by some finite \'etale cover,  its quasi-Albanese map $\alpha:X\to A$ is birational and   proper in codimension one,  i.e. there exists a Zariski closed subset $Z\subset A$ of codimension at least two such that $\alpha$ is proper over $A\backslash Z$.
	\end{enumerate}
\end{corx}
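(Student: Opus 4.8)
The plan is to derive both assertions from \cref{main:abelian}, the structure of the Shafarevich morphism of a representation with virtually abelian image in positive characteristic, and the classification of special subvarieties of semi-abelian varieties.

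For the first assertion, \cref{main:abelian} gives that $\varrho(\pi_1(X))$ is virtually abelian. Since bigness of $\varrho$, and the properties of being special and $h$-special, are preserved under finite étale covers (Campana), I would first pass to such a cover so that $\varrho(\pi_1(X))\cong\bZ^{r}$ is abelian and torsion free. Because $\operatorname{char}K=p>0$, a finitely generated torsion-free abelian subgroup of $\GL_N(\overline K)$ has trivial unipotent part (a finitely generated subgroup of a unipotent $\overline K$-group is a finite $p$-group), so after conjugation $\varrho=(\chi_1,\dots,\chi_N)$ is diagonal with characters $\chi_i\colon\pi_1(X)\to\overline K^{*}$. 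The subgroup of $\overline K^{*}$ generated by the $\chi_i(\pi_1(X))$ is finitely generated; after one more finite étale cover it is torsion free, and, choosing finitely many valuations $v_j$ of $\overline K$ whose restrictions embed it into a free abelian group, a subgroup $H\le\pi_1(X)$ has finite $\varrho$-image if and only if $\varrho(H)=\{1\}$, if and only if the integral classes $v_j\circ\chi_i\in H^{1}(X,\bZ)$ all vanish on $H$. By the characterisation in \cref{main}, ${\rm sh}_\varrho$ therefore contracts exactly the connected $Z\subset X$ on which all $v_j\circ\chi_i$ vanish; since each $v_j\circ\chi_i$ factors through the quasi-Albanese morphism $\alpha\colon X\to A:=\Alb(X)$ (which on fundamental groups realizes the maximal torsion-free abelian quotient $H_1(X,\bZ)/\mathrm{torsion}$ of $\pi_1(X)$), the morphism ${\rm sh}_\varrho$ is constant on the fibres of $\alpha$ and factors through it. Now $\varrho$ being big prevents any positive-dimensional fibre of ${\rm sh}_\varrho$ from meeting a very general point, so ${\rm sh}_\varrho$ --- which has connected general fibres --- is birational; hence $X\to\overline{\alpha(X)}$ is birational and $\overline{\alpha(X)}\subset A$ is a closed subvariety birational to $X$, so special (resp. $h$-special). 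By the structure theory of special (resp. $h$-special) closed subvarieties of semi-abelian varieties (Ueno-Kawamata, resp. its hyperbolic analogue), $\overline{\alpha(X)}$ is a translate of a sub-semi-abelian variety; as it generates $\Alb(X)$ it equals $A$, so $\alpha$ is a dominant birational morphism onto $A$. One then checks that $\alpha$ is an isomorphism in codimension one (a boundary divisor of a smooth model of $X$ dominating an interior divisor of $A$ would enlarge $\Alb(X)$, which is excluded), so $\alpha_*\colon\pi_1(X)\to\pi_1(A)$ is an isomorphism, proving the first assertion.

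For the second assertion, $\bar\kappa(X)=0$ forces $X$ to be special (Campana), so the first assertion applies: after a finite étale cover, $\alpha\colon X\to A=\Alb(X)$ is a birational morphism, an isomorphism in codimension one, with $\alpha_*$ an isomorphism on $\pi_1$. It remains to upgrade this to ``proper in codimension one''. If $\alpha$ is moreover surjective (which one deduces from $\bar\kappa(X)=0$, e.g. via Kawamata's structure theorem for quasi-Albanese maps of varieties of logarithmic Kodaira dimension zero), then for any prime divisor $D_A\subset A$ with generic point $\eta$ one may pick the generic point $x$ of a component of $\alpha^{-1}(D_A)$ dominating $D_A$, and the inclusions $\mathcal O_{A,\eta}\subseteq\mathcal O_{X,x}\subseteq\bC(X)=\bC(A)$ together with the fact that $\mathcal O_{A,\eta}$ is a discrete valuation ring force $\mathcal O_{X,x}=\mathcal O_{A,\eta}$; thus $\alpha$ is an isomorphism over a Zariski neighbourhood of $\eta$, the locus $Z\subset A$ where $\alpha$ fails to be an isomorphism has codimension $\ge 2$, and $\alpha$ is proper over $A\setminus Z$.

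I expect the main difficulty to lie in the first assertion: identifying ${\rm sh}_\varrho$ with a factor of the quasi-Albanese map in the positive-characteristic abelian range --- i.e. the passage, via the Jordan decomposition in characteristic $p$ and via valuations, from the $\GL_N(K)$-representation to integral cohomology classes --- together with controlling the locus omitted by $\alpha$ and fixing the precise form of the classification of $h$-special subvarieties of semi-abelian varieties. Once these are available, the refinement in the second assertion is comparatively formal.
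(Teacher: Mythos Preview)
Your overall plan is right --- reduce to a torsion-free abelian image via \cref{main:abelian}, factor through the quasi-Albanese, use bigness --- but you have inverted where the difficulty lies, and in doing so introduced both unnecessary machinery and a genuine gap.

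The step you flag as the ``main difficulty'' is in fact a one-liner. Once you have passed to an \'etale cover so that $\varrho(\pi_1(X))$ is torsion-free abelian, the representation $\varrho$ factors through $\pi_1(X)^{\rm ab}/\text{torsion}=H_1(X,\bZ)/\text{torsion}$, and the quasi-Albanese map $\alpha$ identifies this group with $\pi_1(A)$. So $\varrho=\alpha^*\beta$ for some $\beta:\pi_1(A)\to\GL_N(K)$, and $\varrho$ is trivial on $\pi_1$ of every fiber of $\alpha$. Your detour through the Jordan decomposition, diagonalization, and valuations to produce classes in $H^1(X,\bZ)$ is simply not needed: all you are recovering is the tautology that a homomorphism to a torsion-free abelian group factors through the universal such quotient. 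From here the paper concludes birationality of $\alpha$ directly: by \cite[Lemma~11.5]{CDY22} the quasi-Albanese map of a special or $h$-special variety is already dominant with connected general fibers, and bigness forces the general fiber to be a point. There is no need to invoke the Ueno--Kawamata classification of special subvarieties of semi-abelian varieties.

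The actual gap in your argument is the isomorphism $\alpha_*:\pi_1(X)\to\pi_1(A)$ in~(i). You claim $\alpha$ is an ``isomorphism in codimension one'' with the parenthetical justification that a boundary divisor mapping to the interior of $A$ would enlarge $\Alb(X)$. This is neither made precise nor obviously true under the hypotheses of~(i) alone (you do not yet know $\bar\kappa(X)=0$, so you cannot invoke Kawamata's surjectivity as you do in~(ii)). The paper avoids this entirely: for $X$ special or $h$-special, the quasi-Albanese map is $\pi_1$-exact (\cref{prop:pi1}, i.e.\ \cite[Proposition~11.14]{CDY22}), meaning $\pi_1(F)\to\pi_1(X)\to\pi_1(A)\to 1$ is exact for a general fiber $F$. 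Since $\alpha$ is birational, $F$ is a point, and the isomorphism follows immediately.

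For~(ii), your approach matches the paper's: $\bar\kappa=0$ implies special, so~(i) gives $\alpha$ birational, and one then needs properness in codimension one. The paper simply cites \cite[Lemma~3.2]{CDY22} for this; your DVR argument is essentially a sketch of that lemma, and can be made to work given surjectivity of $\alpha$ (which does follow from Kawamata's characterization for $\bar\kappa=0$), though you should note that the key point is that surjectivity forces the rational inverse $\alpha^{-1}:A\dashrightarrow X$ to be defined at every codimension-one point of $A$.
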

It is worth mentioning that in \cite{CDY22} we proved \cref{corx2} in cases where ${\rm char}\, K=0$ and $\varrho$ is big and  reductive. 

Lastly, we apply \cref{main2}  to obtain a structure theorem for quasi-projective varieties $X$ for which there exists   a big representation $\varrho:\pi_1(X)\to \GL_N(K)$ where $K$ is a field of positive characteristic. See \cref{thm:structure}. 

\medspace

\subsection{Structure of the paper}
The paper presents several results  from different perspectives. For the readers' convenience, we list in Figure \ref{fig:relationships} the  relationships between main theorems. 
\begin{figure}
	\centering
	\begin{tikzpicture}[>=stealth, node distance=1.5cm, every node/.style={rectangle, draw, align=center}]
		\node (A) {\cref{main}};
		\node (B) [below left of=A] {\cref{main:Sha}};
		\node (C) [below right of=A] {\cref{main:GGL}};
		\node (D) [below of= C, yshift=0.5cm] {\cref{corx}};
		\node (E) [below of= D, yshift=0.5cm] {\cref{main2}};
		\node (I) [below left of=E, yshift=-0.3cm] {\cref{thm:structure}};
		\node (G) [below right of=E, yshift=-0.3cm] {\cref{main:abelian}};
		\node (H) [below left of=G, yshift=-0.1cm] {\cref{corx2}}; 
		\node (F) [below right of= G, yshift=-0.1cm] {\cref{main:kollar}};
		
		\path[->] (A) edge (B);
		\path[->] (A) edge (C);
		\path[->] (C) edge (D);
		\path[->] (D) edge (E);
		\path[->] (E) edge (G);
		\path[->] (E) edge (I);
		\path[->] (G) edge (H);  
		\path[->] (G) edge (F);
	\end{tikzpicture}
	\caption{Relationships between Main Theorems}
	\label{fig:relationships}
\end{figure}
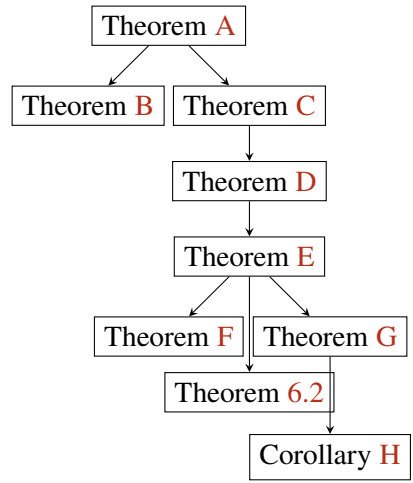

We remark that \cref{main,main:Sha,main:GGL,corx,main2} are entirely novel results, even in cases where $X$ is a projective variety. Their proofs   differ   from those used in studying complex reductive representations of fundamental groups.   In a forthcoming work, we will extend \cref{main:Sha} to arbitrary projective normal varieties. 

\subsection*{Convention and notation} In this paper, we use the following conventions and notations:
\begin{itemize}[noitemsep]
	\item Quasi-projective varieties and their closed subvarieties are assumed to be positive-dimensional and irreducible unless specifically mentioned otherwise. Zariski closed subsets, however, may be reducible.  
	\item Fundamental groups are always referred to as topological fundamental groups.
	\item If $X$ is a complex space, its normalization is denoted by $X^{\mathrm{norm}}$.  
	\item $\bD$ denotes the unit disk in $\bC$, and $\bD^*$ denotes the punctured unit disk.  
	\item For an algebraic group $G$, we denote by $\cD G$ its derived group. 
	\item For any prime number $p$, we denote by $\GL(N,\bF_p)$ the general linear group over $\bF_p$. If $K$ is a field with ${\rm char}\, K=p$, we denote by $\GL_N(K)$ its $K$-points.
	\item For a finitely generated group $\Gamma$, any field $K$ and any representation $\varrho:\Gamma\to \GL_{N}(K)$, we denote by $\varrho^{ss}:\Gamma\to \GL_{N}(\bar{K})$ the semisimplification of $\varrho$, where $\bar{K}$ denotes the algebraic closure of $K$.  
	The representation 
	$\varrho$ is \emph{reductive} if the Zariski closure of $\varrho(\Gamma)$ is a reductive group. 
	We note that $\varrho^{ss}$ is a semisimple representation, thus reductive (cf. \cite[Corollary 19.18]{Mil17}).  
\end{itemize}

\section{Technical preliminary} 

\subsection{Katzarkov-Eyssidieux reduction}
\begin{dfn}[Katzarkov-Eyssidieux reduction]
	Let $X$ be a complex smooth quasi-projective variety, and let $\varrho:\pi_1(X)\to {\rm GL}_N(K)$ be a linear representation where $K$ is a non-archimedean local field. 
	A morphism $s_\varrho:X\to S_\varrho$ to a complex normal quasi-projective variety $S_{\varrho}$ is called Katzarkov-Eyssidieux reduction map if 
	\begin{itemize}
		\item $s_\varrho$ is dominant and has connected general fibers, and
		\item for every connected Zariski closed  subset $T$ of $X$, the image $s_\varrho(T)$ is a point if and only if the image $\varrho({\rm Im}[\pi_1(T)\to \pi_1(X)])$ is a bounded subgroup of $\mathrm{GL}_N(K)$.
	\end{itemize}
\end{dfn}
Recall that a subgroup of $\mathrm{GL}_N(K)$ is said to be \emph{bounded} if it is contained in a maximal compact subgroup of $\mathrm{GL}_N(K)$. Note that a maximal compact subgroup of $\mathrm{GL}_N(K)$ is, up to conjugation, equal to $\mathrm{GL}_N(\mathcal{O}_K)$. 

When $X$ is projective, we may easily see that $s_\varrho:X\to S_\varrho$ is unique up to isomorphism, if it exists.
In our previous work \cite{CDY22} jointly with Cadorel, we establish  the existence of Katzarkov-Eyssidieux reduction map for reductive representations.
This generalized previous work by Katzarkov \cite{Kat97} and Eyssidieux \cite{Eys04} from projective varieties  to the quasi-projective cases.  Here we state a stronger result, which is implicitly contained in our   paper \cite{DY23}. 
\begin{thm} \label{thm:KZ}
	Let $X$ be a complex smooth quasi-projective variety, and let $$\varrho:\pi_1(X)\to {\rm GL}_N(K)$$ be a \emph{linear} representation where $K$ is a non-archimedean local field.  Then there exists a quasi-projective normal variety $S_\varrho$ and a dominant morphism $s_\varrho:X\to S_\varrho$ with connected general fibers, such that  for any connected Zariski closed  subset $T$ of $X$, the following properties are equivalent:
	\begin{enumerate}[label=(\alph*)]
		\item \label{item bounded} the image $\varrho({\rm Im}[\pi_1(T)\to \pi_1(X)])$ is a bounded subgroup of $\GL_{N}(K)$. 
		\item \label{item normalization} For every irreducible component $T_o$ of $T$, the image $\varrho({\rm Im}[\pi_1(T_o^{\rm norm})\to \pi_1(X)])$ is a bounded subgroup of $\GL_{N}(K)$. 
		\item \label{item contraction}The image $s_\varrho(T)$ is a point.\qed
	\end{enumerate} 
\end{thm}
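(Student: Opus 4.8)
The plan is to deduce \cref{thm:KZ} from the existence of the Katzarkov--Eyssidieux reduction in the \emph{reductive} case, established in \cite{CDY22}, by first disposing of the reductivity hypothesis through semisimplification and then incorporating the extra condition (b).

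\emph{Step 1 (reduction to a reductive representation).} I would first invoke the standard fact that for a finitely generated group $\Gamma$ and any $\sigma\colon\Gamma\to\GL_N(K)$ with $K$ a non-archimedean local field, $\sigma(\Gamma)$ is bounded if and only if $\sigma^{ss}(\Gamma)$ is bounded: the direction "$\Rightarrow$" follows by restricting a $\sigma(\Gamma)$-stable lattice in $K^N$ to the graded pieces of an invariant flag, while for "$\Leftarrow$" boundedness of $\sigma^{ss}(\Gamma)$ forces every $\sigma(\gamma)$ to have characteristic polynomial with integral coefficients and unit constant term, hence unit eigenvalues, so that $\sigma(\Gamma)$, being finitely generated with all elements elliptic, fixes a point of the Bruhat--Tits building of $\GL_N(K)$. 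Since $\pi_1(X)$, $\mathrm{Im}[\pi_1(T)\to\pi_1(X)]$ and $\mathrm{Im}[\pi_1(T_o^{\rm norm})\to\pi_1(X)]$ are all finitely generated, conditions (a) and (b) are unchanged if $\varrho$ is replaced by its semisimplification, which we regard as a reductive representation into $\GL_N$ over a suitable finite extension of $K$, still a non-archimedean local field. We may therefore assume $\varrho$ is reductive and let $s_\varrho\colon X\to S_\varrho$ be the Katzarkov--Eyssidieux reduction furnished by \cite{CDY22}; by its defining property it realizes the equivalence (a)$\Leftrightarrow$(c) for all connected Zariski closed subsets.

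\emph{Step 2 (the normalization condition).} As $\mathrm{Im}[\pi_1(T_o^{\rm norm})\to\pi_1(X)]\subseteq\mathrm{Im}[\pi_1(T)\to\pi_1(X)]$, the implication (a)$\Rightarrow$(b) is immediate, so it remains to prove (b)$\Leftrightarrow$(c). I would reduce this, component by component, to the assertion: for an irreducible closed subvariety $T_o\subseteq X$, with $\nu\colon Y:=T_o^{\rm norm}\to X$ the natural proper morphism and $\varrho_Y:=\nu^*\varrho\colon\pi_1(Y)\to\GL_N(K)$, the image $s_\varrho(T_o)=s_\varrho(\nu(Y))$ is a point if and only if $\varrho_Y(\pi_1(Y))=\varrho\big(\mathrm{Im}[\pi_1(T_o^{\rm norm})\to\pi_1(X)]\big)$ is bounded. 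Granting this, (b)$\Leftrightarrow$(c) is formal: if each $s_\varrho(T_o)$ is a point, then these finitely many points agree wherever two components of $T$ meet, so by connectedness of $T$ and continuity of $s_\varrho$ the set $s_\varrho(T)$ is a single point; conversely $s_\varrho(T)$ a point forces $s_\varrho(T_o)$ a point for every $T_o$. The content of the component-wise assertion is the \emph{functoriality of the reduction map}: the composite $Y\xrightarrow{\ \nu\ }X\xrightarrow{\ s_\varrho\ }S_\varrho$ should coincide, after Stein factorization, with the Katzarkov--Eyssidieux reduction $s_{\varrho_Y}\colon Y\to S_{\varrho_Y}$ of $\varrho_Y$. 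Once this is known, $s_\varrho(\nu(Y))$ is a point $\iff$ $S_{\varrho_Y}$ is a point $\iff$ $\varrho_Y(\pi_1(Y))$ is bounded, the last equivalence being the defining property of $s_{\varrho_Y}$ applied to the \emph{normal, irreducible} variety $Y$ itself, which is precisely where normality removes the discrepancy between $\pi_1(Y)$ and the image $\mathrm{Im}[\pi_1(T_o^{\rm norm})\to\pi_1(X)]$.

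The hard part will be this functoriality, which is not formal and must be extracted from the construction of $s_\varrho$ rather than from the black-box statement of \cite{CDY22}. Recall that $s_\varrho$ arises (following \cite{Eys04,CDY22,DY23}) as the Stein factorization of a period-type map on $X$ assembled from the finitely many $\pi_1(X)$-equivariant pluriharmonic maps from $\widetilde X$ into the Bruhat--Tits buildings attached to $\varrho$; I would verify that these harmonic maps, the resulting multivalued spectral forms, and hence the period map all pull back along $\nu$ to the corresponding data for $\varrho_Y$ on $Y$, so that the two Stein factorizations agree. The one subtle point is that $\nu$ need not be an immersion everywhere (for instance at cusps of $T_o$); but a reduction map is determined by its restriction to any dense Zariski open subset, so it suffices to perform the comparison over the locus where $\nu$ is an immersion, where pullbacks of (pluri)harmonic maps remain (pluri)harmonic. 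I expect the cleanest route is to reprove the comparison within the construction of \cite{DY23} rather than to quote \cite{CDY22}; granting it, the remaining bookkeeping is soft.
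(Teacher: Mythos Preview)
Your Step~1 matches the paper's argument: set $s_\varrho:=s_{\varrho^{ss}}$ and use that boundedness is preserved under semisimplification (this is exactly \cref{lem:same bound}, i.e.\ \cite[Lemma~3.7]{DY23}). One caveat: your sketch of the implication ``$\sigma^{ss}(\Gamma)$ bounded $\Rightarrow$ $\sigma(\Gamma)$ bounded'' is not quite right as written --- ``finitely generated with every element elliptic'' does \emph{not} by itself force a global fixed point in the building. The honest argument uses more structure: after conjugating so that $\sigma^{ss}$ lands in $\GL_{n_1}(\mathcal O_K)\times\cdots\times\GL_{n_r}(\mathcal O_K)$, the representation $\sigma$ is block upper-triangular with these diagonal blocks, and a further conjugation by a block-scalar matrix $\mathrm{diag}(\pi^{a_1}I,\ldots,\pi^{a_r}I)$ with sufficiently large gaps $a_i-a_{i+1}$ pushes the finitely many generators into $\GL_N(\mathcal O_K)$. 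Either argue this way or simply quote \cite[Lemma~3.7]{DY23}.

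Your Step~2, however, is considerably more involved than what the paper does, and the extra work is unnecessary. The paper does \emph{not} separately establish (b)$\Leftrightarrow$(c): it observes that \cite[Theorem~H]{CDY22} already proves the full equivalence (a)$\Leftrightarrow$(b)$\Leftrightarrow$(c) for \emph{reductive} representations, and then simply transfers both (a) and (b) back to the original $\varrho$ via \cref{lem:same bound}. Concretely, for $\Gamma={\rm Im}[\pi_1(T_o^{\rm norm})\to\pi_1(X)]$ the two restricted representations $\varrho|_\Gamma$ and $\varrho^{ss}|_\Gamma$ have conjugate semisimplifications, so one is bounded iff the other is --- no functoriality of the reduction map, no pullback of harmonic maps or spectral forms, is needed. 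Your functoriality programme (pulling back the period data along $\nu\colon T_o^{\rm norm}\to X$) is a legitimate route to (b)$\Leftrightarrow$(c) and is in the spirit of how such statements are proved inside \cite{Eys04,CDY22}, but here it duplicates work already packaged in \cite[Theorem~H]{CDY22} and you leave its hardest step (compatibility of the equivariant pluriharmonic maps under pullback) as a plan rather than a proof.
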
 
\begin{proof}
	Let $\varrho^{ss}:\pi_1(X)\to \GL_{N}(\bar{K})$ be the semisimplification of $\varrho$. We assume that $L/K$ is a finite extension such that $\varrho^{ss}:\pi_1(X)\to\GL_{N}(L)$.  It is proven in 	  \cite[Theorem H]{CDY22} that the Katzarkov-Eyssidieux reduction map $s_{\varrho^{ss}}:X\to S_{\varrho^{ss}}$ for $\varrho^{ss}$ exists and satisfies the properties in the theorem.

	On the other hand, we have the following result. 
	\begin{claim}[{\cite[Lemma 3.7]{DY23}}]\label{lem:same bound}
		Let $K$ be a non-archimedean local field and $\Gamma$ be a finitely generated group.    If $\{\varrho_i:\Gamma\to {\rm GL}_N(\bar{K})\}_{i=1,2}$ are two linear  representations such that there semisimplifications are conjugate,  then $\varrho_1$ is bounded if and only if $\varrho_2$ is bounded. \qed %In other words, for the GIT quotient  $\pi: R(X,N) \to M_B(X,N) $  where  $R(X,N)$ is  the representation variety of $\pi_1(X)$ into ${\rm GL}_N$, for any $x\in M_B(X,N)(\bar{K})$, the representations in $\pi^{-1}(x)\subset R(X,N)(\bar{K})$ are either all bounded or all unbounded.  \qed
	\end{claim} 
	Therefore, if we define $s_{\varrho}$ to be $s_{\varrho^{ss}}:X\to S_{\varrho^{ss}}$, it satisfies the properties required in the theorem. 
	Indeed, let $T\subset X$ be a connected Zariski closed subset.
	Set $\Gamma=\pi_1(T)$.
	Then $\Gamma$ is finitely generated.
	Let $\iota:\Gamma\to \pi_1(X)$ be a natural morphism.
	Note that the semisimplifications of two composite representations $\varrho\circ \iota:\Gamma\to {\rm GL}_N(\bar{K})$ and $\varrho^{ss}\circ \iota:\Gamma\to {\rm GL}_N(\bar{K})$ are conjugate.
	Hence by \Cref{lem:same bound}, $\varrho\circ \iota$ is bounded iff $\varrho^{ss}\circ \iota$ is bounded.
	Hence the image $s_{\varrho^{ss}}(T)$ is a point iff $\varrho({\rm Im}[\pi_1(T)\to \pi_1(X)])$ is a bounded subgroup of $\GL_{N}(K)$.
	Similarly $s_{\varrho^{ss}}(T)$ is a point iff $\varrho({\rm Im}[\pi_1(T_o^{\rm norm})\to \pi_1(X)])$ is a bounded subgroup of $\GL_{N}(K)$ for every irreducible component $T_o$ of $T$.
	Thus $s_{\varrho^{ss}}:X\to S_{\varrho^{ss}}$ satisfies the properties required in the theorem. 
\end{proof}

According to this theorem, the two properties \ref{item bounded} and \ref{item normalization} are equivalent for every connected Zariski closed  subset $T$ of $X$.
Hence for every Katzarkov-Eyssidieux reduction map $s_\varrho:X\to S_\varrho$, the three statements \ref{item bounded}, \ref{item normalization} and \ref{item contraction} are equivalent.

\begin{rem}\label{rem:20240306}
	The proof of \cref{thm:KZ} shows that for every linear representation $$\varrho:\pi_1(X)\to {\rm GL}_N(K),$$ where $K$ is a non-archimedean local field, the Katzarkov-Eyssidieux reduction map $$s_{\varrho^{ss}}:X\to S_{\varrho^{ss}}$$ for the semisimplification $\varrho^{ss}$ is the Katzarkov-Eyssidieux reduction map for $\varrho$.
\end{rem}

The following lemma proved in \cite{DY23} will be used throughout this paper.

%\cref{lem:same bound} allows us to define reduction map $s_\varrho:X\to S_\varrho$ for any \emph{linear} representation $\varrho:\pi_1(X)\to {\rm GL}_N(K)$  where $K$ is a non-archimedean local field.
%\footnote{added.}

\begin{lem}[{\cite[Lemma 1.28]{DY23}}]\label{lem:simultaneous}
	Let $V$ be a quasi-projective normal variety and let $(f_{\lambda}:V\to S_{\lambda})_{\lambda\in\Lambda}$ be a family of morphisms into quasi-projective varieties $S_{\lambda}$.
	Then there exist a quasi-projective normal variety $S_{\infty}$ and a morphism $f_{\infty}:V\to S_{\infty}$ such that 
	\begin{itemize}
		\item $f_{\infty}$ is dominant and has connected general fibers,
		\item for every subvariety $Z\subset V$,  $f_{\infty}(Z)$ is a point if and only if $f_{\lambda}(Z)$ is a point for every $\lambda\in \Lambda$, and
		\item
		there exist $\lambda_1,\ldots,\lambda_n\in\Lambda$ such that $f_{\infty}:V\to S_{\infty}$ is the quasi-Stein factorization of $(f_1,\ldots,f_n):V\to S_{\lambda_1}\times\cdots \times S_{\lambda_n}$. 
		\qed
	\end{itemize}
\end{lem}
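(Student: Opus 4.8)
The plan is to reduce the arbitrary family $(f_\lambda)_{\lambda\in\Lambda}$ to a finite subfamily by a noetherian dimension argument, and then to take $f_\infty$ to be the quasi-Stein factorization of the corresponding finite product, whose existence (in the finite case) and basic functoriality I take from \cite{DY23}.

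First I would set up the relevant directed system. For every finite subset $I\subseteq\Lambda$, form the product morphism $f_I:=(f_\lambda)_{\lambda\in I}\colon V\to\prod_{\lambda\in I}S_\lambda$ and let $g_I\colon V\to T_I$ be its quasi-Stein factorization; thus $g_I$ is dominant with connected general fibres, $T_I$ is quasi-projective normal, and $f_I$ factors through $g_I$ by a morphism with finite fibres. Whenever $I\subseteq I'$, the coordinate projection $\prod_{\lambda\in I'}S_\lambda\to\prod_{\lambda\in I}S_\lambda$ carries $f_{I'}$ to $f_I$; since the fibres of $f_{I'}$ refine those of $f_I$, the induced partition of $V$ by the fibres of $g_{I'}$ refines the one by the fibres of $g_I$, and by the functoriality of the quasi-Stein factorization one obtains a dominant morphism $T_{I'}\to T_I$ through which $g_I$ factors. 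In particular $\dim T_I\le\dim T_{I'}$, and for any subvariety $Z\subseteq V$, if $g_{I'}(Z)$ is a point then so is $g_I(Z)$.

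Next I would select the finite subfamily. Since $\dim T_I\le\dim V$ for all finite $I$, the integer $d:=\sup_I\dim T_I$ is attained by some finite $I_0=\{\lambda_1,\dots,\lambda_n\}$, and I claim $f_\infty:=g_{I_0}$ works, with $S_\infty:=T_{I_0}$. Dominance, connectedness of general fibres, and the third bullet hold by construction, so only the equivalence in the second bullet needs checking. If $f_\lambda(Z)$ is a point for every $\lambda$, then in particular $f_{I_0}(Z)$ is a point; as $f_{I_0}$ factors through $g_{I_0}$ by a morphism with finite fibres and $\overline{g_{I_0}(Z)}$ is irreducible, $g_{I_0}(Z)$ is a point. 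Conversely, suppose $f_\infty(Z)=g_{I_0}(Z)$ is a point. For $\lambda\in I_0$ this forces $f_\lambda(Z)$ to be a point. For $\lambda\notin I_0$, put $I:=I_0\cup\{\lambda\}$; by maximality $\dim T_I=d=\dim T_{I_0}$, so the dominant morphism $h\colon T_I\to T_{I_0}$ is generically finite, and, since a general fibre of $g_{I_0}$ is connected while it decomposes as the disjoint union of the fibres of $g_I$ over the finitely many points of $T_I$ above a general point of $T_{I_0}$, there can be only one such point, i.e.\ $h$ is birational. Since $\bC(T_{I_0})\subseteq\bC(T_I)$ have the same transcendence degree over $\bC$ and each is algebraically closed in $\bC(V)$, they coincide; hence $g_{I_0}$ and $g_I$ determine the same partition of $V$, so $g_I(Z)$ is a point and therefore $f_\lambda(Z)$ is a point.

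The hard part is exactly this last stabilization step. A priori nothing prevents an infinite family from refining indefinitely; what saves us is that $\dim T_I$ is bounded, so it stabilizes, and that the quasi-Stein factorization has connected general fibres — equivalently, that the subfield $\bC(T_I)$ it produces is algebraically closed in $\bC(V)$ — which is what upgrades ``the generic fibre dimension has stabilized'' to ``the reduction map itself has stabilized'', and, crucially, controls the behaviour of the extra maps $f_\lambda$ on the possibly \emph{disconnected} special fibres of $g_{I_0}$. Making this point watertight is where one must invoke the precise characterization of the quasi-Stein factorization (by its function-field data and the partition of $V$ it induces) from \cite{DY23}.
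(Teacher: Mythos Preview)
The paper does not give its own proof of this lemma; it is quoted verbatim from \cite[Lemma~1.28]{DY23} and closed with a tombstone, so there is nothing here to compare your argument against. I can only evaluate your proof on its own merits.

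There is a genuine gap in the stabilization step. From $\bC(T_{I_0})=\bC(T_I)$ you infer that $g_{I_0}$ and $g_I$ determine the same partition of $V$, but this does not follow: a birational morphism $h\colon T_I\to T_{I_0}$ between normal varieties can still have positive-dimensional fibres, and over that exceptional locus the partitions induced by $g_I$ and $g_{I_0}$ genuinely differ. Concretely, let $V$ be the blow-up of $\bP^2$ at a point, $f_1\colon V\to\bP^2$ the blow-down, and $f_2\colon V\to\bP^1$ the ruling. Then $T_{\{1\}}=\bP^2$ already has the maximal dimension $\dim V=2$, so $I_0=\{1\}$ is a legitimate choice in your scheme; yet for the exceptional curve $E\subset V$ one has $g_{I_0}(E)$ a point while $f_2(E)=\bP^1$ is not. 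Here $T_{\{1,2\}}\cong V$, the map $h\colon V\to\bP^2$ is the blow-down, the function fields coincide, and still the partitions disagree on $E$. Maximizing $\dim T_I$ alone is therefore not enough to single out the correct finite subfamily.

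A clean repair avoids the function-field comparison entirely and runs the noetherian argument in $V\times V$ instead. For each $\lambda$ set $R_\lambda:=(f_\lambda\times f_\lambda)^{-1}(\Delta_{S_\lambda})\subset V\times V$; this is closed, and $f_\lambda(Z)$ is a point if and only if $Z\times Z\subset R_\lambda$. Since $V\times V$ is noetherian, the intersection $\bigcap_{\lambda\in\Lambda}R_\lambda$ equals $\bigcap_{\lambda\in I_0}R_\lambda$ for some finite $I_0\subset\Lambda$. For this $I_0$ one has, for every subvariety $Z\subset V$, that $f_\lambda(Z)$ is a point for all $\lambda\in I_0$ if and only if it is a point for all $\lambda\in\Lambda$; taking $f_\infty$ to be the quasi-Stein factorization of $(f_\lambda)_{\lambda\in I_0}$ then gives all three bullets at once. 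In your counterexample this forces $I_0=\{1,2\}$, as it should.
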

Such $f_{\infty}:V\to S_{\infty}$ is  called the \emph{simultaneous Stein factorization} of $(f_{\lambda}:V\to S_{\lambda})_{\lambda\in\Lambda}$.

\begin{lem}\label{lem:20240218}
	Let $p$ be a prime number.
	Let $\Lambda$ be a non-empty set of reductive representations $\tau:\pi_1(X)\to\mathrm{GL}_{N_{\tau}}(K_{\tau})$, where $K_{\tau}$ are local fields of $\mathrm{char}\ K_{\tau}=p$.
	Then there exist
	\begin{itemize}
		\item
		a local field $K$ of $\mathrm{char}\ K=p$,
		\item
		a positive integer $N>0$, 
		\item
		a reductive representation $\varrho:\pi_1(X)\to\mathrm{GL}_{N}(K)$
	\end{itemize}
	such that the simultaneous Stein factorization of Katzarkov-Eyssidieux reduction maps $$(s_{\tau}:X\to S_{\tau})_{\tau\in\Lambda}$$ coincides with Katzarkov-Eyssidieux reduction map of $\varrho$.
	Moreover we have $$\cap_{\tau\in\Lambda}\mathrm{ker}(\tau^{ss})\subset \mathrm{ker}(\varrho).$$
\end{lem}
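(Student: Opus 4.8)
\textbf{Proof plan for \cref{lem:20240218}.} The plan is to reduce the problem to a finite sub-collection and then bundle the finitely many representations into a single one over a common local field, using the characterization of the Katzarkov--Eyssidieux reduction in terms of the simultaneous Stein factorization provided by \cref{thm:KZ,lem:simultaneous}. First, by \cref{lem:simultaneous} applied to the family $(s_{\tau}:X\to S_{\tau})_{\tau\in\Lambda}$, the simultaneous Stein factorization $s_\infty:X\to S_\infty$ exists and is realized already by finitely many members $\tau_1,\dots,\tau_n\in\Lambda$: it is the quasi-Stein factorization of $(s_{\tau_1},\dots,s_{\tau_n}):X\to S_{\tau_1}\times\cdots\times S_{\tau_n}$. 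So it suffices to produce a single reductive $\varrho:\pi_1(X)\to\GL_N(K)$ over one local field $K$ of characteristic $p$ whose Katzarkov--Eyssidieux reduction coincides with this factorization of the $n$ maps, and such that $\ker(\varrho)\subset\bigcap_{i=1}^n\ker(\tau_i^{ss})$ (note $\bigcap_{\tau\in\Lambda}\ker(\tau^{ss})\subset\bigcap_{i=1}^n\ker(\tau_i^{ss})$, so the displayed inclusion will follow a fortiori).

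Next I would arrange that the finitely many $\tau_i$ are all defined over a \emph{common} local field. Each $K_{\tau_i}$ is a local field of characteristic $p$, i.e.\ isomorphic to $\bF_q((t))$ for some $p$-power $q$; a finite collection of these admits a common finite separable extension $K$ (e.g.\ $\bF_Q((t))$ with $Q$ a suitable common power), into which each $K_{\tau_i}$ embeds. Composing with these embeddings, and then enlarging $N$, I may regard each $\tau_i$ as a reductive representation $\tau_i:\pi_1(X)\to\GL_{N_i}(K)$ with the same $K$; boundedness of a subgroup is preserved and reflected under the finite extension $K_{\tau_i}\hookrightarrow K$, so by \cref{thm:KZ} (and \cref{rem:20240306}) the Katzarkov--Eyssidieux reduction map of $\tau_i$ over $K$ is still $s_{\tau_i}$ up to isomorphism. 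Now set $N:=N_1+\cdots+N_n$ and $\varrho:=\tau_1\oplus\cdots\oplus\tau_n:\pi_1(X)\to\GL_N(K)$. This is reductive (a direct sum of reductive representations is reductive, as the Zariski closure is a subgroup of the product of reductive groups, hence reductive). Moreover a subgroup $H\le\pi_1(X)$ has $\varrho(H)$ bounded in $\GL_N(K)$ if and only if each $\tau_i(H)$ is bounded in $\GL_{N_i}(K)$: boundedness in a product is coordinatewise, and the image of a block-diagonal representation lands in the product of the blocks. Also $\ker(\varrho)=\bigcap_i\ker(\tau_i)\subset\bigcap_i\ker(\tau_i^{ss})$, giving the required kernel inclusion.

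Finally I would identify the Katzarkov--Eyssidieux reduction of $\varrho$ with $s_\infty$. By \cref{thm:KZ} applied to $\varrho$, its reduction map $s_\varrho:X\to S_\varrho$ is characterized by: for every connected Zariski closed $T\subset X$, $s_\varrho(T)$ is a point $\iff$ $\varrho(\Im[\pi_1(T)\to\pi_1(X)])$ is bounded $\iff$ (by the previous paragraph) $\tau_i(\Im[\pi_1(T)\to\pi_1(X)])$ is bounded for every $i$ $\iff$ $s_{\tau_i}(T)$ is a point for every $i$ $\iff$ (by the defining property of the quasi-Stein factorization in \cref{lem:simultaneous}) $s_\infty(T)$ is a point. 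Two dominant morphisms with connected general fibers from a normal quasi-projective variety that contract exactly the same connected Zariski closed subsets to points are isomorphic over an open set / after quasi-Stein factorization; more precisely, the uniqueness built into the construction of \cref{lem:simultaneous} (and the standard functoriality of quasi-Stein factorizations used there, e.g.\ via \cite{DY23}) identifies $s_\varrho$ with $s_\infty$. Hence $\varrho$, $K$, $N$ have all the asserted properties.

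\textbf{Main obstacle.} The one genuinely delicate point is the comparison of the abstract reduction map $s_\varrho$ with the explicit simultaneous Stein factorization $s_\infty$: \cref{thm:KZ} gives a \emph{characterization} of which subsets get contracted, but to conclude $s_\varrho\cong s_\infty$ one must invoke a uniqueness statement for such maps (among dominant morphisms with connected general fibers, up to the natural equivalence used throughout, e.g.\ up to birational modification of the base). Granting the uniqueness machinery of \cite{DY23,CDY22} already cited in the excerpt, this is routine; the rest (common local field, direct sum, boundedness in products, kernels) is bookkeeping.
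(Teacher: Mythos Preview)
Your strategy is exactly the paper's: invoke \cref{lem:simultaneous} to reduce to finitely many $\tau_1,\dots,\tau_n\in\Lambda$, pass to a common local field $K$, and take a block-diagonal sum. Your ``main obstacle'' is also not really one: rather than constructing $s_\varrho$ abstractly and then invoking a uniqueness statement, one simply \emph{verifies} that the simultaneous Stein factorization $\sigma:X\to\Sigma$ satisfies the defining property of a Katzarkov--Eyssidieux reduction map for $\varrho$---which is precisely the chain of equivalences you wrote down. That is what the paper does, so no comparison of two a priori different maps is needed.

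There is, however, a genuine gap in the kernel inclusion. You need $\bigcap_{\tau\in\Lambda}\ker(\tau^{ss})\subset\ker(\varrho)$, but what you actually establish is $\ker(\varrho)=\bigcap_i\ker(\tau_i)\subset\bigcap_i\ker(\tau_i^{ss})$, which is the opposite containment; combined with $\bigcap_{\tau\in\Lambda}\ker(\tau^{ss})\subset\bigcap_i\ker(\tau_i^{ss})$ this yields nothing. Worse, with your choice $\varrho=\bigoplus_i\tau_i$ the required inclusion can genuinely fail in positive characteristic: a representation can have reductive Zariski closure yet not be semisimple, so $\ker(\tau_i)\subsetneq\ker(\tau_i^{ss})$ is possible (e.g.\ take $\tau$ factoring through a finite $p$-group acting by a single unipotent Jordan block; the Zariski closure is finite, hence has trivial unipotent radical, but $\tau^{ss}$ is trivial). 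The paper avoids this by setting $\varrho:=\tau_1^{ss}\oplus\cdots\oplus\tau_n^{ss}$; then $\varrho$ is semisimple (hence certainly reductive), its reduction map agrees with that of each $\tau_i$ by \cref{rem:20240306}, and $\ker(\varrho)=\bigcap_i\ker(\tau_i^{ss})\supset\bigcap_{\tau\in\Lambda}\ker(\tau^{ss})$ is immediate.

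One smaller point: your justification that $\bigoplus_i\tau_i$ is reductive (``the Zariski closure is a subgroup of the product of reductive groups, hence reductive'') is false as stated---closed subgroups of reductive groups need not be reductive. The conclusion does happen to hold (the unipotent radical of the Zariski closure surjects onto a normal connected unipotent subgroup of each reductive $G_i$, hence is trivial), but using the semisimplifications as the paper does makes this a non-issue.
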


\begin{proof}
	Let $\sigma:X\to \Sigma$ be the simultaneous Stein factorization of $(s_{\tau}:X\to S_{\tau})_{\tau\in\Lambda}$.
	Then $\Sigma$ is normal, and $\sigma:X\to \Sigma$ is dominant and has connected general fibers. 
	By \cref{lem:simultaneous}, there exist $\tau_1,\ldots,\tau_n\in\Lambda$ such that $\sigma:X\to \Sigma$ is the quasi-Stein factorization of $(s_{\tau_1},\ldots,s_{\tau_n}):X\to S_{\tau_1}\times\cdots \times S_{\tau_n}$. 
	Note that each $K_{\tau_i}$ is a finite extension of the Laurent series $\mathbb F_p((t))$ over the finite field $\mathbb F_p$. 
	We may take a local field $K$ of $\mathrm{char}\ K=p$ such that $K_{\tau_i}\subset K$ and $\tau_i^{ss}:\pi_1(X)\to\mathrm{GL}_{N}(K)$ for all $i=1,\ldots,n$.
	Set $N=N_{\tau_1}+\cdots+N_{\tau_n}$.
	We define $\varrho:\pi_1(X)\to \mathrm{GL}_{N}(K)$ by 
	$$(\tau_1^{ss},\ldots,\tau_n^{ss}):\pi_1(X)\to \mathrm{GL}_{N_{\tau_1}}(K)\times\cdots\times \mathrm{GL}_{N_{\tau_n}}(K)\subset \mathrm{GL_{N}(K)}.$$
	Then $\varrho$ is semisimple.
	For every connected Zariski closed  subset $T$ of $X$, $\sigma(T)$ is a point iff $s_{\tau_i}(T)$ is a point for every $i=1,\ldots,n$.
	This happens iff $\tau_i^{ss}({\rm Im}[\pi_1(T)\to \pi_1(X)])$ is a bounded subgroup of $\mathrm{GL}_{N_{\tau_i}}(K)$ for every $i=1,\ldots,n$ (cf. \Cref{rem:20240306}).
	The later is equivalent to the boundedness of $\varrho({\rm Im}[\pi_1(T)\to \pi_1(X)])$.
	Hence the map $\sigma:X\to \Sigma$ is a Katzarkov-Eyssidieux reduction map for $\varrho$.
	Note that $\mathrm{ker}(\varrho)=\mathrm{ker}(\tau_1^{ss})\cap\cdots\cap\mathrm{ker}(\tau_n^{ss})$.
	Hence $\cap_{\tau\in\Lambda}\mathrm{ker}(\tau^{ss})\subset \mathrm{ker}(\varrho)$.
\end{proof}

\subsection{Some facts on algebraic groups}
Note that an algebraic group $G$ over a field of characteristic zero is reductive if and only if $G$ is \emph{linearly reductive}, i.e, for every finite dimensional representation of $G$ is semisimple (cf. \cite[Corollary 22.43]{Mil17}).  However, this fact fails for algebraic group defined over positive characteristic field.  We recall the   following example in \cite[Example 12.55]{Mil17}.
\begin{example}
	Let $k=\overline{\bF_2}$ and let  $V$ be the standard 2-dimensional representation of $\mathrm{SL}_2(k)$.  Then  $\operatorname{Sym}^2V$ is not semisimple as a representation. Indeed, let $e_1:=(1,0)$ and $e_2:=(0,1)$. Within the basis $\{e_1e_2, e_1^2, e_2^2\}$ of $\operatorname{Sym}^2k^2$,  we can express $\operatorname{Sym}^2V$	in the matrix form as 
	\begin{align*}
		\mathrm{SL}_2(k) &\rightarrow \mathrm{GL}_3(k)\\
		\left(\begin{array}{ll}
			a & b \\
			c & d
		\end{array}\right) &\mapsto\left(\begin{array}{ccc}
			1 & 0 & 0 \\
			ab & a^2 & b^2 \\
			cd & c^2 & d^2
		\end{array}\right) 
	\end{align*}
	It is easy to see  that it	is not a semisimple representation since the ${\rm SL}_2$-invariant subspace  spanned by $\{e_1^2, e_2^2\}$  have   no ${\rm SL}_2$-equivariant complement because $ab$ and $cd$  are not linear polynomials in $a^2, b^2, c^2,d^2$.
\end{example} 
It is worthwhile mentioning that an algebraic group $G$ over a field of characteristic $p \neq 0$ is linearly reductive if and only if its identity component $G^{\circ}$ is a torus and $p$ does not divide the index $\left(G: G^{\circ}\right)$ (cf. \cite[Remark 12.56]{Mil17}).  Therefore, in this paper, we must distinguish between \emph{semisimple} and \emph{reductive} representations as we are working over linear algebraic groups over fields of positive characteristic.

\section{Shafarevich morphism in positive characteristic}  
In this section we will prove \cref{main}. 
\subsection{A lemma on finite groups}\label{sec:factor}
\begin{lem}\label{lem:finite group}
	Let $K$ be an algebraically closed field of positive characteristic and let $\Gamma$ be a finitely generated group. Let $\varrho:\Gamma\to \mathrm{GL}_N(K)$ be a representation such that its semisimplification has finite image. Then $\varrho(\Gamma)$ is finite.
\end{lem}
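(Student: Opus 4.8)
The plan is to reduce the finiteness of $\varrho(\Gamma)$ to that of its semisimplification $\varrho^{ss}(\Gamma)$ by controlling the unipotent part of $\varrho$. Let $G$ be the Zariski closure of $\varrho(\Gamma)$ in $\GL_N(K)$, and let $U$ be its unipotent radical (or more precisely $R_u(G^\circ)$, the unipotent radical of the identity component). The key point is that the semisimplification $\varrho^{ss}$ factors, up to conjugacy, through $G/U$: indeed the image of $\varrho$ in $G/U$ is a reductive quotient, and the associated semisimple representation of $\Gamma$ has the same composition factors as $\varrho$, hence is conjugate to $\varrho^{ss}$. By hypothesis $\varrho^{ss}(\Gamma)$ is finite, so the induced map $\bar\varrho:\Gamma\to G/U$ has finite image; consequently $G/U$ is a finite group, which forces $G^\circ\subseteq U$, i.e.\ $G^\circ$ is unipotent. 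Thus it remains to show that a finitely generated subgroup of $U(K)$ that arises this way is finite — and here is where positive characteristic enters decisively.

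\textbf{Key steps.} First I would set up the above dévissage carefully: $G^\circ$ is a connected unipotent group over the field $K$ of characteristic $p>0$, and since $G/G^\circ$ is finite it suffices to prove $\varrho(\Gamma)\cap G^\circ(K)$ is finite, equivalently that this subgroup is trivial after replacing $\Gamma$ by a finite-index subgroup. A connected unipotent group over a perfect field of characteristic $p$ is a successive extension of copies of $\mathbb{G}_a$, and every element of $U(K)$ has order a power of $p$ (unipotent matrices over a characteristic-$p$ field are $p$-torsion: if $u=1+n$ with $n$ nilpotent, then $u^{p^k}=1+n^{p^k}=1$ for $k$ large). So $\varrho(\Gamma)\cap G^\circ(K)$ is a finitely generated torsion group. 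The final ingredient is a Burnside-type / Schur-type theorem: a finitely generated torsion subgroup of $\GL_N(K)$ is finite (Schur's theorem, valid over any field). Combining, $\varrho(\Gamma)\cap G^\circ(K)$ is finite, hence $\varrho(\Gamma)$ is finite.

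\textbf{Main obstacle.} The delicate point is the claim that $\varrho^{ss}$ is (conjugate to) the semisimplification obtained by passing to $G/R_u(G^\circ)$, so that finiteness of $\varrho^{ss}(\Gamma)$ really does give finiteness of $G/G^\circ$ — one must be careful that the composition factors of $\varrho$ as a $\Gamma$-representation coincide with those of the representation of $\Gamma$ on the same vector space viewed through $G \twoheadrightarrow G/R_u(G^\circ)$, which is standard but uses that $R_u(G^\circ)$ acts trivially on each graded piece of a $G$-stable flag refining the socle filtration. The second, more arithmetic, obstacle is invoking the right finiteness theorem for torsion linear groups in positive characteristic: Schur's theorem on torsion subgroups of $\GL_N$ over a field is the clean statement one wants, but one should confirm it applies over the (possibly enormous, non-finitely-generated) algebraically closed field $K$ — it does, since the matrix entries of a finite generating set lie in a finitely generated, hence (for the torsion question) effectively controllable, subring. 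Everything else — the structure theory of unipotent groups, $p$-torsion of unipotent elements — is routine.
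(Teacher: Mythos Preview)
Your argument is correct. Both your proof and the paper's share the same skeleton: reduce to the case where $\varrho(\Gamma)$ lies in a unipotent group, then exploit characteristic $p$ to force finiteness. The differences are in execution. The paper bypasses the Zariski-closure machinery entirely: passing to the finite-index subgroup $\ker\varrho^{ss}$ makes every composition factor trivial, so $\varrho$ is directly conjugate into the full upper-triangular unipotent group $U_N(K)$. For the endgame, the paper uses the explicit central normal series of $U_N(K)$ with successive quotients $\mathbb{G}_a$; since a finitely generated subgroup of $(K,+)$ in characteristic $p$ is finite, an induction along the series (using that finite-index subgroups of finitely generated groups are finitely generated) gives finiteness without invoking Schur's theorem. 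Your route via Schur--Burnside is cleaner to state but imports a heavier black box; the paper's central-series argument is more elementary and essentially reproves the nilpotent case of Schur by hand. Your Zariski-closure detour (and the $N/U$ bookkeeping you flag as the main obstacle) is correct but unnecessary once one observes that ``$\varrho^{ss}$ trivial'' immediately means ``$\varrho$ upper-triangular unipotent up to conjugacy.''
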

\begin{proof}
	Since the semisimplification $\varrho^{ss}$ of $\varrho$ has finite image,  we can replace $\Gamma$ by a finite index subgroup such that $\varrho^{ss}(\Gamma)$ is trivial.  Therefore, some conjugation $\sigma$ of $\varrho$ has image in the subgroup ${\rm U}_N(K)$ consisting 
	of all upper-triangular matrices in $\GL_{N}(K)$
	with 1's on the main diagonal.  
	
	Note ${\rm U}_N(K)$  admits a central normal series whose successive
	quotients are isomorphic to $\bG_{a,K}$.  We remark that a finitely generated subgroup  of $\bG_{a,K}$ is a finite group, for $K$ has positive characteristic.
	By \cite[Proposition 4.17]{ST00},   any finite index subgroup   of a finitely generated  group  is also finitely generated.   Consequently,    $\sigma(\Gamma)$ admits a   central normal series whose successive
	quotients are   finitely generated subgroups of $\bG_{a,K}$, which are  all finite groups. 
	It follows that $\sigma(\Gamma)$  is finite. The lemma is proved. 
\end{proof}

\subsection{Consideration of character varieties} \label{sec:2.2}
We first briefly explain the character varieties for finitely generated groups in positive characteristic and refer the readers to \cite{LM85,Ses77} for more details.   Let $\Gamma$ be a finitely generated group.
The variety of $N$-dimensional linear  representations of $\Gamma$ in characteristic zero is represented by  an affine $\bZ$-scheme $R(\Gamma,N)$ of finite type.    %Namely, for  any commutative ring $A$,   the set of homomorphisms from $\pi_1(X)$ into $\GL_N(A)$ is represented by  an aﬃne $\bZ$-scheme of ﬁnite type $R$.  
Namely, given a commutative ring $A$, the set of $A$-points of $R(\Gamma,N)$ is:
$$
R(\Gamma,N)(A)={\rm Hom}(\Gamma, \GL_N(A)).
$$
%Hence the $\bK$-points of $R$ can be canonically identified with the set $\operatorname{Hom}\left(\Gamma, \GL_N\left(\bK\right)\right)$.
Let $p $ be  a prime number.  
Consider $R(\Gamma,N)_{\bF_p}:=R(\Gamma,N)  \times_{\spec \bZ}\spec \bF_p$ and note that the general linear group over $\bF_p$, denoted by $\GL(N,\bF_p)$, acts on $R(\Gamma,N)_{\bF_p}$ by conjugation.  %Note that  the $\bK$-points  $M(\bK)$ is identified with be conjugacy classes of semi-simple representations $\pi_1(X)\to \GL_N(\bK)$.    The group $\bZ$-scheme $\GL_N$ acts on $R$ by conjugation. 
Using Seshadri's extension of geometric invariant theory quotients for schemes of arbitrary field \cite[Theorem 3]{Ses77}, we can take the GIT quotient
of $R(\Gamma,N)_{\bF_p}$ by $\GL(N,\bF_p)$, denoted by $M_{\rm B}(\Gamma,N)_{\bF_p}$. 
Then $M_{\rm B}(\Gamma,N)_{\bF_p}$  is also an affine $\bF_p$-scheme of finite type.  For any algebraically closed field $K$ of characteristic $p$,   the $K$-points  $M_{\rm B}(\Gamma,N)_{\bF_p}(K)$ is identified with the conjugacy classes of semi-simple representations $\Gamma\to \GL_N(K)$.    
Namely we have the following. 

\begin{proposition}
	Let $K$ be an algebraically closed field of characteristic $p$.
	Then:
	\begin{enumerate}[label=(\alph*)]
		\item Given a linear representation $\varrho: \Gamma\to \mathrm{GL}_N(K)$, we have $[\varrho^{ss}]=[\varrho]$.
		\item For each point $x\in M_{\rm B}(\Gamma,N)_{\bF_p}(K)$, there exists a semisimple representation $$\varrho:\Gamma\to \mathrm{GL}_N(K)$$ such that $[\varrho]=x$.
		\item Let $\varrho:\Gamma\to \mathrm{GL}_N(K)$ and $\varrho':\Gamma\to \mathrm{GL}_N(K)$ be two semisimple representations such that $[\varrho]=[\varrho']$.
		Then $\varrho$ and $\varrho'$ are conjugate.
	\end{enumerate}
\end{proposition}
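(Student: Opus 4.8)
The plan is to deduce this proposition from the three standard facts of geometric invariant theory in the Seshadri setting, exactly paralleling the classical (characteristic zero) treatment of Betti moduli spaces, but using the closed-orbit description of GIT quotients over an arbitrary base field. The three assertions are: (i) a representation and its semisimplification give the same point of $M_{\rm B}(\Gamma,N)_{\bF_p}(K)$; (ii) every $K$-point is represented by a semisimple representation; (iii) semisimple representations with the same image point are conjugate. The key input I would invoke is that, for a reductive group $G$ acting on an affine scheme over a field, each fiber of the quotient map over a $K$-point contains a \emph{unique closed $G$-orbit}, and a point lies in the closed orbit of its fiber iff its stabilizer-orbit is closed; together with the dictionary ``closed $\GL_N$-orbit in $R(\Gamma,N)_{\bar\bF_p}$ $\Longleftrightarrow$ semisimple representation.''

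\textbf{Step 1 (orbit closure and semisimplification).} First I would establish, over $K=\bar K$ of characteristic $p$, that the $\GL_N(K)$-orbit of $\varrho\in R(\Gamma,N)(K)$ is Zariski closed if and only if $\varrho$ is semisimple, and that $\varrho^{ss}$ lies in the closure of the orbit of $\varrho$ (it is the associated graded of a Jordan–Hölder filtration, obtained by degenerating along a cocharacter of $\GL_N$ that splits the filtration in the limit). This is the characteristic-free form of the Artin–Voisin/Hilbert–Mumford argument: a one-parameter subgroup $\lambda$ adapted to a flag of $\Gamma$-submodules sends $\varrho$ to $\varrho^{ss}$ as $t\to 0$. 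Combined with the separation property of the GIT quotient (points with intersecting orbit closures have the same image), this immediately gives (i): $[\varrho]=[\varrho^{ss}]$.

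\textbf{Step 2 (surjectivity onto closed orbits).} For (ii), I would use that the quotient morphism $R(\Gamma,N)_{\bF_p}\to M_{\rm B}(\Gamma,N)_{\bF_p}$ is surjective on $K$-points for $K$ algebraically closed (a GIT quotient of an affine scheme is submersive and surjective), so any $x$ is $[\varrho]$ for some representation $\varrho$; by Step 1 we may replace $\varrho$ by $\varrho^{ss}$, which is semisimple and still maps to $x$.

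\textbf{Step 3 (separation of closed orbits).} For (iii), I would invoke the fundamental property of GIT quotients: the map from the set of closed $G$-orbits in $R(\Gamma,N)_K$ to $M_{\rm B}(\Gamma,N)_{\bF_p}(K)$ is a \emph{bijection}. Hence if $\varrho,\varrho'$ are both semisimple — so both orbits are closed by Step 1 — and $[\varrho]=[\varrho']$, their orbits coincide, i.e. $\varrho$ and $\varrho'$ are conjugate by an element of $\GL_N(K)$. \textbf{The main obstacle} I anticipate is purely citational rather than conceptual: one must make sure that Seshadri's construction over $\spec\bF_p$ genuinely enjoys the ``unique closed orbit per fiber'' property and the identification of $K$-points with closed orbits, since the usual references (Mumford, Newstead) assume an algebraically closed base. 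This is exactly what \cite[Theorem 3]{Ses77} provides, and the reduction to the geometric fiber over $\bar\bF_p$ is harmless because everything in sight commutes with the base change $\bF_p\hookrightarrow K$; the remaining point — that a subrepresentation degenerates its ambient representation to the direct sum — is elementary linear algebra valid in any characteristic, so there is no genuine positive-characteristic pathology here (in contrast to the reductive-vs-semisimple subtlety emphasized earlier in the paper, which concerns representations \emph{of algebraic groups}, not the GIT formalism for $\GL_N$ acting on a representation scheme).
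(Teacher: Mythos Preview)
Your argument is correct, and for parts (i) and (ii) it matches the paper's proof essentially verbatim: both use that $\varrho^{ss}$ lies in the orbit closure $\overline{O(\varrho)}$ (the paper cites \cite[Lem.~1.26]{LM85} for this) together with surjectivity of the GIT quotient on $K$-points.

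The genuine divergence is in part (iii). You invoke the full GIT package --- semisimple $\Longleftrightarrow$ closed orbit, plus the separation of closed orbits by invariant functions (geometric reductivity \`a la Seshadri/Haboush) --- to conclude that two semisimple representations with the same class lie in the same orbit. The paper instead bypasses this machinery entirely: it observes that the characteristic-polynomial functions $\gamma\mapsto \chi(\tau(\gamma))$ are $\GL_N$-invariant, hence factor through the quotient $\pi:R(\Gamma,N)_{\bF_p}\to M_{\rm B}(\Gamma,N)_{\bF_p}$, so $[\varrho]=[\varrho']$ forces $\varrho(\gamma)$ and $\varrho'(\gamma)$ to have equal characteristic polynomials for every $\gamma$; then the Brauer--Nesbitt theorem gives conjugacy. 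Your route is more conceptual and explains \emph{why} the quotient parametrizes semisimple representations, but it leans on the closed-orbit characterization and the separation property in positive characteristic, exactly the citational burden you flagged. The paper's route is more pedestrian but entirely self-contained once the quotient exists: Brauer--Nesbitt is elementary representation theory valid over any field, and it avoids ever needing to check that semisimple orbits are closed.
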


\begin{proof}
	These are well-known when the field is zero characteristic (cf. \cite[Thm 1.28]{LM85}).
	The proofs also work for positive characteristic case as well, as we briefly present below. 
	
	Let $\varrho\in R(\Gamma,N)_{\bF_p}(K)$ be a linear representation.
	We denote by $O(\varrho)$ 
	the orbit of $\varrho$ by the conjugacy action, which is a constructible subset of $ R(\Gamma,N)  \times_{\spec \bZ}\spec K$. 
	Let $\overline{O(\varrho)}$ be the closure of $O(\varrho)$. 
	Then we have $\varrho^{ss}$ is a closed point of $\overline{O(\varrho)}$, whose proof is the same as \cite[Lem 1.26]{LM85}. For the GIT quotient $\pi:R(\Gamma,N)_{\bF_p}\to M_{\rm B}(\Gamma,N)_{\bF_p}$, we know that  $\overline{O(\varrho)}\subset \pi^{-1}([\varrho])$. Therefore, we have $[\varrho]=[\varrho^{ss}]$.

	Next, let $x\in M_{\rm B}(\Gamma,N)_{\bF_p}(K)$.
	Since the GIT quotient is surjective, so is 
	$$\pi_K:R(\Gamma,N)_{\bF_p}(K)\to M_{\rm B}(\Gamma,N)_{\bF_p}(K).$$
	Hence we may take $\varrho_0\in R(\Gamma,N)_{\bF_p}(K)$ such that $[\varrho_0]=x$.
	Set $\varrho=(\varrho_0)^{ss}$.
	Then $\varrho$ is semisimple and $[\varrho]=[\varrho_0]=x$.
	
	%Let us prove the last assertion.  Note that $[\varrho]=[\varrho']$ if and only if $\overline{O(\varrho)}\cap \overline{O(\varrho')}\neq\varnothing$ by the main property of GIT quotient.  By the same proof of \cite[Theorem 1.27]{LM85}, $\varrho$ is semisimple if and only if $O(\varrho)$ is closed. This implies that $O(\varrho)=O(\varrho')$. Hence $\varrho$ and $\varrho'$ are conjugate.\footnote{I revised the last assertion}
	%Then $\pi^{-1}_K([\varrho])$ is a closed set which contains $O(\varrho)$.
	%Hence $[\varrho^{ss}]=[\varrho]$.
	
	Finally, for any matrix $A\in \GL_N(K)$,  we denote by $\chi(A)=T^N+\sigma_1(A)T^{N-1}+\cdots+\sigma_N(A)$ its characteristic polynomial.  
	Given $\gamma\in \Gamma$, we define a map $f_{\gamma}:R(\Gamma,N)_{\bF_p}(K)\to\mathbb A^N(K)$ by $f_{\gamma}(\tau)=(\sigma_1(\tau(\gamma)),\ldots,\sigma_N(\tau(\gamma)))$, where $\tau\in R(\Gamma,N)_{\bF_p}(K)$. Note that $f_\gamma$ is induced by a morphism of $\bF_p$-schemes, which we still denote  by $f_\gamma:R(\Gamma,N)_{\bF_p}\to \mathbb A^N_{\bF_p}$.   Note that $f_{\gamma}$ is $\GL(N,\bF_p)$ invariant.
	Hence $f_{\gamma}$ factors the GIT quotient $\pi:R(\Gamma,N)_{\bF_p}\to M_{\rm B}(\Gamma,N)_{\bF_p}$.
	Now let $\varrho, \varrho'\in R(\Gamma,N)_{\bF_p}(K)$ be two semisimple representations such that $[\varrho]=[\varrho']$.
	Then we have $f_{\gamma}(\varrho)=f_{\gamma}(\varrho')$ for all $\gamma\in \Gamma$.
	In other words, $\varrho(\gamma)$ and $\varrho'(\gamma)$ have the same characteristic polynomials for all $\gamma\in\Gamma$.
	Hence by the Brauer-Nesbitt theorem, the two semisimple representations $\varrho$ and $\varrho'$ are conjugate.
\end{proof}

\begin{lem}\label{lem:20240304}
	Let $M\subset M_{\rm B}(\Gamma,N)_{\bF_p}$ be a constructible set. 
	Assume that, for every reductive representation $\tau:\Gamma\to \GL_N(K)$ with $K$ a local field of characteristic $p$ such that $[\tau] \in M(K)$, the image $\tau(\Gamma)\subset \GL_N(K)$ is bounded.
	Then $M$ is zero dimensional.
\end{lem}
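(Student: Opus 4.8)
The plan is to prove the contrapositive: if $\dim M\geq 1$, I will produce a local field $K'$ of characteristic $p$ and a point of $M(K')$ of the form $[\tau]$ with $\tau$ reductive but $\tau(\pi_1(X))$ \emph{unbounded}, contradicting the hypothesis. The engine is that a character coordinate which is nonconstant along $M$ must acquire a pole at a suitable place, forcing some eigenvalue of some $\tau(\gamma_0)$ to have absolute value $>1$. Concretely: assuming $\dim M\geq 1$, write $M$ as a finite union of irreducible locally closed subsets of $\mxp$ and pick one, $V$, with $\dim V\geq 1$; set $Y:=\overline V$, so that $V\subset Y$ is a dense open subset. By the description of the character scheme recalled above (in particular the Brauer--Nesbitt argument, together with surjectivity of the GIT quotient $\pi\colon R(X,N)_{\bF_p}\to\mxp$), the functions $f_\gamma\colon\mxp\to\bA^N$ separate $\overline{\bF_p}$-points, so they cannot all be constant on the positive-dimensional $Y$; choose $\gamma_0\in\pi_1(X)$ and an index $i_0$ with $f:=\sigma_{i_0}\circ f_{\gamma_0}$ nonconstant on $Y$. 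Fixing a closed point $y\in V$ and a closed irreducible curve $Z\subset Y$ through $y$ not contained in the fibre of $f$ through $y$ (such a curve exists by cutting $Y$ down with general hypersurfaces through $y$), and replacing $Z$ by $Z_{\mathrm{red}}$, we obtain an integral curve over $\bF_p$ on which $f$ is nonconstant and whose generic point $\eta_Z$ lies in $M$, since $Z\cap V$ is a nonempty open subset of $Z$.

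Next I pass to a local field. The function field $F=\bF_p(Z)$ is a global function field, and $f|_Z\in\mathcal O(Z)$ is a nonconstant regular function, hence has a pole at some place $v$ of the smooth projective model of $Z$; put $K:=\widehat F_v$, a local field of characteristic $p$, so that $|f|_v>1$. The composite $\spec K\to\spec F\to Z\hookrightarrow\mxp$, where the middle arrow corresponds to the generic point $\eta_Z$ of $Z$, has image the point $\eta_Z\in M$, hence defines a point $x\in M(K)$. Since $\pi$ is surjective, the finite-type $K$-scheme $R(X,N)_{\bF_p}\times_{\mxp}\spec K$ is nonempty, so it has a closed point with residue field $K'$ finite over $K$; this yields a representation $\varrho_0\colon\pi_1(X)\to\GL_N(K')$ with $\pi(\varrho_0)$ equal to the image of $x$ in $M(K')$. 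Set $\tau:=\varrho_0^{ss}$: it is semisimple, hence reductive, $[\tau]=[\varrho_0]$ has underlying point $\eta_Z$ so that $[\tau]\in M(K')$, and $K'$ is again a local field of characteristic $p$.

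By hypothesis $\tau(\pi_1(X))$ is then bounded in $\GL_N(K')$. But $f_{\gamma_0}$ factors through $\pi$ and records the characteristic-polynomial coefficients of $\tau(\gamma_0)$, so $\sigma_{i_0}(\tau(\gamma_0))$ equals the image of $f\in F$ in $K'$, which has absolute value $>1$; by the ultrametric inequality some eigenvalue $\lambda\in\overline{K'}$ of $\tau(\gamma_0)$ satisfies $|\lambda|>1$, whence the entries of $\tau(\gamma_0)^n$ dominate $|\lambda|^n\to\infty$ and $\tau(\pi_1(X))$ is unbounded. This contradiction shows $\dim M=0$.

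The step needing the most care is the transition from ``$M$ positive-dimensional'' to ``a local field of characteristic $p$ at which a character coordinate blows up'': one genuinely has to cut $Y$ down to a \emph{curve}, since a rank-one discrete valuation on a function field of transcendence degree $\geq 2$ has residue field of positive transcendence degree — not a local field with finite residue field — so the naive divisorial valuation on $Y$ itself does not suffice; and along the way one must track that the generic point of the chosen curve stays inside the constructible set $M$ and that passing to the finite extension $K'/K$ preserves both membership in $M$ and the reductivity of $\tau$ needed to invoke the hypothesis.
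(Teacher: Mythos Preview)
Your proof is correct and rests on the same mechanism as the paper's---reduce to a curve, complete its function field at a place to obtain a local field of characteristic $p$, and use that boundedness forces all characteristic-polynomial coefficients to be integral---but the organization differs. The paper argues directly rather than by contrapositive: it lifts to the representation scheme, takes an arbitrary irreducible affine curve $T\subset\pi^{-1}(M)$ defined over $\overline{\bF_p}$, and works with the tautological family $\varrho_C\colon\Gamma\to\GL_N(\bF_q[C])$; applying the boundedness hypothesis at \emph{every} boundary point $P_i$ of the smooth compactification $\bar C$ (together with \cref{lem:same bound} to pass from $\varrho_i^{ss}$ to $\varrho_i$), it concludes that each $\sigma_j(\varrho_C(\gamma))$ extends to a regular function on $\bar C$, hence is constant, and then Brauer--Nesbitt gives $\pi(T)=\{\mathrm{pt}\}$. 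Your version instead stays in the character variety until the last moment, singles out one coefficient with a pole at one place, and manufactures a single unbounded reductive representation there. Your route is slightly more economical (one place suffices) but needs the pointwise lift from $M(K)$ to $R(K')$ and the passage to a finite extension $K'/K$; the paper sidesteps both by placing the curve upstairs in $R$ from the outset. One cosmetic point: the final clause ``the entries of $\tau(\gamma_0)^n$ dominate $|\lambda|^n$'' is more cleanly replaced by the direct observation that boundedness would force $|\sigma_{i_0}(\tau(\gamma_0))|\le 1$, which is already the desired contradiction.
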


\begin{proof}
	Let $\pi:R(\Gamma,N)_{\bF_p}\to M_{\rm B}(\Gamma,N)_{\bF_p}$ be  the GIT quotient, which is a surjective $\bF_p$-morphism.  
	Let $T\subset \pi^{-1}(M)$ be any irreducible affine curve defined over $\overline{\bF_p}$. 
	We shall show that $\pi(T)$ is a point.
	This proves that $M$ is zero dimensional.
	
	To prove that $\pi(T)$ is a point, we take $\bar{C}$ as the compactification of the normalization $C$ of $T$, and let $\{P_1,\ldots,P_\ell\}= \bar{C}\setminus C$.  
	There exists $q=p^n$ for some $n\in \bZ_{>0}$ such that $\bar{C}$ is defined over $\bF_q$ and $P_i\in \bar{C}(\bF_q)$ for each $i$. 
	By the universal property of the representation scheme $R(\Gamma,N)$, $C$ gives rise to a representation $\varrho_C:\Gamma\to \GL_N(\bF_q[C])$, where $\bF_q[C]$ is the coordinate ring of $C$. 
	Consider the discrete valuation $v_i:\bF_q(C)\to \bZ$ defined by $P_i$, where $\bF_q(C)$ is the function field of $C$. 
	Let $\widehat{\bF_q(C)}_{v_i}$  be the completion of $F_{q}(C)$ with respect to $v_i$. 
	Then we have the isomorphism $\big(\widehat{\bF_q(C)}_{v_i},v_i\big)\simeq \big(\bF_q((t)),v\big)$, where $ \big(\bF_q((t)),v\big)$ is the formal Laurent field of $\bF_q$ with the valuation $v$ defined by  $v(\sum_{i=m}^{+\infty}a_it^i)=\min \{i\mid a_i\neq 0\}$.    
	Let $\varrho_i:\Gamma\to \GL_N(\bF_q((t)))$  be the extension of $\varrho_C$ with respect to $\widehat{\bF_q(C)}_{v_i}$. 
	Then we have
	\begin{equation*}\label{eqn:20240224}
		[\varrho_i]\in M( {\bF_q((t))}).
	\end{equation*}
	Hence, by our assumption, $\varrho_i^{ss}(\Gamma)$ is bounded for each $i$.
	Hence, by \cref{lem:same bound}, $\varrho_i(\Gamma)$ is bounded for each $i$.
	Thus after we replace $\varrho_i$ by some conjugation, we have $\varrho_i(\Gamma)\subset \GL_{N}(\bF_q[[t]])$, where  the $\bF_q[[t]]$ is the ring of integers of $\bF_q((t))$, i.e.
	$$
	\bF_q[[t]]:=\{\sum_{i=0}^{+\infty}a_it^i\mid a_i\in \bF_q \}.
	$$ 
	For any matrix $A\in \GL_N(K)$,  we denote by $\chi(A)=T^N+\sigma_1(A)T^{N-1}+\cdots+\sigma_N(A)$ its characteristic polynomial.    
	%Then $\sigma_j(\varrho_C(\gamma))\in \bF_q[C]$ for each $\gamma\in \pi_1(X)$.  %  induces $\sigma_{i,\gamma}\in \overline{\bF_p}[\mxp]$ defined by $[\varrho]\mapsto \sigma_{i,\gamma}(\varrho):=\sigma_i(\varrho(\gamma))$. Since $C$ is defined over $\bF_q$, it follows that $j^*\sigma_{i,\gamma}\in \bF_q[C]$, where $j:C\to \mxp$ is the natural morphism. 
	Since we have assumed that $\varrho_i(\Gamma)\subset \GL_{N}(\bF_q[[t]])$ for each $i$, it follows that $\sigma_{j}(\varrho_i(\gamma))\in \bF_q[[t]]$ for each $i$.  
	Therefore, by the definition of $\varrho_i$, $v_i\big(\sigma_j(\varrho_C(\gamma))\big)\geq 0$ for each $i$.  It follows that $\sigma_j(\varrho_C(\gamma))$  extends to a regular function on $\overline{C}$, which is thus constant.  
	This implies that for any  two representations $\eta_1:\Gamma\to \GL_{N}(K_1)$ and $\eta_2:\Gamma\to \GL_{N}(K_2)$  such that ${\rm char}\, K_1={\rm char}\, K_2=p$ and  $\eta_i\in C(K_i)$,  we have  $\chi(\eta_1(\gamma))=\chi(\eta_2(\gamma))$ for each $\gamma\in \Gamma$.  
	In other words, $\eta_1$ and $\eta_2$ has the same characteristic polynomial.  
	It follows that $[\eta_1]=[\eta_2]$ by the Brauer-Nesbitt theorem. 
	Hence $\pi(T)$ is a point.   
	Thus $M$ is zero dimensional.
\end{proof}

\begin{cor}\label{cor:20240304}
	Let us consider $M\subset M_{\rm B}(X,N)_{\bF_p}$ satisfying the same assumptions as in \cref{lem:20240304}. 
	Let $\varrho:\Gamma\to \GL_N(L)$ be a linear representation with $L$ a field of characteristic $p$ and $[\varrho]\in M(L)$.
	Then the image $\varrho(\Gamma)\subset \GL_N(L)$ is finite.
\end{cor}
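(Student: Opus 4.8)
The plan is to show that, up to semisimplification and conjugation, $\varrho$ is defined over a finite field, and then conclude with \cref{lem:finite group}. First I would invoke \cref{lem:20240304} to know that $M$ is zero-dimensional (we may of course assume $M\neq\emptyset$). Since $M$ is a constructible subset of the finite-type $\bF_p$-scheme $M_{\rm B}(\Gamma,N)_{\bF_p}$, this means every point of $M$ is closed with residue field finite over $\bF_p$. Because $[\varrho]=[\varrho^{ss}]$ (by the Proposition above), I may work with $\varrho^{ss}$; then, after fixing an algebraic closure $\bar L\supset L$, the point $[\varrho^{ss}]\in M(\bar L)$ has image a closed point $m\in M$ whose residue field $\kappa(m)=\bF_q$ is a finite field.

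The heart of the argument is a descent of this point to $\overline{\bF_p}$. The $\bar L$-point $[\varrho^{ss}]$ corresponds to a ring homomorphism $\kappa(m)=\bF_q\to\bar L$ whose image is algebraic over $\bF_p$, hence lies in the algebraic closure $\overline{\bF_p}$ of $\bF_p$ inside $\bar L$; thus $[\varrho^{ss}]$ is the image of an $\overline{\bF_p}$-point $n\in M(\overline{\bF_p})$ under $M(\overline{\bF_p})\hookrightarrow M(\bar L)$. Since the GIT quotient $\pi\colon R(\Gamma,N)_{\bF_p}\to M_{\rm B}(\Gamma,N)_{\bF_p}$ is surjective on $\overline{\bF_p}$-points (as used in the proof of the Proposition above), I would lift $n$ to a representation $\tau_0\colon\Gamma\to\GL_N(\overline{\bF_p})$ with $[\tau_0]=n$. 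As $\Gamma$ is finitely generated, the images under $\tau_0$ of a finite generating set have all their entries in a single finite subfield of $\overline{\bF_p}$, so $\tau_0(\Gamma)$ lies in $\GL_N$ of a finite field and is finite.

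To finish I would set $\tau\colon\Gamma\to\GL_N(\bar L)$ to be the semisimplification of $\tau_0$ over $\bar L$. Then $\tau$ is semisimple over $\bar L$; its image is finite, because $\ker\tau_0$ acts trivially on every subquotient of $\tau_0$, so $\tau$ factors through the finite group $\tau_0(\Gamma)$; and $[\tau]=[\tau_0]=n=[\varrho^{ss}]$ in $M_{\rm B}(\Gamma,N)_{\bF_p}(\bar L)$. Since $\tau$ and $\varrho^{ss}$ are semisimple representations over the algebraically closed field $\bar L$ representing the same point, the last part of the Proposition above (Brauer--Nesbitt) shows they are conjugate in $\GL_N(\bar L)$, whence $\varrho^{ss}(\Gamma)$ is finite. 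Applying \cref{lem:finite group} to $\varrho\colon\Gamma\to\GL_N(L)\subset\GL_N(\bar L)$, whose semisimplification now has finite image, gives that $\varrho(\Gamma)$ is finite.

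The step I expect to require the most care is the descent to $\overline{\bF_p}$: one must check that the residue-field embedding attached to the point $[\varrho^{ss}]$ genuinely factors through $\overline{\bF_p}$ — so that the lift $\tau_0$ can be chosen with $[\tau_0]$ equal to $[\varrho^{ss}]$ on the nose, with no Frobenius twist left over — and that passing from $\tau_0$ over $\overline{\bF_p}$ to its semisimplification $\tau$ over $\bar L$ preserves both semisimplicity and finiteness of the image. Both of these rely on $\overline{\bF_p}$ being perfect and on the trivial action of $\ker\tau_0$ on subquotients.
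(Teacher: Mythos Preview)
Your proof is correct and follows essentially the same approach as the paper's: both invoke \cref{lem:20240304} to get zero-dimensionality, descend the point $[\varrho]$ to an $\overline{\bF_p}$-valued representation with finite image, and then conclude via semisimplification and \cref{lem:finite group}. Your write-up is simply more explicit about the descent step and the identification of semisimplifications, whereas the paper condenses these into the single sentence ``we can find a point $\eta:\Gamma\to\GL_N(\overline{\bF_p})$ such that $[\eta]\in M(\overline{\bF_p})$''.
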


\begin{proof}
	By \cref{lem:20240304}, $M$ is zero dimensional.
	Thus we can find a point $\eta:\Gamma\to \GL_{N}(\overline{\bF_p})$ such that $[\eta]\in M(\overline{\bF_p})$ and $[\eta]=[\varrho]$ in $M(L)$.
	Since $\eta(\Gamma)$ is finite, the semisimplification $\eta^{ss}$  of $\eta$ has also finite image.   
	As the semisimplification of $\varrho$ is isomorphic to $\eta^{ss}$, by virtue of \cref{lem:finite group},  we conclude that $\varrho(\Gamma)$ is finite.   
\end{proof}

\begin{lem}\label{lem:202403041}
	Let $\varphi:\Gamma'\to \Gamma$ be a group morphism from another finitely generated group $\Gamma'$. 
	Let $M\subset M_{\rm B}(\Gamma,N)_{\bF_p}$ be a constructible set.
	Then the following two statements are equivalent:
	\begin{enumerate}[label=(\alph*)]
		\item \label{itemaf1} For every reductive representation $\tau:\Gamma\to \GL_N(K)$ with $K$ a local field of characteristic $p$ such that $[\tau] \in M(K)$, the image $\tau\circ\varphi(\Gamma')\subset \GL_N(K)$ is bounded.
		\item \label{itembf1} For every linear representation $\varrho:\Gamma\to \GL_{N}(L)$ with $L$ a field of characteristic $p$ and $[\varrho]\in M(L)$, the image $\varrho\circ\varphi(\Gamma')\subset \GL_N(L)$ is finite.
	\end{enumerate}
\end{lem}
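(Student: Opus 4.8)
The plan is to transport the constructible set $M$ along $\varphi$ and then invoke \cref{lem:20240304} and \cref{cor:20240304} for the group $\Gamma'$ in place of $\Gamma$. Precomposition with $\varphi$, i.e.\ $\tau\mapsto\tau\circ\varphi$, defines a $\GL(N,\bF_p)$-equivariant morphism of affine $\bF_p$-schemes $\Phi\colon R(\Gamma,N)_{\bF_p}\to R(\Gamma',N)_{\bF_p}$, which therefore descends to a morphism $\bar\Phi\colon M_{\rm B}(\Gamma,N)_{\bF_p}\to M_{\rm B}(\Gamma',N)_{\bF_p}$ of the GIT quotients, with $\bar\Phi([\tau])=[\tau\circ\varphi]$ for every linear representation $\tau$ of $\Gamma$. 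I would set $M':=\bar\Phi(M)$, which is a constructible subset of $M_{\rm B}(\Gamma',N)_{\bF_p}$ by Chevalley's theorem, since $\bar\Phi$ is a morphism of finite-type $\bF_p$-schemes.

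The implication \ref{itembf1}$\Rightarrow$\ref{itemaf1} is then immediate, since a reductive representation over a local field of characteristic $p$ is in particular linear over a field of characteristic $p$, so \ref{itembf1} forces $\tau\circ\varphi(\Gamma')$ to be finite, hence bounded. For the converse, the key claim to establish is that $M'$ satisfies the hypothesis of \cref{lem:20240304} relative to $\Gamma'$: every reductive representation $\sigma\colon\Gamma'\to\GL_N(K)$ over a local field $K$ of characteristic $p$ with $[\sigma]\in M'(K)$ has bounded image. Granting this, \cref{cor:20240304} applied to $(\Gamma',M')$ shows that every linear representation $\varrho'\colon\Gamma'\to\GL_N(L)$ over a field $L$ of characteristic $p$ with $[\varrho']\in M'(L)$ has finite image; taking $\varrho'=\varrho\circ\varphi$, whose class is $\bar\Phi([\varrho])\in M'(L)$ whenever $[\varrho]\in M(L)$, then gives \ref{itembf1}.

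To prove the claim I would argue as follows. As $[\sigma]\in M'(K)$, the image point $z$ of $\sigma$ in $M_{\rm B}(\Gamma',N)_{\bF_p}$ lies in $\bar\Phi(M)$, so the fibre of $\bar\Phi$ over $z$ meets $M$ in a nonempty constructible subset of a finite-type $\kappa(z)$-scheme, and hence contains a closed point whose residue field is finite over $\kappa(z)$. Base-changing, one obtains a finite extension $K'/K$---again a local field of characteristic $p$---and a representation $\tau\colon\Gamma\to\GL_N(K')$ with $[\tau]\in M(K')$ and $\bar\Phi([\tau])=[\sigma\otimes_K K']$. Replacing $\tau$ by its semisimplification, I may assume $\tau$ reductive, so \ref{itemaf1} applies and $\tau\circ\varphi(\Gamma')$ is bounded in $\GL_N(K')$. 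Since $[\tau\circ\varphi]=\bar\Phi([\tau])=[\sigma\otimes_K K']$, the semisimplifications of $\tau\circ\varphi$ and of $\sigma\otimes_K K'$ have the same characteristic polynomials and are therefore conjugate by Brauer--Nesbitt; \cref{lem:same bound} then transfers boundedness, so $\sigma\otimes_K K'(\Gamma')$ is bounded in $\GL_N(K')$, and consequently $\sigma(\Gamma')\subset\GL_N(K)$ is bounded because the ring of integers satisfies $\mathcal{O}_K=K\cap\mathcal{O}_{K'}$.

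I expect the \emph{main obstacle} to be precisely this descent step: a point of the constructible image $M'=\bar\Phi(M)$ defined over a field need not lift to a representation of $\Gamma$ over the same field, only over a finite extension of it, and one must then check---via \cref{lem:same bound}---that enlarging the local field does not affect boundedness. Everything else is a formal reduction to the already established \cref{lem:20240304} and \cref{cor:20240304}, together with the elementary observation that finite image implies bounded image.
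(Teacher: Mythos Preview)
Your proof is correct and follows essentially the same route as the paper: transport $M$ along the induced map $M_{\rm B}(\Gamma,N)_{\bF_p}\to M_{\rm B}(\Gamma',N)_{\bF_p}$, verify that the image $M'$ satisfies the hypothesis of \cref{lem:20240304} by lifting a reductive $\sigma$ with $[\sigma]\in M'$ to some $\tau$ with $[\tau]\in M$ and invoking \ref{itemaf1} together with \cref{lem:same bound}, and then conclude via \cref{cor:20240304}. The only cosmetic difference is that the paper lifts $\sigma$ directly to a reductive $\tau$ over the algebraic closure $\overline{K'}$ (tacitly using that a finitely generated image already lands in a finite extension, hence a local field) and applies \cref{lem:same bound} in one stroke, whereas you track the finite extension $K'/K$ explicitly and add the descent remark $\mathcal O_K=K\cap\mathcal O_{K'}$; both are fine.
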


\begin{proof}
	The implication \ref{itembf1} $\implies$ \ref{itemaf1} is trivial.
	In the following we prove the implication \ref{itemaf1} $\implies$ \ref{itembf1}.
	We have a induced map $\iota:M_{\rm B}(\Gamma,N)_{\bF_p}\to M_{\rm B}(\Gamma',N)_{\bF_p}$.
	Set $M'=\iota(M)$.
	Then $M'\subset M_{\rm B}(\Gamma',N)_{\bF_p}$ is a constructible set.
	We shall show that $M'$ satisfies the assumption of \cref{lem:20240304}.
	Indeed let $\sigma:\Gamma\to \GL_N(K')$ be a reductive representation such that $[\sigma]\in M'(K')$, where $K'$ is a local field of characteristic $p$.
	Then we may take a reductive representation $\tau:\Gamma\to \GL_N(\overline{K'})$ such that $[\tau]\in M(\overline{K'})$ and $\iota([\tau])=[\sigma]$.
	By the assumption \ref{itemaf1}, $\tau\circ\varphi(\Gamma')$ is bounded.
	By $\iota([\tau])=[\tau\circ\varphi]$, we have $[\tau\circ\varphi]=[\sigma]$.
	Thus by \cref{lem:same bound}, $\sigma(\Gamma')$ is bounded.
	Thus $M'$ satisfies the assumption of \cref{lem:20240304}.
	
	Now let $\varrho:\Gamma\to \GL_{N}(L)$ be a linear representation such that $[\varrho]\in M(L)$ with $L$ a field of characteristic $p$.
	Then we have $[\varrho\circ\varphi]\in M'(L)$.
	By \cref{cor:20240304}, $\varrho\circ\varphi(\Gamma')$ is finite.
\end{proof}

\subsection{Factorization through non-rigidity}\label{sec:2.3}
Let $X$ be  a quasi-projective smooth variety.   
We write $M_{\rm B}(X,N)_{\bF_p}=M_{\rm B}(\pi_1(X),N)_{\bF_p}$.
Let  $M \subset \mxp $ be a Zariski closed subset.

\begin{dfn}\label{def:reduction ac2} 
	The \emph{reduction map} $s_{M}:X\to S_M$ is obtained through the simultaneous Stein factorization of the reductions   $\{s_{\tau}:X\to S_\tau\}_{[\tau]\in M(K)}$. 
	Here   $\tau:\pi_1(X)\to \GL_N(K)$ ranges over all reductive representations with $K$  a non-archimedean local field of characteristic $p$ such that  $[\tau] \in M(K)$ and $s_\tau:X\to S_\tau$ is the reduction map  defined in \cref{thm:KZ}.  
\end{dfn}

The reduction map  $s_M:X\to S_M$  enjoys the following crucial property.

\begin{thm}\label{thm:Sha1}
	Let $M$ be a Zariski closed subset of $\mxp$.  	
	The reduction map $s_M:X\to S_M$   is the Shafarevich morphism for $M$. That is, for any connected  Zariski closed subset $Z$ of $X$, the following properties are equivalent:
	\begin{enumerate}[label=(\alph*)]
		\item \label{itemaf} $s_M(Z)$ is a point;
		\item \label{itembf}for any linear representation $\varrho:\pi_1(X)\to \GL_{N}(K)$ with $K$ a field of characteristic $p$ and $[\varrho]\in M(K)$,   we have $\varrho({\rm Im}[\pi_1(Z)\to \pi_1(X)])$ is finite;
		\item \label{itemcf}for any semisimple representation $\varrho:\pi_1(X)\to \GL_{N}(K)$ with $K$ a field of characteristic $p$ such that $[\varrho]\in M(K)$,   we have $\varrho({\rm Im}[\pi_1(Z_o^{\rm norm})\to \pi_1(X)])$ is finite, where $Z_o$ is any irreducible component of $Z$. 
	\end{enumerate}
\end{thm}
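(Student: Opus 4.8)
The plan is to establish the chain of implications $\ref{itemaf}\Rightarrow\ref{itembf}\Rightarrow\ref{itemcf}\Rightarrow\ref{itemaf}$, reducing everything to the boundedness criterion furnished by \cref{thm:KZ} together with the constructibility machinery of \cref{lem:20240304,cor:20240304,lem:202403041}. First I would set $\Gamma = \pi_1(X)$ and, for a connected Zariski closed $Z\subset X$ with an irreducible component $Z_o$, write $\varphi_Z:\pi_1(Z)\to\Gamma$ and $\varphi_{Z_o}:\pi_1(Z_o^{\mathrm{norm}})\to\Gamma$ for the natural maps; these are morphisms of finitely generated groups, so \cref{lem:202403041} applies verbatim with $\Gamma'=\pi_1(Z)$ or $\pi_1(Z_o^{\mathrm{norm}})$.

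\medskip

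\noindent\textbf{Step 1: $\ref{itemaf}\Rightarrow\ref{itembf}$.} Suppose $s_M(Z)$ is a point. By \cref{def:reduction ac2}, $s_M$ is the simultaneous Stein factorization of $\{s_\tau\}_{[\tau]\in M(K)}$ over all reductive $\tau$ into local fields $K$ of characteristic $p$; by the defining property in \cref{lem:simultaneous}, $s_\tau(Z)$ is a point for every such $\tau$. Since $Z$ is connected, \cref{thm:KZ} gives that $\tau(\mathrm{Im}[\pi_1(Z)\to\Gamma])$ is a bounded subgroup for every reductive $\tau$ with $[\tau]\in M(K)$. This is precisely hypothesis \ref{itemaf1} of \cref{lem:202403041} applied to $\varphi=\varphi_Z$; its conclusion \ref{itembf1} says that for every linear $\varrho:\Gamma\to\GL_N(L)$ with $L$ of characteristic $p$ and $[\varrho]\in M(L)$, the image $\varrho(\mathrm{Im}[\pi_1(Z)\to\Gamma])$ is finite, which is \ref{itembf}. (Strictly, \cref{lem:202403041} is stated for constructible $M$, but a Zariski closed subset is constructible.)

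\medskip

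\noindent\textbf{Step 2: $\ref{itembf}\Rightarrow\ref{itemcf}$.} This is essentially formal: for any irreducible component $Z_o$ of $Z$, the composite $\pi_1(Z_o^{\mathrm{norm}})\to\pi_1(Z_o)\to\pi_1(Z)\to\Gamma$ factors through $\pi_1(Z)\to\Gamma$ when $Z_o\subset Z$ and $Z$ is connected—more carefully, one uses that $\mathrm{Im}[\pi_1(Z_o^{\mathrm{norm}})\to\Gamma]$ is contained in $\mathrm{Im}[\pi_1(Z)\to\Gamma]$ up to passing through $\pi_1(Z_o)\to\pi_1(Z)$ (which holds since $Z_o\hookrightarrow Z$). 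Hence if $\varrho(\mathrm{Im}[\pi_1(Z)\to\Gamma])$ is finite, so is its subgroup $\varrho(\mathrm{Im}[\pi_1(Z_o^{\mathrm{norm}})\to\Gamma])$, in particular for semisimple $\varrho$. I would double-check the basepoint bookkeeping here, since $Z$ connected but reducible requires choosing a path from the basepoint of $Z_o$ into $Z$; but conjugation does not affect finiteness of the image.

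\medskip

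\noindent\textbf{Step 3: $\ref{itemcf}\Rightarrow\ref{itemaf}$.} This is the substantive direction. Assume \ref{itemcf} holds. I want to show $s_M(Z)$ is a point, equivalently (by \cref{lem:simultaneous}) that $s_\tau(Z)$ is a point for every reductive $\tau:\Gamma\to\GL_N(K)$, $K$ a local field of characteristic $p$, with $[\tau]\in M(K)$. By \cref{thm:KZ}, it suffices to show $\tau(\mathrm{Im}[\pi_1(Z_o^{\mathrm{norm}})\to\Gamma])$ is bounded for every irreducible component $Z_o$ of $Z$ (the equivalence of \ref{item bounded} and \ref{item normalization} there handles reducible connected $Z$). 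Now fix $Z_o$ and apply \cref{lem:202403041} with $\varphi=\varphi_{Z_o}:\pi_1(Z_o^{\mathrm{norm}})\to\Gamma$: I must verify its hypothesis \ref{itemaf1}, i.e. that $\sigma\circ\varphi_{Z_o}(\pi_1(Z_o^{\mathrm{norm}}))$ is bounded for every reductive $\sigma$ with $[\sigma]\in M(K')$, $K'$ local of characteristic $p$. But \ref{itemcf} is a statement about semisimple representations over arbitrary fields $L$ of characteristic $p$; I bridge the gap by noting $[\sigma]=[\sigma^{ss}]$ and $\sigma^{ss}$ is semisimple over a finite extension $L\supset K'$, so \ref{itemcf} gives that $\sigma^{ss}\circ\varphi_{Z_o}(\pi_1(Z_o^{\mathrm{norm}}))$ is finite, hence bounded, hence $\sigma\circ\varphi_{Z_o}(\pi_1(Z_o^{\mathrm{norm}}))$ is bounded by \cref{lem:same bound}. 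Then \cref{lem:202403041}\ref{itembf1} yields in particular that for every reductive (a fortiori every such) $\tau$ with $[\tau]\in M(K)$, $\tau\circ\varphi_{Z_o}(\pi_1(Z_o^{\mathrm{norm}}))$ is finite, so bounded; applying this to every component $Z_o$ and using \cref{thm:KZ}\ref{item normalization}$\Leftrightarrow$\ref{item contraction} shows $s_\tau(Z)$ is a point. Finally \cref{lem:simultaneous} gives $s_M(Z)$ is a point.

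\medskip

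\noindent\textbf{Main obstacle.} The delicate point is Step 3, and within it the careful passage between (i) boundedness of reductive representations over \emph{local} fields (the language of \cref{thm:KZ} and \cref{def:reduction ac2}) and (ii) finiteness of arbitrary, possibly non-semisimple, linear representations over \emph{arbitrary} fields of characteristic $p$ (the language of \ref{itemcf}). The translation requires simultaneously invoking $[\varrho]=[\varrho^{ss}]$ on character varieties, the semisimplification-conjugacy correspondence, \cref{lem:same bound} (semisimplification preserves boundedness), and \cref{lem:202403041} (which itself packages \cref{lem:20240304}/\cref{cor:20240304}, i.e. the zero-dimensionality argument on $M_{\rm B}$). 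One must also be attentive to the distinction between \emph{reductive} and \emph{semisimple} representations emphasized in the preliminaries: \ref{itemcf} is phrased with semisimple $\varrho$, which is harmless because $M$-membership only sees $[\varrho^{ss}]$, but it must be spelled out. The reducibility of $Z$ and the resulting basepoint/component bookkeeping is a minor but genuine nuisance handled uniformly by the equivalence of \ref{item bounded} and \ref{item normalization} in \cref{thm:KZ}.
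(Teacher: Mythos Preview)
Your proof is correct and follows the same cycle $\ref{itemaf}\Rightarrow\ref{itembf}\Rightarrow\ref{itemcf}\Rightarrow\ref{itemaf}$ as the paper, invoking the same ingredients (\cref{thm:KZ}, \cref{lem:same bound}, \cref{lem:202403041}). One remark on Step~3: your invocation of \cref{lem:202403041} there is a detour---in verifying its hypothesis~\ref{itemaf1} (boundedness of $\sigma\circ\varphi_{Z_o}$ for every reductive $\sigma$ with $[\sigma]\in M$) you have already established exactly what \cref{thm:KZ} needs to conclude $s_\sigma(Z)$ is a point, so there is no need to pass through conclusion~\ref{itembf1} and back; the paper proceeds directly at this point, noting that for each reductive $\tau$ in \cref{def:reduction ac2} one has $[\tau]=[\tau^{ss}]$, applies~\ref{itemcf} to the semisimple $\tau^{ss}$ to get finiteness hence boundedness, and then uses \cref{thm:KZ} and \cref{def:reduction ac2}.
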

\begin{proof}
	\noindent {\ref{itemaf} $\implies$ \ref{itembf}:}	Let $\tau:\pi_1(X)\to \GL_N(K)$ be a  reductive representation with $K$  a local field of characteristic $p$ such that  $[\tau] \in M(K)$.
	Then by the assumption \ref{itemaf}, $s_{\tau}(Z)$ is a point.
	Hence $\tau ({\rm Im}[\pi_1(Z)\to \pi_1(X)])$ is bounded.
	Hence by \cref{lem:202403041}, $\varrho ({\rm Im}[\pi_1(Z)\to \pi_1(X)])$ is finite for every linear representation $\varrho:\pi_1(X)\to \GL_{N}(K)$ with $K$ a field of characteristic $p$ and $[\varrho]\in M(K)$.

	\medspace
	
	\noindent {\ref{itembf} $\implies$ \ref{itemcf}:}	this is obvious.
	
	\medspace
	
	\noindent {\ref{itemcf} $\implies$ \ref{itemaf}:} 
	Let $\varrho:\pi_1(X)\to \GL_{N}(K)$ with $K$ any non-archimedean local field of characteristic $p$ such that $[\varrho]\in M(K)$. 
	Then, the image $\varrho({\rm Im}[\pi_1(Z_o^{\rm norm})\to \pi_1(X)])$ is finite by our assumption, and is thus bounded.  
	By the property in \cref{thm:KZ}, $s_{\varrho}(Z)$ is a point.
	By \cref{def:reduction ac2}, $s_M(Z)$ is also a point.  
\end{proof}

\begin{rem}
	We remark that in the case of  characteristic 0, i.e. meaning that $M$ is a Zariski closed subset of $M_{\rm B}(X,N)$ defined over $\bar{\mathbb{Q}}$,  the reduction map $s_M:X\to S_M$  constructed in \cite{DY23} is the same as  \cref{def:reduction ac2}:  it is obtained through the simultaneous Stein factorization of the reductions   $\{s_{\tau}:X\to S_\tau\}_{[\tau]\in M(K)}$, where   $\tau:\pi_1(X)\to \GL_N(K)$ ranges over all reductive representations with $K$  a local field of \emph{characteristic $0$} such that  $[\tau] \in M(K)$ and $s_\tau:X\to S_\tau$ is the reduction map defined in \cref{thm:KZ}. 
	%Furthermore, the property stated in \cref{lem:bounded2} holds true in the characteristic zero case as well. 
	The interested readers can refer to \cite[\S 3.1]{DY23} for further details.
\end{rem}

\subsection{Construction of the  Shafarevich morphism}
\begin{thm}\label{thm:Sha22}
	Let $X$ be a  quasi-projective normal variety and $\varrho:\pi_1(X)\to \GL_{N}(K)$ be a linear  representation, where $K$ is a field of characteristic $p>0$. Then the Shafarevich morphism ${\rm sh}_\varrho:X\to {\rm Sh}_\varrho(X)$ exists.  That is, for any connected Zariski closed subset  $Z\subset X$, the following properties are equivalent:
	\begin{enumerate}[label=\rm (\alph*)]
		\item ${\rm sh}_\varrho(Z)$ is a point;
		\item $ \varrho({\rm Im}[\pi_1(Z)\to \pi_1(X)])$ is finite;
		\item for each irreducible component $Z_o$ of $Z$, $ \varrho^{ss}({\rm Im}[\pi_1(Z_o^{\rm norm})\to \pi_1(X)])$ is finite. 
	\end{enumerate}  
\end{thm}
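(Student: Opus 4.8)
The plan is to deduce \cref{thm:Sha22} from \cref{thm:Sha1} by applying the latter to the Zariski closed subset $M:=\overline{\{[\varrho^{ss}]\}}^{\mathrm{Zar}}\subset\mxp$ cut out by the single point attached to the semisimplification $\varrho^{ss}\colon\pi_1(X)\to\GL_N(\bar K)$, and by setting ${\rm sh}_\varrho:=s_M$. First I would reduce to the case $X$ smooth: for a resolution $\mu\colon\widetilde X\to X$ one has $\mu_*\colon\pi_1(\widetilde X)\xrightarrow{\sim}\pi_1(X)$ since $X$ is normal, every fibre $F$ of $\mu$ satisfies ${\rm Im}[\pi_1(F)\to\pi_1(\widetilde X)]=1$ because $\mu(F)$ is a point, so the morphism built on $\widetilde X$ contracts the fibres of $\mu$ and descends to $X$, and the three equivalences transport along $\mu$ because $\mu^{-1}(Z)$ is connected and the relevant images of fundamental groups correspond under $\mu_*$. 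So assume $X$ smooth. Then \cref{def:reduction ac2} produces $s_M\colon X\to S_M$, dominant with connected general fibres onto a quasi-projective normal $S_M$, and \cref{thm:Sha1} says that for connected Zariski closed $Z\subset X$ the conditions (a') ${\rm sh}_\varrho(Z)$ is a point; (b') $\sigma({\rm Im}[\pi_1(Z)\to\pi_1(X)])$ is finite for every linear $\sigma\colon\pi_1(X)\to\GL_N(L)$, $L$ of characteristic $p$, with $[\sigma]\in M(L)$; (c') the same for semisimple $\sigma$ and the subgroups ${\rm Im}[\pi_1(Z_o^{\rm norm})\to\pi_1(X)]$ over irreducible components $Z_o$ — are equivalent. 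The task is to interpolate statements (a), (b), (c) of \cref{thm:Sha22} into this chain.

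Two implications are immediate. Since $[\varrho]=[\varrho^{ss}]\in M(\bar K)$, applying (b') with $\sigma=\varrho$ and $L=\bar K$ gives (b); and finiteness of $\varrho({\rm Im}[\pi_1(Z)\to\pi_1(X)])$ forces finiteness of $\varrho^{ss}$ of the same subgroup, hence of $\varrho^{ss}({\rm Im}[\pi_1(Z_o^{\rm norm})\to\pi_1(X)])$, so (b)$\Rightarrow$(c). The content is (c)$\Rightarrow$(c'). Fixing a component $Z_o$ and writing $G:={\rm Im}[\pi_1(Z_o^{\rm norm})\to\pi_1(X)]$ — finitely generated, with $\varrho^{ss}(G)$ finite by (c) — I would in fact prove the \emph{stronger} statement that $\sigma(G)$ is finite for every linear $\sigma\colon\pi_1(X)\to\GL_N(L)$ with $[\sigma]\in M(L)$. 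By \cref{lem:202403041} applied to the inclusion $\varphi\colon G\hookrightarrow\pi_1(X)$ and the constructible set $M$, this is equivalent to the assertion that $\tau(G)$ is bounded for every reductive $\tau\colon\pi_1(X)\to\GL_N(K_1)$ over a local field $K_1$ of characteristic $p$ with $[\tau]\in M(K_1)$.

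To prove this boundedness I would examine the restriction morphism $r_G\colon\mxp\to M_{\rm B}(G,N)_{\bF_p}$. Since $\varrho^{ss}(G)$ is finite, the characteristic polynomials of the elements $\varrho^{ss}(\gamma)$, $\gamma\in G$, take finitely many values and have only roots of unity as roots, so their coefficients generate a finite subfield of $\overline{\bF_p}$; hence $r_G([\varrho^{ss}])=[\varrho^{ss}|_G]$ has finite residue field, i.e.\ is a closed point of $M_{\rm B}(G,N)_{\bF_p}$. As $M=\overline{\{[\varrho^{ss}]\}}$ and $r_G$ is continuous, $r_G(M)\subseteq\overline{\{[\varrho^{ss}|_G]\}}=\{[\varrho^{ss}|_G]\}$, so $r_G(M)$ is this single closed point. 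Therefore $[\tau|_G]=r_G([\tau])=[\varrho^{ss}|_G]$; since $\varrho^{ss}|_G$ has finite image it may be realized inside $\GL_N(\overline{\bF_p})\subset\GL_N(\overline{K_1})$, so the semisimplifications of $\tau|_G$ and of $\varrho^{ss}|_G$ have the same class over $\overline{K_1}$, hence are conjugate by Brauer--Nesbitt, and \cref{lem:same bound} gives that $\tau|_G$ is bounded if and only if $\varrho^{ss}|_G$ is — and the latter has finite, thus bounded, image. So $\tau(G)$ is bounded, which proves (c)$\Rightarrow$(c'). Together with \cref{thm:Sha1} this yields the cycle (b')$\Rightarrow$(b)$\Rightarrow$(c)$\Rightarrow$(c')$\Rightarrow$(b') and (a)$\Leftrightarrow$(b'), so (a), (b), (c) are equivalent.

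I expect the main obstacle to be the step ``$r_G(M)$ is a single closed point'', and it is exactly where positive characteristic is indispensable: it rests on the arithmetic of $\overline{\bF_p}$ — that the field generated by roots of unity over $\bF_p$ is finite, equivalently that finitely generated subgroups of $\bG_a$ are finite — which lets finiteness of $\varrho^{ss}(G)$ propagate across the whole orbit–closure $M$. In characteristic zero there is no such phenomenon and one must instead work with a much larger $M$ and exploit (non-)rigidity of local systems, as in \cite{DY23}. A subtlety to keep in mind throughout is that semisimplification does not commute with restriction to a subgroup, which is why the argument is routed through the character variety and \cref{lem:same bound} rather than comparing $\varrho^{ss}|_G$ and $(\varrho|_G)^{ss}$ directly; one also uses the routine facts that fundamental groups of complex quasi-projective varieties are finitely generated and that proper surjective morphisms with connected fibres are $\pi_1$-surjective, needed in the reduction to the smooth case.
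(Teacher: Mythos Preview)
Your approach is correct and takes a genuinely different route from the paper. Both proofs set ${\rm sh}_\varrho:=s_M$ for a Zariski closed $M\subset\mxp$ and invoke \cref{thm:Sha1}, but the paper uses the larger set $M=\bigcap_{f} j_f^{-1}\{[1]\}$, intersected over all proper $f:Y\to X$ with $f^*\varrho$ trivial. With that choice the implication (c)$\Rightarrow$(a) is nearly tautological: passing to a finite \'etale cover $Y\to Z_o^{\rm norm}$ on which $\varrho^{ss}$ pulls back trivially, the very definition of $M$ forces $[f^*\tau]=[1]$ for every $[\tau]\in M$, and \cref{lem:finite group} finishes. Your minimal choice $M=\overline{\{[\varrho^{ss}]\}}$ replaces this tautology with the scheme-theoretic observation that $r_G$ collapses $M$ to a single closed point of $M_{\rm B}(G,N)_{\bF_p}$ whenever $\varrho^{ss}(G)$ is finite --- a clean use of the fact that finitely many roots of unity in characteristic $p$ generate a finite field, together with the uniqueness of the embedding of that finite field into $\overline{K_1}$. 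Your route makes it transparent that ${\rm sh}_\varrho$ depends only on the schematic point $[\varrho^{ss}]\in\mxp$; the paper's route avoids any discussion of residue fields.

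One correction is needed in your reduction to the smooth case. For a resolution $\mu:\widetilde X\to X$ with $X$ normal, $\mu_*$ on $\pi_1$ is surjective but \emph{not} an isomorphism in general (the affine cone over an elliptic curve is contractible while its resolution, a line bundle over the curve, has $\pi_1\simeq\bZ^2$). Consequently your claim ${\rm Im}[\pi_1(F)\to\pi_1(\widetilde X)]=1$ for fibres $F$ of $\mu$ is unjustified. What does hold is that this image lies in $\ker(\mu_*)\subset\ker(\mu^*\varrho)$, which is enough to contract each $F$ and descend ${\rm sh}_{\mu^*\varrho}$ to $X$. Transporting the equivalences then requires slightly more than ``images correspond under $\mu_*$'': for (a)$\Rightarrow$(b) one uses that $\pi_1(\mu^{-1}(Z))\to\pi_1(Z)$ is onto (see \cite[Lemma 3.45]{DY23}); for (c)$\Rightarrow$(a) one picks a component $W\subset\mu^{-1}(Z_o)$ dominating $Z_o$ and combines $[(\mu^*\varrho)^{ss}]=[\mu^*\varrho^{ss}]$ with \cref{lem:finite group}. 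With these adjustments your proof is complete.
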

\begin{proof}  
	\begin{comment}
		\footnote{
			Is this theorem follows directly,
			if we apply $M=\overline{[\varrho]}$ (Zariski closure) in \cref{thm:Sha1}?
			(I think in the case char=0, the space \eqref{eq:kc2} is needed to ensure that $M$ is $\bR^*$-invariant. ) ADDED: In \cref{thm:Sha1}, we require that $M$ is defined over $\overline{\bF_p}$. But $\overline{[\varrho]}$ is not necessarily defined over $\overline{\bF_p}$. Hence we cannot apply \cref{thm:Sha1} to conclude the theorem. 
			
			ADDED: By $\overline{[\varrho]}$, I meant to take Zariski closure in $M_B(X,N)_{\bF_p}$.  
			To be precice, we have $\psi:M_B(X,N)_{K}\to M_B(X,N)_{\bF_p}$ coming from the base change.
			We have $[\varrho]\in M_B(X,N)_{K}$, which is a closed point, and $\psi([\varrho])\in M_B(X,N)_{\bF_p}$, which may be not closed point. 
			We take the Zariski closure $M=\overline{\psi([\varrho])}\subset M_B(X,N)_{\bF_p}$.
			If $\tau:\pi_1(X)\to \mathrm{GL}_N(K)$ is semisimple and $[\tau]\in M(K)$, then $\mathrm{ker}(\varrho)\subset \mathrm{ker}(\tau)$.
			This follows from the proof of Lem 3.23 in our previous paper on reductive Shafarevich conjecture for zero characteristic, since $W_{\gamma}\subset M_B(X,N)_{\bF_p}$  is defined over $\bF_p$.
			Hence the Shafarevich morphism for $M$ coincides with that of $\varrho$.
			However I think the proof of \cref{thm:Sha22} is reasonably short, so I think we do not need to change this. ADDED: I think that you are right. } 
	\end{comment}
	\noindent {\it Step 1: The smooth case.} In this step, we will assume that $X$ is smooth.  Define 
	\begin{equation}\label{eq:kc2}
		M:=\bigcap_{f:Y\to X} j_f^{-1}\{[1]\}, 
	\end{equation}
	where $1$  stands  for the trivial representation, and  $f: Y\to X$ ranges over all  proper morphisms from positive dimensional quasi-projective normal varieties such that $f^*\varrho=1$.    	
	Here $j_f: \mxp\to \myp$ is a morphism of affine $\bF_p$-scheme induced by $f$. 
	Then $M$ is a  Zariski closed   subset.    
	We apply \cref{thm:Sha1} to construct  the  Shafarevich morphism $s_M:X\to S_M$ associated with $M$.   
	It is a dominant morphism with general fibers connected. 	Let ${\rm sh}_\varrho:X\to {\rm Sh}_\varrho(X)$ be  $s_M:X\to S_M$ and we will prove that it satisfies the properties in the theorem. 
	
	\medspace
	
	\noindent {\em {\rm (a)} $\Rightarrow$ {\rm (b)}}:   this follows from the fact that $[\varrho]\in M(K) $ and \cref{thm:Sha1}.
	
	\medspace
	
	\noindent {\em {\rm (b)} $\Rightarrow$ {\rm (c)}}:  obvious.
	
	\medspace
	
	\noindent {\em {\rm (c)} $\Rightarrow$ {\rm (a)}}:   We   take a finite \'etale cover $Y\to Z^{\rm norm}_o$ such that  $f^*\varrho^{ss}( \pi_1(Y))$ is trivial, where we denote by $f:Y\to X$  the natural proper morphism.   Let $\tau:\pi_1(X)\to {\rm GL}_N(L)$ be any linear  representation such that $[\tau]\in M(L)$ where $L$ is any  field of characteristic $p$. 
	Then $[f^*\tau]=[1]$ by \eqref{eq:kc2}. Thanks to  \cref{lem:finite group},   $f^*\tau(\pi_1(Y))$ is finite, and  it follows that   $ \tau({\rm Im}[\pi_1(Z^{\rm norm}_o)\to \pi_1(X)])$ is finite as ${\rm Im}[\pi_1(Y)\to \pi_1(Z^{\rm norm}_o)]$ is a finite index subgroup of $\pi_1(Z^{\rm norm}_o)$. According to \cref{thm:Sha1}, $s_M(Z)$, and thus ${\rm sh}_\varrho(Z)$ is a point.   
	
	\medspace 
	
	\noindent {\it Step 2: Reduction to the smooth case.}  We now do not assume that $X$ is smooth. Let $\mu:X_1\to X$ be a resolution of singularities. Then the Shafarevich morphism ${\rm sh}_{\mu^*\varrho}:X_1\to {\rm Sh}_{\mu^*\varrho}(X_1)$ exists and satisfies the properties in the theorem.  Since $X$ is normal, each fiber $F$ of $\mu$ is compact and connected.  Since $\mu^*\varrho({\rm Im}[\pi_1(F)\to \pi_1(X_1)])=\{1\}$, it implies that ${\rm sh}_{\mu^*\varrho}(F)$ is a point. Hence there exists  a morphism ${\rm sh}_{\varrho}:X\to {\rm Sh}_{\mu^*\varrho}(X_1)$ such that ${\rm sh}_{\varrho}\circ\mu={\rm sh}_{\mu^*\varrho}$. 
	
	\medspace
	
	\noindent {\em {\rm (a)} $\Rightarrow$ {\rm (b)}}:    Let $W:=\mu^{-1}(Z)$, which is a connected Zariski closed subset of $X_1$.  Then ${\rm sh}_{\mu^*\varrho}(W)$ is a point, which implies that $\mu^*\varrho({\rm Im}[\pi_1(W)\to \pi_1(X_1)])$  is finite.  By \cite[Lemma 3.47]{DY23},  
	$ \pi_1(W)\to \pi_1(Z)$ is surjective. It follows that $$\varrho({\rm Im}[\pi_1(Z)\to \pi_1(X)])=\mu^*\varrho({\rm Im}[\pi_1(W)\to \pi_1(X_1)])$$  is finite.  
	
	\medspace
	
	\noindent {\em {\rm (b)} $\Rightarrow$ {\rm (c)}}:  obvious.
	
	\medspace
	
	\noindent {\em {\rm (c)} $\Rightarrow$ {\rm (a)}}:   Let $W$ be an irreducible component of $\mu^{-1}(Z_o)$ which is surjective onto $Z_o$.  %Then $ {\rm Im}[\pi_1(W^{\rm norm})\to \pi_1(Z_o^{\rm norm})]$ is a finite index subgroup of $[\pi_1(W^{\rm norm})\to \pi_1(Z_o^{\rm norm})]$.  
	This implies that $ \mu^*\varrho^{ss}({\rm Im}[\pi_1(W^{\rm norm})\to \pi_1(X_1)])$ is finite. Since $[(\mu^*\varrho)^{ss}]=[\mu^*\varrho^{ss}]$, by \cref{lem:finite group}, $ (\mu^*\varrho)^{ss}({\rm Im}[\pi_1(W^{\rm norm})\to \pi_1(X_1)])$ is also finite. Hence ${\rm sh}_{\varrho}(Z_o)={\rm sh}_{\mu^*\varrho}(W)$ is a point. Since $Z$ is connected, we conclude that ${\rm sh}_{\varrho}(Z)$ is a point. 
	
	Letting ${\rm Sh}_{\varrho}(X):={\rm Sh}_{\mu^*\varrho}(X_1)$, we prove the theorem. 
\end{proof}  
\cref{lem:20240218,thm:Sha1,thm:Sha22} yield the following result. 
\begin{cor}\label{lem:20240226}
	Let $X$ be a  smooth quasi-projective  variety and $\varrho:\pi_1(X)\to \GL_{N}(K)$ be a linear  representation, where $K$ is a field of characteristic $p>0$.  Then there exist
	\begin{itemize}
		\item
		a local field $K'$ of $\mathrm{char}\ K'=p$,
		\item
		a positive integer $N'>0$, 
		\item
		a reductive representation $\sigma:\pi_1(X)\to\mathrm{GL}_{N'}(K')$
	\end{itemize}
	such that the Katzarkov-Eyssidieux reduction map $s_{\sigma}:X\to S_{\sigma}$ of $\sigma$ is the Shafarevich morphism ${\rm sh}_\varrho:X\to {\rm Sh}_\varrho(X)$ of $\varrho$.
\end{cor}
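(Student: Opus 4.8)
The plan is to recognize this corollary as an assembly of \cref{thm:Sha22}, \cref{def:reduction ac2}, and \cref{lem:20240218}; the only genuine content is to identify the index set appearing in \cref{def:reduction ac2} with the hypothesis of \cref{lem:20240218} and to check that it is nonempty.

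First I would recall that, since $X$ is smooth, Step 1 of the proof of \cref{thm:Sha22} \emph{defines} the Shafarevich morphism ${\rm sh}_\varrho:X\to{\rm Sh}_\varrho(X)$ to be the reduction map $s_M:X\to S_M$ attached to the Zariski closed subset $M\subset\mxp$ of \eqref{eq:kc2}. So it suffices to produce a reductive representation $\sigma$ over a positive-characteristic local field whose Katzarkov--Eyssidieux reduction map is $s_M$.

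Next, let $\Lambda$ be the set of reductive representations $\tau:\pi_1(X)\to\GL_N(K_\tau)$ with $K_\tau$ a local field of characteristic $p$ such that $[\tau]\in M(K_\tau)$. By \cref{def:reduction ac2}, $s_M:X\to S_M$ is precisely the simultaneous Stein factorization of the family $(s_\tau:X\to S_\tau)_{\tau\in\Lambda}$. I would check that $\Lambda\neq\varnothing$: the trivial representation $\mathbf 1:\pi_1(X)\to\GL_N(\bF_p((t)))$ is semisimple, hence reductive, and $[\mathbf 1]\in M(\bF_p((t)))$ because for every $f:Y\to X$ occurring in \eqref{eq:kc2} the induced morphism $j_f$ sends $[\mathbf 1]$ to $[\mathbf 1]$, so $[\mathbf 1]$ lies in every $j_f^{-1}\{[1]\}$.

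Finally I would apply \cref{lem:20240218} to $\Lambda$ (all its members have dimension $N$, the special case $N_\tau\equiv N$): it yields a local field $K'$ with $\mathrm{char}\,K'=p$, an integer $N'>0$, and a reductive representation $\sigma:\pi_1(X)\to\GL_{N'}(K')$ such that the simultaneous Stein factorization of $(s_\tau:X\to S_\tau)_{\tau\in\Lambda}$ coincides with the Katzarkov--Eyssidieux reduction map $s_\sigma:X\to S_\sigma$ of $\sigma$. Chaining the three identifications gives $s_\sigma:X\to S_\sigma$ equal to $s_M:X\to S_M$, which is ${\rm sh}_\varrho:X\to{\rm Sh}_\varrho(X)$, as required. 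I expect no serious obstacle beyond this bookkeeping; the only point deserving attention is that the family indexing \cref{def:reduction ac2} is exactly the set $\Lambda$ to which \cref{lem:20240218} applies, and that $\Lambda$ is nonempty so that lemma's hypothesis is met.
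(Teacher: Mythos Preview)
Your proposal is correct and follows essentially the same route as the paper's proof: both identify ${\rm sh}_\varrho$ with $s_M$ via \cref{thm:Sha1,thm:Sha22}, recognize $s_M$ as the simultaneous Stein factorization over the family $\Lambda$ of reductive representations with class in $M$, and then invoke \cref{lem:20240218}. Your explicit check that $\Lambda\neq\varnothing$ via the trivial representation is a small addition the paper leaves implicit.
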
 
\begin{proof}
	By \cref{thm:Sha1,thm:Sha22},	${\rm sh}_\varrho:X\to {\rm Sh}_\varrho(X)$  is obtained through the simultaneous Stein factorization of the reduction maps $\{s_{\tau}:X\to S_{\tau}\}_{[\tau]\in M(K)}$, where $$\tau:\pi_1(X)\to \GL_N(K)$$ ranges over all reductive representations with $K$  a local field of characteristic $p$, and $M$ is defined in \eqref{eq:kc2}. 
	By \cref{lem:20240218}, there exist a local field $K$ of $\mathrm{char}\ K=p$, a positive integer $N'$ and a reductive representation $\sigma:\pi_1(X)\to\mathrm{GL}_{N'}(K)$ such that ${\rm sh}_\varrho:X\to {\rm Sh}_\varrho(X)$ coincides with $s_{\sigma}:X\to S_{\sigma}$.
\end{proof}

\section{Hyperbolicity via linear representation in positive characteristic}\label{sec:hyperbolicity}
The structure of the Shafarevich morphism, as presented in the proof of  \Cref{thm:Sha22}, is related to the hyperbolicity of algebraic varieties. We will prove \cref{main:GGL,corx,main2} in this section. 
\subsection{On the generalized Green-Griffiths-Lang conjecture}  
\begin{thm}\label{thm:GGL}
Let $X$ be a  quasi-projective normal variety. Let $\varrho:\pi_1(X)\to \GL_{N}(K)$ be a big   representation where $K$ is a field of  positive characteristic. Then the following properties are equivalent: 
\begin{enumerate}[label=(\roman*)]
	\item  \label{h1} $X$ is of log general type;
	\item   \label{h2}$X$ is pseudo Picard hyperbolic;
	\item \label{h3}$X$ is pseudo Brody hyperbolic;   
	\item \label{h4}  $X$ is strongly of log general type.
\end{enumerate} 
\end{thm}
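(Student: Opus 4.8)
\textbf{Proof plan for \cref{thm:GGL}.}

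The strategy is to reduce the Green--Griffiths--Lang statement to the case already handled in \cite{CDY22} by exploiting the structure of the Shafarevich morphism from \cref{thm:Sha22}, or rather its refined version \cref{lem:20240226}. The key point is that a big representation $\varrho:\pi_1(X)\to\GL_N(K)$ with $K$ of characteristic $p>0$ gives, after passing to a resolution $\mu:X'\to X$ and invoking \cref{lem:20240226}, a reductive representation $\sigma:\pi_1(X')\to\GL_{N'}(K')$ over a non-archimedean local field $K'$ of residue characteristic $p$ whose Katzarkov--Eyssidieux reduction map $s_\sigma$ coincides with ${\rm sh}_{\mu^*\varrho}$. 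Since $\varrho$ is big, $\mu^*\varrho$ is big, so ${\rm sh}_{\mu^*\varrho}$ is generically finite (by the equivalence in \cref{thm:Sha22}: for $Z$ through a very general point, ${\rm sh}(Z)$ is a point iff the relevant image is finite iff $Z$ is a point, using bigness); hence $X'$, and therefore $X$, carries a generically finite map to the base $S_\sigma$ which comes equipped with the structure used in \cite{CDY22}. The plan is then: (1) establish the non-trivial implications for $X'$ smooth quasi-projective carrying a big reductive $\ell$-adic representation by transporting the harmonic-map/spectral-covering construction of \cite{CDY22} verbatim --- the construction there produces, from $s_\sigma$, a tower of admissible coordinates and a logarithmic symmetric differential (a section of $\Sym^\bullet\Omega_{\overline{X'}}(\log D)$) that is \emph{generically ample} along $s_\sigma$, i.e.\ big on a Zariski-dense open set; (2) deduce log general type from this, since a big logarithmic symmetric differential forces $\bar\kappa(X')$ to be maximal, which is \ref{h1}; (3) deduce \ref{h2}, \ref{h3} from \ref{h1} together with the same generically-ample symmetric differential, via the Nevanlinna-theoretic / Demailly jet-differential argument of \cite{CDY22} adapted so the exceptional set $\Xi$ is the union of the indeterminacy locus, the $s_\sigma$-contracted subvarieties, and ${\rm Sp}(\varrho)$; and (4) obtain \ref{h4} by running the same argument on each subvariety $V\not\subset\Xi$, using that the restriction of $\varrho$ to such a $V$ is again big (by definition of ${\rm Sp}(\varrho)$ and bigness) so that $V$ is of log general type. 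The trivial implications \ref{h4}$\Rightarrow$\ref{h1} and \ref{h2}$\Rightarrow$\ref{h3} (or \ref{h3}$\Rightarrow$ something) close the cycle; note $X$ itself is of log general type iff $X'$ is, and likewise for pseudo Picard/Brody hyperbolicity, so descending along $\mu$ is harmless.

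The essential new input over \cite{CDY22} is that $K$ need not be of characteristic zero and $\varrho$ need not be semisimple: the first is handled because \cite{DY23} and \cref{thm:KZ} already provide Katzarkov--Eyssidieux reductions over local fields of any characteristic, so the harmonic map to the Bruhat--Tits building and the resulting spectral construction go through with no change; the second is handled at the very start by \cref{lem:20240226}, which replaces $\varrho$ by a reductive $\sigma$ with the \emph{same} Shafarevich/reduction map, so all the geometry is governed by $\sigma$ and the non-semisimplicity of $\varrho$ is invisible to the hyperbolicity argument (it only affects the translation between "finite image of $\varrho$" and "bounded image of $\sigma$", already recorded in \cref{lem:same bound} and \cref{lem:finite group}).

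I expect the main obstacle to be step (1) --- more precisely, checking that the construction of the generically ample logarithmic symmetric differential in \cite{CDY22}, which there is phrased for $\bC$-local systems and uses Simpson's non-abelian Hodge theory at the archimedean places in addition to the non-archimedean (building) part, still yields enough positivity when \emph{only} the non-archimedean $p$-adic data is available. One must verify that the purely non-archimedean part of the argument in \cite{CDY22} (the part producing symmetric differentials from the Katzarkov--Eyssidieux reduction via spectral covers, independent of any variation of Hodge structure) already suffices to conclude log general type and pseudo-hyperbolicity on the complement of ${\rm Sp}(\varrho)$; this is plausible because the reduction map $s_\sigma$ already ``sees'' the whole of the non-large locus once $\varrho$ is big, but the bookkeeping of the exceptional locus --- showing ${\rm Sp}(\varrho)$ is exactly the right set to throw away and that it is a \emph{proper} Zariski closed subset (which is \cref{lem:proper}, to be proved in the next section) --- is where care is needed. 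The remaining steps (2)--(4) are then formal consequences combining standard facts: big symmetric differentials $\Rightarrow$ log general type (Campana--Paun type), and big symmetric differentials $\Rightarrow$ Picard/Brody hyperbolicity off a proper subset (Demailly, Lu--Sun--Yau style), exactly as carried out in \cite{CDY22}.
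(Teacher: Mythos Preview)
Your setup via \cref{lem:20240226} is exactly right, and your observation that the non-archimedean spectral construction alone carries all the geometry is the correct insight. But step (2) contains a genuine error that propagates through the whole plan: the spectral covering $\pi:X^{\rm sp}\to X$ together with the generically finite map $a:X^{\rm sp}\to A$ to a semi-abelian variety does \emph{not} produce a logarithmic symmetric differential big enough to force $\bar\kappa(X)=\dim X$. It only gives $\bar\kappa(X^{\rm sp})\geq 0$. The example in \cref{rem:sharp} ($X=\bC^*$, $\varrho:n\mapsto t^n$ into $\GL_1(\bF_p(t))$, which is big) shows that a big representation in positive characteristic does not imply log general type, so your step~(2) is a non-sequitur and the four conditions genuinely need to be proven equivalent rather than shown to all hold.

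Because of this, your cycle does not close: you have \ref{h1}$\Rightarrow$\ref{h2}$\Rightarrow$\ref{h3} (your step~(3), which is essentially correct) and \ref{h4}$\Rightarrow$\ref{h1} (trivial), but no path from \ref{h3} back to \ref{h1} or \ref{h4}. Your step~(4) proposes \ref{h1}$\Rightarrow$\ref{h4} by restriction to subvarieties, but that uses the same flawed step~(2) applied to each $V$. The paper instead proves \ref{h3}$\Rightarrow$\ref{h4} as follows: since $X$ is pseudo Brody hyperbolic, so is $X^{\rm sp}$ (finite cover); because $a:X^{\rm sp}\to A$ is generically finite to a semi-abelian variety, \cite[Cor.~4.2]{CDY22} yields that $X^{\rm sp}$ is strongly of log general type outside a proper $Z$. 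Descent to $X$ is then the delicate part: for $V\subset X$ with $V\not\subset\Xi$, take a component $\overline{S}$ of the fibre product over $\overline{X^{\rm sp}}$, observe $p:\overline{S}\to\overline{W}$ is Galois, and form the $H'$-invariant section $\sigma^{H'}\in H^0(\overline{W},\Sym^\ell\Omega_{\overline{W}}(\log D_{\overline{W}}))$ from products of the $\psi_i-\psi_j$. This section vanishes along the branch divisor $E$, giving a non-trivial map $\sO_{\overline{W}}(E)\to\Sym^\ell\Omega_{\overline{W}}(\log D_{\overline{W}})$; combined with $K_{\overline{W}}+E+D_{\overline{W}}$ big (since $W\setminus E$ is covered by something of log general type), Campana--P\u{a}un \cite[Cor.~8.7]{CP19} yields $K_{\overline{W}}+D_{\overline{W}}$ big. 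This descent argument via Galois-invariant symmetric differentials, not a direct appeal to ``big symmetric differentials $\Rightarrow$ log general type'', is the missing piece in your plan.
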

It's worth noting that the conjectural four equivalent properties mentioned in \cref{thm:GGL} constitute the statement of the generalized Green-Griffiths-Lang conjecture, as presented in \cite{CDY22}.
\begin{proof}[Proof of \cref{thm:GGL}]
By replacing $X$ with a desingularization and $\varrho$ with the pullback on this birational model, we can   assume that $X$ is smooth.   Let $\overline{X}$ be a smooth projective compactification of $X$ such that $D:=\overline{X}\backslash X$ is a simple normal crossing divisor. 	
By \cref{thm:Sha22}, the Shafarevich morphism ${\rm sh}_\varrho:X\to {\rm Sh}_\varrho(X)$ exists.
By \cref{lem:20240226}, the Shafarevich morphism ${\rm sh}_\varrho:X\to {\rm Sh}_\varrho(X)$ coincides with a Katzarkov-Eyssidieux reduction map $s_{\tau}:X\to S_{\tau}$, where $\tau:\pi_1(X)\to\mathrm{GL}_{N'}(K')$ is a reductive representation with some local field $K'$ of $\mathrm{char}\ K'=\mathrm{char}\ K>0$. 
By the construction of $s_{\tau}$ in  \cite[Proof of Theorem H]{CDY22}, there exists a finite (ramified) Galois cover $\pi:\overline{X^{\mathrm{sp}}}\to \overline{X}$ with Galois group $H$ such that 
\begin{enumerate}[label=(\alph*)]
	\item \label{itea} there exists a set of forms $\{\eta_{j}\}_{j=1,\ldots,m}\subset H^0(\overline{X^{\mathrm{sp}}}, \pi^*\Omega_{\overline{X}}(\log D))$ which are invariant under $H$;
	\item  \label{iteb}$\pi$ is \'etale outside  
	\begin{equation}\label{eq:rami}
		R:=\{x\in \overline{X^{\mathrm{sp}}} \mid \exists \eta_{j}\neq\eta_{\ell}  \mbox{ with }(\eta_{j}-\eta_{\ell})(x)=0\}.
	\end{equation}
	\item \label{itec}There exists a morphism $a:X^{\mathrm{sp}}\to A$ to a semi-abelian variety $A$ with $H$ acting on $A$  such that $a$ is $H$-equivariant, where $\xsp:=\pi^{-1}(X)$. Here, \( \xsp \to X \) is called the \emph{spectral cover} of \( X \) associated with \( \tau \). 
	\item \label{ited}The reduction map $s_{\tau}:X\to S_{\tau}$ is the quasi-Stein factorization of the quotient $X\to A/H$ of $a$ by $H$. 
\end{enumerate}   

\begin{claim}\label{claim:finite}
	We have $\dim \xsp=\dim a(\xsp)$.
\end{claim}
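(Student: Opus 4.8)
The plan is to prove the claim by showing that $a\colon\xsp\to A$ is generically finite onto its image, which is precisely the equality $\dim\xsp=\dim a(\xsp)$. The only substantial ingredient is the bigness of $\varrho$: it will force the Shafarevich morphism $s_\tau={\rm sh}_\varrho$ to be generically finite, and the rest will be a dimension count carried through the factorization recorded in \ref{itec}--\ref{ited}.

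First I would establish that $\dim X=\dim S_\tau$, i.e.\ that $s_\tau$ is generically finite. If not, set $d:=\dim X-\dim S_\tau>0$. Since $s_\tau\colon X\to S_\tau$ is dominant with connected general fibers, there is a dense Zariski open $U\subset S_\tau$ over which every fiber is nonempty of pure dimension $d$; then $s_\tau^{-1}(U)$ is a dense Zariski open of $X$, hence contains a very general point $x$ of $X$. Let $Z$ be an irreducible component through $x$ of the fiber $s_\tau^{-1}(s_\tau(x))$. Then $Z$ is a positive-dimensional closed subvariety of $X$ containing the very general point $x$ with $s_\tau(Z)$ a point, so by \cref{thm:Sha22} the group $\varrho({\rm Im}[\pi_1(Z)\to\pi_1(X)])$ is finite; since $\mathrm{Im}[\pi_1(Z^{\rm norm})\to\pi_1(X)]$ is a subgroup of $\mathrm{Im}[\pi_1(Z)\to\pi_1(X)]$, the group $\varrho({\rm Im}[\pi_1(Z^{\rm norm})\to\pi_1(X)])$ is finite as well, contradicting that $\varrho$ is big. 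Hence $d=0$.

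Next I would chase dimensions. Let $\pi\colon\xsp\to X$ be the finite surjective morphism restricting the Galois cover, let $q\colon A\to A/H$ be the quotient by the finite group $H$ (a finite morphism), and let $g\colon X\to A/H$ be the induced $H$-quotient of $a$, so that $g\circ\pi=q\circ a$ by \ref{itec} and \ref{ited}. Since $s_\tau$ is the quasi-Stein factorization of $g$ by \ref{ited}, we have $\dim S_\tau=\dim g(X)$; and since $\pi$ is finite surjective, $\dim\xsp=\dim X$ and $g(X)=g(\pi(\xsp))=q(a(\xsp))$. Combining these with the previous step,
\[
\dim\xsp=\dim X=\dim S_\tau=\dim g(X)=\dim q\bigl(a(\xsp)\bigr)\le\dim a(\xsp)\le\dim\xsp,
\]
so equality holds throughout, and in particular $\dim\xsp=\dim a(\xsp)$. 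The main obstacle is the first step: one genuinely needs the bigness hypothesis, routed through the characterization of ${\rm sh}_\varrho$ in \cref{thm:Sha22}, to exclude a positive-dimensional general fiber of $s_\tau$ (equivalently, a positive-dimensional subvariety through a very general point on which $\varrho$ has finite image). The final dimension count is then purely formal, using only the finiteness of $\pi$ and $q$ and the quasi-Stein factorization property \ref{ited}.
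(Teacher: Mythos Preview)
Your proof is correct and follows essentially the same route as the paper. The paper's argument is just the terse version: ``Since $\varrho$ is big, ${\rm sh}_\varrho$, hence $s_\tau$, is birational; thus $\dim\xsp=\dim a(\xsp)$ by \ref{ited}.'' Your first step unpacks why bigness forces $s_\tau$ to be generically finite (equivalently birational, since it has connected general fibers), and your second step spells out the dimension chase through \ref{ited} that the paper leaves implicit. One tiny wording point: when you say $s_\tau^{-1}(U)$ ``contains a very general point $x$,'' you are using that the set of very general points is the complement of a countable union of proper closed subsets, hence meets every nonempty Zariski open; this is standard over $\mathbb{C}$, but worth making explicit since the argument hinges on the fiber component $Z$ passing through a genuinely very general point.
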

\begin{proof}
	Since $\varrho$ is big, it follows that ${\rm sh}_\varrho$, hence $s_{\tau}$ is birational.
	Thus $\dim \xsp= \dim a(\xsp)$ by \Cref{ited}.
\end{proof}
Now we will apply the techniques and results in \cite{CDY22} to prove the theorem.  

\noindent \ref{h1} $\Rightarrow$ \ref{h2}:  
We first recall some notions in Nevanlinna theory used in \cite[\S 4.1]{CDY22}.   Let $Y$ be a connected Riemann surface with a proper surjective holomorphic map $p:Y\to\mathbb C_{>\delta}$, where $\mathbb C_{>\delta}:=\{z\in\bC\mid \delta<|z|\}$ with some fixed positive constant $\delta>0$.
For $r>2\delta$, define
$
Y(r)=p^{-1}\big( \mathbb C_{>2\delta}(r) \big)
$ 
where $\mathbb C_{>2\delta}(r)=\{z\in \bC\mid  2\delta<|z|<r\}$. 
In the following, we assume that $r>2\delta$.
The \emph{ramification counting function} of the covering $p:Y\to  \bC_{>\delta}$ is defined by
\begin{align}\label{eq:ram}
	N_{{\rm ram}\,  p}(r):=\frac{1}{{\rm deg} p}\int_{2\delta}^{r}\left[\sum_{y\in {Y}(t)} \ord_y \ram p \right]\frac{dt}{t},
\end{align} 
where $\ram p\subset Y$ is the ramification divisor of $p:Y\to\mathbb C_{>\delta}$. 

\medspace

For  any holomorphic map $f:\bC_{>\delta}\to X$ whose image is not contained in $\pi(R)$, there exists  a surjective finite holomorphic map   $p:Y\to \bC_{>\delta}$  from a connected Riemann surface $Y$ to  $\bC_{>\delta}$ and  a  holomorphic map $g:Y\to \xsp$  satisfying the following diagram:
\begin{equation}\label{figure:curve}
	\begin{tikzcd}
		Y\arrow[r, "g"] \arrow[d, "p"] & \xsp\arrow[d, "\pi"]\\
		\bC_{>\delta}\arrow[r, "f"] & X
	\end{tikzcd}
\end{equation}
Here $Y$ is a  connected component of the normalization of $\bC_{>\delta}\times_X \xsp$.  By \cite[Proposition 6.9]{CDY22},  there exists a proper Zariski closed subset $E\subsetneq X$ such that for  any holomorphic map $f:\bC_{>\delta}\to X$ whose image is 
not contained in $E$,  
one has  
\begin{align}\label{eq:ramification}
	N_{{\rm ram}\, p}(r) =o( T_{g}(r,L)) + O(\log r)
\end{align} 
for all $r>2\delta$ outside some exceptional interval with finite Lebesgue measure. 
Here $g:Y\to \xsp$ is the induced holomorphic map in \eqref{figure:curve}, $L$ is an ample line bundle on $\overline{\xsp}$ equipped with a smooth hermitian metric $h_L$  
and $T_g(r,L)$ is the order function  defined by 
\begin{align}\label{eq:order}
	T_g(r,L):=\frac{1}{\operatorname{deg} p} \int_{2\delta}^{r}\left[\int_{Y(t)} g^{*}c_1(L,h_L)\right] \frac{d t}{t}. 
\end{align} 
Note that $\xsp$ is of log general type as we assume that $X$ is of log general type and $\pi:\xsp\to X$ is a Galois cover.  
We apply \cite[Theorem 4.1]{CDY22} to conclude that there exists a proper Zariski closed set $\Xi\subsetneqq \xsp$ such that an extension $\overline{g}:\overline{Y}\to\overline{\xsp}$ of $g$ exists provided $g(Y)\not\subset \Xi$, where $\overline{Y}$ is a Riemann surface such that  $p:Y\to \mathbb C_{>\delta}$ extends to a proper map $\overline{p}:\overline{Y}\to \mathbb C_{>\delta}\cup\{\infty\}$. 
This induces an extension $\bar{f}:\mathbb C_{>\delta}\cup\{\infty\}\to \overline{X}$.
Indeed,  
given a sequence $(a_n)$ in $\mathbb C_{>\delta}$ satisfing $a_n\to\infty$, we may take a sequence $(b_n)$ in $Y$ such that $p(b_n)=a_n$ and $b_n\to \beta\in \overline{Y}$.
Then $\bar{p}(\beta)=\infty$ and $f(a_n)=\pi\circ g(b_n)\to \pi\circ \bar{g}(\beta)$.
Thus $(f(a_n))$ converges for every sequence $(a_n)$ satisfing $a_n\to\infty$.
Thus $f$ has continuous extension $\bar{f}:\mathbb C_{>\delta}\cup\{\infty\}\to \overline{X}$, which is holomorphic by Riemann's removable singularity theorem.
Hence, $X$ is pseudo Picard hyperbolic.

\medspace

\noindent \ref{h2} $\Rightarrow$ \ref{h3}:  It is obvious that pseudo Picard hyperbolicity implies pseudo Brody hyperbolicity (cf. \cite[Lemma 4.3]{CDY22}).

\medspace

\noindent \ref{h3} $\Rightarrow$ \ref{h4}: Since $\pi:\xsp\to X$ is a finite surjective morphism, $\xsp$ is also pseudo Brody hyperbolic.   By \cite[Corollary 4.2]{CDY22}, we conclude that there exists   a proper Zariski closed subset $Z\subsetneq \xsp$ such that    any positive dimensional closed subvariety $V$ is of log general type if it is not contained in $Z$.  Then the rest of the proof is basically the same as that of \cite[Theorem 6.3]{CDY22} and let us explain it for the sake of completeness. 

By the proof of \cite[Theorem 6.3]{CDY22}, there exists a Zariski closed subset $\Xi\subset X$ such that we have the following properties:
\begin{enumerate}[label=(\arabic*)]
	\item  \label{item1}	$\pi(Z\cup R)\subset \Xi$, where $R$ is defined in \cref{eq:rami};
	\item Let $V\subset X$ be any closed subvariety such that $V\not\subset \Xi$.
	Let $W\to V$ be a smooth modification and let $\overline{W}$ be a smooth projective compactification of $W$ such that $D_{\overline{W}}:=\overline{W}-W$ is a simple normal crossing divisor,  and $W\to X$ extends to a morphism $\overline{W}\to \overline{X}$. 
	Let $\overline{S}$ be the  normalization of an irreducible component of $\overline{W}\times_{\overline{X}}\overline{\xsp}$.  
	\begin{equation*}
		\begin{tikzcd}
			\overline{S}  \arrow[r, "g"] \arrow[d, "p"]&  \overline{\xsp}\arrow[d, "\pi"]\\
			\overline{W} \arrow[r] & \overline{X}
		\end{tikzcd}
	\end{equation*}
	Then by \cite[Claim 6.5]{CDY22} the finite morphism	$p:\overline{S}\to \overline{W}$ is a Galois morphism with Galois group $H'\subset H$. The morphism $g:\overline{S}\to \overline{\xsp}$ is $H'$-equivariant. 
	\item Define $\psi_j:=g^*\eta_j\in  H^0(\overline{S}, p^*\Omega_{\overline{W}}(\log D_{\overline{W}}))$ for $j=1,\ldots,m$. 
	Let $I$ be the set of all $(i,j)$ such that
	\begin{itemize}
		\item $\eta_i-\eta_j\neq 0$;
		\item   the image of $S\to \xsp$ intersects with $\{z\in\overline{\xsp}\mid (\eta_i-\eta_j)(z)=0\}$. 
	\end{itemize}
	Then by  by \cite[Claim 6.6]{CDY22},  
	for $(i,j)\in I$, $\psi_i-\psi_j\not=0$ in $H^0(\overline{S}, p^*\Omega_{\overline{W}}(\log D_{\overline{W}}))$. 
\end{enumerate}   
We set
\begin{align*} 
	R':=\{z\in \overline{S} \mid \exists   (i,j)\in I  \mbox{ with } (\psi_i-\psi_j)(z)=0 \}.
\end{align*}  
Then $R'$ is a proper Zariski closed subset of $\overline{S}$.  	Denote by  $R_0$ the ramification locus of $p:\overline{S}\to\overline{W}$.  By the purity of branch locus of finite morphisms,  we know that $R_0$ is a (Weil) divisor, and thus $p(R_0)$ is also a divisor.  
Moreover $R_0=p^{-1}\big(p(R_0)\big)$ since $p$ is Galois with Galois group $H'$.  Denote by $E$ the sum of prime components of $p(R_0)$ which intersect $W$.   
One observes that $S-p^{-1}(E)\to W-E$ is finite \'etale.

Since 	 $\pi:\overline{\xsp}\to \overline{X}$  is \'etale over $\overline{\xsp}-R$, it follows that 
$p$ is \'etale over $\overline{S}-g^{-1}(R)$ and thus 
$
R_0\cap S\subset R'\cap S.
$

Since  $\overline{S}\to \overline{\xsp}$ is $H'$-equivariant, it follows that any $h\in H'$ acts on $$\{\psi_i-\psi_j\}_{(i,j)\in I}\subset H^0(\overline{S}, p^*\Omega_{\overline{W}}(\log D_{\overline{W}}))$$ as a permutation by our choice of $I$. 
Define a section
$$
\sigma:=\prod_{h\in H'}\prod_{(i,j)\in I}h^*(\psi_i-\psi_j)\in H^0(\overline{S}, \Sym^{\ell}p^*\Omega_{\overline{W}}(\log D_{\overline{W}})), 
$$
which is non-zero and vanishes at  $R'$ by our choice of $I$. 
Then it is invariant under the $H'$-action and thus descends to a section
$$
\sigma^{H'}\in H^0(\overline{W}, \Sym^{\ell}\Omega_{\overline{W}}(\log D_{\overline{W}}))
$$
so that $p^*\sigma^{H'}=\sigma$.      Since $R_0\cap S\subset R'\cap S$ and $p^{-1}(p(R_0))=R_0$,     $\sigma^{H'}$ vanishes at   the divisor $E$. This implies that there is a non-trivial morphism
\begin{align}\label{eq:CP1}
	\mathcal{O}_{\overline{W}}(E)\to \Sym^{\ell}\Omega_{\overline{W}}(\log D_{\overline{W}}).
\end{align} 
Recall that  $S$ is of log general type by \Cref{item1} and the choice of $Z$.
Since $S-p^{-1}(E)\to W-E$ is finite \'etale,  $W\backslash E$ is also of log general type.  By \cite[Lemma 3]{NWY13}, $K_{\overline{W}}+E+D_{\overline{W}}$ is big.   
Together with \eqref{eq:CP1} we can apply \cite[Corollary 8.7]{CP19} to conclude that $K_{\overline{W}}+ D_{\overline{W}}$ is big.
Hence $W$, so $V$ is of log general type. 

\medspace

\noindent \ref{h4} $\Rightarrow$ \ref{h1}:
This is obvious.
Thus we have completed the proof of \cref{thm:GGL}. 
\end{proof}
\begin{rem}\label{rem}
From the proof of (i) $\Rightarrow$ (ii) in \cref{thm:GGL}, we note that the weaker  condition of $\xsp$ being of log general type is sufficient to establish the pseudo Picard hyperbolicity of $X$. 
This observation will be particularly relevant for the  proofs of \cref{prop:sab} and \cref{thm:fun} below. 
\end{rem}

\subsection{Comparison of  special subsets}
This subsection is devoted to the proof of \cref{corx}. We first prove a lemma on the special subset of the big representation in positive characteristic. 
\begin{lem}\label{lem:proper}
Let $X$ be a smooth quasi-projective variety. Let $\varrho:\pi_1(X)\to \GL_{N}(K)$ be a big   representation where $K$ is a field of  positive characteristic.    Then  the special subset  ${\rm Sp}(\varrho)$ defined in \cref{dfn:special}   is a proper Zariski closed subset of $X$.  
\end{lem}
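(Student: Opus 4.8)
The plan is to rewrite ${\rm Sp}(\varrho)$ using the Shafarevich morphism of \cref{thm:Sha22} and then to use bigness of $\varrho$ to show this morphism is generically finite; the conclusion is then formal. By \cref{thm:Sha22}, for any positive-dimensional irreducible (hence connected) closed subvariety $Z\subset X$, the finiteness of $\iota^*\varrho(\pi_1(Z))=\varrho({\rm Im}[\pi_1(Z)\to\pi_1(X)])$ is equivalent to ${\rm sh}_\varrho(Z)$ being a point. Therefore ${\rm Sp}(\varrho)$ equals the Zariski closure of the union of all positive-dimensional irreducible closed subvarieties of $X$ contracted by ${\rm sh}_\varrho:X\to{\rm Sh}_\varrho(X)$.

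Next I would show that ${\rm sh}_\varrho$ is generically finite (in fact birational, since its general fibers are connected). Suppose not; then $\dim{\rm Sh}_\varrho(X)<\dim X$, and choosing a dense Zariski open $U_0\subset{\rm Sh}_\varrho(X)$ over which the fibers of ${\rm sh}_\varrho$ are connected, the preimage ${\rm sh}_\varrho^{-1}(U_0)$ is a dense Zariski open of $X$ and so contains a very general point $x_0$ of $X$. Let $F^o$ be an irreducible component through $x_0$ of the fiber ${\rm sh}_\varrho^{-1}({\rm sh}_\varrho(x_0))$; by the fiber-dimension inequality, $\dim F^o\geq \dim X-\dim{\rm Sh}_\varrho(X)\geq 1$, so $F^o$ is a positive-dimensional irreducible closed subvariety contracted by ${\rm sh}_\varrho$. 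By \cref{thm:Sha22} applied to $Z=F^o$, the group $\varrho({\rm Im}[\pi_1(F^o)\to\pi_1(X)])$ is finite; since ${\rm Im}[\pi_1((F^o)^{\rm norm})\to\pi_1(X)]$ is a subgroup of ${\rm Im}[\pi_1(F^o)\to\pi_1(X)]$, the image $\varrho({\rm Im}[\pi_1((F^o)^{\rm norm})\to\pi_1(X)])$ is also finite. As $x_0$ is very general, this contradicts the bigness of $\varrho$.

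Finally, set $\Xi:=\{x\in X\mid\dim_x{\rm sh}_\varrho^{-1}({\rm sh}_\varrho(x))\geq 1\}$. By upper semicontinuity of fiber dimension, $\Xi$ is Zariski closed in $X$; and since ${\rm sh}_\varrho$ is dominant and generically finite, there is a dense open $V\subset{\rm Sh}_\varrho(X)$ with finite fibers over it, so $\Xi\subseteq X\setminus{\rm sh}_\varrho^{-1}(V)\subsetneq X$ is proper. Every positive-dimensional irreducible $Z$ contracted by ${\rm sh}_\varrho$ lies in a single fiber, so $Z\subseteq\Xi$; hence the union of all such $Z$ is contained in $\Xi$, and since $\Xi$ is closed, ${\rm Sp}(\varrho)\subseteq\Xi\subsetneq X$. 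As ${\rm Sp}(\varrho)$ is by definition a Zariski closure, this shows it is a proper Zariski closed subset of $X$.

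I expect the second step---deducing generic finiteness of the Shafarevich morphism from bigness---to be the only real content: one has to produce, through a very general point, an actual positive-dimensional subvariety on which $\varrho$ has finite image and then contradict bigness. The passage between the fundamental group of $F^o$ and that of its normalization is immediate from the inclusion of images, and the characterization of contracted subvarieties is exactly \cref{thm:Sha22}, so no further work is needed there. The last step is then purely scheme-theoretic via upper semicontinuity of fiber dimension.
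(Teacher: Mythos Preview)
Your proposal is correct and follows essentially the same route as the paper: both invoke \cref{thm:Sha22}, deduce from bigness that ${\rm sh}_\varrho$ is birational, and then bound ${\rm Sp}(\varrho)$ by a proper Zariski closed subset coming from the non-isomorphism locus of ${\rm sh}_\varrho$. The paper simply asserts the birationality (``Since $\varrho$ is big, it follows that ${\rm sh}_\varrho$ is birational'') and takes the complement of the open set on which ${\rm sh}_\varrho$ is an isomorphism, whereas you spell out the contradiction through a very general point and use instead the locus $\Xi$ of positive fibre dimension; both choices work and the arguments are interchangeable.
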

\begin{proof}
Note that the Shafarevich morphism ${\rm sh}_\varrho:X\to {\rm Sh}_\varrho(X)$ exists, which is dominant morphism with connected general fibers 
(cf. \cref{thm:Sha22}).
Since $\varrho$ is big, it follows that ${\rm sh}_\varrho$ is birational.  Therefore, we have a Zariski dense open set $X^\circ\subset X$ such that ${\rm sh}_\varrho:X^\circ\to {\rm sh}_\varrho(X^\circ)$ is an isomorphism. 
By \cref{thm:Sha22}, for any positive dimensional  closed subvariety $Z\subset X$ not contained in $X\setminus X^\circ$, $\varrho({\rm Im}[\pi_1(Z)\to \pi_1(X)])$ is infinite.  
Hence  ${\rm Sp}(\varrho)\subset  X\setminus X^\circ$.   
This concludes that ${\rm Sp}(\varrho)$   is a proper Zariski closed subset of $X$.  
\end{proof}
Before proceeding to the proof of  \cref{corx} (see \cref{cor:GGL} below), we will first prove the following result.
\begin{proposition}\label{prop:sab}
Let $X$ be a smooth quasi-projective variety. Let $\varrho:\pi_1(X)\to \GL_{N}(K)$ be a big   representation where $K$ is a field of  positive characteristic.  
Assume that there exists a holomorphic map $f:\bD^*\to X$ such that $f$ has essential singularity at the origin and that the image $f(\bD^*)$ is Zariski dense.
Then  $\Spab(X)=X$. 
\end{proposition}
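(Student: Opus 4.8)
The plan is to feed the given punctured‑disk map into the semi‑abelian structure underlying the proof of \cref{thm:GGL}. First I would recall, as in that proof (drawn from \cite[Proof of Theorem H]{CDY22}), the finite ramified Galois cover $\pi\colon\overline{\xsp}\to\overline X$ with group $H$, the $H$-equivariant morphism $a\colon\xsp\to A$ to a semi-abelian variety, and the ramification set $R$, such that ${\rm sh}_\varrho\colon X\to{\rm Sh}_\varrho(X)$ is the quasi-Stein factorization of the descended map $X\to A/H$. Since $\varrho$ is big, ${\rm sh}_\varrho$ is birational (as in the proof of \cref{lem:proper}), and hence, by \cref{claim:finite}, $a$ is generically finite onto its image $W:=a(\xsp)$, which has dimension $\dim X$.

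Next I would transport $f$ to the cover. As $f(\bD^*)$ is Zariski dense it is not contained in $\pi(R)$, so the construction recalled around the diagram \eqref{figure:curve} in the proof of \cref{thm:GGL} yields a surjective finite map $p\colon Y\to\bD^*$ from a connected Riemann surface $Y$ (a punctured disk near the relevant end) and a lift $g\colon Y\to\xsp$ with $\pi\circ g=f\circ p$. Then $g$ still has an essential singularity over the puncture — otherwise applying $\pi$ to a holomorphic extension of $g$ would extend $f\circ p$ — and $g(Y)$ is Zariski dense in $\xsp$, since $\pi(g(Y))=f(\bD^*)$ is dense in $X$ and $\pi$ is finite. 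Set $h:=a\circ g\colon Y\to A$. Because $g(Y)$ is Zariski dense and $a$ is generically finite onto $W$, the Zariski closure of $h(Y)$ is exactly $W$. If $\dim X=1$ we are done: a smooth curve carrying a dense essentially-singular punctured-disk map is not pseudo Picard hyperbolic, hence not of log general type by \cref{thm:GGL}, hence is $\bP^1$, $\bC$, $\bG_m$, or an elliptic curve, each of which is dominated by a semi-abelian variety (respectively $\bG_m\hookrightarrow\bP^1$, $\bG_m\hookrightarrow\bC$, the identity of $\bG_m$, the identity of the elliptic curve) and therefore satisfies $\Spab(X)=X$. So assume $\dim X\ge 2$; then $h$ itself has an essential singularity, since otherwise a holomorphic extension of $h$ to a smooth equivariant compactification $\overline A$ would have image a compact algebraic curve, forcing $\dim W\le 1$.

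The geometric core is the structure theory of holomorphic curves from a punctured disk into a semi-abelian variety, together with the fact (Noguchi--Winkelmann--Yamanoi, cf. \cite{NWY13}) that a subvariety of a semi-abelian variety which is of log general type is pseudo Picard hyperbolic. Applying this to $h$ and iterating along the tower of quotients $A\to A/B$ provided by the Kawamata--Ueno structure theorem, I would deduce that $W$ is \emph{not} of log general type, that $W$ is a union of translates of a positive-dimensional semi-abelian subvariety $B\subseteq A$ with $W/B\hookrightarrow A/B$ of log general type, and — since pushing $h$ forward to $A/B$ again produces a Zariski-dense essentially-singular disk in $W/B$ — that $W/B$ is at most a curve (the case $\dim W/B=0$ meaning $W$ is a translate of a semi-abelian variety); the remaining low-dimensional cases are handled directly. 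Feeding this back through $a$, using Kawamata's theorem that a variety of log Kodaira dimension zero admitting a generically finite morphism to a semi-abelian variety is birational to one, one obtains that $\xsp$ is covered, up to Zariski closure, by images of semi-abelian varieties, i.e. $\Spab(\xsp)=\xsp$. Finally, since $\xsp\to X$ is finite and surjective, any rational map from a semi-abelian variety to $\xsp$ that is regular off codimension $\ge 2$ composes to such a map to $X$, whence $\Spab(X)=X$.

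I expect the main obstacle to be precisely this last transfer through $a$. The map $a$ is only generically finite and is genuinely ramified, so the fact that $W=a(\xsp)$ is swept out by translates of semi-abelian subvarieties does not formally propagate to $\xsp$ — a ramified cover of a semi-abelian variety can well be of general type — and there is no literal lift of those translates to $\xsp$. What rescues the argument is that $g$ remains Zariski dense and essentially singular after every push-forward in the construction, so that the log-general-type strata produced by the Kawamata--Ueno decomposition are forced to be non-positive-dimensional; combined with Kawamata's birational characterization in log Kodaira dimension zero, this pins down the birational structure of $\xsp$. Making the bookkeeping of the $H$-action, the ramification divisor $R$, and the iterated Stein factorizations fit together cleanly is the delicate point.
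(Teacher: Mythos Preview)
Your proposal takes a more laborious route than the paper and, as you correctly anticipate, does not close the gap at the ``transfer back through $a$'' step. Showing that $W=a(\xsp)$ is swept out by translates of a semi-abelian subvariety $B$ gives no direct information about $\Spab(\xsp)$: the map $a$ is ramified and only generically finite, so the $B$-translates in $W$ need not lift to semi-abelian varieties mapping to $\xsp$. Your proposed rescue --- that the Zariski density of $g$ combined with Kawamata's characterization ``pins down the birational structure of $\xsp$'' --- remains vague; Kawamata's theorem applies to a variety of log Kodaira dimension zero, but you have not shown that $\xsp$ (or any fiber of a fibration of $\xsp$) has $\bar\kappa=0$. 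You have only shown this for $W$, or quotients of $W$, downstairs.

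The paper's proof avoids this difficulty by a much shorter argument. Rather than lifting $f$ and analyzing $W\subset A$, it observes (via \cref{rem}) that the step (i)$\Rightarrow$(ii) in the proof of \cref{thm:GGL} shows: if $\xsp$ is of log general type, then $X$ is pseudo Picard hyperbolic. The hypothesis on $f$ says $X$ is not pseudo Picard hyperbolic, so by contraposition $\xsp$ is \emph{not} of log general type. Now, since $\bar\kappa(\xsp)\ge 0$ (because $a$ is generically finite onto its image in $A$), the logarithmic Iitaka fibration $j\colon\xsp\dashrightarrow J(\xsp)$ has positive-dimensional general fiber $F$ with $\bar\kappa(F)=0$ and $\dim F=\dim a(F)$. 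At this point one applies \cite[Lemma~3.5]{CDY22} directly to $F$ to get $\Spab(F)=F$, whence $\Spab(\xsp)=\xsp$ and so $\Spab(X)=X$. The key maneuver you are missing is to work with the Iitaka fibration of $\xsp$ itself rather than with the Kawamata--Ueno decomposition of $W$; this places the $\bar\kappa=0$ condition exactly where Kawamata's theorem (packaged in \cite[Lemma~3.5]{CDY22}) can be applied upstairs, with no need to undo the ramification of $a$.
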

\begin{proof}
We will maintain the same notations as introduced in the proof of \cref{thm:GGL}.	Let $\pi:\xsp\to X$ be the Galois covering, as  defined therein.  
By the assumption, $X$ is not pseudo Picard hyperbolic.
Hence $\xsp$ is not of log general type by \cref{rem}. 
Recall that in the proof of \cref{thm:GGL},  we have a morphism $a:\xsp\to A$, where 
$A$ is a semiabelian variety, and $\dim \xsp=\dim a(\xsp)$. 
%By \cite[Corollary 4.2]{CDY22}, $\xsp$ is not of log general type. 
Note that $\bar{\kappa}(\xsp)\geq 0$. We denote by  $j:\xsp\dashrightarrow J(\xsp)$ the logarithmic Iitaka fibration of $\xsp$, whose general fibers are positive dimensional as $\xsp$ is not of log general type. After replacing $\xsp$ be a proper birational model, we might assume that $\xsp$ is smooth and $j$ is regular. Then for a very general fiber $F$ of $j$, we have $\bar{\kappa}(F)=0$ and $\dim F=\dim a(F)>0$. By \cite[Lemma 3.5]{CDY22}, we have $\Spab(F)=F$. Consequently, $\Spab(\xsp)=\xsp$. Since $\pi:\xsp\to X$ is surjective, it follows that $\Spab(X)=X$.  The proposition is proved. 
\end{proof}

\begin{thm}[=\cref{corx}]\label{cor:GGL}
Let $X$ be a  quasi-projective normal variety. Let $$\varrho:\pi_1(X)\to \GL_{N}(K)$$ be a big   representation where $K$ is a field of  positive characteristic.  Then  we have
\begin{align*}%\label{eq:special same}
	\Spab(X)\setminus {\rm Sp}(\varrho) =	  \Spalg(X)\setminus        {\rm Sp}(\varrho)= \Spp(X)\setminus        {\rm Sp}(\varrho)= \Sph(X)\setminus        {\rm Sp}(\varrho).
\end{align*}
We have ${\rm Sp}_\bullet(X)\subsetneq X$ if and only if $X$ is of log general type, 	where ${\rm Sp}_{\bullet}$  denotes any of $\Spab$, $\Spalg$,  $\Sph$ or $ \Spp$. 
\end{thm}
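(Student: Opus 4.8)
The plan is to first reduce to the smooth case by replacing $X$ with a desingularization $\mu\colon X'\to X$ and $\varrho$ with $\mu^*\varrho$; since fibers of $\mu$ are compact, rationally connected and carry trivial $\varrho$, all of the special subsets $\mathrm{Sp}_\bullet$ and the set $\mathrm{Sp}(\varrho)$ pull back and push forward compatibly, so it suffices to prove the equalities on $X'$. Having done this, I would establish the chain of inclusions that holds \emph{for free}: away from $\mathrm{Sp}(\varrho)$ one always has $\Spab(X)\subset\Spalg(X)$ (semiabelian varieties are never of log general type) and $\Sph(X)\subset\Spp(X)$ (a map $\bC\to X$, precomposed with $z\mapsto 1/z$ on $\bD^*$, either extends — forcing the image into a proper subvariety which one checks lands in $\mathrm{Sp}(\varrho)$ — or has essential singularity), and by \cref{thm:GGL} applied to subvarieties not meeting $\mathrm{Sp}(\varrho)$, one gets $\Spalg\subset\Sph$ and $\Spp\subset\Spalg$ modulo $\mathrm{Sp}(\varrho)$. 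The delicate point is therefore the single inclusion $\Spp(X)\setminus\mathrm{Sp}(\varrho)\subset\Spab(X)\setminus\mathrm{Sp}(\varrho)$, i.e. that an essential-singularity disk whose Zariski closure is not contained in $\mathrm{Sp}(\varrho)$ sweeps out a set contained in $\Spab(X)$.

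For that inclusion I would argue as follows. Let $f\colon\bD^*\to X$ have essential singularity at the origin with $V:=\overline{f(\bD^*)}^{\mathrm{Zar}}\not\subset\mathrm{Sp}(\varrho)$. Passing to a resolution $W\to V$ with simple normal crossing boundary and pulling back $\varrho$, the restriction $\varrho|_W$ is again big (since $V\not\subset\mathrm{Sp}(\varrho)$ and by \cref{thm:Sha22} the image of $\pi_1$ of any subvariety meeting the locus where $\mathrm{sh}_\varrho$ is an isomorphism is infinite), and $f$ lifts to a Zariski-dense essential-singularity disk $\bD^*\to W$. Now \cref{prop:sab} applies verbatim on $W$: it gives $\Spab(W)=W$, hence $V\subset\Spab(X)$, and since $f(\bD^*)\subset V$ this shows the required containment. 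Running this over all such $f$ and taking Zariski closures yields $\Spp(X)\setminus\mathrm{Sp}(\varrho)\subset\Spab(X)\setminus\mathrm{Sp}(\varrho)$, and combined with the free inclusions this closes the cycle, proving all four sets agree outside $\mathrm{Sp}(\varrho)$.

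For the final assertion, note first that if $X$ is of log general type then by \cref{thm:GGL} it is strongly of log general type and pseudo Picard and pseudo Brody hyperbolic, so each of $\Spalg(X),\Spp(X),\Sph(X)$ is a proper subset directly from its definition, and $\Spab(X)$ is proper because any rational map from a semiabelian variety with large image would contradict pseudo Brody hyperbolicity off the special locus together with the equality just proved; more precisely, $\Spab(X)\subset\Spalg(X)\cup\mathrm{Sp}(\varrho)$ and $\mathrm{Sp}(\varrho)\subsetneq X$ by \cref{lem:proper}, so $\Spab(X)\subsetneq X$. Conversely, if $\mathrm{Sp}_\bullet(X)\subsetneq X$ for one (equivalently, by the displayed equalities and \cref{lem:proper}, any) of the four, then in particular $\Spalg(X)\subsetneq X$, so $X$ itself — being a positive-dimensional subvariety not contained in the proper subset $\Spalg(X)$ — is of log general type. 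The main obstacle is ensuring that the lift of $f$ to the cover $\xsp$ (and then to $W$) remains Zariski dense and that bigness descends to $W$; both are handled by \cref{thm:Sha22} and the Zariski-density bookkeeping already present in the proof of \cref{thm:GGL}, so no genuinely new ingredient beyond \cref{prop:sab} is needed.
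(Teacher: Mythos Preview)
Your proposal is correct and follows essentially the same route as the paper: take the Zariski closure $Z$ of an essential-singularity disk (resp.\ a subvariety not of log general type), observe that $\varrho|_Z$ is big since $Z\not\subset\mathrm{Sp}(\varrho)$, and apply \cref{thm:GGL} and \cref{prop:sab} to get the needed inclusions. Two small remarks: the paper obtains the ``free'' chain $\Spab\subset\Sph\subset\Spp$ directly from \cite[Lemma~4.3]{CDY22} (a semiabelian variety carries Zariski-dense entire curves, and one composes with $z\mapsto e^{1/z}$ rather than $z\mapsto 1/z$), which is cleaner than your $\Spab\subset\Spalg$ (that inclusion is not quite ``semiabelian varieties are never of log general type'' --- it is a Ueno--Kawamata type statement about their \emph{images}); and the ``main obstacle'' you flag about lifting $f$ to $\xsp$ is not actually present, since \cref{prop:sab} already absorbs the passage to the spectral cover internally --- you only need the tautological lift of $f$ to the desingularization $W$ of its Zariski closure.
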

\begin{proof} 
By \cite[Lemma 4.3]{CDY22} one has  
\begin{align}\label{eq:subset}
	\Spab(X)\subseteq	 \Sph(X)\subseteq\Spp(X).   
\end{align}
Let $f:\bD^*\to X$ be a holomorphic map with essential singularity at the origin such that $f(\bD^*)\not\subset {\rm Sp}(\varrho)$. Let $Z$ be a desingularization of the Zariski closure of $f(\bD^*)$. 
By the definition of ${\rm Sp}(\varrho)$ in \cref{dfn:special}, we note that  the natural morphism $\iota:Z\to X$ induces a big   representation $\iota^*\varrho$.   
Since $\Spp(Z)=Z$,
\cref{thm:GGL} implies that $Z$ is not of log general type. 
Hence $\iota(Z)\subset \Spalg(X)$, which implies    
\begin{align}\label{eq:subset2}
	\Spp(X)\setminus        {\rm Sp}(\varrho)\subseteq  \Spalg(X)\setminus        {\rm Sp}(\varrho).
\end{align}
By \cref{prop:sab}, $\Spab(Z)=Z$. It follows that 
\begin{align*}\label{eq:subset3}
	\Spp(X)\setminus        {\rm Sp}(\varrho)\subseteq  \Spab(X)\setminus        {\rm Sp}(\varrho).
\end{align*}
Combining this with \eqref{eq:subset}, we get 
$$\Spab(X)\setminus {\rm Sp}(\varrho) = \Sph(X)\setminus        {\rm Sp}(\varrho)=\Spp(X)\setminus        {\rm Sp}(\varrho).$$
Thus it remains to prove $ \Spp(X)\setminus        {\rm Sp}(\varrho)= \Spalg(X)\setminus        {\rm Sp}(\varrho)$.

\medspace

Let $Y$ be a closed subvariety of $X$ that is not of log general type. Assume that $Y\not\subseteq {\rm Sp}(\varrho)$.     Let $\iota:Z\to Y$ be a desingularization. 
Then $\iota^*\varrho$ is a big representation. 
By \cref{thm:GGL}, we have $\Spp(Y)=Y$.
Hence $Y\subset \Spp(X)$, which implies 	\begin{align*}
	\Spalg(X)\setminus        {\rm Sp}(\varrho)\subseteq  \Spp(X)\setminus        {\rm Sp}(\varrho).
\end{align*}
Together with \eqref{eq:subset2}, we obtain $ \Spp(X)\setminus        {\rm Sp}(\varrho)= \Spalg(X)\setminus        {\rm Sp}(\varrho)$.
We have proved our theorem.
\end{proof}

\subsection{A characterization of hyperbolicity via fundamental groups} 
\begin{thm}\label{thm:fun}
Let $X$ be a quasi-projective normal variety. Let $\varrho:\pi_1(X)\to \GL_{N}(K)$ be a big   representation where $K$ is a  field of  positive characteristic.  If the Zariski closure $G$ of $\varrho(\pi_1(X))$ is a semisimple algebraic group, then  ${\rm Sp}_\bullet(X)\subsetneq X$  	where ${\rm Sp}_{\bullet}$  denotes any of $\Spab$, $\Spalg$,  $\Sph$ or $ \Spp$, in particular, $X$ is of log general type. 
\end{thm}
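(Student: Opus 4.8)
The strategy is to reduce the statement to the generalized Green--Griffiths--Lang package of \cref{thm:GGL,cor:GGL} and then to exploit semisimplicity of $G$ through the structure of the Shafarevich morphism exhibited in the proof of \cref{thm:GGL}. By \cref{cor:GGL} it is equivalent to prove that $X$ is of log general type, and after replacing $X$ by a smooth birational model (with a smooth compactification $\overline{X}$ whose boundary $D$ is simple normal crossing) we may assume $X$ smooth. By \cref{thm:GGL} together with \cref{rem}, it is then enough to show that the Galois cover $\xsp$ occurring in the proof of \cref{thm:GGL} is of log general type. Recall that, since $\varrho$ is big, \cref{lem:20240226} realizes ${\rm sh}_\varrho$ as the Katzarkov--Eyssidieux reduction $s_\tau$ of a reductive representation $\tau\colon\pi_1(X)\to\GL_{N'}(K')$ over a local field $K'$ of positive characteristic, and that the proof of \cref{thm:GGL} provides an $H$-equivariant morphism $a\colon\xsp\to A$ to a semi-abelian variety, an $H$-invariant family of logarithmic $1$-forms $\{\eta_j\}$ on $\overline{\xsp}$ which are, by construction of $a$, pullbacks $\eta_j=a^{*}\omega_j$ of translation-invariant forms on $A$, and (by \cref{claim:finite}) the equality $\dim\xsp=\dim a(\xsp)$; in particular $a$ is generically finite onto its image $Y:=a(\xsp)$.

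By Kawamata's structure theorem for closed subvarieties of semi-abelian varieties, $Y$ is of log general type if and only if its stabilizer $B:=\{x\in A:x+Y=Y\}$ is trivial, and since $a$ is generically finite onto $Y$ the same equivalence holds for $\xsp$. Assume, for contradiction, that $\dim B>0$. Then the morphism $q\colon\xsp\to C$ onto a smooth model of $Y/B\subseteq A/B$ satisfies $\dim C=\dim\xsp-\dim B<\dim\xsp$, and a general fiber $F$ of $q$ has $\dim F=\dim B$, with $a(F)$ a translate of $B$ and $a|_F\colon F\to a(F)$ generically finite and surjective. Consequently every $\eta_j|_F=(a|_F)^{*}(\omega_j|_{a(F)})$ is the pullback of a translation-invariant $1$-form on $B$; that is, the spectral $1$-forms of $\tau$ become invariant upon restriction to $F$. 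Moreover, over a Zariski-dense open subset of $C$ the map $q$ is a topological fiber bundle, so $\bar K:={\rm Im}[\pi_1(F)\to\pi_1(\xsp)]$ is a normal subgroup of $\pi_1(\xsp)$; since ${\rm Im}[\pi_1(\xsp)\to\pi_1(X)]$ has finite index in $\pi_1(X)$ and $G$ is connected, $G_1:=\overline{\varrho(\bar K)}^{\rm Zar}$ is a normal algebraic subgroup of $G$.

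The crux is to prove that $\varrho(\bar K)$ is virtually solvable. Granting this, $G_1^{\circ}$ is a connected normal solvable subgroup of the semisimple group $G$, hence trivial, so $\varrho(\bar K)$ is finite; as $Z:=\pi(F)$ passes through a general point of $X$ and $F\to Z$ is finite surjective, this forces $\varrho({\rm Im}[\pi_1(Z^{\rm norm})\to\pi_1(X)])$ to be finite, contradicting bigness of $\varrho$ — and this contradiction proves $\xsp$ of log general type, hence the theorem via \cref{thm:GGL,cor:GGL}. To establish that $\varrho(\bar K)$ is virtually solvable I would transpose the argument of \cite[Theorem A]{CDY22} to the present setting: because the forms $\eta_j|_F$ are invariant, the $\tau$-equivariant harmonic map from $\widetilde{X}$ to the Bruhat--Tits building of $\GL_{N'}(K')$, restricted to $\widetilde{F}$, factors through the Euclidean space $\widetilde{B}\simeq\bC^{\dim B}$ and has image a flat; since this restriction is unbounded (its image in $A/H$ is positive-dimensional), the flat is positive-dimensional, $\tau(\bar K)$ fixes a point at infinity of the building, and hence lies in a proper parabolic subgroup. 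Running this over all the reductions entering the construction of ${\rm sh}_\varrho$ and using that $G$ is semisimple — a subgroup which, after base change to every relevant completion, stabilizes a flat in the corresponding building cannot be Zariski dense in a nontrivial semisimple normal subgroup of $G$ — yields the virtual solvability of $\varrho(\bar K)$. This last transfer, from a "stabilizes a flat at every place'' statement about the reductions to the internal structure of $\varrho(\bar K)$ in $G$, is the main obstacle, and it is precisely here that the hypothesis that $G$ is semisimple (rather than merely reductive, cf. \cref{rem:sharp}) is indispensable.
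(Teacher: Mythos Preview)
Your reduction to showing that $\xsp$ is of log general type is exactly right and matches the paper. The divergence is in how you prove this, and there the paper takes a much shorter route that avoids the gap you yourself flag.

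The paper does \emph{not} analyse the stabilizer $B$ of $a(\xsp)$ in $A$. Instead it passes to a desingularization $Y\to\xsp$ on which the logarithmic Iitaka fibration $j:Y\to J(Y)$ is regular, and looks at a very general fiber $F$ of $j$. Such an $F$ satisfies $\bar\kappa(F)=0$ and, because $a$ is generically finite onto its image, also $\dim F=\dim a(F)$. These two conditions together are precisely the hypotheses of \cite[Lemma~3.3]{CDY22}, which yields directly that $\pi_1(F)$ is \emph{abelian}. From there the argument is immediate: ${\rm Im}[\pi_1(F)\to\pi_1(Y)]$ is normal (cf.\ \cite[Lemma~2.2]{CDY22}), so the Zariski closure $N$ of its image under $(\pi\circ\mu)^*\varrho$ is a normal subgroup of $G$ whose identity component is a torus, hence trivial by semisimplicity of $G$; bigness of $\varrho$ then forces $\dim F=0$.

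By contrast, your fiber $F$ (of the map to $Y/B$) has no reason to satisfy $\bar\kappa(F)=0$, so you cannot invoke \cite[Lemma~3.3]{CDY22} and are forced into the building-theoretic detour. That detour has a real gap: knowing that the spectral $1$-forms $\eta_j|_F$ are translation-invariant does \emph{not} imply that the harmonic map $u|_{\widetilde F}$ lands in a single flat of the building --- the local apartment in which $u$ is pluriharmonic can vary from chamber to chamber, and controlling the differential does not pin down the global image. Moreover, even granting that $\tau(\bar K)$ lies in a parabolic of $\GL_{N'}(K')$, the ``transfer'' you describe from statements about the auxiliary reduction $\tau$ back to the Zariski closure of $\varrho(\bar K)$ inside $G$ is not justified; $\tau$ and $\varrho$ live over different groups and different fields, and there is no general mechanism of the kind you sketch. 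The paper sidesteps all of this by choosing the fibration (Iitaka) so that abelianity of $\pi_1(F)$ comes for free.
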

\begin{proof}
We may assume that $K$ is algebraically closed.  Replacing $X$ by a desingularization, we may assume that $X$ is smooth. 	We will still maintain the same notations as introduced in the proof of \cref{thm:GGL}.	Let $\pi:\xsp\to X$ be the Galois covering  defined therein.  Consider the representation $\pi^*\varrho:\pi_1(\xsp)\to G(K)$, which is Zariski dense as ${\rm Im}[\pi_1(\xsp)\to \pi_1(X)]$ is a finite index subgroup of $\pi_1(X)$. By the proof of \cref{thm:GGL}, there exists a morphism $a:\xsp\to A$ where $A$ is a semiabelian variety such that $\dim \xsp=\dim a(\xsp)$. Hence we have $\bar{\kappa}(\xsp)\geq 0$.  
\begin{claim}\label{claim:20250710}
	$\xsp$ is of log general type. 
\end{claim}
\begin{proof}
	Let $\mu:Y\to \xsp$ be a desingularization such that the logarithmic Iitaka fibration $j:Y\to J(Y)$ is regular. For a very general fiber $F$ of $j$, we have $\bar{\kappa}(F)=0$ and $\dim F=\dim a(F)$. By \cite[Lemma 3.3]{CDY22}, we have $\pi_1(F)$ is abelian. 
	
	We write $\tau=(\pi\circ\mu)^*\varrho:\pi_1(Y)\to G(K)$. Notably,  $\tau(\pi_1(Y))$ is Zariski dense in $G$.  By \cite[Corollary 2.9.3]{Kol93} (or \cite[Lemma 2.2]{CDY22}),   ${\rm Im}[\pi_1(F)\to \pi_1(Y)]$ is a normal subgroup of $\pi_1(Y)$.  Consequently,    the Zariski closure  $N$ of $\tau({\rm Im}[\pi_1(F)\to \pi_1(Y)])$ is a normal subgroup of $G$.  It's worth noting that the connected component   $N^\circ$ of $N$ is a tori since $\tau({\rm Im}[\pi_1(F)\to \pi_1(Y)])$ is commutative. Therefore, $N^\circ$  must be trivial since $G$ is assumed to be semisimple.  Consequently,   $\tau({\rm Im}[\pi_1(F)\to \pi_1(Y)])$ is finite. 
	
	Given our assumption that $\varrho$ is big, we conclude that $\tau$ is also big. Therefore,  we arrive at the conclusion that $\dim F=0$, leading us to deduce that both $Y$ and, consequently, $\xsp$ are of log general type.
\end{proof}
Now using \Cref{claim:20250710},  
we can carry out the same proof as the step (i)$\Rightarrow$(ii)  in the proof  of \cref{thm:GGL} to conclude that $X$ is pseudo Picard hyperbolic. It's essential to emphasize that in that proof, the condition of $X$ being of log general type is only used to infer 
that    $\xsp$ is  of log general type. See \cref{rem}.  We now apply \cref{cor:GGL} to conclude that ${\rm Sp}_\bullet\subsetneq X$  	where ${\rm Sp}_{\bullet}$  denotes any of $\Spab$, $\Spalg$,  $\Sph$ or $ \Spp$.  
\end{proof}
\begin{rem}\label{rem:sharp}
Note that the condition in \cref{thm:fun} is sharp. For example,   the representation 
\begin{align*}
	\bZ \simeq  \pi_1(\bC^*)&\to  \GL_1(\bF_p(t))\\
	n&\mapsto  t^n
\end{align*}
is a big and Zariski dense representation. However, $\bC^*$ is not of general type and contains a Zariski dense entire curve. This example demonstrates that the semisimplicity of $G$ is necessary for \cref{thm:fun}  to hold.

Furthermore, the condition of bigness   in \cref{thm:fun} is also indispensable. Indeed, it's worth noting that  for any complex algebraic variety $X$, we have $\pi_1(X)\simeq \pi_1(X\times \bP^1)$. The requirement of $\varrho$ being  big  effectively excludes the case of  uniruled varieties $X\times \bP^1$, which is apparently non-hyperbolic. 
\end{rem}  
In the next three sections, we will provide some applications    of    \cref{thm:fun}. 

\section{On Campana's abelianity conjecture}
As our first application of \cref{thm:fun}, in this section we prove Campana's abelianity conjecture in the context of representations in positive characteristic.   
\subsection{Special and $h$-special varieties: properties and conjectures}
We first recall the definition of special varieties by Campana \cite{Cam04,Cam11}.
\begin{dfn}[Campana's specialness]\label{def:special}
Let $X$ be a    quasi-projective normal variety.
\begin{thmlist}
	\item	$X$ is \emph{weakly special} if for any finite \'etale cover $\widehat{X}\to X$ and any proper birational modification $\widehat{X}'\to \widehat{X}$, there exists no  dominant morphism  $\widehat{X}'\to Y$  with connected general fibers such that $Y$ is a positive-dimensional  quasi-projective  variety of  log  general type.  
	\item  $X$ is \emph{special} if  for  any proper birational modification $X'\to X$ there is no dominant morphism $X'\to  Y$   with
	connected general fibers over  a positive-dimensional  quasi-projective  variety  $Y$ such that the \emph{Campana orbifold base}  (or simply orbifold base) is  of log  general type.  
	\item $X$ is \emph{Brody special} if  it contains a Zariski dense entire curve. 
\end{thmlist}
\end{dfn}
%Let us briefly recall the definitions of \emph{classical orbifold base} and  \emph{orbifold base}  of a fibration $f:X\to Y$ between smooth projective varieties.    Let $|\Delta| \subset Y$ be the union of all codimension one irreducible components of the locus over which the schemetheoretic fibre of $f$ is not smooth. For each component $\Delta_i$ of $|\Delta|$, let $D_i:=\sum_{j \in J} m_{i, j} D_{i, j}$ be the union of all components of $f^* \Delta_i$ that are mapped surjectively onto $\Delta_i$ by $f$. Then one defines the multiplicity (resp. classical multiplicity)  of $f$  along $\Delta_i$ by $m_i: =\inf \left\{m_{i, j}, j \in J\right\}$ (resp. $m'_i: ={\rm gcd} \left\{m_{i, j}, j \in J\right\}$) and the $\mathbb{Q}$-divisors
%$$
%\Delta(f):=\sum_{i \in I}\left(1-1 / m_i\right) \Delta_i, \quad  \Delta'(f):=\sum_{i \in I}\left(1-1 / m'_i\right) \Delta_i.
%$$
%The pair $(X, \Delta(f))$ (resp.  $(X,\Delta'(f))$) is called the orbifold base (resp. classical orbifold base) of the fibration $f$.  One can see that $\Delta(f)\geq \Delta'(f)$.   %The fibration is said to be of general type if its orbifold base is of general type.  

Campana defined $X$ to be \emph{$H$-special} if $X$ has vanishing Kobayashi pseudo-distance.
Motivated by \cite[11.3 (5)]{Cam11}, in \cite[Definition 1.11]{CDY22} we introduce the following definition. 
\begin{dfn}[$h$-special]\label{defn:20230407}
Let $X$ be a smooth quasi-projective variety.
We define the equivalence relation $x\sim y$ of two points $x,y\in X$ iff there exists a sequence of holomorphic maps $f_1,\ldots,f_l:\mathbb C\to X$ such that letting $Z_i\subset X$ to be the Zariski closure of $f_i(\mathbb C)$, we have 
$$x\in Z_1, Z_1\cap Z_2\not=\emptyset, \ldots, Z_{l-1}\cap Z_l\not=\emptyset, y\in Z_l.$$
We set $R=\{ (x,y)\in X\times X; x\sim y\}$.
We define $X$ to be \emph{hyperbolically special} ($h$-special for short) iff $R\subset X\times X$ is Zariski dense.
\end{dfn} 
By definition, rationally connected projective varieties are $h$-special without referring to a theorem of Campana and Winkelmann \cite{CW16}, who proved that all rationally connected projective varieties contain Zariski dense entire curves.   It also has the following properties. 
\begin{lem}[{\cite[Lemmas 10.2 \& 10.3 \& 10.4]{CDY22}}]\label{lem:preserve}
\begin{enumerate}[label=(\alph*)]
	\item 	If a smooth quasi-projective variety $X$ is Brody special, then it is $h$-special.
	\item
	Let $X$ be an $h$-special quasi-projective variety.
	Let $S$ be a quasi-projective variety and let $p:X\to S$ be a dominant morphism.
	Then $S$ is $h$-special.
	\item Let $X$ be an $h$-special smooth quasi-projective variety, and let $p:X'\to X$ be a finite \'etale morphism or proper birational morphism from a quasi-projective variety $X'$.
	Then $X'$ is $h$-special. \qed
\end{enumerate} 
\end{lem}
\begin{proposition}[{\cite[Proposition 11.14]{CDY22}}]
\label{prop:pi1}If a quasi-projective smooth variety  $X$ is special or $h$-special, the quasi-albanese map $a:X\to A$ of $X$ is $\pi_1$-exact, i.e.,   we have the following exact sequence:
$$\pi_1(F)\to \pi_1(X)\to \pi_1(A)\to 1,$$ where $F$ is a general fiber of $a$. \qed
\end{proposition}

In \cite{Cam04,Cam11}, Campana proposed the following tantalizing abelianity conjecture. 
\begin{conjecture}[Campana]\label{conj:Campana}
A special   smooth projective variety has virtually abelian fundamental group. 
\end{conjecture}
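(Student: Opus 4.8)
\medskip

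\noindent\emph{Toward a proof (the linear case).} The conjecture is open in general; the plan below establishes it under the additional hypothesis --- the natural one for the methods of this paper --- that $\pi_1(X)$ is \emph{linear}, i.e.\ that there is a faithful representation $\varrho:\pi_1(X)\hookrightarrow\GL_N(K)$ over some field $K$. If $\pi_1(X)$ is finite there is nothing to prove, so assume it infinite. Since $\pi_1(X)$ is finitely presented we may take $K$ finitely generated over its prime field, so that ${\rm char}\, K$ is well defined; enlarging $K$ we assume it algebraically closed, and after a finite \'etale cover of $X$ --- again smooth projective and special, with fundamental group virtually abelian if and only if $\pi_1(X)$ is --- we may assume the Zariski closure $G:=\overline{\varrho(\pi_1(X))}$ is connected.

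The argument splits according to ${\rm char}\, K$. If ${\rm char}\, K=0$, embed $K$ in $\bC$ and apply Campana's theorem \cite[Theorem 7.4]{Cam04} (which is precisely the conclusion of \cref{main:abelian} for smooth projective special varieties in characteristic zero): $\varrho(\pi_1(X))$ is virtually abelian, hence so is $\pi_1(X)$ since $\varrho$ is faithful. If ${\rm char}\, K=p>0$, then \cref{main:abelian} applies verbatim --- a smooth projective special variety is in particular a smooth quasi-projective special variety --- giving $\varrho(\pi_1(X))$ virtually abelian, and again $\pi_1(X)$ is virtually abelian by faithfulness. This proves the conjecture for linear $\pi_1(X)$.

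For completeness I recall how the harder case ${\rm char}\, K=p$ of \cref{main:abelian} is obtained, since this is where the input of the present paper enters. One first shows $G$ is solvable: writing $H:=G/R(G)$ (semisimple, $R(G)$ the radical) and letting $\sigma:\pi_1(X)\to H(K)$ be the induced Zariski-dense representation, by \cref{thm:Sha22} and \cref{prop:factor} (available because $X$ is projective) a finite \'etale cover makes $\sigma$ factor through a \emph{large} representation $\tau:\pi_1({\rm Sh}_\sigma(X))\to\GL_N(K)$ on the normal projective Shafarevich variety, whose image is still Zariski-dense in $H$ (as the Shafarevich morphism is $\pi_1$-surjective). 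If $\dim {\rm Sh}_\sigma(X)>0$, then $\tau$ is big with semisimple Zariski closure, so by \cref{thm:fun} and \cref{cor:GGL} a resolution of ${\rm Sh}_\sigma(X)$ is of log general type; but ${\rm Sh}_\sigma(X)$ is a dominant image of the special variety $X$, hence special (by Campana, resp.\ \cref{lem:preserve} in the $h$-special case), and a positive-dimensional special variety is not of log general type --- a contradiction. Hence ${\rm Sh}_\sigma(X)$ is a point, $\sigma(\pi_1(X))$ is finite, $H$ is trivial, and $G=R(G)$ is solvable. Then $\varrho(\pi_1(X))$, and so $\pi_1(X)$, is a solvable K\"ahler group, which is virtually nilpotent by Delzant \cite[Th\'eor\`eme 1.4]{Del10}; being linear over an algebraically closed field of characteristic $p$, it is virtually abelian by \cref{lem:VR}.

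The main obstacle is the step eliminating the semisimple quotient $H$: it requires the factorization of $\sigma$ through its Shafarevich morphism (\cref{prop:factor}), the hyperbolicity criterion \cref{thm:fun} --- itself resting on the Katzarkov--Eyssidieux reduction and the log-general-type dichotomy used in \cref{thm:GGL} --- together with the fact that specialness descends to dominant images, with the minor subtlety that ${\rm Sh}_\sigma(X)$ is only normal, so one passes to a resolution before invoking \cref{cor:GGL}. By contrast, the final passage from ``solvable'' to ``virtually abelian'' in characteristic $p$ is comparatively soft, using only Delzant's theorem and the elementary finiteness of finitely generated $p$-subgroups of $\mathbb{G}_a$ encoded in \cref{lem:VR}.
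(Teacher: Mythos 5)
The statement is labeled as a conjecture in the paper and is genuinely open; the paper itself does not prove it but only records it as motivation for \cref{main:abelian}. Your proposal is upfront about this and correctly reduces to the case where $\pi_1(X)$ is linear over a field finitely generated over its prime field, then splits by characteristic. That restriction is exactly the reach of the paper's techniques, and your char-$0$/char-$p$ dichotomy matches what the paper implicitly does: for ${\rm char}\,K=0$ you invoke Campana's theorem \cite{Cam04} (cited the same way in the paragraph after \cref{main:abelian}), and for ${\rm char}\,K>0$ you invoke \cref{thm:abelian}. Both steps and the reduction to a faithful $\varrho$ with connected, algebraically closed Zariski closure are sound.

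Your sketch of the ${\rm char}\,K>0$ case deviates from the paper's proof of \cref{thm:abelian} in two reasonable ways that are worth noting. First, you use \cref{prop:factor} to factor $\sigma$ through its Shafarevich morphism, whereas the paper's proof of \cref{thm:abelian} uses \cref{lem:factor}; \cref{prop:factor} is available precisely because $X$ is projective, so in your setting this substitution is legitimate and in fact slightly cleaner. The precise form of the contradiction is also slightly different: the paper applies \cref{thm:fun} to a smooth $Y_1$ produced by \cref{lem:factor} and contradicts weak specialness directly, whereas you pass to a resolution of the (possibly singular) Shafarevich variety and use that dominant images of special varieties are special; both routes work, but one should be careful that \cref{thm:fun} and \cref{cor:GGL} are stated for normal quasi-projective varieties and the bigness/semisimplicity of the induced representation must be checked on the chosen model — you do address this. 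Second, your passage from $G$ solvable to $\pi_1(X)$ virtually abelian goes through Delzant's theorem plus \cref{lem:VR}, which is the route taken in Step 3 of \cref{thm:univ}; the paper's proof of \cref{thm:abelian} uses instead the $\pi_1$-exactness of the quasi-Albanese map (\cref{prop:pi1}) to show $[\pi_1(X),\pi_1(X)]$ is finitely generated, a route that also covers the quasi-projective case where Delzant is unavailable. For the projective case of the conjecture, your Delzant route is fine and arguably more economical.

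So: nothing wrong, essentially the same overall strategy as the paper's \cref{thm:abelian}, specialized to projective $X$, with a legitimate variant in the final reduction step. The one thing to be explicit about, which you do state but which should not be lost in polishing, is that this is a conditional result — the conjecture as stated requires no linearity hypothesis and remains open.
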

%Note that in cases where ${\rm char}\, K=0$,  \cref{conj:Campana}  was proved by Campana  \cite{Cam04}   (for $X$ special) and the second author   \cite{Yam10} (for $X$ Brody special)  

In \cite{CDY22} we observed that  \cref{conj:Campana} fails for quasi-projective varieties.  
As illustrated in \cite[Example 11.26]{CDY22}, we constructed a smooth quasi-projective variety  such that it is both special and Brody special, yet it has nilpotent fundamental group that is not virtually abelian. Consequently, within the quasi-projective context, we   revised \cref{conj:Campana} as follows.
\begin{conjecture}[{\cite[Conjecture 1.14]{CDY22}}]\label{conj:Campana2}
A special or $h$-special smooth quasi-projective variety has virtually nilpotent fundamental group. 
\end{conjecture}
In \cite{CDY22}, we confirm \cref{conj:Campana2} for quasi-projective varieties with linear fundamental groups in characteristic zero. 
\begin{thm}[{\cite[Theorem E]{CDY22}}]\label{thm:CDY}
Let $X$ be a special or $h$-special smooth quasiprojective variety. Let  $\varrho:\pi_1(X)\to \GL_{N}(\bC)$ be a linear representation. Then $\varrho(\pi_1(X))$  is virtually nilpotent. \qed
\end{thm}
By \cite[Example 11.26]{CDY22}, \cref{thm:CDY} is shown to be sharp. 
In the context of representations in positive characteristic,   we can obtain a stronger result as motivated by the following lemma.

\begin{lem}\label{lem:VR}
Let $\Gamma\subset \GL_{N}(K)$ be a finitely generated  
subgroup where $K$ is an algebraically closed field  of positive characteristic.  If $\Gamma$ is virtually nilpotent, then it is virtually abelian.
\end{lem}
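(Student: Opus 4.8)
The plan is to pass to the Zariski closure of $\Gamma$ inside $\GL_{N}$ over $K$ and invoke the structure theory of connected nilpotent linear algebraic groups; this reduces the whole statement to \cref{lem:finite group}.

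First I would carry out the routine reductions. Since a finite-index subgroup of a finitely generated group is again finitely generated (\cite[Proposition 4.17]{ST00}), we may replace $\Gamma$ by a finite-index subgroup and so assume $\Gamma$ is nilpotent, of class $c$ say. Let $G\subseteq\GL_{N,K}$ be the Zariski closure of $\Gamma$, and for a subgroup $H\subseteq\GL_N(K)$ write $\overline H$ for its Zariski closure. A short induction shows $G$ is nilpotent of class at most $c$: for $g\in G$ the map $h\mapsto ghg^{-1}h^{-1}$ is a morphism of varieties, so from $\gamma_i(G)\subseteq\overline{\gamma_i(\Gamma)}$ (descending central series) one deduces $\gamma_{i+1}(G)=[G,\gamma_i(G)]\subseteq\overline{[\Gamma,\gamma_i(\Gamma)]}=\overline{\gamma_{i+1}(\Gamma)}$, whence $\gamma_{c+1}(G)=\{1\}$. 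Replacing $\Gamma$ by $\Gamma\cap G^{\circ}$, which has finite index in $\Gamma$ because $G/G^{\circ}$ is finite, and replacing $G$ by the Zariski closure of this new $\Gamma$, I may additionally assume that $G$ is connected.

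Next I would apply the structure theorem for connected nilpotent linear algebraic groups over the algebraically closed field $K$ (see, e.g., \cite[Theorem~III.10.6]{Bor91}): the subset $G_{s}$ of semisimple elements of $G$ is a central torus, the subset $G_{u}$ of unipotent elements is a connected closed normal subgroup, and multiplication induces an isomorphism of algebraic groups $G_{s}\times G_{u}\cong G$. In particular the projection $p\colon G\to G_{u}$ onto the second factor is a homomorphism of algebraic groups whose kernel is the torus $G_{s}$.

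Finally, $p(\Gamma)\subseteq G_{u}(K)\subseteq\GL_{N}(K)$ is a finitely generated group all of whose elements are unipotent matrices, so the semisimplification of the tautological representation $p(\Gamma)\hookrightarrow\GL_{N}(K)$ is trivial; by \cref{lem:finite group} it follows that $p(\Gamma)$ is finite. Hence $\Gamma_{0}:=\ker\bigl(p|_{\Gamma}\bigr)=\Gamma\cap G_{s}(K)$ has finite index in $\Gamma$, and $\Gamma_{0}$ is abelian since $G_{s}$ is a torus; therefore $\Gamma$ is virtually abelian. The only substantive input is the decomposition $G=G_{s}\times G_{u}$ for connected nilpotent $G$, and the only place the hypothesis ${\rm char}\,K>0$ is used is \cref{lem:finite group} (over $\mathbb{C}$ the integral Heisenberg group is finitely generated, nilpotent and linear but not virtually abelian, so the statement genuinely fails in characteristic zero). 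I do not expect any serious obstacle beyond correctly citing the structure theorem; the remaining steps are bookkeeping.
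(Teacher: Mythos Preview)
Your proof is correct and takes a genuinely different route from the paper's. The paper reduces to showing that $[\Gamma,\Gamma]$ is finite: it observes $\mathcal{D}G\subset R_u(G)$, invokes the fact that the commutator subgroup of a finitely generated nilpotent group is again finitely generated (\cite[Corollary~5.45]{ST00}), and then argues directly that a finitely generated subgroup of $R_u(G)(K)$ is finite via the filtration by $\mathbb{G}_a$'s; finally it uses Malcev's residual finiteness to pass to a finite-index subgroup disjoint from $[\Gamma,\Gamma]$. You instead use the full splitting $G\cong G_s\times G_u$ for connected nilpotent $G$ and project onto the unipotent factor, which lets you invoke \cref{lem:finite group} as a black box and read off an abelian finite-index subgroup as the kernel. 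Your route is slicker in that it avoids both the finite-generation of $[\Gamma,\Gamma]$ and the Malcev step; the paper's route uses a marginally weaker structural input (only $\mathcal{D}G\subset R_u(G)$, not the direct product decomposition). One cosmetic point: the reference \texttt{[Bor91]} is not in the paper's bibliography; the same structure theorem can be cited from \cite{Mil17} (Theorem~16.13) to keep the citations self-contained.
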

\begin{proof}
After replacing $\Gamma$ by a suitable finite index subgroup, we can assume that   the Zariski closure $G$ of $\Gamma$ in $\GL_{N}$ is   connected and nilpotent. Hence we have
$
\cD G\subset R_u(G),
$ 
where $R_u(G)$ is the unipotent radical of $G$ and $\cD G$ is the  the derived group of $G$. Consequently, 
We have $$[\Gamma,\Gamma] 
\subset [G(K),G(K)]\subset R_u(G)(K).$$   Note that $R_u(G)$ is unipotent. It is thus a successive extension of $\bG_{a,K}$. 

Since $\Gamma$ is finitely generated and nilpotent, it follows that $[\Gamma,\Gamma]$ is also finitely generated by \cite[Corollary 5.45]{ST00}. 
From this fact we conclude that $[\Gamma,\Gamma]$ is thus a successive extension of  finitely    generated  subgroups of $\bG_{a,K}$. Since finitely generated $p$-groups are finite, it implies that $[\Gamma,\Gamma]$   is a finite group.   Since $\Gamma$ is residually finite by Malcev's theorem, it has a finite index normal subgroup $\Gamma_1$ such that $\Gamma_1\cap [\Gamma,\Gamma]=\{e\}$.  Consequently, $[\Gamma_1,\Gamma_1]\subset [\Gamma,\Gamma]\cap \Gamma_1=\{e\}$. Hence $\Gamma_1$ is   abelian.  We conclude that $\Gamma$ is virtually abelian.  
\end{proof}

\subsection{A factorization result (I)}
In this subsection we prove a factorization result.
\begin{proposition}\label{lem:factor} 
Let   $f:X\to Y$ be a dominant morphism from a smooth quasi-projective variety $X$ to a normal quasi-projective variety $Y$ such that general fibers of $f$ are connected. 	Let $\varrho:\pi_1(X)\to \GL_{N}(K)$ where $K$ is any field. Assume that for a general smooth fiber $F$ of $f$, $\varrho({\rm Im}[\pi_1(F)\to \pi_1(X)])$ is finite.  Then there exists 
\begin{itemize}
	\item  	a generically finite proper surjective morphism $\mu:X_1\to X$ obtained by the composition of birational modifications and  finite \'etale Galois covers;
	\item  a dominant morphism $f_1:X_1\to Y_1$ onto a smooth quasi-projective variety $Y_1$  with connected general fibers;
	\item a generically finite dominant morphism $\nu:Y_1\to Y$;
	\item a representation $\tau:\pi_1(Y_1)\to \GL_N(K)$,
\end{itemize}
such that $f_1^*\tau=\mu^*\varrho$ and we have the following commutative diagram
\begin{equation*}
	\begin{tikzcd}
		X_1\arrow[r, "\mu"] \arrow[d, "f_1"] & X\arrow[d, "f"] \\
		Y_1 \arrow[r, "\nu"] & Y
	\end{tikzcd}
\end{equation*}  
\end{proposition}
\begin{proof}
\noindent {\it Step 1: we can assume that $\varrho({\rm Im}[\pi_1(F)\to \pi_1(X)])$ is trivial.}  Let $Y^\circ$ be the Zariski open subset of $Y$ such that it is smooth and $f$ is smooth over $Y^\circ$.  Denote by $X^\circ:=f^{-1}(Y^\circ)$.
We take a fiber $F:=f(y)$ with $y\in Y^\circ$, which is smooth and connected.   Since $\Gamma:= \varrho(\pi_1(X))$ is residually finite by Malcev's theorem, we can find a finite index normal subgroup $N\triangleleft \Gamma$ such that 
$$
N\cap  \varrho({\rm Im}[\pi_1(F)\to \pi_1(X)])=\{e\}.
$$
Let $\mu:X_1\to X$ be a finite \'etale cover such that 
$$
\mu^*\varrho(\pi_1(X_1))=N. 
$$ 
Let $X_1\stackrel{f_1}{\to}Y_1\stackrel{\nu}{\to} Y$ be the quasi-Stein factorization of $f\circ\mu$. 

%Note that we have the following exact sequence
%$$\pi_1(F)\to \pi_1(X_1^\circ) \to \pi_1(Y^\circ)\to 0,$$
% where $X_1^\circ:=f^{-1}(Y^\circ)$.  
\begin{equation*}
	\begin{tikzcd}
		X_1\arrow[r, "\mu"] \arrow[d, "f_1"] & X\arrow[d, "f"] \\
		Y_1 \arrow[r, "\nu"] & Y
	\end{tikzcd}
\end{equation*} 
Then $f_1$ is smooth over $Y_1^\circ:=\nu^{-1}(Y^\circ)$. %and $\nu$ is \'etale over $Y^\circ$. 
Take any point $y_1\in \nu^{-1}(y)$. Then   the fiber $F_1:=f^{-1}(y_1)$ is smooth and 
$$\mu^*\varrho({\rm Im}[\pi_1(F_1)\to\pi_1(X_1)])\subset N\cap \varrho({\rm Im}[\pi_1(F)\to \pi_1(X)])=\{e\}$$
as $\mu(F_1)\subset F$.  

In the following, to lighten the notation we will replace \(X\), $Y$, $f$ and $\varrho$  by \(X_{1}\),    $Y_1$, $f_1$ and $\mu^*\varrho$ respectively, and assume that  $\varrho({\rm Im}[\pi_1(F)\to\pi_1(X)])$ is trivial for a general  smooth fiber $F$ of $f$.   

\medspace

\noindent{\em Step 2. Compactifications and first reduction step.} 
Let \( \overline{X'} \supset X \) and \( \overline{Y} \supset Y \) be smooth projective compactifications of \( X \) and \( Y \), respectively, such that \( f \) extends to a morphism \( \bar{f}' : \overline{X'} \to \overline{Y} \). By performing a suitable blow-up of \( \overline{X'} \), we may assume that \( \overline{X'} \) is smooth. Since \( f \) is dominant, it follows that \( \bar{f}' \) is surjective. Set \( \overline{X} := (\bar{f}')^{-1}(Y) \). Then \( \bar{f} := \bar{f}'|_{\overline{X}} \) is a surjective proper morphism whose general fibers are connected. As \( Y \) is normal, it follows from Zariski's Main Theorem that all fibers of \( \bar{f} \) are connected. 

\begin{claim} We may assume that \(\bar{f}:\overline{X} \to Y\) is equidimensional.
\end{claim}
Indeed, by the Hironaka-Gruson-Raynaud flattening theorem, there exists a birational proper morphism \( Y_1 \to Y \) from a smooth quasi-projective variety \( Y_1 \), such that for the irreducible component \( T \) of \( \overline{X} \times_Y Y_1 \) dominating \( Y_1 \), the induced morphism \( f_T := T \to Y_1 \) is surjective, proper, and flat. 

By further blowing up \( Y_1 \) and taking the base change of \( f_T \), we may assume that \( f_T \) is smooth over a Zariski open subset \( Y_1^\circ \subset Y_1 \), with \( Y_1 \backslash Y_1^\circ \) a simple normal crossing divisor. 

Let \( \nu : \overline{X}_1 \to T \) denote the normalization map. Since  \( f_T \) is flat, the induced morphism \( f_1 : \overline{X}_1 \to Y_1 \)  has equidimensional fibers. Let \( \mu : \overline{X}_1 \to \overline{X} \) be the induced proper birational morphism, and set \( X_1 := \mu^{-1}(X) \). 

Then one has a diagram
\[
\begin{tikzcd}
	X_{1} \arrow[r] \arrow[d] & X \arrow[d] \\
	Y_{1} \arrow[r] & Y
\end{tikzcd}
\]
where the horizontal maps are proper birational, and the two spaces on the left satisfy the hypotheses of the proposition if we take the representation induced on \(\pi_{1}(X_{1})\). Clearly, it suffices to show the result where \(X\) (resp. \(Y\)) is replaced by \(X_{1}\) (resp. \(Y_{1}\)).  In the following, we may also replace \(\overline{X}\)  (resp. $Y$) by \(\overline{X}_{1}\) and $Y$ (resp. $Y_1$).

\medskip

\noindent
{\em Step 3. Induced representation on an open subset of \(Y\).}  Consider a Zariski open set $Y^\circ\subset Y$ such that    $X^\circ:=f^{-1}(Y^\circ)$   is  a topologically locally trivial fibration over $Y^\circ$.  Recall that in Step 2, we arranged that   $Y\backslash Y^\circ$ is a simple normal crossing divisor.  Then  we have a short exact sequence
$$
\pi_1(F)\to \pi_1(X^\circ)\to \pi_1(Y^\circ)\to 1
$$
where $F$ is a general fiber of $f$ over $Y^\circ$.  By Step 1,  $\varrho({\rm Im}[\pi_1(F)\to \pi_1(X)])$ is trivial. Hence we can pass to the quotient, which yields a representation $\tau_1:\pi_1(Y^\circ)\to \GL_{N}(K)$ so that $\varrho|_{\pi_1(X^\circ)}= f^{\ast}\tau_1$. 

\medskip

\noindent
{\em Step 4. Reducing \(Y\), we may assume that all divisorial components of \(Y - Y^{\circ}\) intersect \(f(X)\).}  Denote by  $E$ the sum of prime divisors of $Y$ contained in the complement $Y\backslash Y^\circ$. We decompose $E=E_1+E_2$ so that $E_1$ is the sum of prime divisors of $E$ that do not intersect \(f(X)\).  We replace $Y$ by $Y\backslash E_1$.  Then for any prime divisor $P$ contained in $Y\backslash Y^\circ$, $f^{-1}(P)\cap X\neq \varnothing$.  

\medskip

\noindent {\it Step 5. Second use of Malcev's theorem.}  Recall that  $Y\backslash Y^\circ$ is a simple normal crossing divisor $D:=\sum_{i\in I}D_i$ and     \(f^{-1}(D_i) \neq \varnothing\) for each $i\in I$. Since \(\bar{f} : \overline{X }\to Y\) is equidimensional, then for any prime component \(P\) of \(f^{-1}(D_i)\), the morphism \(f|_{P} : P \to D_i\) is dominant. Also, since \(X\) is normal, \(X\) is smooth at the general points  of \(P\).

This allows to find a smooth point \(x \) in $P$ (resp. \(y \in D_i\backslash \bigcup_{j\in I; j\neq i}D_j\)) with local coordinates \((z_{1}, \dotsc z_{m})\) (resp. \((w_{1}, \dotsc, w_{n})\)) around \(x\) (resp. \(y\)),   such that  around $x$ (resp. around $y$) we have $P=(z_1=0)$ (resp. $D=(w_1=0)$) and   \(f^{\ast}(w_{1}) = z_{1}^{k}\) for some \(k \geq 1\).
Hence the small meridian loop $\gamma$ around the general point of \(P\)  is mapped to $\eta_i^k$ where $\eta_i$ is the small meridian loop around \(D_i\).  On the other hand, since \(\gamma\) is trivial in $\pi_1(X)$, it follows that
\begin{align}\label{eq:commutative}
	0=\varrho(\gamma)=\tau_1(\eta_i^k).
\end{align}	 
Hence $\tau_1(\eta_i)$ is a torsion element.

We now fix a base point $x_0\in X^\circ$ and write $y_0:=f(x_0)\in Y^\circ$.  Set $\cS:=\{J\subset I\mid D_{J}\neq \varnothing \}$, where $D_J=\cap_{j\in J}D_j\backslash \cup_{i\in I\backslash J} D_i$.  Then $\cS$ is a finite set.   
Take any $J \in \mathcal{S}$ with $J = \{j_1, \ldots, j_k\}$.  
Let $D_{J,1},\ldots,D_{J,k_J}$ be connected components of $D_J$.  
Fix any $i\in \{1,\ldots,k_J\}$ and choose a point $y \in D_{J,i}$. 
We can find a coordinate neighborhood $(U; z_1, \ldots, z_n)$ of $y$ in $Y$ such that
$$
D\cap U=(\cup_{i=1}^{k} D_{j_i})\cap U
$$ with $D_{j_i}=(z_i=0)$ for each $i$. 
Set $U^*:=U-D$ and we fix a point  $y_J\in U^*$.  
The fundamental group $\pi_1(U^*, y_J) \simeq  \bZ^{k}$   is abelian.   Let $e_i$ be  a clockwise loop with basepoint $y_J$ around the origin in the $i$-th  factor of $(\bD^*)^{k}\times \bD^{n-k}$. Pick a smooth path $h: [0,1]\to Y^\circ$ connecting $y_0$ with $y_J$, and denote by $\gamma_i\in \pi_1(Y^\circ, y_0)$ the homotopy class of the loop $h^{-1}\cdot e_i \cdot h$. Set $T_i:=\tau_1(\gamma_i)$.   Clearly, $T_1,\ldots, T_k$ commute pairwise.   By \eqref{eq:commutative}, we know that each $T_i$ is an torsion element. Therefore,  
\begin{align}\label{eq:choice}
	G_{J,i}:=	\left\{ T_1^{\ell_1}\cdots T_k^{\ell_{k }}  \mid    \ell_1,\ldots,\ell_k\in \bZ \right\} 
\end{align}
is a \emph{finite} abelian subgroup of $\tau_1(\pi_1(Y^\circ,y_0))$. Note that  $G_{J,i}$ depends on the choice of $h$, and the above coordinate system $(U;z_1,\ldots,z_k)$, but any two such groups are \emph{conjugate}.  

By Malcev's theorem,  $\tau_1(\pi_1(Y^\circ,y_0))$ is  a finitely generated, residually finite group. Hence  there exists a finite-index normal subgroup $   \Gamma$ of $\tau_1(\pi_1(Y^\circ,y_0))=\varrho(\pi_1(X,x_0))$  such that 
\begin{align}\label{eq:choice2}
	\Gamma\cap G_{J,i}=\{e\}
\end{align}
for each $J\in \cS$ and $i\in \{1,\ldots,k_J\}$. 
Let $\mu: X_1\to X$ be the finite Galois \'etale cover such that $\mu^*\varrho(\pi_1(X_1))=\Gamma$.

\medspace

\noindent {\it Step 6. Completion of the proof.}  Let  $ {X}_1\stackrel{ {f}_1}{\to} Y_1\stackrel{\nu}{\to} Y$ be the  quasi-Stein factorization of $ {f}\circ\mu$.  Then $f_1$ is smooth over $Y_1^\circ:=\nu^{-1}(Y^\circ)$.  We take blow-ups  $\mu_2:X_2\to X_1$ and $\nu_2:Y_2\to Y_1$ such that 
\begin{itemize}
	\item    both $X_2$
	and $Y_2$ are smooth and $f_1$ induces  a morphism $f_2:X_2\to Y_2$.
	\item The map $Y_2^\circ:=\nu^{-1}_2(Y_1^\circ)\to Y_1^\circ$  is an isomorphism. 
	\item    The set  $Y_2\backslash Y_2^\circ$ is a simple normal crossing divisor. 
	\item Set  $X_2^\circ:=f_2^{-1}(Y_2^\circ)$. Then $f_2^\circ:=f_2|_{X_2^\circ}$ is a topologically locally trivial fibration over $Y_2^\circ$.
\end{itemize}    
%We denote by $X_1:=$ Since $\bar{f}$ is equidimensional, then so is $\bar{f}_1$.  Let $Y_2\to Y_1$ be a desingularization, and let $X_2$ be the normalization of the main component of $\overline{X}_1\times_{Y_1}Y_2$. %which is \'etale over $Y^\circ$. 
Note that for the representation $(\nu_2\circ\nu)^*\tau_1:\pi_1(Y_2^\circ)\to \GL_{N}(K)$, its image 
$$
(\nu\circ\nu_2)^*\tau_1(\pi_1(Y_2^\circ))= (\mu\circ\mu_2)^*\varrho(\pi_1(X_2))\subset \Gamma.
$$ 
Let $a:\bD\to Y_2$ be any holomorphic disk such that $(\nu\circ\nu_2\circ a)^{-1}(D)=\{0\}$.  It follows that $\nu\circ\nu_2\circ a(0)\in D_{J,i}$ for 	some  $J\in \cS$ and $i\in \{1,\ldots,k_J\}$.  Then $(\nu\circ a)^*\tau_1(\pi_1(\bD^*))$ is contained in  some conjugacy class of   $G_{J,i}$. Since $\Gamma\cap G_{J,i}=\{e\}$  by \eqref{eq:choice2} and given that $\Gamma$ is a normal subgroup of $	\nu^*\tau_1(\pi_1(Y_1^\circ))$, it implies that $(\nu\circ a)^*\tau_1(\pi_1(\bD^*))=\{e\}$. Consequently, the pullback $(\nu_2 \circ \nu)^* \tau_1$ extends to a representation $\tau: \pi_1(Y_2) \to \GL_N(K)$. It follows that
\[
f_2^* \tau = (\mu \circ \mu_2)^* \varrho.
\]
We now change the notation by renaming $X_2$ as $X_1$, $Y_2$ as $Y_1$, $f_2$ as $f_1$, $\mu \circ \mu_2$ as $\mu$, and $\nu \circ \nu_2$ as $\nu$. This completes the proof of the proposition.
\end{proof}
\begin{rem}
\cref{lem:factor} is proven in \cite[Proposition 2.5]{CDY22} in cases where ${\rm char}\, K=0$ and the proof utilizes Selberg's lemma: a finitely generated linear group in characteristic zero is virtually torsion free.   
\end{rem}

\subsection{On the Abelianity conjecture} 
\begin{thm}\label{thm:abelian}
Let $X$ be a  smooth quasi-projective   variety, and let  $\varrho:\pi_1(X)\to \GL_{N}(K)$ be any   representation with $K$ any field of positive characteristic. If  $X$ is    special or $h$-special, then $\varrho(\pi_1(X))$ is virtually abelian.  
\end{thm}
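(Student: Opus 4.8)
The plan is to follow the three-step strategy used in the proof of \cref{thm:univ} (Steps 1, 3, 4 there), but to use Campana's specialness/$h$-specialness instead of the compactifiable universal cover hypothesis, and a different argument (\cref{prop:pi1}) to control the commutator subgroup. As usual we may assume $K$ is algebraically closed, and after replacing $X$ by a finite \'etale cover we may assume $G$, the Zariski closure of $\varrho(\pi_1(X))$, is connected, with radical $R(G)$ and semisimple quotient $H=G/R(G)$; let $\sigma:\pi_1(X)\to H(K)$ be the induced Zariski-dense representation.

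\emph{Step 1: $G$ is solvable, i.e. $H$ is trivial.} This is exactly the argument of Step 1 of \cref{thm:univ}, but now using $h$-specialness. If $\dim H>0$ then $\sigma(\pi_1(X))$ is infinite, so the Shafarevich morphism ${\rm sh}_\sigma:X\to {\rm Sh}_\sigma(X)$ (which exists by \cref{thm:Sha22}) has positive-dimensional target. By \cref{thm:fun}, ${\rm Sp}_\bullet({\rm Sh}_\sigma(X))\subsetneq {\rm Sh}_\sigma(X)$; in particular ${\rm Sh}_\sigma(X)$ is strongly of log general type, hence (being positive-dimensional) admits a dominant morphism to a positive-dimensional variety of log general type — indeed it is of log general type itself after a birational modification, by \cref{thm:GGL} applied to the big representation $\tau$ provided by the compact-case factorization, or more directly: ${\rm Sp}_\bullet\subsetneq$ forces a dominant map to something of log general type in the sense of the orbifold base. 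But $X$ is special or $h$-special, so by \cref{lem:preserve}(ii) any dominant image of $X$ is $h$-special, and a special variety admits no dominant morphism onto a positive-dimensional variety with log general type orbifold base by definition; since ${\rm sh}_\sigma$ is dominant with connected general fibers, ${\rm Sh}_\sigma(X)$ inherits this, giving a contradiction with it being of log general type. (One must here be slightly careful to match the orbifold-base formulation of specialness with the log-general-type conclusion of \cref{thm:fun}; in the $h$-special case one uses instead that an $h$-special variety cannot be pseudo Brody hyperbolic unless it is a point, again contradicting \cref{thm:fun}.) Hence $H$ is trivial and $G=R(G)$ is solvable.

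\emph{Step 2: controlling $[\pi_1(X),\pi_1(X)]$.} Here is where the non-compact argument differs from \cref{thm:univ}. Since $X$ is special or $h$-special, by \cref{prop:pi1} the quasi-Albanese map $a:X\to A$ is $\pi_1$-exact: $\pi_1(F)\to\pi_1(X)\to\pi_1(A)\to 1$ is exact, where $F$ is a general fiber. As $\pi_1(A)$ is abelian, $[\pi_1(X),\pi_1(X)]$ is contained in ${\rm Im}[\pi_1(F)\to\pi_1(X)]$. Now $F$ is again smooth quasi-projective, and — this is the point to check — $F$ is itself special (resp. $h$-special) as a general fiber of the quasi-Albanese map; hence by Noetherian induction on dimension, or more simply because $\pi_1(F)\to\pi_1(X)$ has image generated by finitely many elements (as $\pi_1(X)$ is finitely generated and the quotient $\pi_1(A)$ is finitely presented), $[\pi_1(X),\pi_1(X)]=\varrho([\pi_1(X),\pi_1(X)])$ is a \emph{finitely generated} subgroup of $G(K)$.

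\emph{Step 3: conclusion.} By Step 1, $G$ is connected solvable, so $\cD G\subset R_u(G)$, whence $[\varrho(\pi_1(X)),\varrho(\pi_1(X))]\subset [G(K),G(K)]\subset R_u(G)(K)$, a unipotent group which is a successive extension of copies of $\bG_{a,K}$. A finitely generated subgroup of such a group is, by the argument of \cref{lem:VR} (a successive extension of finitely generated subgroups of $\bG_{a,K}$, each of which is a finite $p$-group), a \emph{finite} group. Thus $[\varrho(\pi_1(X)),\varrho(\pi_1(X))]$ is finite; since $\varrho(\pi_1(X))$ is residually finite by Malcev's theorem, it has a finite-index normal subgroup $\Gamma_1$ with $\Gamma_1\cap[\varrho(\pi_1(X)),\varrho(\pi_1(X))]=\{e\}$, and then $[\Gamma_1,\Gamma_1]=\{e\}$, so $\Gamma_1$ is abelian and $\varrho(\pi_1(X))$ is virtually abelian.

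The main obstacle I expect is Step 1: making the "contradiction with specialness" precise requires translating the hyperbolicity output of \cref{thm:fun} (the special subsets ${\rm Sp}_\bullet$ being proper, or log general type of the Shafarevich variety) into the orbifold-base language of Campana's definition, and separately handling the $h$-special case via pseudo Brody hyperbolicity — this bookkeeping, rather than any deep new input, is the delicate part. Step 2's verification that the general quasi-Albanese fiber is again special/$h$-special is routine (it follows from the fibered structure and \cref{lem:preserve}), and Step 3 is a direct transcription of \cref{lem:VR}.
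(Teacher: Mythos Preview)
Your overall architecture matches the paper's proof, but there are two genuine gaps.

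\textbf{Step 1.} You cannot apply \cref{thm:fun} directly to ${\rm Sh}_\sigma(X)$: that theorem needs a big representation on the variety in question with semisimple Zariski closure, and you have not produced one. The ``compact-case factorization'' you allude to is \cref{prop:factor}, which requires $X$ projective. The paper instead invokes \cref{lem:factor} (the quasi-projective factorization): this yields a generically finite $\mu:X_1\to X$ built from birational modifications and finite \'etale covers, a dominant $f_1:X_1\to Y_1$ with connected general fibers, and a \emph{big} representation $\tau:\pi_1(Y_1)\to H(K)$ with $f_1^*\tau=\mu^*\sigma$. Now \cref{thm:fun} applied to $(Y_1,\tau)$ gives that $Y_1$ is of log general type and pseudo Picard hyperbolic. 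Since $X_1$ is again special (resp.\ $h$-special) by \cref{lem:preserve} and \cite{Cam11}, and $f_1$ is dominant with connected general fibers, this contradicts weak specialness (resp.\ forces $Y_1$ to be $h$-special, which is incompatible with pseudo Brody hyperbolicity). Your sketch gestures at this but does not name \cref{lem:factor}, and the substitute reasoning (``${\rm Sp}_\bullet\subsetneq$ forces a dominant map to something of log general type in the sense of the orbifold base'') is not an argument.

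\textbf{Step 2.} Your claim that $[\pi_1(X),\pi_1(X)]$ is finitely generated because ``$\pi_1(X)$ is finitely generated and the quotient $\pi_1(A)$ is finitely presented'' is false in general: the commutator subgroup of the free group $F_2$ is free of infinite rank, yet $F_2/[F_2,F_2]\cong\bZ^2$ is finitely presented. From $\pi_1$-exactness you only get $[\pi_1(X),\pi_1(X)]\subset{\rm Im}[\pi_1(F)\to\pi_1(X)]$, and a subgroup of a finitely generated group need not be finitely generated. The paper fixes this by first replacing $X$ with a finite \'etale cover so that $\pi_1(X)^{\rm ab}\to\pi_1(A)$ is an \emph{isomorphism}; then $\ker[\pi_1(X)\to\pi_1(A)]=[\pi_1(X),\pi_1(X)]$, and $\pi_1$-exactness gives the \emph{equality} $[\pi_1(X),\pi_1(X)]={\rm Im}[\pi_1(F)\to\pi_1(X)]$, which is finitely generated as the image of $\pi_1(F)$. (The detour through ``$F$ is again special/$h$-special, Noetherian induction'' is unnecessary and does not help here.)

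Your Step 3 is correct and agrees with the paper.
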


\begin{proof} 
\noindent {\it Step 1. We prove that $\varrho(\pi_1(X))$ is  solvable.} 
We may assume that $K$ is algebraically closed.  Let $G$ be the Zariski closure of $\varrho(\pi_1(X))$.  Note that any finite \'etale cover of a   special (resp. $h$-special) variety is still  special (resp. $h$-special).   After replacing $X$ by a  finite \'etale cover,  we may assume that $G$ is connected. Let $R(G)$ be the radical of $G$. Let $H:=G/R(G)$, which is semisimple.  If $\dim H>0$,  then $\varrho$ induces a Zariski dense representation $\sigma: \pi_1(X)\to H(K)$.  Let  ${\rm sh}_\sigma:X\to {\rm Sh}_\sigma(X)$   be the Shafarevich morphism  of $\sigma$.    By the property of the Shafarevich morphism in \cref{thm:Sha22}, a general fiber $F$ of ${\rm sh}_\sigma$ is connected and  $ \sigma({\rm Im}[\pi_1(F)\to \pi_1(X)])$ is finite.     We apply \cref{lem:factor} to conclude that there exist 	
\begin{enumerate}[label*={\rm (\roman*)}]
	\item   \label{item:etale} a generically finite proper surjective morphism $\mu:X_1\to X$ from a smooth quasi-projective variety obtained by the composition of birational modifications and  finite \'etale Galois covers;
	\item \label{item:gf} a generically finite dominant morphism $\nu:Y_1\to  {\rm Sh}_\sigma(X)$;
	\item \label{item:gs}a dominant morphism $f_1:X_1\to Y_1$ with $Y_1$ a smooth quasi-projective variety with connected general fibers;
	\item a representation $\tau:\pi_1(Y_1)\to H(K)$
\end{enumerate}
such that we have following commutative diagram
\begin{equation*}
	\begin{tikzcd}
		X_1\arrow[r, "\mu"] \arrow[d, "f_1"] & X\arrow[d, "{\rm sh}_\sigma"] \\
		Y_1 \arrow[r, "\nu"] & {\rm Sh}_\sigma(X)
	\end{tikzcd}
\end{equation*}   and $\mu^*\sigma=f_1^*\tau$. 
We can show that 
$\tau$ is a big representation.  Thanks to \cref{thm:fun}, $Y_1$ is of log general type and pseudo Picard hyperbolic.  
This leads to a contradiction since $X$ is special (thus weakly special by \cite{Cam11}) or $h$-special. Hence $G=R(G)$. 

\medspace

\noindent {\it Step 2. We prove that $\varrho(\pi_1(X))$ is virtually abelian.}   
Note that any finite \'etale cover of a special (resp. $h$-special) variety is still special (resp. $h$-special) by \cite{Cam04} and \cref{lem:preserve}.  
Replacing  $X$ by a finite \'etale cover, we may assume that  $\pi_1(X)^{ab}\to \pi_1(A)$ is an isomorphism, where   $\pi_1(X)^{ab}:=\pi_1(X)/[\pi_1(X),\pi_1(X)]$ and $A$ is the quasi-albanese variety. 
Since  $X$ is special or $h$-special, by \cref{prop:pi1}, the quasi-albanese map $a:X\to A$ of $X$ is $\pi_1$-exact, i.e.,   we have the following exact sequence:
$$\pi_1(F)\to \pi_1(X)\to \pi_1(A)\to 1,$$ where $F$ is a general fiber of $a$. 
Hence $[\pi_1(X),\pi_1(X)]$ is the image of $\pi_1(F)\to \pi_1(X)$, which is thus finitely generated.  
It implies that  $[\varrho(\pi_1(X)),\varrho(\pi_1(X))]=\varrho([\pi_1(X),\pi_1(X)])$ is also finitely generated.
By Step 1,  $G$ is solvable. Hence we have
$
\cD G\subset R_u(G),
$ 
where $R_u(G)$ is the unipotent radical of $G$ and $\cD G$ is the  the derived group of $G$. Consequently, 
we have	%\footnote{
	%	Shall we add the following line?:
	%Note that every subgroup of finite index in $[\pi_1(X),\pi_1(X)]$ is finitely generated (cf. \cite[Proposition 4.17]{ST00}
	%	).} 
$$ [\varrho(\pi_1(X)),\varrho(\pi_1(X))]
\subset [G(K),G(K)]\subset R_u(G)(K).$$  
Note that every subgroup of finite index in $[\pi_1(X),\pi_1(X)]$ is also finitely generated (cf. \cite[Proposition 4.17]{ST00}. 
By the same arguments in \Cref{lem:VR}, we conclude that $[\varrho(\pi_1(X)),\varrho(\pi_1(X))]$ is finite. 
Hence $\varrho(\pi_1(X))$ is virtually abelian. 
\end{proof} 
\subsection{A characterization of semiabelian varieties.}
\cref{thm:abelian}  allows  us to give a characterization of semiabelian varieties. 
\begin{proposition}\label{thm:char}
Let $Y$ be a  smooth quasi-projective   variety, and let  $\varrho:\pi_1(Y)\to \GL_{N}(K)$ be a big representation where $K$ is a  field of  positive characteristic.
\begin{thmlist}
	\item  \label{item:character}  If $Y$ is special or $h$-special, then there exists a finite \'etale cover $X$ of $Y$, such that its quasi-Albanese map $\alpha:X\to A$ is birational and $\alpha_*:\pi_1(X)\to \pi_1(A)$ is an isomorphism.
	\item \label{item:character2} If the logarithmic Kodaira dimension $\bar{\kappa}(Y)=0$,  then there exists a finite \'etale cover $X$ of $Y$, such that  its  quasi-Albanese map $\alpha:X\to A$ is birational and   proper in codimension one,  i.e. there exists a Zariski closed subset $Z\subset A$ of codimension at least two such that $\alpha$ is proper over $A\backslash Z$.
\end{thmlist}
\end{proposition}
The proof closely follows that of \cite[Proposition 12.7]{CDY22}, and we present it here for the sake of completeness.
\begin{proof}
\noindent {\it Proof of (i)}.	
By \cref{thm:abelian},  there is a finite \'etale cover $X$ of $Y$   such that  $G:=\varrho(\pi_1(X))$ is  abelian and torsion free. 
It follows that $\varrho|_{\pi_1(X)}:\pi_1(X)\to \GL_{N}(K)$ factors through  $H_1(X,\bZ)/{\rm torsion}$. 
By \cite[Lemma 11.5]{CDY22}, $\alpha$ is dominant with connected general fibers.	
Since $\alpha_*:H_1(X,\bZ)/{\rm torsion}\to H_1(A,\bZ)$ is isomorphic,  $\varrho$ further factors through 	$H_1(A,\bZ)$.
\begin{equation}\label{eq:dia}
	\begin{tikzcd}
		&	\pi_1(X) \arrow[d]\arrow[r, "\rho|_{\pi_1(X)}"] \arrow[ldd, bend right=30]& G\\
		&	H_1(X,\bZ)/{\rm torsion} \arrow[ru]\arrow[d, "\alpha_*"']&\\
		\pi_1(A)\arrow[r, "="]	&	H_1(A,\bZ)\arrow[ruu, bend right=30,, "\beta"']&
	\end{tikzcd}
\end{equation}
From the above diagram for every fiber $F$ of $\alpha$,   $\varrho({\rm Im}[\pi_1(F)\to \pi_1(X)])$ is trivial.   Since $\varrho$ is big,  the   general fiber of $\alpha$ is thus a point. Hence $\alpha$ is birational.  Since $\alpha:X\to A$ is $\pi_1$-exact by \cref{prop:pi1}, it follows that  $\alpha_*:\pi_1(X)\to \pi_1(A)$ is an isomorphism. 

\medspace

\noindent {\it Proof of (ii)}.	If $\bar{\kappa}(Y)=0$,  then $Y$ is special by \cite[Corollary 5.6]{Cam11} and  for any finite \'etale cover $X$ of $Y$, we have $\bar{\kappa}(X)=0$ by \cite[Lemma 6.7]{CDY22}. By \cref{item:character},  there is a finite \'etale cover $X$ of $Y$ such that its quasi-Albanese map $\alpha:X\to A$  is birational.   
We apply \cite[Lemma 3.2]{CDY22} to conclude that $\alpha$ is proper in codimension one. 
\end{proof}

\section{A structure theorem: on a conjecture by Koll\'ar}
In \cite[Conjecture 4.18]{Kol95}, Koll\'ar raised the following conjecture on the structure of varieties with big fundamental group. 
\begin{conjecture}\label{conj:Kollar}
Let $X$ be a smooth projective variety with big fundamental group such that $0<\kappa(X)<\dim X$. Then $X$ has a finite \'etale cover $p:X'\to X$ such that $X'$ is birational to a smooth family of abelian varieties over a projective variety of general type $Z$ which has big fundamental group. 
\end{conjecture}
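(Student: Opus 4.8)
The plan is to establish \cref{conj:Kollar} under the standing hypothesis of this paper, namely that $\pi_1(X)$ admits a faithful linear representation $\varrho\colon\pi_1(X)\to\GL_N(K)$ over some field $K$, combining the positive–characteristic input of \cref{thm:Sha22,thm:fun,thm:char,thm:abelian} with the characteristic–zero counterparts of \cite{CDY22,DY23} according to $\operatorname{char}K$. The first step is a translation of hypotheses. Since $\varrho$ is faithful, for a connected subvariety $Z\subseteq X$ the image $\varrho(\operatorname{Im}[\pi_1(Z)\to\pi_1(X)])$ is finite if and only if $\operatorname{Im}[\pi_1(Z)\to\pi_1(X)]$ is finite, so by \cref{thm:Sha22} the Shafarevich morphism $\mathrm{sh}_\varrho\colon X\to\mathrm{Sh}_\varrho(X)$ is Koll\'ar's $\Gamma$-reduction of $\pi_1(X)$ itself; thus ``$X$ has big fundamental group'' is exactly ``$\mathrm{sh}_\varrho$ is birational'', equivalently ``$\varrho$ is a big representation''. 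In particular \cref{cor:GGL,thm:fun,thm:char} apply to $X$ and to any subvariety through a very general point.

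Second, I would pass to the Iitaka fibration. After a smooth birational modification let $f\colon X\to Z$ be the Iitaka fibration, so $\dim Z=\kappa(X)$, $0<\dim Z<\dim X$, and a very general fibre $F$ has $\kappa(F)=0$. Since $X$ has big fundamental group and $F$ meets a very general point of $X$, every positive–dimensional $V\subseteq F$ through a very general point satisfies that $\pi_1(V)\to\pi_1(X)$ factors through $\pi_1(F)$ with infinite image; hence $F$ has big fundamental group and $\bar\kappa(F)=0$. Applying \cref{thm:char}(ii) (or its characteristic–zero analogue in \cite{CDY22}) to $F$, after a finite \'etale cover $F'\to F$ the Albanese map $F'\to A$ is birational and proper in codimension one; as $F'$ is projective and the exceptional locus of a birational morphism of smooth projective varieties is a divisor, ``proper in codimension one'' forces $F'\xrightarrow{\ \sim\ }A$, i.e.\ $F'$ is an abelian variety. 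By residual finiteness of $\pi_1(X)$ this cover of the generic fibre is induced by a finite \'etale cover $X'\to X$; replacing $X$ by $X'$ we may assume the general fibre of the Iitaka fibration is an abelian variety. This is where compactness of $X$ is essential: in the quasi-projective situation of \cref{thm:structure} one obtains only semi-abelian fibres.

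Third, one must upgrade $f$, after a further finite \'etale cover and birational modifications of $X$ and $Z$, into a \emph{smooth} projective family of abelian varieties and identify the base. For the smoothing I would make $f$ equidimensional by flattening $\overline X\times_Z Z_1\to Z_1$ (Hironaka–Gruson–Raynaud, as in the proof of \cref{lem:factor}), apply semistable reduction and potential–good–reduction in codimension one so that after a Kummer-type \'etale base change — which, using the surjection $\pi_1(X)\to\pi_1(Z)$ and residual finiteness, again comes from a finite \'etale cover of $X$ — the abelian fibration has good reduction over the complement of a codimension $\ge 2$ subset of $Z$, and finally remove the remaining bad fibres by a birational modification of the total space using that the fibres are abelian. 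For the base, Kawamata's addition theorem for fibre spaces with abelian general fibre gives $\kappa(X)=\kappa(Z)+\operatorname{Var}(f)$, while the Iitaka fibration gives $\kappa(X)=\dim Z$ and $\operatorname{Var}(f)\le\dim Z$; if $\kappa(Z)<\dim Z$ then $\kappa(Z)=0$, so $Z$ has $\bar\kappa=0$ and, having big fundamental group, is birational to an abelian variety by \cref{thm:char}(ii), whence (rigidity of smooth abelian schemes over abelian bases) $\operatorname{Var}(f)=0$, contradicting $\kappa(X)=\dim Z>0$; thus $Z$ is of general type. Finally, once the family is smooth and proper it is a topological fibre bundle, so $\pi_1(F)\to\pi_1(X)\to\pi_1(Z)\to 1$ is exact with kernel the virtually abelian, finitely generated group $H:=\operatorname{Im}[\pi_1(F)\to\pi_1(X)]$, and one deduces that $\pi_1(Z)$ is big from bigness of $\pi_1(X)$ by analysing, for $W\subseteq Z$ through a very general point, the image of $\pi_1(f^{-1}(W))$ in $\pi_1(X)$ modulo $H$, together with the $H$-Shafarevich theory recalled in \cref{rem:Kollar}.

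The hard part will be the last paragraph, on two counts. First, turning the Iitaka fibration into a genuinely smooth abelian family (not merely one with abelian general fibre) requires eliminating multiple and degenerate fibres by a combination of semistable reduction and birational surgery, and the base change used must be traceable to a finite \'etale cover of $X$ — precisely the step that controls the exact sequence $\pi_1(F)\to\pi_1(X)\to\pi_1(Z)\to 1$ needed downstream. Second, extracting ``$Z$ of general type with big fundamental group'' is not formal: the variation term in Kawamata's formula can a priori be positive, so the big-fundamental-group constraint must be fed back in via \cref{thm:char}(ii) and rigidity of abelian schemes over abelian bases, and the bigness of $\pi_1(Z)$ has to be pried out of that of $\pi_1(X)$ across the virtually abelian kernel $H$. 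Compared with the quasi-projective, semi-abelian statement of \cref{thm:structure}, the genuinely new ingredients here are the passage from semi-abelian to abelian fibres (using properness of $X$) and the smoothing of the resulting family over the general-type base.
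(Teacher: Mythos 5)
The statement you are trying to prove is labelled a \emph{conjecture} in the paper, namely Koll\'ar's \cref{conj:Kollar}, and the paper does not prove it: there is no ``paper's own proof'' to compare against. What the paper does is state \cref{thm:structure}, a related structure theorem that holds under the extra hypothesis of a big linear representation over a field of positive characteristic, and whose conclusion is \emph{weaker} than \cref{conj:Kollar} --- it produces a locally trivial (semi-)abelian fibration over (a birational modification of) the log Iitaka base $J(X)$, but it does not assert that $J(X)$ is of general type with big fundamental group, and the genuinely hard local-triviality step is delegated to \cite[Theorem 12.1]{CDY22} rather than argued from scratch. So even under your standing hypothesis of faithfulness of a linear $\varrho$, you are attempting to prove strictly more than the paper claims, and your argument must be judged on its own.

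There are two concrete gaps. First, your Step~3 (``semistable reduction and potential good reduction in codimension one, then remove bad fibres by a birational modification'') does not go through as written. Semistable reduction for abelian fibrations (Grothendieck) yields semistable, not good, reduction after a finite base change; good reduction is equivalent to the local monodromy about each boundary divisor becoming trivial after the base change, and this is precisely what can fail --- there is no a priori bound on the order of the monodromy. Producing a genuinely \emph{smooth} projective abelian scheme over all of $Z$ is exactly the content that the paper side-steps by citing \cite[Theorem~12.1]{CDY22}, which obtains a \emph{locally trivial} fibration; local triviality (equivalently, finite monodromy everywhere) is what makes the Kummer-type base change argument work, and it must be proved, not assumed. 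Your sketch silently uses the conclusion to prove itself. Second, the deduction in your last paragraph, ``if $\kappa(Z)<\dim Z$ then $\kappa(Z)=0$'', does not follow from Kawamata's identity $\kappa(X)=\kappa(Z)+\mathrm{Var}(f)$ together with $\kappa(X)=\dim Z$ and $\mathrm{Var}(f)\le\dim Z$: these only give $\kappa(Z)=\dim Z-\mathrm{Var}(f)\ge 0$, and $\kappa(Z)<\dim Z$ merely says $\mathrm{Var}(f)>0$, not $\kappa(Z)=0$. Also, a minor but real error in Step~2: a birational \emph{morphism} $F'\to A$ from a smooth projective variety to an abelian variety need not be an isomorphism (blow up a point of $A$), so ``proper in codimension one forces $F'\xrightarrow{\ \sim\ } A$'' is false; what you actually obtain is that $F'$ is \emph{birational} to an abelian variety, which suffices for your purposes but must be phrased that way.
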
 
In this section we address \cref{conj:Kollar}.  Our theorem is the following:
\begin{thm}\label{thm:structure}
Let $X$ be a quasi-projective normal variety and let $\varrho:\pi_1(X)\to {\rm GL}_N(K)$ be a   big representation where $K$ is a field of positive characteristic.    Then 
\begin{thmlist}
	\item \label{item:LKD} the logarithmic Kodaira dimension satsifies $\bar{\kappa}(X)\geq 0$. 
	\item \label{item:Mori}There is a proper Zariski closed subset $\Xi$ of $X$ such that each non-constant algebraic  
	morphism $\bA^1\to X$ has image in $\Xi$.
	\item \label{item:local trivial}If $0<\bar{\kappa}(X)<\dim X$, after replacing $X$ by a finite \'etale cover and a birational modification, there are a    semi-abelian variety $A$, a   quasi-projective manifold $V$  and a birational morphism $a:X\to V$  such that the following commutative diagram holds:
	\begin{equation*} 
		\begin{tikzcd}
			X \arrow[rr,    "a"] \arrow[dr, "j"] & & V \arrow[ld, "h"]\\
			& J(X)&
		\end{tikzcd}
	\end{equation*}
	where $j$ is the logarithmic Iitaka fibration of $X$ and $h:V\to J(X)$ is a  locally trivial fibration with fibers isomorphic to $A$.  Moreover,
	for a   general fiber $F$ of $j$, $a|_{F}:F\to A$ is proper in codimension one.  
\end{thmlist}   
\end{thm}    
In our previous work \cite[Theorem 12.5]{CDY22}, we established \cref{thm:structure} for the cases where ${\rm char}\, K=0$ and $\varrho$ is big and reductive.  
The proof of \cref{thm:structure} closely follows that of \cite[Theorem 12.5]{CDY22}.
Note that \cref{item:character2} corresponds to the case $\bar{\kappa}(X)=0$ of \cref{item:local trivial}. 
\begin{proof}
We may assume that $K$ is algebraically closed.  To prove the theorem we are free to replace $X$ by a birational modification  and by a finite \'etale cover since the logarithmic Kodaira dimension will remain unchanged.   We replace $\varrho$  by its semisimplification, which is still big by \cref{lem:finite group}. Hence we might assume that $\varrho$ is big and semisimple. Consequently, after replacing $X$ by a finite \'etale cover,  the Zariski closure $G$ of $\varrho$ is reductive and connected.  Let $\cD G$ be the derived group of $G$, which is semisimple. Let $Z\subset G$ be the maximal central torus of $G$. Then  $T:=G/\cD G$  is a torus and  $S:=G/Z$ is semisimple.  The natural morphism $G\to  S\times T$  is a central isogeny (see \cite[Corollary 5.3.3]{Con14}). 
The induced representation $\varrho': \pi_1(X)\to  S(K)\times T(K)$ by $\varrho$  is also big.   Consider the  representation $\sigma:\pi_1(X)\to S(K)$, obtained by composing $\varrho$ with the morphism $G \to S$. Then $\sigma(\pi_1(X))$ is Zariski dense. Let ${\rm sh}_\sigma:X\to {\rm Sh}_\sigma(X)$ be the Shafarevich morphism of $\sigma$.  

Like Step 1 of the proof of \cref{thm:abelian}, we apply \cref{lem:factor} to conclude that there exist 	
\begin{enumerate}[label*={\rm (\roman*)}]
	\item  a generically finite proper surjective morphism $\mu:X_1\to X$ from a smooth quasi-projective variety obtained by the composition of birational modifications and  finite \'etale Galois covers;
	\item \label{itemgf}a generically finite dominant morphism $\nu:Y_1\to  {\rm Sh}_\sigma(X)$;
	\item  a dominant morphism $f_1:X_1\to Y_1$ with $Y_1$ a smooth quasi-projective variety with connected general fibers;
	\item a big representation $\tau:\pi_1(Y_1)\to S(K)$;
\end{enumerate}
such that we have the  
following commutative diagram
\begin{equation*}
	\begin{tikzcd}
		X_1\arrow[r, "\mu"] \arrow[d, "f_1"] & X\arrow[d, "{\rm sh}_\sigma"] \\
		Y_1 \arrow[r, "\nu"] & {\rm Sh}_\sigma(X)
	\end{tikzcd}
\end{equation*}   and $\mu^*\sigma=f_1^*\tau$. 
It is straightforward to show that
$\tau$ is a big representation.  Thanks to \cref{thm:fun}, the special loci $\Spalg(Y_1)$ and $\Spp(Y_1)$ are both proper Zariski closed subset of  $Y_1$.  In particular, $Y_1$ is of log general type.   Note that $Y_1$ can be a point. 

Consider the morphism  
\begin{align*}
	g:X_1&\to A \times Y_1\\
	x&\mapsto (\alpha(x), f_1(x)).
\end{align*}
where $\alpha:X_1\to A$ is the quasi Albanese map  
of $X_1$. 
\begin{claim}\label{claim:same}
	We have	  $\dim X_1=\dim g(X_1)$.
\end{claim}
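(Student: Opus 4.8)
The plan is to show that $g$ is generically finite onto its image, which — since $\dim g(X_1)\leq\dim X_1$ always holds — is equivalent to the asserted equality. Suppose this fails, and set $d:=\dim X_1-\dim g(X_1)\geq 1$. Over a dense Zariski open subset of $g(X_1)$ the fibres of $g$ have pure dimension $d$, so their union is a dense open subset $U\subset X_1$. Pick a very general point $x_0\in U$ — so that, in addition, $x_0$ avoids the countably many proper Zariski closed subsets relevant to the bigness of $\mu^*\varrho'$ — and let $Z$ be the irreducible component of the fibre $g^{-1}(g(x_0))$ passing through $x_0$. Then $Z\subset X_1$ is an irreducible closed subvariety of dimension $d\geq 1$ containing a very general point of $X_1$, and being a component of a fibre of $g=(\alpha,f_1)$ it is contracted by $f_1$ to a point $y\in Y_1$ and by $\alpha$ to a point of $A$.

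I then claim that $\mu^*\varrho'\bigl({\rm Im}[\pi_1(Z^{\rm norm})\to\pi_1(X_1)]\bigr)$ is finite, which contradicts the bigness of $\mu^*\varrho'$. (Here $\mu^*\varrho'$ is big because $\varrho'$ is big and $\mu$ is a composition of birational modifications, inducing $\pi_1$-isomorphisms of the smooth models involved, and of finite étale covers, each of which preserves bigness.) Write $\mu^*\varrho'=(\sigma_1,\sigma_2)$, where $\sigma_1$ and $\sigma_2$ are the compositions of $\mu^*\varrho'$ with the two projections $\cD G(K)\times T(K)\to\cD G(K)$ and $\cD G(K)\times T(K)\to T(K)$; note $\sigma_1=\mu^*\sigma$. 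For the $\cD G$-component: since $\mu^*\sigma=f_1^*\tau$ and $f_1$ sends $Z$ to the point $y$, the composite of $Z^{\rm norm}\to X_1$ with $f_1$ is constant, hence ${\rm Im}[\pi_1(Z^{\rm norm})\to\pi_1(Y_1)]=\{1\}$ and therefore $\sigma_1\bigl({\rm Im}[\pi_1(Z^{\rm norm})\to\pi_1(X_1)]\bigr)=\{1\}$. For the torus component: as $T$ is a torus, $\sigma_2$ has abelian image and factors through $H_1(X_1,\mathbb Z)$; let $L\subset H_1(X_1,\mathbb Z)$ be the image of ${\rm Im}[\pi_1(Z^{\rm norm})\to\pi_1(X_1)]$. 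Since $\alpha$ contracts $Z$ to a point, the composite of $Z^{\rm norm}\to X_1$ with $\alpha$ is constant, so $L\subset\ker\bigl(\alpha_*:H_1(X_1,\mathbb Z)\to H_1(A,\mathbb Z)\bigr)$; by the universal property of the quasi-Albanese map $\alpha_*$ becomes an isomorphism after $\otimes\,\mathbb Q$, so its kernel is the torsion subgroup of $H_1(X_1,\mathbb Z)$, which is finite. Hence $L$ is finite and so is $\sigma_2\bigl({\rm Im}[\pi_1(Z^{\rm norm})\to\pi_1(X_1)]\bigr)$. Combining the two components yields the finiteness of $\mu^*\varrho'\bigl({\rm Im}[\pi_1(Z^{\rm norm})\to\pi_1(X_1)]\bigr)$, the desired contradiction.

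The one point deserving care is the control of the torus part $\sigma_2$ on $\pi_1(Z^{\rm norm})$: the argument hinges on the fact that the quasi-Albanese map induces a $\mathbb Q$-isomorphism on $H_1$, so that a class killed by $\alpha_*$ over $\mathbb Z$ is automatically torsion, which is what converts "$\alpha$ contracts $Z$" into "$\sigma_2$ is finite on $\pi_1(Z^{\rm norm})$". The remaining ingredients — purity of the dimension of general fibres of $g$, the existence of a suitable very general point, and the stability of bigness under $\mu$ — are routine and are already used repeatedly elsewhere in the paper.
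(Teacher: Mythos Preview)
Your argument is correct and rests on exactly the same two observations the paper uses: the $\cD G$-component $\mu^*\sigma=f_1^*\tau$ is trivial on anything contracted by $f_1$, and the torus component $\sigma_2$ factors through $H_1(X_1,\bZ)$, whose map to $\pi_1(A)$ has finite (torsion) kernel, so $\sigma_2$ is finite on anything contracted by $\alpha$. The only difference is packaging: the paper argues \emph{directly} by restricting to a general smooth fiber $F$ of $f_1$, notes that $\mu^*\sigma|_F$ is trivial and hence the torus part $\eta:\pi_1(F)\to T(K)$ is big, and concludes (via the same Albanese factorization) that $\dim F=\dim\alpha(F)$, whence $\dim X_1=\dim g(X_1)$; you instead argue by contradiction on a positive-dimensional fiber component of $g$. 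Your version is a touch more careful about the torsion in $H_1(X_1,\bZ)$, which the paper absorbs into the phrase ``similar to \eqref{eq:dia}''.
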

\begin{proof}
	For a general smooth fiber $F$ of $f_1$,   $\mu^*\sigma({\rm Im}[\pi_1(F)\to \pi_1(X)])$ is trivial. Since $\mu^*\varrho':\pi_1(X_1)\to S(K)\times T(K)$ is big, by the construction of $\sigma$, we conclude that the representation $\eta:\pi_1(F)\to T(K)$ obtained by 
	$$
	\pi_1(F)\to \pi_1(X_1)\stackrel{\mu^*\varrho'}{\to}  S(K)\times T(K)\to T(K)
	$$ 
	is big.  Since $T(K)$ is commutative,  similar to   \eqref{eq:dia},  $\eta$ factors through $\pi_1(F)\to \pi_1(A)\to T(K)$. This implies that $\dim F=\dim \alpha(F)$. Hence $\dim X_1=\dim g(X_1)$. 
\end{proof}

%We apply \cref{lem:factor}. 
%Then by replacing $X$ with a finite \'etale cover and a birational modification, we obtain a smooth quasi-projective variety $Y$ (might be zero-dimensional), a semiabelian variety $A$, and a morphism $g:X\to A \times Y$ that satisfy the following properties:
%	\begin{itemize}
	%	\item  $\dim X=\dim g(X)$.
	%	\item Let $p:X\to Y$ be  the composition of $g$ with the projective map $A\times Y\to Y$. 
	%	Then  $p$   is dominant.
	%	\item $\Spalg(Y)\subsetneqq Y$ and $\Spp(Y)\subsetneqq Y$.
	%	\end{itemize} 

Let us  prove \cref{item:LKD}.
Thanks to \cref{claim:same},  for a general smooth fiber $F$ of $f_1$,  we have $\dim F=\dim \alpha(F)$.  Hence $\bar{\kappa}(F)\geq 0$. Since $Y_1$ is of log general type, by the  subadditivity of the logarithmic Kodaira dimension  proven in \cite[Theorem 1.9]{Fuj17}, we obtain $$\bar{\kappa}(X_1)\geq \bar{\kappa}(Y_1)+\bar{\kappa}(F)\geq \bar{\kappa}(Y_1)=\dim Y_1\geq 0.$$  
Hence $\bar{\kappa}(X)=\bar{\kappa}(X_1)\geq 0$. 
The first claim is proved.

\medspace

Let us  prove \cref{item:Mori}.   By \cref{lem:proper}, the special subset ${\rm Sp}(\varrho)$ defined in \cref{dfn:special}  
is a proper closed subset of $X$.  Let $\gamma:\bA^1\to X$ be a non-constant algebraic morphism. Then $\gamma^*\varrho(\pi_1(\bA^1))=\{1\}$. By the definition of  ${\rm Sp}(\varrho)$, we have $\gamma(\bA^1)\subset {\rm Sp}(\varrho)$.

Finally \cref{item:local trivial} follows from \cite[Theorem 12.1]{CDY22}. 
\end{proof}

%\section{Some examples}
%Let $X$ be a smooth cubic surface in $\bP^3$ and we consider a generic projection $X\to \bP^2\subset \bP^3$ with the branched curve $C\subset \bP^2$. This $C$ is  a sextic curve with 6 cusps. By the theorem of Zariski, we know that 
%$$
%\pi_1(\bP^2\backslash S)\simeq \bZ_2\star\bZ_3. 
%$$
%Note that $\bZ_2\star\bZ_3\simeq {\rm PSL}_2(\bZ)$
%Hence we have a faithful representation $\pi_1(\bP^2\backslash S)\to {\rm PSL}_2(\bZ)$ which is Zariski dense in   ${\rm PSL}_{2}$. Here we think of ${\rm PSL}_{2}$ as  a complex algebraic group which is semisimple. 

%\begin{claim}
% $\pi_1(\bP^2\backslash S)$ is big.\footnote{We have to analyze Zariski's proof to show this.}
%\end{claim} 
%\begin{proof}
%Indeed, if it is not big, then there exists a finite \'etale cover $\pi:X'\to \bP^2\backslash S$, a proper birational modification $X\to X'$ and  a dominant morphism $f:X\to Y$ with general fiber connected such that for a general  fiber $F$ of $X$, we have ${\rm Im}[\pi_1(F)\to \pi_1(X)]=\{e\}$.  By the short exact sequence 
%$$
%\pi_1(F)\to \pi_1(X)\to \pi_1(Y)\to 0,
%$$ 
%we conclude that $\pi_1(Y)$ is isomorphic to a finite index subgroup of ${\rm PSL}_2(\bZ)$. 
%\end{proof}

%\begin{rem}
%As we remarked in the introduction, in cases where ${\rm char}\, K=0$, \cref{thm:abelian} was proved by Campana \cite{Cam04} and the second author \cite{Yam10}. When $X$ is quasi-projective, as we show in \cite{CDY22}, Campana's abelianity conjecture has to be revised as nilpotentcy conjecture: a smooth quasi-projective special variety has virtually nilpotent fundamental groups. 
%\end{rem}

\section{On the Shafarevich conjecture and a conjecture by   Claudon-H\"oring-Koll\'ar}
In this section, we will work on projective varieties.   
\subsection{Algebraic varieties with compactifiable universal cover} 
In the paper \cite{CHK13,CH13}, Claudon, H\"oring and Koll\'ar proposed the following intriguing conjecture:
\begin{conjecture}\label{conj}
Let $X$ be a complex projective manifold with infinite fundamental group $\pi_1(X)$. Suppose that the universal cover $\widetilde{X}$ is quasi-projective. Then   after replacing $X$ by a  finite \'etale cover,   there exists a locally trivial fibration $X \rightarrow A$ with simply connected fiber  $F$ onto a complex torus $A$. In particular we have $\widetilde{X}\simeq F \times \mathbb{C}^{\operatorname{dim} A}$.
\end{conjecture}
It's worth noting that assuming abundance conjecture, Claudon, H\"oring and Koll\'ar proved this conjecture in \cite{CHK13}, thereby providing an  unconditional proof for \cref{conj} in cases where $\dim X\leqslant 3$. Additionally, Claudon and H\"oring, in \cite{CH13}, proved \cref{conj} when $\pi_1(X)$ is virtually abelian, a result essential for the proof of \cref{thm:univ}.

As an application of \cref{thm:abelian},  we establish a linear version of \Cref{conj} without relying on the abundance conjecture.   
\begin{cor}\label{thm:univ}
Let $X$ be a smooth projective variety  with an infinite fundamental group $\pi_1(X)$, such that its universal covering  $\widetilde{X}$  is a Zariski open subset  of some compact K\"ahler manifold $\overline{X}$. If  there exists a faithful  representation $\varrho:\pi_1(X)\to \GL_{N}(K),$ where $K$ is any field, then   the Albanese map of $X$ is (up to finite \'etale cover)   locally isotrivial with simply connected fiber $F$.  In particular we have $\widetilde{X} \simeq F \times \mathbb{C}^{q(X)}$  with $q(X)$ the irregularity of $X$. 
\end{cor}
\begin{proof} 
The proof is suggested by a referee of our paper. Consider the core map $$c_X: X \dashrightarrow Y := C(X)$$  defined by Campana, such that  the orbifold base of $c_X$ is of orbifold general type (see \cite{Cam04}). By the orbifold version of the Kobayashi-Ochiai theorem \cite[Theorem 8.2]{Cam04}, the composed meromorphic map
\[
\widetilde{X} \xrightarrow{\pi} X \overset{c_X}{\dashrightarrow} Y
\]
extends to a meromorphic map from a K{\"a}hler compactification $\overline{\widetilde{X}}$ of $\widetilde{X}$. This implies that the general fiber $F$ of $c_X$ satisfies that $\pi^{-1}(F)$ has only finitely many connected components. In particular, the induced homomorphism $\pi_1(F) \rightarrow \pi_1(X)$ has image of finite index.

According to \cite{Cam04}, $F$ is a special manifold. Since we assume the existence of a faithful representation $\varrho: \pi_1(X) \to \GL_N(K)$,   it follows from \cref{main:abelian} when $\mathrm{char}\, K>0$ and  \cite[Theorem 7.8]{Cam04} when $\mathrm{char}\, K=0$ that the image $\varrho([\pi_1(F)\to \pi_1(X)])$ is virtually abelian. As the image of $\pi_1(F) \to \pi_1(X)$ has finite index, $\pi_1(X)$ itself is virtually abelian. The conclusion then follows from \cite[Theorem 1.5]{CH13}, completing the proof of \cref{thm:univ}.
\end{proof}
\begin{rem}\label{rem:referee}
As mentioned above, the proof of \cref{thm:univ} was suggested by a referee, following the original ideas of \cite{CHK13} and \cite[p.~7]{CH10}. Our original proof was significantly more involved and relied on the hyperbolicity arguments in \cref{thm:fun}. 
\end{rem}

\subsection{A factorization result (II)}
In this subsection, we will prove that  any linear representation factors through its Shafarevich morphism after passing to  a finite \'etale cover.  It will be used in the proof of \cref{thm:convexity}.

\begin{lem}\label{lem:20240124} 
Let $X$ be a projective normal variety. 
Let $F\subset X$ be a connected Zariski closed subset.
Then there exists a connected open neighbourhood $U\subset X$ of $F$ such that ${\rm Im}[\pi_1(F)\to\pi_1(X)]= {\rm Im}[\pi_1(U)\to \pi_1(X)]$.
\end{lem}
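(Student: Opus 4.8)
The plan is to reduce the statement to the existence of a neighbourhood of $F$ that deformation retracts onto it. Since $X$ is projective it is in particular a compact complex analytic space, and since $F$ is Zariski closed it is a closed analytic subspace of $X$. First I would invoke the classical fact that such an $F$ admits a connected open neighbourhood $U\subset X$ which deformation retracts onto $F$. One concrete way to produce $U$ is to triangulate the pair $(X,F)$: by the triangulation theorem for analytic pairs (Hironaka, Hardt), $X$ is homeomorphic to a finite simplicial complex $K$ in which $F$ becomes a subcomplex $L$, and one takes $U$ to be the interior of the regular neighbourhood of $L$ in the second barycentric subdivision of $K$; this interior is a neighbourhood of $F$ that deformation retracts onto $F$, and it is connected because $F$ is. (Alternatively one may simply quote \cite[Theorem~4.5]{Hof09}, or Durfee's results on neighbourhoods of algebraic sets, which directly produce such a $U$ with $\pi_1(F)\to\pi_1(U)$ an isomorphism.)

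Once such a $U$ is fixed, together with a basepoint $x_0\in F$, the remainder is purely formal. The homotopy equivalence $F\hookrightarrow U$ shows that the induced map $\iota_*\colon\pi_1(F,x_0)\to\pi_1(U,x_0)$ is an isomorphism, so in particular $\iota_*$ is surjective and $U$ is connected. Write $j_*\colon\pi_1(F,x_0)\to\pi_1(X,x_0)$ and $k_*\colon\pi_1(U,x_0)\to\pi_1(X,x_0)$ for the maps induced by the inclusions, so that $j_*=k_*\circ\iota_*$. Then ${\rm Im}\,j_*\subseteq{\rm Im}\,k_*$ trivially, while surjectivity of $\iota_*$ gives ${\rm Im}\,k_*={\rm Im}(k_*\circ\iota_*)={\rm Im}\,j_*$. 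This is precisely the asserted equality ${\rm Im}[\pi_1(F)\to\pi_1(X)]={\rm Im}[\pi_1(U)\to\pi_1(X)]$.

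The only substantive ingredient is the existence of the retracting neighbourhood $U$, and that is where the work (or, more honestly, the citation) goes; everything afterwards is a one-line diagram chase. I would expect this first step to be the ``hard part'' only in the bookkeeping sense, since it is standard (sub)analytic topology; in particular it uses neither the normality of $X$ nor any smoothness, only that $X$ is a compact complex space and $F$ a closed analytic subset of it, which is exactly the generality in which the lemma is applied later.
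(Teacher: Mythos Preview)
Your proposal is correct, but it takes a different route from the paper's own argument. You invoke the existence of a regular neighbourhood $U$ of $F$ that deformation retracts onto $F$ (via triangulation of the analytic pair $(X,F)$, or directly via \cite[Theorem~4.5]{Hof09}), which gives the stronger statement that $\pi_1(F)\to\pi_1(U)$ is an isomorphism; the lemma then follows by a one-line diagram chase. The paper instead gives a direct, elementary construction: it covers $F$ by finitely many small open sets $\Omega_{t_i}$ in which every loop is null-homotopic in $X$ and $F\cap\Omega_{t_i}$ is connected, builds $U$ as a carefully nested union of smaller neighbourhoods, and then verifies the equality of images by lifting to the universal cover $\widetilde{X}$ and tracking which connected component of $\pi^{-1}(U)$ sits over a fixed component $F'$ of $\pi^{-1}(F)$. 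This uses nothing beyond covering-space theory and compactness.

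In fact the paper explicitly acknowledges your approach in the remark immediately following the lemma: it notes that the stronger isomorphism $\pi_1(F)\simeq\pi_1(U)$ is available from \cite[Theorem~4.5]{Hof09}, and says the direct proof is included ``for the sake of convenience.'' So your route is both correct and anticipated; what it buys is brevity and a stronger conclusion, at the cost of importing a nontrivial triangulation result, while the paper's argument is fully self-contained. Your closing observation that normality of $X$ plays no role is also accurate for either proof.
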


\begin{rem} 
A stronger result holds as a consequence of \cite[Theorem 4.5]{Hof09}.
Namely there exists a connected open neighbourhood $U\subset X$ of $F$ such that the induced map $\pi_1(F)\to \pi_1(U)$ is an isomorphism.
Here we give a direct proof of \cref{lem:20240124} for the sake of convenience.
\end{rem}

\begin{proof}[Proof of \cref{lem:20240124}]
The construction of $U$ is as follows.
For each $t\in F$, we take a connected open neighbourhood $\Omega_t\subset X$ of $t$ such that 
\begin{enumerate}[label*=(\alph*)]
	\item   any loop in $\Omega_t$ is null homotopic in $X$, 
	\item  $F\cap \Omega_t$ is connected.
\end{enumerate} 
We take a connected open neighbourhood $W_t$ of $t$ such that $\overline{W_t}\subset \Omega_t$.
Since $X$ is projective, $F$ is compact.
We may take $t_1,\ldots,t_l\in F$ such that $F\subset W_{t_1}\cup \cdots\cup W_{t_l}$.
For each $s\in F$, we take a connected open neighbourhood $U_s$ of $s$ such that for each $i=1,\ldots,l$, if $s\in W_{t_i}$, then $U_s\subset W_{t_i}$, if $s\in \overline{W_{t_i}}$ then $U_s\subset \Omega_{t_i}$, and if $s\not\in \overline{W_{t_i}}$, then $U_s\cap \overline{W_{t_i}}=\emptyset$.
We set $U=\cup_{s\in F}U_s$.
Note that $U$ is open and $F\subset U$.
Since $F$ and $U_s$ are connected, $U$ is connected.
We shall show that $U$ satisfies the property of our lemma.

Let $\pi:\widetilde{X}\to X$ be the universal covering space.
Set $\Gamma=\pi_1(X)$.
The action $\Gamma \curvearrowright \widetilde{X}$ is free.
We fix a connected component $F'\subset \widetilde{X}$ of $\pi^{-1}(F)$.
Define $H\subset \Gamma$ by $\gamma\in H$ iff $\gamma F'=F'$.
Note that $F\subset \widetilde{X}$ is an analytic set, hence locally path connected.
Hence $F'\subset \widetilde{X}$ is path connected analytic set.
Thus $H={\rm Im}[\pi_1(F)\to\pi_1(X)]$.

For each $t\in F$, we take $t'\in F'$ such that $t=\pi(t')$.
Then $F'\cap \pi^{-1}(t)=\{\gamma t';\gamma\in H\}$.
For each $i=1,\ldots,l$, we denote by $\Omega_{t_i'}'\subset \widetilde{X}$ the connected component of $\pi^{-1}(\Omega_{t_i})$ which contains $t_i'$.
Since any loop in $\Omega_{t_i}$ is null homotopic in $X$, the natural map $\Omega_{t_i'}'\to \Omega_{t_i}$ is isomorphic.
Then $\pi^{-1}(\Omega_{t_i})=\sqcup_{\gamma\in \Gamma}\gamma \Omega_{t_i'}'$ is disjoint union.
Since $F\cap \Omega_{t_i}$ is connected, $\Omega_{t_i'}'$ does not intersect with $\gamma F'$ for all $\gamma\in\Gamma-H$.
Thus 
\begin{equation}\label{eqn:202401221}
	F'\cap (\cup_{\gamma\in \Gamma-H}\gamma \Omega_{t_i'}')=\emptyset.
\end{equation}
Let $W_{t_i'}'\subset \Omega_{t_i'}'$ be the inverse image of $W_{t_i}$ under $\Omega_{t_i'}'\to\Omega_{t_i}$.
Then $\pi^{-1}(\overline{W_{t_i}})=\cup_{\gamma\in\Gamma}\gamma \overline{W_{t_i'}'}$, which is a closed subset of $\widetilde{X}$.
Set $\mathcal{C}_i=\cup_{\gamma\in\Gamma-H}\gamma \overline{W_{t_i'}'}$.
By $\mathcal{C}_i=\pi^{-1}(\overline{W_{t_i}})\cap (\cup_{\gamma\in H}\gamma \Omega_{t_i'}')^c$, we conclude that $\mathcal{C}_i$ is a closed set.
By \eqref{eqn:202401221}, we have $F'\cap \mathcal{C}_i=\emptyset$.
We set $\mathcal{C}=\mathcal{C}_{1}\cup\cdots\cup\mathcal{C}_{l}$.
Then $\mathcal{C}\subset \widetilde{X}$ is a closed subset and $F'\cap \mathcal{C}=\emptyset$.

Now for each $t\in F$, we denote by $U'_{t'}\subset \widetilde{X}$ the connected component of $\pi^{-1}(U_t)$ which contains $t'$.
We claim that if $\tau\in H$, then $\tau U'_{t'}\cap \mathcal{C}=\emptyset$, and if $\tau\in \Gamma-H$, then $\tau U'_{t'}\subset \mathrm{Int}(\mathcal{C})$, where $\mathrm{Int}(\mathcal{C})$ is the interior of $\mathcal{C}$.
To prove this, we first suppose $\tau\in H$.
To show $\tau U'_{t'}\cap \mathcal{C}=\emptyset$, it is enough to show that $\tau U'_{t'}\cap \gamma \overline{W_{t'_i}'}=\emptyset$ for each $\gamma\in \Gamma-H$ and $i=1,\ldots,l$.
Indeed, if $t\in \overline{W_{t_i}}$, then $U_t\subset \Omega_{t_i}$.
By $\gamma\not=\tau$, we have $\gamma\Omega'_{t'_i}\cap \tau\Omega'_{t'_i}=\emptyset$.
Hence $\tau U'_{t'}\cap \gamma \overline{W_{t'_i}'}=\emptyset$.
If $t\not\in \overline{W_{t_i}}$, then $U_t\cap  \overline{W_{t_i}}=\emptyset$.
Thus $\tau U'_{t'}\cap \gamma \overline{W_{t'_i}'}=\emptyset$.
We have proved that $\tau U'_{t'}\cap \mathcal{C}=\emptyset$, if $\tau\in H$.
Next suppose $\tau\in \Gamma-H$.
We may take $i$ such that $t\in W_{t_i}$.
Then by $U_t\subset W_{t_i}$, we have $\tau U'_{t'}\subset \tau W'_{t'_i}\subset \mathrm{Int}(\mathcal{C})$.

Now we set $U'=\cup_{t\in F}\cup_{\gamma\in H}\gamma U'_{t'}$.
Then we have $F'\subset U'$.
Since $F'$ and $\gamma U'_{t'}$ are connected, $U'$ is connected.
Note that $U'\cap \mathcal{C}=\emptyset$ and $\gamma U'=U'$ for all $\gamma\in H$.
On the other hand, for all $\gamma\in \Gamma-H$, we have $\gamma U'\subset \mathrm{Int}(\mathcal{C})$.
By $\pi^{-1}(U)=\cup_{\gamma\in \Gamma}\gamma U'$, we conclude that $U'$ is a connected component of $\pi^{-1}(U)$ and $\varrho({\rm Im}[\pi_1(U)\to \pi_1(X)])=H$.
The proof is completed.
\end{proof}

\begin{lem}\label{lem:20230120}
Let $f:X\to S$ be a proper surjective map between quasi-projective varieties 
%	\footnote{I omitted the assumption that $X$ and $S$ are normal.
	%	I think it is not needed.}
$X$ and $S$ such that all fibers of $f$ are connected.
Let $\varrho:\pi_1(X)\to \Gamma$ be a morphism to a group $\Gamma$.
Assume that for every fiber $F$ of $f$, the image $\varrho({\rm Im}[\pi_1(F)\to \pi_1(X)])$ is trivial.
Then there exists a morphism $\tau:\pi_1(S)\to \Gamma$ such that $\tau\circ f_*=\varrho$.
\end{lem}

\begin{proof} 
Let $p:X'\to X$ be a Galois covering corresponding to $\mathrm{ker}(\varrho)\subset\pi_1(X)$.
Replacing $\Gamma$ by $\varrho(\pi_1(X))$, we may assume that $\varrho$ is surjective.
Then $\mathrm{Gal}(X'/X)=\Gamma$.
The action $\Gamma\curvearrowright X'$ is properly discontinuous.
Let $g:X'\to S$ be the composite of $p:X'\to X$ and $f:X\to S$.
We define an equivalence relation $\sim$ on $X'$ by $x\sim y$ iff $g(x)=g(y)$ and $y$ is contained in the connected component of $g^{-1}(g(x))$ which contains $x$.
Let $\varphi:X'\to S'$ be the quotient map endowed with the quotient topology on $S'$.
Then we have the induced map $q:S'\to S$, which is continuous.
Each $\gamma\in \Gamma$ preserves the equivalence relation $\sim$ on $X'$, namely if $x\sim y$, then $\gamma x\sim \gamma y$. 
Hence the action $\Gamma\curvearrowright X'$ descends to $\Gamma\curvearrowright S'$ as a continuous action.

We shall prove that $q:S'\to S$ is a topological Galois covering with $\mathrm{Gal}(S'/S)=\Gamma$.
We take $s\in S$.
We take a connected open neighborhood $U_s\subset X$ of $f^{-1}(s)$ as in \cref{lem:20240124}.
Then 
\begin{equation}\label{eqn:20250705}
	\varrho({\rm Im}[\pi_1(U_s)\to \pi_1(X)])=\{1\}.
\end{equation}
Since $f:X\to S$ is proper, we may take an open neighborhood $V_s\subset S$ of $s\in S$ such that $f^{-1}(V_s)\subset U_s$.
We replace $U_s$ by $f^{-1}(V_s)$.
Let $W\subset X'$ be a connected component of $p^{-1}(U_s)$.
Then by \eqref{eqn:20250705}, the induced map $p|_{W}:W\to U_s$ is a homeomorphism.
We have a disjoint union
\begin{equation}\label{eqn:202507051}
	p^{-1}(U_s)=\bigcup_{\gamma\in \Gamma}\gamma W.
\end{equation}
Set $\Omega=\varphi(W)$.
Then $\varphi^{-1}(\Omega)=W$, hence $\Omega\subset S'$ is open.
Since every fiber of $f:X\to S$ is connected, the induced map $q|_{\Omega}:\Omega\to V_s$ is injective.
Since $f:X\to S$ is surjective, $q|_{\Omega}:\Omega\to V_s$ is surjective, hence bijective.
Since $f:X\to S$ is proper, $(q|_{\Omega})^{-1}:V_s\to \Omega$ is continuous, hence $q|_{\Omega}:\Omega\to V_s$ is homeomorphic.
By \eqref{eqn:202507051}, we have a disjoint union $q^{-1}(V_s)=\bigcup_{\gamma\in \Gamma}\gamma \Omega$.
Hence $q:S'\to S$ is a topological Galois covering with $\mathrm{Gal}(S'/S)=\Gamma$.
Thus we get $\tau:\pi_1(S)\to\Gamma$ such that $\tau\circ f_*=\varrho$.
\end{proof}

\begin{lem}\label{lem:factor2}
Let $f:X\to S$ be a proper surjective map between projective normal varieties $X$ and $S$ such that all fibers of $f$ are connected. 
Let $\varrho:\pi_1(X)\to \mathrm{GL}_n(K)$ be a representation, where $K$ is any field.
Assume that for every fiber $F$ of $f$, the image $\varrho({\rm Im}[\pi_1(F)\to \pi_1(X)])$ is finite.
Then there exists a finite \'etale covering $\mu:X'\to X$ with the following property:
Let $X'\to S'\to S$ be the  Stein factorization of the composite $X'\to X\to S$.
Then for every fiber $G$ of $g:X'\to S'$, the image $\mu^*\varrho({\rm Im}[\pi_1(G)\to \pi_1(X')])$ is trivial.
\end{lem}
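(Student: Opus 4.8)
Write $\Gamma=\pi_1(X)$, $H=\ker\varrho$, and $G=\varrho(\Gamma)=\Gamma/H$, a finitely generated linear group. For $s\in S$ set $F_s=f^{-1}(s)$, $\Gamma_{F_s}={\rm Im}[\pi_1(F_s)\to\Gamma]$ and $G_{F_s}=\varrho(\Gamma_{F_s})$; by hypothesis every $G_{F_s}$ is finite. The heart of the matter is the claim:

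\emph{(\(\star\)) The subgroups $\{\Gamma_{F_s}\}_{s\in S}$ lie in finitely many conjugacy classes of $\Gamma$.}

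Granting $(\star)$, choose representatives $G_1,\dots,G_r\le G$ for the (finitely many) conjugacy classes among the finite subgroups $G_{F_s}$. Since $G$ is finitely generated linear it is residually finite (Malcev), so for each $j$ and each nontrivial $\gamma\in G_j$ we may pick a finite index normal subgroup of $G$ omitting $\gamma$; intersecting all of these (finitely many in total) produces a finite index normal subgroup $N'\triangleleft G$ with $N'\cap G_j=\{1\}$ for all $j$. Because $N'$ is normal, $N'\cap gG_jg^{-1}=g(N'\cap G_j)g^{-1}=\{1\}$ for all $g,j$, hence $N'\cap G_{F_s}=\{1\}$ for every $s$. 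Now put $\Gamma'=\varrho^{-1}(N')\triangleleft\Gamma$, a subgroup of index $[G:N']$, and let $\mu:X'\to X$ be the finite \'etale covering with $\mu_*\pi_1(X')=\Gamma'$, so that $\mu^*\varrho=\varrho|_{\Gamma'}$.

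Let $X'\to S'\to S$ be the Stein factorization of $f\circ\mu$; then $X'\to S'$ is proper with connected fibers. For $s\in S$ the set $\mu^{-1}(F_s)=(f\circ\mu)^{-1}(s)$ decomposes into the connected fibers of $X'\to S'$ over the finitely many points of $S'$ above $s$, so every fiber $G$ of $X'\to S'$ is a connected component of some $\mu^{-1}(F_s)$. For such $G$, the composite $G\hookrightarrow X'\xrightarrow{\mu}X$ factors through $F_s\hookrightarrow X$, so under the inclusion $\mu_*:\pi_1(X')\cong\Gamma'\hookrightarrow\Gamma$ the image ${\rm Im}[\pi_1(G)\to\pi_1(X')]$ is carried into $\Gamma'\cap\Gamma_{F_s}$. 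Consequently
$\mu^*\varrho\big({\rm Im}[\pi_1(G)\to\pi_1(X')]\big)=\varrho\big({\rm Im}[\pi_1(G)\to\pi_1(X)]\big)\subseteq\varrho(\Gamma'\cap\Gamma_{F_s})=N'\cap G_{F_s}=\{1\}$,
which is exactly the asserted property of $\mu$.

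It remains to prove $(\star)$, which I expect to be the only real obstacle. I would deduce it from the following statement, proved by induction on $\dim Z$: for every proper morphism $q:W\to Z$ of complex varieties, the subgroups ${\rm Im}[\pi_1(q^{-1}(z))\to\pi_1(W)]$, $z\in Z$, lie in finitely many conjugacy classes of $\pi_1(W)$. The case $\dim Z=0$ is trivial. For the inductive step, by Verdier's generic local triviality theorem for proper morphisms there is a dense Zariski open (irreducible) subset $Z^\circ\subseteq Z$ over which $q$ is a topologically locally trivial fibration; connectedness of $Z^\circ$ forces the inclusions $q^{-1}(z)\hookrightarrow q^{-1}(Z^\circ)$, $z\in Z^\circ$, to be pairwise conjugate in $\pi_1(q^{-1}(Z^\circ))$ (the usual fact that the fiber subgroup of the fundamental group of the total space of a fibration is well defined up to conjugacy), so their images in $\pi_1(W)$ form a single conjugacy class. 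The closed set $Z\setminus Z^\circ$ has finitely many irreducible components $Z_i$, each of dimension $<\dim Z$; applying the inductive hypothesis to $q:q^{-1}(Z_i)\to Z_i$ and pushing forward along $\pi_1(q^{-1}(Z_i))\to\pi_1(W)$ (the image of a conjugacy class of subgroups of the source lies in a single conjugacy class of $\pi_1(W)$) shows $\{{\rm Im}[\pi_1(q^{-1}(z))\to\pi_1(W)]\}_{z\in Z_i}$ meets only finitely many conjugacy classes; summing over the finitely many $Z_i$ closes the induction. Taking $q=f$, $W=X$, $Z=S$ yields $(\star)$. The delicate part is purely bookkeeping with base points and conjugacy; no further geometric input beyond Verdier's theorem is required.
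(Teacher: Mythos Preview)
Your proof is correct and takes a genuinely different route from the paper's. The paper argues by Noetherian descent: for each finite-index normal subgroup $N\subset\varrho(\pi_1(X))$ it defines the ``bad locus'' $Z_N=\{s\in S:\varrho(\Gamma_{F_s})\cap N\ne\{1\}\}$, shows (via the open-neighborhood \cref{lem:20240124} and an associated Galois-cover construction) that $Z_N$ is Zariski closed, and then shrinks $N$ repeatedly so as to strictly shrink $Z_N$ until it becomes empty. Your argument instead bounds the number of conjugacy classes of $\Gamma_{F_s}$ once and for all by topological stratification (Verdier / Thom's first isotopy lemma) and then makes a single application of Malcev's residual finiteness to the resulting finite list of finite subgroups. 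The trade-off: your approach is shorter and more conceptual, but imports a nontrivial theorem from stratification theory; the paper's approach stays within elementary algebraic geometry and reuses \cref{lem:20240124}, which is needed elsewhere anyway.

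One minor point on your proof of $(\star)$: as stated, the inductive hypothesis is for morphisms of \emph{varieties}, but $q^{-1}(Z_i)$ need not be irreducible, so you cannot apply the hypothesis to $q|_{q^{-1}(Z_i)}$ as written. The fix is either to formulate the induction for arbitrary complex algebraic sets, or---more efficiently---to skip the induction altogether: a single application of the full Whitney stratification of $f$ (Thom--Mather) gives finitely many connected smooth strata $S_\alpha\subset S$ over each of which $f$ is a topological fiber bundle, and each stratum contributes exactly one conjugacy class of $\Gamma_{F_s}$ in $\pi_1(X)$. This is what your induction is effectively reproving.
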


\begin{proof} 
We take $s\in S$.
Since $\varrho(\pi_1(X))$ is residually finite and $$\varrho({\rm Im}[\pi_1(f^{-1}(s))\to \pi_1(X)])$$ is finite, we may take a finite indexed normal subgroup $N_s\subset \varrho(\pi_1(X))$ such that $$\varrho({\rm Im}[\pi_1(f^{-1}(s))\to \pi_1(X)])\cap N_s=\{1\}.$$
We take a connected open neighborhood $U_s\subset X$ of $f^{-1}(s)$ as in \cref{lem:20240124}.
Then $$\varrho({\rm Im}[\pi_1(U_s)\to \pi_1(X)])\cap N_s=\{1\}.$$
We note that $f:X\to S$ is proper.
Hence, we may take an open neighborhood $V_s\subset S$ of $s\in S$ such that $f^{-1}(V_s)\subset U_s$.
Then for $t\in V_s$, we have $f^{-1}(t)\subset U$, hence $$\varrho({\rm Im}[\pi_1(f^{-1}(t))\to \pi_1(X)])\cap N_s=\{1\}.$$
Since $S$ is compact, we may take finite points $s_1,\ldots,s_k\in S$ such that $S=V_{s_1}\cup\cdots\cup V_{s_k}$. 
Set $N=N_{s_1}\cap\cdots\cap N_{s_k}$.
Then for every fiber $F$ of $f$, we have $\varrho({\rm Im}[\pi_1(F)\to \pi_1(X)])\cap N=\{1\}$.

Let $\mu:X'\to X$ be a Galois covering corresponding to $\varrho^{-1}(N)\subset\pi_1(X)$.
Let $X'\to S'\to S$ be the  Stein factorization of the composite $X'\to X\to S$.
Then for every fiber $G$ of $g:X'\to S'$, we have ${\rm Im}[\pi_1(G)\to \pi_1(X)]\subset {\rm Im}[\pi_1(F)\to \pi_1(X)]$, where $F$ is the fiber of $f$ such that $\mu(G)=F$.
Hence $\varrho({\rm Im}[\pi_1(G)\to \pi_1(X)])\cap N=\{1\}$.
By $N=\varrho(\pi_1(X'))$, we conclude $\mu^*\varrho({\rm Im}[\pi_1(G)\to \pi_1(X')])=\{1\}$. 
\end{proof}

\begin{proposition}\label{prop:factor}
Let $X$ be a projective normal variety and let $\varrho:\pi_1(X)\to \GL_{N}(K)$ be a representation where $K$ is any field of positive characteristic. Then there exists a finite \'etale cover $\mu:\widehat{X}\to X$  and  a large representation $\tau:\pi_1({\rm Sh}_{\mu^*\varrho}(\widehat{X}))\to \GL_{N}(K)$ such that $({\rm sh}_{\mu^*\varrho})^*\tau=\mu^*\varrho$, where  ${\rm sh}_{\mu^*\varrho}:\widehat{X}\to {\rm Sh}_{\mu^*\varrho}(\widehat{X})$ is the Shafarevich morphism of $\mu^*\varrho$. 
\end{proposition}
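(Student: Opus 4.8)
The plan is to apply the two preceding factorization lemmas to the Shafarevich morphism ${\rm sh}_\varrho\colon X\to{\rm Sh}_\varrho(X)$ of $\varrho$ (which exists by \cref{thm:Sha22}, with ${\rm Sh}_\varrho(X)$ normal) and then to check that the fibration produced on the étale cover is itself a Shafarevich morphism. First I would note that, since $X$ is projective, ${\rm sh}_\varrho$ is proper surjective onto the projective variety ${\rm Sh}_\varrho(X)$, and its Stein factorization is finite and birational onto ${\rm Sh}_\varrho(X)$, hence an isomorphism by Zariski's main theorem since ${\rm Sh}_\varrho(X)$ is normal; therefore every fiber of ${\rm sh}_\varrho$ is connected. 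For any fiber $F$ one has ${\rm sh}_\varrho(F)=\{\mathrm{pt}\}$, so $\varrho({\rm Im}[\pi_1(F)\to\pi_1(X)])$ is finite by \cref{thm:Sha22}.

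Next I would apply \cref{lem:factor2} to $f={\rm sh}_\varrho$: this gives a finite étale cover $\mu\colon\widehat X\to X$ such that, writing $\widehat X\xrightarrow{\,g\,}S'\xrightarrow{\,h\,}{\rm Sh}_\varrho(X)$ for the Stein factorization of ${\rm sh}_\varrho\circ\mu$ --- so $h$ is finite surjective, $g$ is proper surjective with connected fibers, and $S'$ is normal since $\widehat X$ is --- the representation $\mu^*\varrho$ is trivial on ${\rm Im}[\pi_1(G)\to\pi_1(\widehat X)]$ for every fiber $G$ of $g$. Then \cref{lem:20230120} applied to $g$ produces a representation $\tau\colon\pi_1(S')\to\GL_N(K)$ with $g^*\tau=\mu^*\varrho$.

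It remains to see that $g$ is the Shafarevich morphism of $\mu^*\varrho$ and that $\tau$ is large, so that one may set ${\rm Sh}_{\mu^*\varrho}(\widehat X)=S'$ and ${\rm sh}_{\mu^*\varrho}=g$. Since the Shafarevich morphism is unique up to isomorphism over a projective base (compare two candidates through the Stein factorization of their fiber product, as for Katzarkov-Eyssidieux reductions), it suffices to verify the characterizing equivalences of \cref{thm:Sha22}: if $g(Z)$ is a point then $Z$ lies in a fiber of $g$, hence $\mu^*\varrho$ is trivial on ${\rm Im}[\pi_1(Z)\to\pi_1(\widehat X)]$; conversely, if this image is finite, then, treating one irreducible component of $Z$ at a time and using that $\mu$ is finite étale and that $Z^{\rm norm}\to\mu(Z)$ lifts to a finite surjective map $Z^{\rm norm}\to\mu(Z)^{\rm norm}$ with finite-index image on $\pi_1$, one deduces that $\varrho({\rm Im}[\pi_1(\mu(Z)^{\rm norm})\to\pi_1(X)])$ is finite, whence ${\rm sh}_\varrho(\mu(Z))=h(g(Z))$ is a point by \cref{thm:Sha22} and, $h$ being finite with $g(Z)$ connected, $g(Z)$ is a point. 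For largeness, given a positive-dimensional closed $V\subset S'$ pick an irreducible component $W\subset g^{-1}(V)$ dominating $V$; since $g(W)=V$ is not a point, $(\mu^*\varrho)^{ss}$, hence $\mu^*\varrho$ (because $\ker(\mu^*\varrho)\subset\ker((\mu^*\varrho)^{ss})$), is infinite on ${\rm Im}[\pi_1(W^{\rm norm})\to\pi_1(\widehat X)]$; lifting $W^{\rm norm}\to V$ to $W^{\rm norm}\to V^{\rm norm}$ and using $g^*\tau=\mu^*\varrho$ shows $\tau({\rm Im}[\pi_1(V^{\rm norm})\to\pi_1(S')])$ is infinite.

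The main obstacle is this last step: promoting "connected general fibers" to "connected fibers" for the Stein factorizations involved, and---especially---confirming that the fibration $g$ extracted from \cref{lem:factor2,lem:20230120} genuinely is the Shafarevich morphism of $\mu^*\varrho$ rather than merely a map it factors through, which rests on the uniqueness of the Shafarevich morphism over a projective base and on the $\pi_1$-control for the finite surjections $Z^{\rm norm}\to\mu(Z)^{\rm norm}$.
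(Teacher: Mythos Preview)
Your proposal is correct and follows essentially the same approach as the paper: apply \cref{lem:factor2} to ${\rm sh}_\varrho$ to obtain the \'etale cover and Stein factorization $g$, apply \cref{lem:20230120} to descend $\mu^*\varrho$ to $\tau$, then identify $g$ with ${\rm sh}_{\mu^*\varrho}$ and check largeness of $\tau$. You are in fact more careful than the paper in two places: you explicitly argue (via Zariski's main theorem on the Stein factorization) that \emph{all} fibers of ${\rm sh}_\varrho$ are connected before invoking \cref{lem:factor2}, and you spell out both directions of the contraction criterion to identify $g$ with the Shafarevich morphism, whereas the paper simply appeals to ``unicity of the Shafarevich morphism'' at that point.
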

\begin{proof}
By \cref{thm:Sha22}, the Shafarevich morphism ${\rm sh}_\varrho:X\to {\rm Sh}_\varrho(X)$ exists, and for each fiber $F$ of  ${\rm sh}_\varrho$, $\varrho({\rm Im}[\pi_1(F)\to \pi_1(X)])$ is finite. 
By virtue of \cref{lem:factor2},  there exists a finite \'etale cover $\mu:\widehat{X}\to X$ such that considering the Stein factorization  $\widehat{X} \stackrel{g}{\to} S\to {\rm Sh}_\varrho(X)$   of the composite ${\rm sh}_\varrho\circ\mu$,  
for every fiber $G$ of $g $, the image $\mu^*\varrho({\rm Im}[\pi_1(G)\to \pi_1(\widehat{X})])$ is trivial.   Applying \cref{lem:20230120}, we conclude that there exists a representation $\tau:\pi_1(S)\to \GL_{N}(K)$ such that $g^*\tau=\mu^*\varrho$. 

We note that for each fiber $G$ of $g$, $g(G)$ is a point if and only if $$\mu^*\varrho({\rm Im}[\pi_1(G^{\rm norm})\to \pi_1(\widehat{X})])$$ is finite.  By the unicity of the Shafarevich morphism,    $g:\widehat{X}\to S$ is identified with the Shafarevich morphism ${\rm sh}_{\mu^*\varrho}:\widehat{X}\to {\rm Sh}_{\mu^*\varrho}(\widehat{X})$ of $\mu^*\varrho$. 

Let us prove that $\tau$ is large. Let $Z\subset  S$ be a positive dimensional closed subvariety. Let $W\subset  g^{-1}(Z)$ be an irreducible component that is dominant over $Z$. It follows that $\mu^*\varrho({\rm Im}[\pi_1(W^{\rm norm})\to \pi_1(\widehat{X})])$ is infinite by \cref{thm:Sha22}. Since $g^*\tau=\mu^*\varrho$, it follows that
$$
\mu^*\varrho({\rm Im}[\pi_1(W^{\rm norm})\to \pi_1(\widehat{X})])\subset \tau({\rm Im}[\pi_1(Z^{\rm norm})\to \pi_1(S)])
$$ 
is infinite. 
This implies that $\tau$ is a large representation. 
\end{proof}

In the rest of this paper, we will prove \cref{main:Sha}.

\subsection{Partial Albanese morphism}\label{sec:partial}
For this subsection we refer the readers to  \cite{Eys04,Eys11}  for details. Let $X$ be a smooth projective variety.  Let $\{\eta_1,\ldots,\eta_k\}\subset H^0(X,\Omega_X^1)$ be a set of holomorphic one forms. Consider the Albanese map $\alb_X: X\to \Alb(X)$. Then there exist holomorphic one forms $\{\omega_1,\ldots,\omega_k\}\subset H^0(\Alb(X),\Omega_{\Alb(X)}^1)$  such that  $\alb_X^*\omega_i=\eta_i$ for each $i$. Let $B$ be the largest abelian subvariety in $\Alb(X)$ such that $\omega_i|_{B}\equiv 0$ for each $i$.  Denote by $A:=\Alb(X)/B$ the quotient which is also an abelian variety. Then the morphism $a:X\to A$ that is the composite  of $\alb_X$ and the quotient map $q:\Alb(X)\to A$  is called the \emph{partial Albanese morphism} induced by $\{\eta_1,\ldots,\eta_k\}$.

We remark that there exist  holomorphic 1-forms $\{\omega'_1,\ldots,\omega'_k\}\subset H^0(A,\Omega_{A}^1)$ such that $q^*\omega_i'=\omega_i$ for each $i$.  They satisfy the following property: 
\begin{lem}\label{lem:snef}
For any positive-dimensional closed subvariety $Z$ of $a(X)$, there exists some $\omega_i'$ such that $\omega_i'|_{Z}\not\equiv 0$.  
\end{lem}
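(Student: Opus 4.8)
The plan is to prove the contrapositive of the statement by reduction to the defining maximality of the abelian subvariety $B$. Assuming, for contradiction, that $Z\subseteq a(X)$ is a positive-dimensional closed subvariety with $\omega_i'|_Z\equiv 0$ for every $i$, I will produce an abelian subvariety of $\Alb(X)$ strictly larger than $B$ on which all the $\omega_i$ vanish, contradicting the fact that $B$ is the largest such. Here the vanishing $\omega_i'|_Z\equiv 0$ is to be understood through a resolution $\widetilde Z\to Z$ (equivalently, on $Z^{\mathrm{reg}}$); note that the hypothesis $Z\subseteq a(X)$ will not be used, only that $Z$ is a subvariety of $A$.

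First I would reduce to the case $0\in Z$: since each $\omega_i'$ is a translation-invariant holomorphic $1$-form on $A$, replacing $Z$ by a translate $Z-z_0$ with $z_0\in Z$ affects neither $\dim Z$ nor the vanishing $\omega_i'|_Z\equiv 0$. Then let $B'\subseteq A$ be the abelian subvariety generated by $Z$; by the standard structure of subgroups generated by a closed subvariety through the origin, $B'=Z+\cdots+Z$ ($n$ summands) for some $n\geq 1$, and $\dim B'\geq\dim Z>0$. Let $\pi\colon\widetilde Z\to Z\hookrightarrow A$ be a resolution followed by the inclusion, so $\pi^*\omega_i'=0$ in $H^0(\widetilde Z,\Omega^1_{\widetilde Z})$, and consider the surjective morphism
\[
\psi\colon \widetilde Z^{\times n}\longrightarrow B',\qquad (t_1,\dots,t_n)\longmapsto \pi(t_1)+\cdots+\pi(t_n).
\]
Translation-invariance of $\omega_i'$ gives $\psi^*\omega_i'=\sum_{j=1}^n \pr_j^*\,\pi^*\omega_i'=0$. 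Since $\widetilde Z^{\times n}$ and $B'$ are smooth over $\mathbb{C}$ and $\psi$ is dominant, generic smoothness provides a dense open subset of $\widetilde Z^{\times n}$ over which $\psi$ is smooth, hence over which $\psi^*$ is injective on $1$-forms; therefore $\omega_i'|_{B'}$ vanishes at a point of $B'$, and being a translation-invariant form on the abelian variety $B'$ it vanishes identically. Thus $\omega_i'|_{B'}\equiv 0$ for every $i$.

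Finally I would transfer this back along the quotient homomorphism $q\colon\Alb(X)\to A=\Alb(X)/B$ (which satisfies $q^*\omega_i'=\omega_i$): let $\widehat B$ be the identity component of $q^{-1}(B')$, an abelian subvariety of $\Alb(X)$. From $q(B)=0\in B'$ one gets $B\subseteq\widehat B$, while $\ker(q|_{\widehat B})=B$ and $q(\widehat B)=B'$ (as $B'$ is connected), so $\dim\widehat B=\dim B+\dim B'>\dim B$ and hence $B\subsetneq\widehat B$; moreover $\omega_i|_{\widehat B}=(q|_{\widehat B})^*(\omega_i'|_{B'})=0$ for all $i$. This contradicts the maximality of $B$, and the lemma follows. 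The only genuinely non-formal step is the middle one — correctly interpreting the vanishing of $\omega_i'|_Z$ on the possibly singular $Z$ and propagating it to the generated abelian subvariety $B'$ — which forces one to work on the smooth model $\widetilde Z^{\times n}$ and invoke generic smoothness there; the remaining steps are translation-invariance bookkeeping together with standard abelian-variety structure theory.
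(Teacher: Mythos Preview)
Your argument is correct and self-contained. The paper's proof takes a shorter but less direct route: it pulls $Z$ back to $X$ by choosing an irreducible component $W\subset a^{-1}(Z)$ dominant over $Z$, then invokes an external result (\cite[Lemma 1.1]{CDY22}) on the partial Albanese map to find some $\eta_i$ with $\eta_i|_W\not\equiv 0$, and finally observes that $\eta_i=a^*\omega_i'$ forces $\omega_i'|_Z\not\equiv 0$.

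The underlying mechanism is the same --- the cited lemma in \cite{CDY22} is essentially what you prove here, namely that the maximality of $B$ prevents any positive-dimensional subvariety of $A$ from annihilating all the $\omega_i'$. Your version has the advantage of working entirely on the abelian-variety side and being self-contained, at the cost of spelling out the structure argument (generating $B'$, descending the vanishing via generic smoothness, and comparing $\widehat B$ with $B$). The paper's version is terser but relies on a black box. Your remark that $Z\subset a(X)$ is never used is also correct: the statement holds for any positive-dimensional subvariety of $A$.
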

\begin{proof}
Since $a:X\to a(X)$ is surjective, there exists an irreducible component $W$ in $a^{-1}(Z)$ that is dominant over $Z$.  By the property of the partial Albanese map proved in \cite[Lemma 1.1]{CDY22},   there exists some $\omega_i$ such that $\omega_i|_{W}\not\equiv 0$.   Since $\omega_i=a^*\omega'_i$, it follows that $\omega_i'|_{Z}\not\equiv 0$.  
\end{proof}
\subsection{A recollection of spectral covering and canonical currents}
Let $X$ be a smooth projective variety.  Let $\tau:\pi_1(X)\to \GL_{N}(K)$ be a reductive representation where $K$ is a non-archimedean local field.   According to \cref{thm:KZ}, the \emph{Katzarkov-Eyssidieux reduction map} $s_\tau:X\to S_\tau$ of $\tau$ exists, fulfilling the properties outlined therein.   
Since $X$ is projective, $s_\tau:X\to S_\tau$ is unique.
We will outline the construction of certain  \emph{canonical}  positive closed $(1,1)$-currents  over $S_{\tau}$.  %We first assume that the Zariski closure of  $\varrho(\pi_1(X))$ is   semisimple. 

We first recall some facts about \emph{spectral forms}  as we have already seen in \Cref{itea} in the proof of \cref{thm:GGL}. We refer the readers to \cite{Eys04,CDY22,DY23} for more details. Let   $$\tau:\pi_1(X)\to \GL_{N}(K)$$ be a reductive representation where $K$ is a non-archimedean local field.  
By the work of Gromov-Schoen \cite{GS92} (see also \cite{BDDM} in cases where $X$ is non-compact), there exists a $\tau$-equivariant harmonic mapping $u:\widetilde{X}\to \Delta(G)$ where $\Delta(G)$ is the (enlarged) Bruhat-Tits building of $G$ (see \cite[Definition 4.3.2]{KP23} for the definition). Such a harmonic map is pluriharmonic, and the $(1,0)$-part of the complexified differentials gives rise to a set of multivalued holomorphic 1-forms over a dense  open set of $X$ whose complement has Hausdorff codimension at least two. 
We can take some finite (ramified) Galois covering $\pi:\xsp\to X$ (so-called \emph{spectral covering}) with  Galois group $H$ and such that the pullback of these multivalued one forms becomes single valued ones $\{\eta_1,\ldots,\eta_k\}\subset H^0(\xsp, \pi^*\Omega_X^1)$, that are called the \emph{spectral 1-forms} of $\tau$.  
Consequently, the Stein factorization of the \emph{partial Albanese morphism} $a:\xsp\to A$ of $\{\eta_1,\ldots,\eta_k\}$  leads to the (Katzarkov-Eyssidieux) reduction map $s_{\pi^*\tau}:\xsp\to S_{\pi^*\tau}$ of $\pi^*\tau$. This map  $s_{\pi^*\tau}$ is $H$-equivariant and its quotient by $H$ gives rise to the reduction map $s_\tau:X\to S_\tau$ by $\tau$.  More precisely, we have the  following commutative diagram:
\begin{equation*}
\begin{tikzcd}
	\xsp\arrow[r, "\pi"] \arrow[d, "s_{\pi^*\tau}"]\arrow[dd,bend right=37, "a"'] &  X\arrow[d, "s_{\tau}"]\\
	S_{\pi^*\tau} \arrow[d, "b"]\arrow[r, "\sigma_\pi"] & S_{\tau}\\
	A &
\end{tikzcd}
\end{equation*}
Here  $\sigma_\pi$ is also a finite ramified Galois cover with Galois group $H$. 
Note that there are 1-forms  
$\{\eta_1',\ldots,\eta_m'\}\subset H^0(A, \Omega_A^1)$ such that $a^*\eta_i'=\eta_i$. %Consider the finite morphism $b: S_{\pi^*\tau}\to A$. Then  
We define a smooth  positive 
closed $(1,1)$-form $T_{\pi^*\tau}:=b^*\sum_{i=1}^{m}i\eta_i'\wedge\overline{\eta_i}'$ on $S_{\pi^*\tau}$. Note that $T_{\pi^*\tau}$ is invariant under the Galois action $H$. Therefore,  there is a positive  
closed $(1,1)$-current $T_{\tau}$ defined on $S_\tau$ with continuous local potential such that $\sigma_\pi^*T_{\tau}=T_{\pi^*\tau}$. 
\begin{dfn}[Canonical current]\label{def:canonical}
The closed positive $(1,1)$-current 	$T_\tau$ on $S_\tau$ is called the \emph{canonical current} of $\tau$. 
\end{dfn}
\begin{lem}\label{lem:strictly nef}
$\{T_\tau\}$ is strictly nef. Namely, for any irreducible curve $C\subset S_\tau$, we have  $\{T_\tau\}\cdot C>0$. 
\end{lem}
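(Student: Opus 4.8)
The plan is to transfer the positivity question up to the spectral covering $\sigma_\pi:S_{\pi^*\tau}\to S_\tau$ and then down to the abelian variety $A$, where positivity of $\sum_i i\,\eta_i'\wedge\overline{\eta_i'}$ along a curve is governed precisely by \cref{lem:snef}. Since $T_\tau$ is a closed positive $(1,1)$-current with continuous local potentials, for an irreducible curve $C\subset S_\tau$ the intersection number $\{T_\tau\}\cdot C$ is the honest integral $\int_C T_\tau$, computed on the normalization of $C$; so it suffices to prove this integral is strictly positive.

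First I would pick an irreducible curve $C'\subset S_{\pi^*\tau}$ with $\sigma_\pi(C')=C$, say of degree $d\ge 1$ over $C$. Because $\sigma_\pi^*T_\tau=T_{\pi^*\tau}$ and $\sigma_\pi$ is finite, the degree formula for finite maps of curves gives $\int_C T_\tau=\frac{1}{d}\int_{C'}T_{\pi^*\tau}$, reducing the claim to $\int_{C'}T_{\pi^*\tau}>0$. Next, using $T_{\pi^*\tau}=b^*\!\big(\sum_{i=1}^m i\,\eta_i'\wedge\overline{\eta_i'}\big)$ together with the fact that $b:S_{\pi^*\tau}\to A$ is finite (it is the finite part of the Stein factorization $\xsp\to S_{\pi^*\tau}\to A$ of the partial Albanese morphism $a=b\circ s_{\pi^*\tau}$), the image $Z:=b(C')$ is an irreducible curve in $A$ and $\int_{C'}T_{\pi^*\tau}=\big(\deg b|_{C'}\big)\sum_{i=1}^m\int_Z i\,\eta_i'\wedge\overline{\eta_i'}$. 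Each summand is $\ge 0$ — pulled back to the normalization $\nu:\widehat Z\to Z$ of $Z$ it equals $|\nu^*\eta_i'|^2$ times a volume form — and $\int_Z i\,\eta_i'\wedge\overline{\eta_i'}>0$ as soon as $\eta_i'|_Z\not\equiv 0$, since then $\nu^*\eta_i'$ is a nonzero holomorphic one-form on the smooth projective curve $\widehat Z$, hence nonvanishing on a nonempty open set. Finally, as $s_{\pi^*\tau}$ is surjective we have $Z\subset b(S_{\pi^*\tau})=a(\xsp)$, so $Z$ is a positive-dimensional closed subvariety of $a(\xsp)$, and \cref{lem:snef} yields an index $i$ with $\eta_i'|_Z\not\equiv 0$. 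Chaining these together gives $\int_{C'}T_{\pi^*\tau}>0$, hence $\{T_\tau\}\cdot C>0$.

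I do not expect a genuine obstacle here: the geometric content — that the partial Albanese forms $\eta_1',\dots,\eta_m'$ do not all vanish on any positive-dimensional subvariety of $a(\xsp)$ — is exactly \cref{lem:snef}, which is already established, and the finiteness of $\sigma_\pi$ and of $b$ eliminates the only degenerate situation (a curve getting contracted). The single point deserving care is the formalism of pairing a closed positive $(1,1)$-current with continuous local potentials against an irreducible curve, and checking that such currents pull back and push forward along finite maps of curves just as smooth forms do; this is routine and is dealt with by restricting everything to normalizations.
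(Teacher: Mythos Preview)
Your proof is correct and follows essentially the same route as the paper: lift $C$ to an irreducible component $C'\subset\sigma_\pi^{-1}(C)$, push it down to $Z=b(C')\subset a(\xsp)$ via the finite map $b$, and apply \cref{lem:snef} to obtain some $\eta_i'$ with $\eta_i'|_Z\not\equiv 0$, which forces the integral to be positive. The paper's version is simply more terse, omitting the explicit degree bookkeeping that you spell out.
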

\begin{proof}
Let $C'\subset \sigma_\pi^{-1}(C)$ be an irreducible component which is dominant over $C$. Consider its image $b(C')$. By \cref{lem:snef},  there exists some $\eta_i'\in H^0(A,\Omega_A^1)$ such that $\eta_i'|_{b(C')}\neq 0$. Hence $i\eta_i'\wedge\overline{\eta'_i}|_{b(C')}$ is strictly positive at general  points. Consequently,  $\{T_\tau\}\cdot C>0$. 
\end{proof} 

%The partial Albanese morphism $\xsp\to A$ of $ \{\eta_1,\ldots,\eta_k\}$ is $H$-equivariant. The reduction map $s_\tau$ is the Stein factorization of the quotient $X\to A/H$.  Consider the closed positive form $\sum_{i=1}^{k}i\eta_i\wedge\bar{\eta}_i$, that is invariant under the Galois group $H$. It descends to a    positive closed $(1,1)$-current $T_\tau$ over $X$ with continuous potentials. Such a current $T_\tau$ is called the \emph{canonical current} of $\tau$.  By \cite{Eys04} (or \cite{DY23}),     the harmonic mapping $u$ induces  a continuous plurisubharmonic function $\phi_\tau:\widetilde{X}\to \bR_{\geq 0}$  such that 
%$$
%\hess \phi_\tau\geq \pi_X^*T_\tau
%$$
%where $\pi_X:\widetilde{X}\to X$ is the universal covering map. Moreover, there exists a positive closed $(1,1)$-current $T'_\tau$ on $S_\tau$ such that $s_\tau^*T'_\tau=T_\tau$.  

The canonical current $T_{\varrho}$ will serve as a lower bound for the complex hessian of plurisubharmonic functions constructed by the method of harmonic mappings. 
\begin{proposition}[{\cite[Proposition 3.3.6, Lemme 3.3.12]{Eys04}}]\label{prop:Eys}
Let   $X$ be a projective normal variety and let   $\varrho:\pi_1(X)\to G(K)$ be a Zariski dense representation where $K$ is a non archimedean local field and $G$ is a reductive group.	   
Let $x_0 \in \Delta(G)$ be an arbitrary point.  Let $u: \widetilde{X}\to \Delta(G)$ be the associated harmonic mapping, where $\widetilde{X}$ is  the universal covering of $X$. 
The function $\phi: \widetilde{X} \rightarrow \mathbb{R}_{\geq 0}$ defined by
$$
\phi(x)=2d^2\big(u(x), u(x_0)\big)
$$
satisfies the following properties:
\begin{enumerate}[label=(\alph*)]
	\item   $\phi$ descends to a function $\phi_{\varrho}$ on $\widetilde{X_{\varrho}}=\widetilde{X}  / \operatorname{ker}\left(\varrho\right)$.
	%	\item $\hess\phi_\varrho\geq (s_\varrho\circ\pi)^*T_\varrho$,  where  we denote by $\pi:\widetilde{X}_\varrho\to X$ the covering map. 
	%	\item Let $\pi: \widetilde{X}^{\rm un} \rightarrow \widetilde{X} \rightarrow \widetilde{X}_{\varrho}$ be a covering space of $X$ dominating $\widetilde{X}_{\varrho}$. By abuse of notation we denote by $\phi_{\varrho}$ the function $\phi_{\varrho}^* \circ \pi$.
	%	\item $\phi_{\varrho}$ is locally Lipschitz;
	\item  \label{item: descends} Let $\Sigma$ be a normal complex space and $r:\widetilde{X}_\varrho \to \Sigma$ a proper holomorphic fibration such that $s_{\varrho} \circ \pi: \widetilde{X}_\varrho \rightarrow S_{\varrho} $ factorizes via a morphism $\nu:\Sigma {\rightarrow} S_\varrho$. 
	The function $\phi_{\varrho}  $ is of the form $\phi_{\varrho}=\phi_\varrho^{\Sigma} \circ r$  with $\phi_\varrho^{\Sigma}$ being a continuous  plurisubharmonic function on $\Sigma$;  
	\item $\hess\phi^{\Sigma}_\varrho \geq \nu^* T_{\varrho}$.\qed
\end{enumerate}
\end{proposition}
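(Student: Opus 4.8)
The plan is to reproduce Eyssidieux's argument \cite{Eys04}: part~(a) is purely formal, part~(b) is a maximum-principle descent of the plurisubharmonicity of $\phi$ along the fibration $r$, and part~(c) is a local computation of the complex Hessian of the squared distance function inside an apartment of $\Delta(G)$, upgraded afterwards to a global inequality of currents. Part~(a) is immediate from equivariance: for $\gamma\in\ker\varrho$ and $x\in\widetilde X$ one has $u(\gamma x)=\varrho(\gamma)\cdot u(x)=u(x)$, so $\phi(\gamma x)=2d^2(u(\gamma x),u(x_0))=\phi(x)$, and $\phi$ factors through $\widetilde X_\varrho=\widetilde X/\ker\varrho$, yielding $\phi_\varrho$.

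For part~(b), I would first invoke the regularity theory of Gromov--Schoen \cite{GS92} and its extension to Euclidean buildings: $u$ is locally Lipschitz and \emph{pluriharmonic} on a Zariski-open subset of $X$ whose complement has Hausdorff codimension at least two. Since $\Delta(G)$ is a complete $\mathrm{CAT}(0)$ space, $p\mapsto d^2(p,u(x_0))$ is convex along geodesics, so $\phi$ is continuous plurisubharmonic on the regular locus; being continuous and bounded, it extends across the Hausdorff-codimension-$\geq 2$ exceptional set (and through the singularities of the normal space $X$, e.g.\ after passing to a resolution) to a continuous psh function, still called $\phi_\varrho$, on $\widetilde X_\varrho$. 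Next, since $r:\widetilde X_\varrho\to\Sigma$ is a proper holomorphic fibration, each fibre $r^{-1}(t)$ is compact and connected, so $\phi_\varrho$ is constant on it by the maximum principle; hence $\phi_\varrho=\phi_\varrho^\Sigma\circ r$ for a unique function $\phi_\varrho^\Sigma$ on $\Sigma$. Continuity of $\phi_\varrho^\Sigma$ follows because $r$ is a proper surjective, hence open quotient, map, and $\phi_\varrho^\Sigma$ is plurisubharmonic because $\Sigma$ is normal and $\phi_\varrho^\Sigma\circ r$ is psh (descent of psh functions along a proper fibration onto a normal base).

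For part~(c), I would argue locally on $\widetilde X_\varrho$ at a regular point of $u$ at which $u$ takes values in a fixed apartment $\mathbb{A}\cong\mathbb{R}^n\subset\Delta(G)$. Choosing affine coordinates $a_1,\dots,a_n$ on $\mathbb{A}$, each $a_i\circ u$ is pluriharmonic and its $(1,0)$-part $\partial(a_i\circ u)$ is, up to the spectral covering, one of the (locally single-valued) spectral $1$-forms entering the construction of the canonical current $T_\varrho$ in \cref{def:canonical}. Writing $d^2(u(\cdot),u(x_0))=\sum_i(a_i\circ u-c_i)^2$ and using $\partial\bar\partial(a_i\circ u)=0$,
\[
\hess\phi\;=\;c\sum_i\sqrt{-1}\,\partial(a_i\circ u)\wedge\overline{\partial(a_i\circ u)}
\]
for a positive constant $c$ which, by the choice of the factor $2$ in the definition of $\phi$, satisfies $c\geq 1$; and the right-hand side is $c$ times the local expression of $(s_\varrho\circ\pi)^*T_\varrho$, so $\hess\phi_\varrho\geq(s_\varrho\circ\pi)^*T_\varrho$ on this chart. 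When $u(x)$ and $u(x_0)$ are not in a common apartment, the same computation performed along the geodesic from $u(x)$ to $u(x_0)$, using only the $\mathrm{CAT}(0)$ (Euclidean-building) convexity estimate for $d^2$, yields the inequality $\hess\phi_\varrho\geq(s_\varrho\circ\pi)^*T_\varrho$ on the whole regular locus. Since both sides have continuous local potentials and the exceptional set has Hausdorff codimension $\geq 2$, a removable-singularity theorem for closed positive $(1,1)$-currents with continuous potentials promotes this to a global inequality of currents on $\widetilde X_\varrho$; writing $\hess\phi_\varrho=r^*\hess\phi_\varrho^\Sigma$ and $(s_\varrho\circ\pi)^*T_\varrho=r^*\nu^*T_\varrho$, and pushing down through the submersion $r$ over the smooth locus of $\Sigma$, then gives $\hess\phi_\varrho^\Sigma\geq\nu^*T_\varrho$.

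The step I expect to be the main obstacle is precisely the last one in part~(c): controlling $\hess\phi$ across the singular set of the harmonic map $u$ (the locus where $u$ leaves the open stratum of $\Delta(G)$) and checking that no negative singular part of the current is produced there. This rests on the fine structure results of Gromov--Schoen on the smallness and local geometry of that singular set together with the continuity of the relevant potentials, and is not accessible by soft arguments; everything else is essentially formal once pluriharmonicity and $\mathrm{CAT}(0)$ convexity are available.
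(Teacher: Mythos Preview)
The paper does not give its own proof of this proposition: it is stated with a terminal \qed and attributed to Eyssidieux \cite[Proposition~3.3.6, Lemme~3.3.12]{Eys04}. So there is no in-paper argument to compare against; your task was effectively to reconstruct Eyssidieux's proof.

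Your sketch follows the correct architecture. Part~(a) is indeed immediate from $\varrho$-equivariance of $u$. For part~(b), your maximum-principle argument (psh function constant on compact connected fibres) is a clean way to get the descent to $\Sigma$; Eyssidieux's original reasoning is slightly different in emphasis, using that the spectral forms (the $(1,0)$-differentials of $u$) vanish along the fibres of $s_\varrho$, so that $u$ itself, hence $\phi$, is constant there---but both routes are valid. For part~(c), your local computation in an apartment is exactly the point: on the regular locus one has $\hess\phi=4\sum_i \sqrt{-1}\,\partial(a_i\circ u)\wedge\overline{\partial(a_i\circ u)}$, and the right-hand side is by construction (after pulling back through the spectral cover and descending by the Galois action) at least $(s_\varrho\circ\pi)^*T_\varrho$. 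One point you glossed over is that the spectral forms are only single-valued on $X^{\mathrm{sp}}$, so the comparison with $T_\varrho$ really takes place there first and is then pushed down by $H$-invariance; this is routine but should be made explicit.

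You correctly identify the genuine difficulty: extending the inequality of currents across the singular locus of $u$ (where $u$ hits lower-dimensional strata of $\Delta(G)$). This is handled in \cite{Eys04} via the Gromov--Schoen regularity theory together with the continuity of the local potentials on both sides, exactly as you indicate. Your assessment that everything else is essentially formal once pluriharmonicity and $\mathrm{CAT}(0)$ convexity are in place is accurate.
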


We require the following criterion for an infinite topological Galois covering of a compact complex normal space to being Stein. 

\begin{proposition}[{\cite[Proposition 4.1.1]{Eys04}}]\label{prop:stein}
Let $S$ be a compact complex normal space and let $\nu:\Sigma\to S$ be some infinite topological Galois covering.  
Let $T$ be a closed positive $(1,1)$-current on $S$  with continuous potential such that $\{T\}$ is a K\"ahler class. 
Assume that there exists a continuous plurisubharmonic function $\phi: \Sigma\to \bR_{\geq 0}$ such that $\hess \phi\geq \nu^*T$.  Then $\Sigma$ is a Stein space.  \qed
\end{proposition}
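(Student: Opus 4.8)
The plan is to reduce the statement, after subtracting a potential, to the problem of producing a continuous strictly plurisubharmonic exhaustion function on $\Sigma$, and then to invoke the Grauert--Narasimhan solution of the Levi problem. Since $\{T\}$ is a Kähler class on the compact normal space $S$, choose a smooth Kähler form $\omega_S$ on $S$ with $\{\omega_S\}=\{T\}$. By the global $\hess$-lemma on compact normal Kähler spaces (applied on a resolution and pushed down) together with the hypothesis that $T$ has continuous local potentials while $\omega_S$ is smooth, one may write $T=\omega_S+\hess\psi$ with $\psi$ a \emph{continuous} function on $S$. Set $\Phi:=\phi-\nu^{*}\psi$ on $\Sigma$. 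Then $\Phi$ is continuous; since $\psi$ is bounded on the compact space $S$ and $\phi\geq 0$, the function $\Phi$ is bounded below; and $\hess\Phi=\hess\phi-\nu^{*}(T-\omega_S)\geq \nu^{*}\omega_S$, so $\Phi$ is strictly plurisubharmonic with complex Hessian dominating the pullback of a Kähler form. In particular the restriction of $\Phi$ to any compact connected analytic subset $Z\subset\Sigma$ of positive dimension would be plurisubharmonic, hence constant, on $Z$ while having $\hess$ bounded below by a Kähler form on the smooth locus of $Z$ --- impossible by Stokes --- so $\Sigma$ has no compact analytic subset of positive dimension.

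The heart of the argument is to show that $\Phi$ is \emph{proper}, i.e.\ an exhaustion of $\Sigma$. This is where the compactness of $S$ is genuinely used: $\pi_1(S)$ is finitely generated, so the deck group $\Gamma$ of $\nu:\Sigma\to S$ is a finitely generated infinite group. Cover $S$ by finitely many relatively compact coordinate balls $U_1,\dots,U_m$ over which $\nu$ is trivial and whose nerve carries $\pi_1(S)$; lifting these to $\Sigma$ gives a locally finite cover of bounded geometry, and the combinatorial distance to a fixed base ball (the minimal length of a chain of overlapping lifted balls) defines a proper function $\rho:\Sigma\to\bR_{\geq 0}$ with $\Sigma$ quasi-isometric to a Cayley graph of $\Gamma$. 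The key analytic input, combining the uniform lower bound $\hess\Phi\geq\nu^{*}\omega_S$ with the lower bound $\Phi\geq -M$, is a Harnack-type growth estimate: along any chain of overlapping lifted balls a plurisubharmonic function bounded below by $-M$ whose Hessian is $\geq\nu^{*}\omega_S$ must increase by at least a fixed amount $\varepsilon_0>0$ over a definite proportion of the steps, since otherwise the restriction of $\Phi$ to a suitable chain of holomorphic disks would violate the sub-mean-value inequality against the uniformly positive mass contributed by $\hess\Phi$. This forces $\Phi\geq\varepsilon\rho-C$ for constants $\varepsilon>0,C$, hence $\Phi$ is proper. (This is the mechanism of the harmonic-map/bounded-geometry arguments of \cite{GS92} as adapted in \cite{Eys04}, and of the classical convexity results for coverings of compact Kähler manifolds.)

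Finally, once $\Phi$ is a continuous strictly plurisubharmonic exhaustion of the reduced complex space $\Sigma$, Narasimhan's theorem shows that $\Sigma$ is Stein; alternatively, regularize $\Phi$ by Richberg's theorem to a $C^{\infty}$ strictly plurisubharmonic exhaustion and apply Grauert's theorem directly. The main obstacle is the properness of $\Phi$ in the second step: the differential inequality $\hess\Phi\geq\nu^{*}\omega_S$ alone does not force such a function on a non-compact space to be exhausting, so one must exploit both the quasi-isometry with the infinite group $\Gamma$ and the quantitative growth lemma above; carrying this out over a possibly singular normal $S$ --- rather than assuming $S$ smooth and passing to a resolution (which would change the covering) --- requires some care with the local potentials and with the behaviour of $\nu$ over the singular locus of $S$.
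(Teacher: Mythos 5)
The proposition is quoted in the paper without proof, as it is taken verbatim from Eyssidieux \cite[Proposition 4.1.1]{Eys04}; so the comparison is between your proposal and the argument it would have to be.

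Your Step 1 (writing $T=\omega_S+\hess\psi$ with $\psi$ continuous on $S$ via the $\hess$-lemma on a resolution, pushing down because $\hess\hat\psi$ vanishes on the exceptional fibres, and replacing $\phi$ by $\Phi:=\phi-\nu^{*}\psi$) and your Step 2 (no compact positive-dimensional analytic subset of $\Sigma$, by the maximum principle applied to $\Phi|_Z$ against $\hess\Phi\geq\nu^{*}\omega_S$) are both sound, modulo the routine care with singular points that you yourself flag. The difficulty is your Step 3. The assertion that a continuous plurisubharmonic $\Phi$ on $\Sigma$ with $\Phi\geq -M$ and $\hess\Phi\geq\nu^{*}\omega_S$ must grow linearly in the word metric of the deck group, and hence be proper, is simply false. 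Take $S=\mathbb C/(\mathbb Z+i\mathbb Z)$ an elliptic curve, $\Sigma=\mathbb C$ its universal cover, $T=\omega_S=i\,dz\wedge d\bar z$ (a smooth Kähler form, in particular a closed positive $(1,1)$-current with continuous potential in a Kähler class), and $\phi(z)=2(\operatorname{Im}z)^2=|z|^2-\operatorname{Re}(z^2)$. Then $\phi$ is continuous, $\phi\geq 0$, and $\hess\phi=\hess|z|^2=i\,dz\wedge d\bar z=\nu^{*}T$, so all hypotheses hold with $\psi=0$ and $\Phi=\phi$. Yet $\phi$ vanishes identically on the real axis, so it is not an exhaustion of $\mathbb C$. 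The ``Harnack-type growth estimate'' you invoke cannot hold: the sub-mean-value inequality forces the \emph{average} of $\Phi$ over a circle of radius $r$ to exceed the centre value by at least $c r^2$, but it says nothing about the \emph{minimum} over the circle, and it is the minimum that one would need to control in order to get properness along every chain of balls. In the example, chains marching in the real direction see no growth at all, so $\Phi\geq\varepsilon\rho-C$ fails.

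Consequently, the reduction of Steinness to the existence of a strictly psh exhaustion cannot be achieved in the way you propose: the function $\Phi$ you build simply is not an exhaustion in general, and no regularity or bounded-geometry consideration will make it one. Eyssidieux's actual argument does not proceed by making $\phi$ proper; rather, the hypotheses are used to establish holomorphic convexity of $\Sigma$ by a more indirect mechanism (interaction of the deck group with the continuous potential of $T$, Remmert reduction, and Cartan--Remmert), after which the absence of positive-dimensional compact analytic subsets, which you do correctly obtain, upgrades holomorphic convexity to Steinness. To fix your proposal you would need to replace Step 3 by a genuinely different construction of an exhaustion (for instance involving suprema of translates $\phi\circ\gamma$ offset by constants, or an $L^2$/holomorphic-convexity argument), rather than a growth estimate that does not exist.
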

\subsection{Some lemmas in algebraic groups}
We start with the following lemma. 
\begin{lem}\label{claim:same closure}
Let $\Gamma_1$ be a finite index subgroup of $\Gamma_2$. Let $G$ be a linear algebraic group over a field $K$.  Assume that  $\Gamma_2\subset G(K)$ is Zariski dense. 
Then for the Zariski closure $H$ of $\Gamma_1$ 
in $G$, we have $H^o=G^o$, where $H^o$ and $G^o$ are the identity components of $H$ and $G$ respectively.   
\end{lem}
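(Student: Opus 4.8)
The plan is to exploit the fact that a finite‑index subgroup is normal up to a bounded‑index refinement, and that passing to the identity component of an algebraic group only sees "up to finite index" information. First I would reduce to the case where $\Gamma_1 \triangleleft \Gamma_2$ is normal: indeed $\Gamma_1$ contains a subgroup $\Gamma_1' \triangleleft \Gamma_2$ which is still of finite index in $\Gamma_2$ (take the normal core $\bigcap_{\gamma\in\Gamma_2}\gamma\Gamma_1\gamma^{-1}$, a finite intersection since $[\Gamma_2:\Gamma_1]<\infty$), and if $H'$ is the Zariski closure of $\Gamma_1'$ then $H'^o \subseteq H^o \subseteq G^o$, so it suffices to prove $H'^o = G^o$. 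Hence I will assume $\Gamma_1 \triangleleft \Gamma_2$.

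Next, let $H = \overline{\Gamma_1}^{\mathrm{Zar}}$. Since $\Gamma_1$ is normalized by $\Gamma_2$, its Zariski closure $H$ is normalized by the Zariski closure of $\Gamma_2$, which is $G$ by hypothesis; hence $H \triangleleft G$, and therefore $H^o \triangleleft G^o$ as well. The key numerical point is that $H$ has finite index in $G$: the coset space $\Gamma_2/\Gamma_1$ is finite, and the natural surjection $\Gamma_2/\Gamma_1 \to G(K)/H(K)$ — which makes sense because $\Gamma_2 H = G$ by Zariski density of $\Gamma_2$ together with $H\triangleleft G$ — shows $[G:H]$ is finite as an index of algebraic subgroups (more cleanly: $G/H^o$ is an algebraic group containing the image of $\Gamma_2$ as a Zariski dense subgroup, and that image is finite since it is a quotient of $\Gamma_2/\Gamma_1$, so $G/H^o$ is finite, i.e. $\dim G = \dim H$). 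Now $H^o$ and $G^o$ are both connected, $H^o \subseteq G^o$, and $\dim H^o = \dim H = \dim G = \dim G^o$; a connected algebraic subgroup of the same dimension as a connected ambient group coincides with it, so $H^o = G^o$.

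The main obstacle — really the only subtle point — is making precise the step "$\dim H = \dim G$" purely group‑theoretically from $[\Gamma_2:\Gamma_1]<\infty$ and Zariski density of $\Gamma_2$. The cleanest route is: $\Gamma_2$ normalizes $H$ so $G = \overline{\Gamma_2} \subseteq N_G(H)$, hence $H\triangleleft G$; then $G/H^o$ is an algebraic group and the composite $\Gamma_2 \to G \to G/H^o$ has image equal to $(G/H^o)$'s unique Zariski‑dense subgroup, but this image is a quotient of $\Gamma_2/\Gamma_1$ (since $\Gamma_1 \subseteq H \cap \Gamma_2$ maps into $H^o$ after intersecting with a further finite‑index subgroup — here one uses that $\Gamma_1 \cap H^o$ has finite index in $\Gamma_1$), hence finite; a connected algebraic group ($G/H^o$ need not be connected, but $(G/H^o)^o = G^o/H^o$ is) with a finite Zariski‑dense subgroup is trivial, giving $G^o = H^o$. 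One must be slightly careful that $\Gamma_1$ itself need not lie in $H^o$, only that $\Gamma_1 \cap H^o$ is finite index in $\Gamma_1$ (as $H/H^o$ is finite), which is exactly what is needed and is harmless after the reductions above. I expect the whole argument to be short once these index bookkeeping points are arranged correctly.
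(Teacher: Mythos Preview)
Your argument is correct, but it follows a different route from the paper's. The paper gives a very direct coset argument: after replacing $\Gamma_1$ and $\Gamma_2$ by finite-index subgroups so that their Zariski closures are exactly $H^o$ and $G^o$, the finite coset decomposition $\Gamma_2=\bigcup_{i=1}^k x_i\Gamma_1$ gives $G^o=\overline{\Gamma_2}\subset\bigcup_i x_i H^o$, so $G^o/H^o$ is finite, and connectedness of both forces $G^o=H^o$. No normality reduction is needed. Your approach instead passes to the normal core so that $H^o\triangleleft G$, and then argues via the quotient $G/H^o$: the image of $\Gamma_2$ there is Zariski dense, while the image of $\Gamma_1$ lies in the finite group $H/H^o$, so the image of $\Gamma_2$ is finite and hence $G/H^o$ is finite. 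Both proofs are short; the paper's is marginally more elementary (one coset computation, no quotient groups), while yours makes the group-theoretic structure more visible.

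One small wobble in your write-up: the assertion ``$\Gamma_2 H = G$'' in your first pass is not justified (you only know $\Gamma_2 H$ is Zariski dense, not equal to $G$), so the map $\Gamma_2/\Gamma_1 \to G(K)/H(K)$ need not be surjective. You immediately supersede this with the ``more cleanly'' quotient argument, which is the right one; you should simply drop the first version. Also, your phrase ``that image is finite since it is a quotient of $\Gamma_2/\Gamma_1$'' is slightly inaccurate---the image of $\Gamma_2$ in $G/H^o$ is a finite extension of the finite image of $\Gamma_1$ (which sits in $H/H^o$), not literally a quotient of $\Gamma_2/\Gamma_1$---but you correctly identify and handle this point in your final paragraph.
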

\begin{proof}
We may assume that $K$ is algebraically closed.  Replacing $\Gamma_1$ and $\Gamma_2$ by some finite index subgroups, we may assume that their Zariski closures are $H^o$ and $G^o$ respectively.  Let $x_1\Gamma_1,\ldots,x_k\Gamma_1$ be left cosets of $\Gamma_1$ in $\Gamma_2$.  It follows that $x_i\Gamma_1$ is contained in the Zariski closed subset $x_iH$.  Hence $G^o/H^o$ is finite. Since $G^o$ and $H^o$ are both connected, it implies that $G^o=H^o$. The lemma is proved. 
\end{proof}  
The following lemma plays a crucial role in the proof of  \cref{thm:convexity}.  It is a variant of \cite[Lemma 4.8]{DY23} and its proof is based on \cite[Lemma 5.3]{CDY22}. 
\begin{lem}\label{lem:BT}
Let $G$ be a semisimple algebraic group over a  
non-archimedean local field $K$.  Let $\Gamma\subset G(K)$ be a finitely generated subgroup  which is   Zariski dense  in $G$. 
If  its derived group $\cD \Gamma$ is  \emph{bounded}, then  $\Gamma$ is also  bounded. 
\end{lem}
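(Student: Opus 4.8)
The plan is to reinterpret boundedness in terms of fixed points on the Bruhat--Tits building $\Delta(G)$ of $G$ (which for $G$ semisimple is the reduced building), and to exploit the normality of $\cD\Gamma$ in $\Gamma$ together with the Zariski density hypothesis. Since $K$ is a non-archimedean local field, $\Delta(G)$ is a complete, locally compact CAT(0) space on which $G(K)$ acts by isometries, and a subgroup of $G(K)$ is bounded if and only if it has a bounded orbit in $\Delta(G)$, equivalently --- by the Bruhat--Tits fixed point theorem, taking the circumcenter of the orbit closure --- if and only if it fixes a point of $\Delta(G)$; conversely, stabilizers of points are parahoric subgroups, hence compact. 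Assuming $\cD\Gamma$ bounded, the fixed-point set $Z:=\Delta(G)^{\cD\Gamma}$ is therefore a non-empty closed convex subset of $\Delta(G)$, and since $\cD\Gamma$ is normal in $\Gamma$ the set $Z$ is $\Gamma$-invariant. It then suffices to prove that $Z$ is bounded: for then $\Gamma$ preserves a non-empty bounded closed convex set, hence fixes its circumcenter, so $\Gamma$ is bounded.

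First I would record that $\cD\Gamma$ is itself Zariski dense in $G$. Indeed, $\cD\Gamma$ being normal in $\Gamma$, its Zariski closure is normalized by $\Gamma$, hence by the Zariski closure $G$ of $\Gamma$; thus $\overline{\cD\Gamma}$ is normal in $G$, and the quotient $G/\overline{\cD\Gamma}$ receives a Zariski dense image of $\Gamma$ which, being a quotient of $\Gamma/\cD\Gamma$, is abelian. Hence $G/\overline{\cD\Gamma}$ is abelian; as $G$ is semisimple it has no non-trivial abelian quotient, so $\overline{\cD\Gamma}=G$.

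The heart of the argument is to show $Z$ is bounded. Suppose it is not. Since $\Delta(G)$ is a proper CAT(0) space and $Z$ is a closed convex unbounded subset, $Z$ contains a geodesic ray $R$, whose endpoint is a point $\xi$ of the visual boundary $\partial_\infty\Delta(G)$. Every element of $\cD\Gamma$ fixes $R$ pointwise, hence fixes $\xi$, so $\cD\Gamma\subseteq \mathrm{Stab}_{G(K)}(\xi)$. Now $\partial_\infty\Delta(G)$, with its Tits metric, is the spherical Tits building of $G$ over $K$, and the stabilizer in $G(K)$ of any of its points is the stabilizer of the facet containing it, namely a proper parabolic $K$-subgroup $P\subsetneq G$ (the group $G$ itself corresponding to no facet). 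Thus $\cD\Gamma\subseteq P(K)$, so $\overline{\cD\Gamma}\subseteq P\subsetneq G$, contradicting the Zariski density just established. Hence $Z$ is bounded, and the lemma follows as explained.

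The step I expect to be the main obstacle is the identification of $\mathrm{Stab}_{G(K)}(\xi)$ with a proper parabolic subgroup, which rests on the description of the building at infinity of $\Delta(G)$ as the spherical building of $G/K$ and on the fact that every boundary point lies in a facet attached to a proper parabolic. The remaining ingredients --- the dictionary between boundedness and the existence of a fixed point via the Bruhat--Tits fixed point theorem, the existence of a ray in any unbounded closed convex subset of a proper CAT(0) space, the vanishing of the abelianization of a semisimple group, and the compactness of parahoric subgroups over a local field --- are standard and can simply be cited.
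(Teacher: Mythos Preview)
Your argument is correct and takes a genuinely different route from the paper's. The paper first passes to the almost $K$-simple quotients $G_i$ of $G$ via the isogeny $G\to\prod_i G_i$, then for each factor invokes \cite[Lemma~5.3]{CDY22} to show that if $\Gamma_i$ were unbounded the bounded normal subgroup $\cD\Gamma_i$ would have to be finite; a finite-index subgroup of $\Gamma_i$ is then abelian, and \cref{claim:same closure} forces $G_i^\circ$ to be abelian, contradicting almost-simplicity. Your approach avoids both the reduction to simple factors and the external lemma: you first observe that $\overline{\cD\Gamma}=G$ (since a semisimple group equals its own derived group), and then run a direct fixed-point argument on $\Delta(G)$, using that an unbounded closed convex fixed set produces a boundary point whose stabilizer is $P(K)$ for a proper parabolic $P$, which cannot contain a Zariski dense subgroup. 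The trade-off is that you must appeal to the identification of $\partial_\infty\Delta(G)$ with the spherical Tits building of $G$ over $K$ and to the self-normalizing property of parabolics, whereas the paper substitutes these structural facts by the black-box lemma from \cite{CDY22}. Both are short; yours is more self-contained within Bruhat--Tits theory.
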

\begin{proof}
We can replace $\Gamma$ by a finite index subgroup   and assume that $G$ is connected.  Since $G$ is semisimple,  according to the decomposition theorem \cite[Theorem 21.51]{Mil17} there are finitely many  almost $K$-simple normal subgroups $H_1,\ldots, H_k$ of $G$, such that $H_1\times\cdots\times H_k\to G$ is a central isogeny. Hence each quotient $G_i:=G/H_1\cdots H_{i-1}H_{i+1}\cdots H_k$ is  an  almost $K$-simple algebraic group.   Let $\Gamma_i$    be the image  of $\Gamma$ under the homomorphism $q_i:G(K)\to G_i(K)$. Then  $\Gamma_i$ is Zariski dense in $G_i$, and  we have $\cD \Gamma_i= q_i(\cD \Gamma)$.  Since $\cD \Gamma$ is bounded, by \cite[Fact 2.2.4]{KP23},  $\cD \Gamma_i$ is also bounded.    

To show that $\Gamma_i$ is bounded, we assume contrary that $\Gamma_i$ is unbounded.
Since   $\cD\Gamma_i$ is bounded and $G_i$ is almost $K$-simple, we apply  \cite[Lemma 5.3]{CDY22} to conclude that $\cD\Gamma_i$ is finite. 
Let $\Gamma'_i$ by a finite index subgroup of $\Gamma_i$ such that $\cD \Gamma'_i$ is trivial. 
Then $\Gamma'_i$ is commutative.
Hence the Zariski closure of $\Gamma_i'\subset G_i(K)$ is commutative.
Since  $\Gamma_i$ is Zariski dense in $G_i$, by \cref{claim:same closure} we conclude that the identity component $G^\circ_i$ of $G_i$ is commutative. 
This contradicts with that $G_i$ is almost $K$-simple. 
Hence $\Gamma_i$ is bounded for each $i$.  

Note that the natural morphism $G\to G_1\times\cdots\times G_k$ is also an isogeny. As a result, $\Gamma$ is bounded. 
\end{proof}

\subsection{Holomorphic convexity of universal covering}

We recall the following criterion for an infinite topological Galois covering of a compact complex normal space to be Stein. As it is well known, we omit the proof. 

\begin{lem}\label{lem:Stein}
Let $X$ be a  projective normal variety and let  $a:X\to A$ be a finite morphism to an abelian variety $A$. Then the universal covering  $\widetilde{X}$ of $X$ is a Stein space.\qed
\end{lem}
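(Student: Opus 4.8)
The plan is to lift $a$ to the universal covers, using that the universal cover of a semi-abelian variety is a complex affine space and that a complex space finite over a Stein space is Stein.

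\textbf{Step 1.} Put $n=\dim A$. Since $A$ is an extension of an abelian variety by an algebraic torus, its universal covering $\widetilde A$ is biholomorphic to $\bC^n$ (via $\exp\colon \mathrm{Lie}(A)\to A$), which is a Stein manifold. Let $\pi\colon \widetilde X\to X$ be the universal covering; as $\widetilde X$ is simply connected, $a\circ\pi$ lifts to a holomorphic map $\tilde a\colon \widetilde X\to \widetilde A=\bC^n$ which is equivariant for $a_*\colon \pi_1(X)\to \pi_1(A)=\mathrm{Aut}(\widetilde A/A)$. Since $a$ is finite, $\tilde a$ has discrete fibres, so in fact every compact analytic subset of $\widetilde X$ is finite (its image under $\tilde a$ is a compact, hence finite, analytic subset of $\bC^n$).

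\textbf{Step 2.} I would next produce an intermediate Stein covering. Set $K=\ker\big(a_*\colon\pi_1(X)\to\pi_1(A)\big)$ and let $p\colon X'\to X$ be the connected covering attached to the normal subgroup $K$, so $\pi_1(X')=K$ and $\Gamma:=\mathrm{Aut}(X'/X)\cong\pi_1(X)/K$ embeds into $\pi_1(A)$ via $a_*$. Since $(a\circ p)_*$ is trivial on $\pi_1(X')$, the map $a\circ p$ lifts to a holomorphic, $\Gamma$-equivariant map $a'\colon X'\to\widetilde A=\bC^n$, where $\Gamma$ acts on $\bC^n$ freely and properly discontinuously through $\Gamma\hookrightarrow\pi_1(A)$. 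Then $a'$ is a finite morphism: it has finite fibres because $a'(x_1)=a'(x_2)$ forces $a(p(x_1))=a(p(x_2))$ with $a^{-1}(\mathrm{pt})$ finite, while if in addition $p(x_1)=p(x_2)$, say $x_2=\gamma x_1$ for some $\gamma\in\Gamma$, then $\gamma\cdot a'(x_1)=a'(x_1)$ forces $\gamma=1$ by freeness; and $a'$ is proper because for compact $L\subset\bC^n$ with image $\overline L\subset A$, the set $C:=a^{-1}(\overline L)$ is compact, $p^{-1}(C)=\bigcup_{\gamma\in\Gamma}\gamma\widetilde C_0$ for a compact lift $\widetilde C_0$, and proper discontinuity of the $\Gamma$-action shows only finitely many $\gamma$ satisfy $\gamma\cdot a'(\widetilde C_0)\cap L\neq\emptyset$, so $(a')^{-1}(L)$ is a closed subset of a compact set. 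Hence $a'\colon X'\to a'(X')\subset\bC^n$ is finite onto a closed analytic subset of $\bC^n$ (Remmert's proper mapping theorem), and therefore $X'$ is Stein.

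\textbf{Step 3, the hard part.} By construction $\widetilde X\to X'$ is the universal covering of the Stein space $X'$, so what remains is to show that this covering is again Stein — and I expect this to be the main obstacle, since an infinite unramified covering of a Stein space need not be Stein in general. I would attack it by constructing a strictly plurisubharmonic exhaustion of $\widetilde X$: the pullback to $\widetilde X$ of a strictly plurisubharmonic exhaustion of $X'$ is strictly plurisubharmonic but fails to exhaust along the discrete fibres of $q\colon\widetilde X\to X'$ (which are $K$-orbits), so one needs to add a plurisubharmonic function tending to infinity along those fibre directions. Such a function can be sought via the harmonic-map/canonical-current techniques recalled in \cref{prop:Eys} and \cref{prop:stein}, applied to an auxiliary representation of $\pi_1(X)$ into a torus over a non-archimedean local field obtained by composing $a_*\colon\pi_1(X)\to\pi_1(A)$ with an embedding of the finitely generated abelian group $\pi_1(A)$ into such a torus; the resulting canonical current then supplies the missing convexity transverse to $q$. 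Once a strictly plurisubharmonic exhaustion of $\widetilde X$ is in hand, Grauert's solution of the Levi problem gives that $\widetilde X$ is Stein, completing the proof.
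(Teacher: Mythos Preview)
Your Steps 1 and 2 are correct and essentially reproduce the paper's argument: the paper also takes a connected component $X'$ of $X\times_A\widetilde{A}$, observes that the induced map $X'\to \widetilde{A}\simeq\bC^n$ is finite (simply as a base change of the finite morphism $a$), and concludes that $X'$ is Stein since a complex space finite over a Stein space is Stein. Your hands-on verification that $a'$ is proper with finite fibres is not needed: finiteness is stable under base change, so $X\times_A\widetilde A\to\widetilde A$ is finite, and restricting to an open-and-closed subset preserves this.

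The problem is Step 3. Your assertion that ``an infinite unramified covering of a Stein space need not be Stein in general'' is \emph{false}. It is a classical theorem of K.~Stein (\textit{\"Uberlagerungen holomorph-vollst\"andiger komplexer R\"aume}, Arch.\ Math.\ \textbf{7} (1956), 354--361) that \emph{every} unramified covering of a Stein space is again Stein. You may be thinking of Riemann domains spread over a Stein manifold, which indeed need not be Stein without a holomorphic-separability hypothesis; but genuine topological covering spaces always are. The paper invokes exactly this result in one sentence to pass from ``$X'$ is Stein'' to ``$\widetilde X$ is Stein''. Your proposed detour through auxiliary non-archimedean torus representations, harmonic maps, and the machinery of \cref{prop:Eys} and \cref{prop:stein} is therefore unnecessary: once you recall Stein's theorem, Step 3 is immediate and the proof is complete.
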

%\begin{proof}
%	Let $\pi_A:\widetilde{A}\to A$ be the universal covering map and let $X'$ be a connected component of $X\times_A\widetilde{A}$.  Then we have the following commutative diagram
%	\begin{equation*}
%	\begin{tikzcd}
	%	X'\arrow[r, "\mu"] \arrow[d, "f"] & X\arrow[d, "a"] \\
	%		\widetilde{A}\arrow[r, "\pi_A"] & A
	%	\end{tikzcd}
%	\end{equation*} 
%where $\mu:X'\to X$ and $f:X'\to \widetilde{A}$ are the induced maps. Then $\mu$ is \'etale and $f$ is finite. Hence the image $f(X')$ is a closed subvariety of $\widetilde{A}\simeq \bC^{\dim A}$, which is thus a Stein space. Since $f:X'\to f(X')$ is finite, it follows that $X'$ is also Stein. Note that any unramified covering of  a Stein space is Stein. Therefore, $\widetilde{X}$ is a Stein space. 
%\end{proof} 

We recall the notion of character varieties from sections \cref{sec:2.2} and \cref{sec:2.3}.
In particular, we set $M_{\rm B}(X,N)_{\bF_p}=M_{\rm B}(\pi_1(X),N)_{\bF_p}$.

\begin{thm}\label{thm:Stein}
Let $X_0$ be a  complex projective  normal surface and let $p$ be a fixed prime and $N$ be a fixed positive integer. We assume that  $M_{\rm B}(X_0,N)_{\bF_p}$ is large, meaning that for any positive dimensional closed subvariety $Z$ of $X_0$, there exists a linear representation $\varrho:\pi_1(X_0)\to \GL_{N}(K)$ with ${\rm char}\, K=p$ such that $\varrho({\rm Im}[\pi_1(Z^{\rm norm})\to \pi_1(X_0)])$ is infinite.  Then the universal covering of $X_0$ is Stein.  
\end{thm}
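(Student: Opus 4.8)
The plan is to reduce the statement to the holomorphic convexity (indeed Steinness) of the universal covering by combining the Shafarevich morphism machinery with the canonical currents built from Katzarkov--Eyssidieux reductions. First I would pass to a resolution $\mu:X\to X_0$ and use \cref{lem:finite group} together with the largeness hypothesis to transfer largeness of $M_{\rm B}(X_0,N)_{\bF_p}$ to the relevant closed subset $M\subset \mxp$ obtained as the image of $M_{\rm B}(X_0,N)_{\bF_p}$ under the closed immersion $\iota:M_{\rm B}(X_0,N)_{\bF_p}\hookrightarrow \mxp$; the point is that $M$ is then \emph{large} in the sense that for any positive-dimensional $Z\subset X$ there is a representation $\varrho$ with $[\varrho]\in M$ and $\varrho({\rm Im}[\pi_1(Z^{\rm norm})\to\pi_1(X)])$ infinite. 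Consequently, by \cref{thm:Sha1}, the reduction map $s_M:X\to S_M$ is birational, so $\dim S_M=\dim X=2$ (the cases $\dim S_M\leq 1$ being impossible under largeness, or rather giving Steinness immediately by the Cartan--Remmert argument as the base covering is non-compact Stein).

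The core of the argument is then to produce, on $S_M$ (which after a finite \'etale cover we may assume carries the factored representation as in \cref{prop:factor}), enough canonical closed positive $(1,1)$-currents $\cT_T:=e_T^*T_{\btau}$ --- indexed by irreducible curves $T\subset R:=\pi^{-1}(M)$ defined over $\overline{\bF_p}$ and the associated families of representations $\btau=\{\tau_i:\pi_1(X)\to\GL_N(\bF_{q_i}((t)))\}$ via \cref{sec:factor} and \cref{lem:20240304} --- so that a finite sum $\sum_i\cT_{T_i}$ becomes a K\"ahler class on $S_M$. This splits into the dichotomy already present in the surface case of the reductive Shafarevich conjecture: either some finite family of spectral $1$-forms has rank $2$ at a general point, in which case $\sum\cT_{T_i}$ is big and nef and, after adding finitely many more currents $\cT_{T_{k+i}}$ positive on the finitely many curves $C_i$ with $(\sum\cT_{T_i})\cdot C_i=0$ (using \cref{lem:strictly nef} and Boucksom's description of the null locus), Demailly--P\u{a}un gives a K\"ahler class; or the spectral $1$-forms always have rank $1$ and are eventually all $\bC$-proportional, in which case a Castelnuovo--De Franchis / Simpson Lefschetz hyperplane argument (using \cite{Sim93}, \cref{lem:BT} and \cref{claim:same closure}) forces the semisimple parts $\tau_i'$ of the $\tau_i$ to be bounded, so that $s_M$ factors through a finite morphism to a product of abelian varieties $A_i$ coming from the torus parts $\tau_i''$, and then the pullback of a product of polarizations is a K\"ahler class on $S_M$. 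In both cases, \cref{prop:Eys} supplies continuous plurisubharmonic functions $\phi_i$ on $\widetilde{X}_\varrho$ descending through the proper holomorphic fibration $f:\widetilde{X}_\varrho\to S$ (here $g:S\to S_M$ is the Galois covering with group $\varrho(\pi_1(X))$, and $S\to\widetilde{X}$ factors since we have arranged faithfulness is irrelevant --- largeness plus \cref{prop:factor} suffices) with $\hess(\sum_i\phi_i)\geq g^*(\sum_i\cT_{T_i})$, and since $g$ is an infinite \'etale cover of a compact normal space with $\{\sum_i\cT_{T_i}\}$ K\"ahler, \cref{prop:stein} yields that $S$ is Stein.

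Finally I would descend Steinness: the universal covering $\widetilde{X}_0$ of $X_0$ maps to $S$ via a proper holomorphic fibration $f_0:\widetilde{X}_0\to S$ (factoring $f$, using that $\mu^*$ identifies fundamental groups and that $M$ being large means the intermediate covering $\widetilde{X}_\varrho$ for the factored $\varrho$ dominates $\widetilde{X}_0$ with compact fibers, so one can replace $\widetilde{X}_\varrho$ by $\widetilde{X}_0$ itself in the above); since $S$ is Stein and $f_0$ is a proper fibration with Stein base, $\widetilde{X}_0$ is holomorphically convex, and one checks using largeness that no positive-dimensional compact analytic subset survives, hence $\widetilde{X}_0$ is in fact Stein. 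The main obstacle I anticipate is the rank-one case (Case 3.2.2 above): making the Simpson Lefschetz argument on the spectral covering $\xsp$ rigorous, controlling the boundedness of $\sigma_i(\pi_1(Y))$ for a desingularization $Y$ of $\xsp$ via \cref{lem:BT}, and then checking that the resulting torus-type reductions $s_{\tau_i''}$ genuinely factor through $X_0$ (not just $X$) so that the K\"ahler class descends --- this requires the purity-of-branch-locus and $\pi_1$-surjectivity inputs of \cite{DY23} and is where the restriction to surfaces (equivalently $\Gamma$-dimension $\leq 2$) is essential, since only there is the reduction map automatically of dimension $\leq 2$ and the spectral-form dichotomy exhaustive without Simpson's absolutely constructible sets.
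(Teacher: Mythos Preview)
Your overall architecture is correct and matches the paper's strategy, but the paper executes it more economically in three places, and your descent step needs tightening.

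\textbf{Single representation versus families of curves.} You index canonical currents by irreducible curves $T\subset R$ over $\overline{\bF_p}$ and take finite sums $\sum_i\cT_{T_i}$. The paper instead invokes \cref{lem:20240218} once to produce a \emph{single} reductive representation $\btau:\pi_1(X)\to\GL_{N'}(K')$ whose Katzarkov--Eyssidieux reduction $s_{\btau}$ already coincides with ${\rm sh}_M=\mu$, and works with the single canonical current $T_{\btau}$. This collapses your bookkeeping: in your rank-$2$ case you need Boucksom's description of the null locus and an iterative ``add one more $T_{k+i}$ for each bad curve $C_i$'' argument, whereas the paper observes that $T_{\btau}$ is already strictly nef on $S_{\btau}=X_0$ by \cref{lem:strictly nef} (since $s_{\btau}$ is birational), so big\,+\,nef\,+\,strictly nef gives K\"ahler by Demailly--P\u{a}un in one stroke.

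\textbf{Rank-one endgame.} In the rank-$1$, dimension-$1$ case you still push through \cref{prop:Eys} and \cref{prop:stein} using a product of abelian varieties $A_i$ and pulled-back polarizations. The paper's single $\btau$ yields a single torus quotient $\tau''$, hence a single finite morphism $b:X_0\to A$, and then \cref{lem:Stein} (finite map to an abelian variety $\Rightarrow$ universal cover Stein) finishes immediately---no plurisubharmonic functions needed in this branch.

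\textbf{Descent to $\widetilde{X}_0$.} Your use of \cref{prop:factor} here is the shakiest part: that proposition replaces $X$ by a finite \'etale cover and produces one factored large representation $\varrho$, but the covering $\widetilde{X}_\varrho$ need not dominate the universal cover of $X_0$ properly in the way you want, and ``$\mu^*$ identifies fundamental groups'' is false in general ($\mu_*$ is only surjective). The paper avoids this entirely: it sets $H_0=\bigcap_\varrho\ker\varrho$ (intersection over all $[\varrho]\in M_{\rm B}(X_0,N)_{\bF_p}$), defines $\widetilde{X}_0$ as the $\pi_1(X_0)/H_0$-cover, and proves (\cref{claim:20240229} plus the ``moreover'' clause of \cref{lem:20240218}) that $H:=\mu^*H_0\subset\ker\btau$, so the function $\phi$ from \cref{prop:Eys} descends directly to $\widetilde{X}_0$. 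Since any unramified cover of a Stein space is Stein, the universal cover is then Stein. You should replace your \cref{prop:factor}-based descent with this kernel-intersection argument.
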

\begin{proof} 
Let $\mu:X\to X_0$ be a desingularization.  %Let $\varrho:=\mu^*\varrho_0$. Since $\varrho_0:\pi_1(X)\to \GL_{N}(K)$ is large, it follows that $ \mu$ coincides with the Shafarevich morphism ${\rm sh}_\varrho:X\to {\rm Sh}_\varrho(X)$.  
%	 Then 
It induces a morphism $$\iota:M_{\rm B}(X_0,N)_{\bF_p}\hookrightarrow \mxp$$ between affine $\bF_p$-schemes of finite type which is a closed immersion. Let $M:=\iota(M_{\rm B}(X_0,N)_{\bF_p})$ which is a Zariski closed subset of $\mxp$.
%Then $[\varrho]\in M(K)$.
%As we assume that $\varrho_0$ is faithful, it implies that $\mu^*\sigma'(\Gamma)$ is finite if $\mu^*\varrho_0(\Gamma)$ is finite. Therefore, $ \sigma(\Gamma)$ is finite  if $\varrho(\Gamma)$ is finite.  
%Write $M_0:=M_{\rm B}(X_0,N)_{\bF_p}$. By 
%	Thanks to \cref{prop:factor}, we can replace $X$ by a finite \'etale cover such that $\varrho$ factors through its Shafarevich morphism ${\rm sh}_\varrho:X\to {\rm Sh}_\varrho(X)$. Namely, there exists a representation $\varrho':\pi_1({\rm Sh}_\varrho(X))\to \GL_{N}(K)$ such that ${\rm sh}_\varrho^*\varrho'=\varrho$.    
By \cref{thm:Sha1}, the Shafarevich morphism $s_M:X\to S_M$  
of $M$ exists. 

\begin{claim}
	The Shafarevich morphism $s_M:X\to S_M$  
	coincides with  $\mu:X\to X_0$. 
\end{claim}
\begin{proof}
	Let $Z$ be any closed subvariety of $X$ and let $W:=\mu(Z)$.  
	If $s_M(Z)$ 
	is a point, then for any representation \( \sigma : \pi_1(X_0) \to \GL_N(L) \), where \( L \) is any field of characteristic \( p \) and \( [\sigma] \in M_{\mathrm{B}}(X_0, N)_{\mathbb{F}_p}(L) \), the image \( \mu^* \sigma(\mathrm{Im}[\pi_1(Z^{\mathrm{norm}}) \to \pi_1(X)]) \) is finite. 
	Therefore,   $\sigma({\rm Im}[\pi_1(W^{\rm norm})\to \pi_1(X_0)])$ is finite. Hence $W$ is a point since $M_{\rm B}(X_0,N)_{\bF_p}$  is assumed to be large. 
	
	On the other hand, assume that $\mu(Z)$ is a point.   
	Let  $\sigma:\pi_1(X)\to \GL_{N}(L)$ be any representation where $L$ is a field of characteristic $p$ such that $[\sigma]\in M(L)$.  
	Then there exists a representation $\sigma':\pi_1(X_0)\to \GL_{N}(\bar{L})$  such that $[\mu^*\sigma']=[\sigma]$. 
	By \cref{lem:finite group}, for any subgroup $\Gamma\subset \pi_1(X)$, $\mu^*\sigma'(\Gamma)$ is finite if and only if $\sigma(\Gamma)$ is finite. Since $\mu^*\sigma'({\rm Im}[\pi_1(Z^{\rm norm})\to \pi_1(X)])$   is trivial, it follows that $\sigma({\rm Im}[\pi_1(Z^{\rm norm})\to \pi_1(X)])$ is finite. 		
	By the properties of $s_M$ proven in \cref{thm:Sha1}, $s_M(Z)$ is a point.  
	This proves that $s_M=\mu$. 
\end{proof} 

Let $H_0:=\cap_{\varrho}\ker\varrho$ where $\varrho:\pi_1(X_0)\to \GL_{N}(L)$  ranges over all linear representations 
and $L$ is any field  with ${\rm char}\, L=p$.	
Denote by $H:=\mu_*^{-1}H_0$, where $\mu_*:\pi_1(X)\to\pi_1(X_0)$ is the induced morphism. 
Let $\pi_H:\widetilde{X}_H\to X$ be the Galois covering with the Galois group $\pi_1(X)/H$ and $\pi_0:\widetilde{X}_0\to X_0$ be that with $\pi_1(X_0)/H_0$.
Consequently, we have the following commutative diagram 
\begin{equation}\label{eq:Cartan}
	\begin{tikzcd}
		\widetilde{X}_H\arrow[r, "\pi_H"] \arrow[d, "f"] & X\arrow[d, "\mu"] \arrow[dr, "s_M"] &\\
		\widetilde{X}_0 \arrow[r, "\pi_0"]& X_0 \arrow[r,equal] &S_M
	\end{tikzcd}
\end{equation}
where $f:\widetilde{X}_H\to \widetilde{X}_0$ is a holomorphic proper fibration.

\begin{claim}\label{claim:20240229}
	Let $\sigma:\pi_1(X)\to \GL_{N}(L)$ be a linear representation such that $[\sigma]\in M(L)$, where $L$ is a field of characteristic $p$.
	Then $H\subset \mathrm{ker}(\sigma^{ss})$.
\end{claim}
\begin{proof}
	There exists a linear representation $\sigma':\pi_1(X_0)\to \GL_{N}(\bar{L})$  such that $[\mu^*\sigma']=[\sigma]$. 
	Then $(\mu^*\sigma')^{ss}$ is conjugate to $\sigma^{ss}$.
	Hence $\mathrm{ker}(\mu^*\sigma')\subset \mathrm{ker}((\mu^*\sigma')^{ss})=\mathrm{ker}(\sigma^{ss})$.
	We have $H\subset \mu^*\mathrm{ker}(\sigma')=\mathrm{ker}(\mu^*\sigma')$.
	Thus $H\subset \mathrm{ker}(\sigma^{ss})$.
\end{proof}

Consider  the affine $\bF_p$-scheme  $R_{\bF_p}$ and the GIT quotient $\pi:R_{\bF_p}\to \mxp$, as defined in \cref{sec:factor}. Define $R:=\pi^{-1}(M)$, which is an affine $\overline{\bF_p}$-scheme of finite type.  
According to the structure of the Shafarevich morphism described in \cref{thm:Sha1} and \cref{def:reduction ac2}, the Shafarevich morphism $s_M:X\to S_M$  
of $M$  is   obtained through the simultaneous Stein factorization of the reductions   $\{s_{\tau}:X\to S_\tau\}_{[\tau]\in M(K)}$. 
Here   $\tau:\pi_1(X)\to \GL_N(K)$ ranges over all reductive representations with $K$  a local field of characteristic $p$ such that  $[\tau] \in M(K)$ and $s_\tau:X\to S_\tau$ is the reduction map defined in \cref{thm:KZ}.   %Thanks to \cref{lem:simultaneous}, there exists finitely many   representations  $ \{\tau_i:\pi_1(X) \to \GL_N\big( \bF_{q_i}((t))\big)\}_{i=1,\ldots,m}$ such that ${\rm Sh}_\varrho$ is the Stein factorization of 
%$$
%(s_{\tau_1},\ldots,s_{\tau_m}):X\to S_{\tau_1}\times\cdots\times S_{\tau_m}.$$

%	Consider any irreducible curve $T\subset  R$ defined over $\overline{\bF_p}$. According to  \cref{sec:factor}, its reduction map  $s_T:X\to S_T$ is induced by a family of representations  $\btau:= \{\tau_i:\pi_1(X) \to \GL_N\big( \bF_{q_i}((t))\big)\}_{i=1,\ldots,\ell}$. Consequently, $s_\btau:X\to S_\btau$ coincides with $s_T$.  

%	By applying \cref{lem:same bound}, we can replace each $\tau_i$ by its semisimplification without changing its reduction map.   Then the Zariski closure of $\tau_i$ is reductive.    It follows that   $s_T:X\to S_T$   coincides with the reduction map $s_\btau:X\to S_\btau$. 
%	\begin{claim}\label{claim:small}
	%	We have $H\subset \ker\tau_i $.
	%	\end{claim}
%	\begin{proof}
	%	By the construction of $\tau_i$, we know that $[\tau_i]\in M(L)$, where $L:=\overline{\bF_{q_i}((t))}$. By the definition of $M$, there exists $\sigma:\pi_1(X_0)\to \GL_{N}(K)$ such that $K$ is some field with ${\rm char}\, K=p$ and $[\mu^*\sigma]=[\tau_i]$. Since $\tau_i$ is semisimple, it follows that $H\subset \ker \mu^*\sigma\subset \ker\tau_i$. 
	%	\end{proof}

By \cref{lem:20240218}, there exists a reductive representation $\btau:\pi_1(X)\to \GL_{N'}(K')$ with $K'$ a local field of characteristic $p$ such that the Katzarkov-Eyssidieux reduction map $s_{\btau}:X\to S_{\btau}$ of $\btau$ coincides with $s_M:X\to S_M$.   
Moreover by \cref{lem:20240218} and \cref{claim:20240229}, we have
\begin{equation}\label{eqn:20240229}
	H\subset \mathrm{ker}(\btau).
\end{equation}

%We then further use the above isogeny between reductive groups with product of semisimple one and tori. Therefore, we may assume that  $s_{T_1},\ldots,s_{T_k}$ are induced by a family of representations  $ \{\tau_i:\pi_1(X) \to \GL_N\big( \overline{\bF_{q_i}((t))}\big)\}_{i=1,\ldots,\ell}$ and $ \{\sigma_j:\pi_1(X) \to \GL_N\big( \overline{\bF_{q'_j}((t))}\big)\}_{j=1,\ldots,m}$ such that the Zariski closure of $\tau_i$ is semisimple and the Zariski closure of $\sigma_j$ is tori.  
%Let $\pi:\xsp\to X$ be the spectral covering of  $ \{\tau_i\}_{i=1,\ldots,\ell}$  and  be their  spectral forms.  They will be called the spectral coverings and spectral forms with respect to $T_1,\ldots,T_k\subset  R_{\bF_{p}}$. 

For	the spectral one forms $\{\eta_1,\ldots,\eta_k\}\subset H^0(\xsp, \pi^*\Omega_X^1)$ associated with the representation \( \btau : \pi_1(X) \to \GL_{N'}(K') \), we define the \emph{rank} as follows: if there exist   \( i \) and \( j \) such that \( \eta_i \wedge \eta_j \neq 0 \), then we say that the rank is \( 2 \); otherwise, the rank is \( 1 \). 

\medspace

\noindent {\it Case 1: The spectral 1-forms have rank 2}.    Assume that the spectral one forms $$\{\eta_1,\ldots,\eta_k\}\subset H^0(\xsp, \pi^*\Omega_X^1)$$ with respect to $\btau:\pi_1(X)\to \GL_{N'}(K')$ have rank 2.

Let $s_\btau:X\to S_\btau$ be the reduction map of $\btau$, which coincides with $s_M:X\to S_M$ 
(and thus $\mu:X\to X_0$).  
Let $T_\btau$ be the canonical current on $X_0=S_\btau$ defined in \cref{def:canonical}.  %Let us denote by $\cT_{T}:=e_T^*T_\btau$, which is a positive closed (1,1)-current on ${\rm Sh}_M(X)$ with continuous potential.  
%	We note that $\mu:X\to X_0$ is the Stein factorization of $(s_\tau_1,\ldotsm)$   Denote by $X_0\to S_{\tau_1}\times\cdots\times S_{\tau_k}$  the resulting finite morphism.  
%	By the construction of the canonical current $\cT_{T_i}$, we conclude that  for the positive closed (1,1)-current $\sum_{i=1}^{k}\cT_{T_i}$ on ${\rm Sh}_M(X)=X_0$, 
Then $T_\btau$ is strictly positive at general points since the spectral forms associated with $\btau$ has rank 2. Since $T_\btau$ has continuous local potentials, it follows that $T_\btau$  is a  big and nef class.   

On the other hand, by  \cref{lem:strictly nef}, we conclude that $T_\btau$ is strictly nef.  	We now apply a theorem by Demailly-P\u{a}un \cite{DP04} (or \cite{Lam99}) to  conclude that $\{T_\btau\}$ is a K\"ahler class.

According to \cref{prop:Eys} and \eqref{eqn:20240229}, there exists a continuous plurisubharmonic function  $\phi:\widetilde{X}_0\to \bR_{\geq 0}$ such that $\hess \phi\geq \pi_0^*T_{\btau}$.
We can apply \cref{prop:stein} to conclude that $\widetilde{X}_0$ is a Stein space. Note that any unramified covering of  a Stein space is Stein. Therefore, the universal covering of $X_0$ is a Stein space.   %Note that there exists a proper holomorphic fibration $\widetilde{X}_\varrho\to \widetilde{X}_0$, where $\widetilde{X}_0$ is the universal cover of $X_0$. Then  $f:\widetilde{X}_\varrho\to S$ factors through a proper holomorphic fibration $f_0:\widetilde{X}_0\to S$.  Consequently, $\widetilde{X}_0$ is holomorphically convex by the Cartan-Remmert theorem.  

\medspace

%By the construction of $s_{T_{k+1}}$, there exists a   reductive representation $\tau:\pi_1(X)\to \GL_{N}(\overline{\bF_q((t))})$ such that $s_\tau(T)$ is not a point. 

\noindent {\it Case 2: The spectral 1-forms have rank 1}:   
Assume that the spectral covering $\xsp\to X$  with respect to $\btau:\pi_1(X)\to \GL_{N'}(K')$ has rank 1; i.e., $\eta_i\wedge\eta_j\equiv 0$ for every $\eta_i$ and $\eta_j$, where $\{\eta_1,\ldots,\eta_\ell\}\subset H^0(\xsp,\pi^*\Omega_X^1)$  is the spectral forms associated with $\btau:\pi_1(X)\to \GL_{N'}(K')$, and $\pi:\xsp\to X$ is  the spectral covering  associated with $\btau:\pi_1(X)\to \GL_{N'}(K')$.

\medspace

\noindent {\it Case  2.1: The dimension of spectral 1-forms is at least two}:  
Suppose  
$
{\rm dim}_\bC{\rm Span}\{\eta_1,\ldots,\eta_\ell\}\geq 2.
$ 

\medspace

Without loss of generality, we may assume that $\eta_1\wedge\eta_2\equiv 0$ and $\eta_1\not\in \{\bC\eta_2\}$. 
According to the Castelnuovo-De Franchis theorem (cf. \cite[Theorem 2.7]{ABCK96}), there exists a proper fibration $h:\xsp\to C$ over a smooth projective curve $C$ such that $\{\eta_1,\eta_2\}\subset h^*H^0(C,\Omega_C^1)$.  
Since $s_M=\mu$ 
is birational,  we can choose a general fiber $F$ of $h$, which is irreducible and  such that $s_{M}\circ\pi(F)=s_\btau\circ\pi(F)$  
is not a point. 
%Here $e_{T}:X_0\to S_{T}$ is the natural morphism.  %Thanks to \cref{lem:strictly nef}, 
%	\begin{align} \label{eq:nef}
	%	\{\cT_{T}\}\cdot {\rm sh}_M\circ\pi(F)>0
	%\end{align}  
	There exists some $i$ such that $\eta_i|_{F}\neq 0$ (cf. \Cref{lem:snef}).
	Given that  $\eta_1|_{F}\equiv 0$, this implies that $\eta_i\wedge\eta_1\neq 0$. It contradicts with our assumption that the spectral $1$-forms have rank 1.  
	Therefore, this case cannot occur. 
	
	\medspace
	
	\noindent {\it Case  2.2: The dimension of spectral 1-forms is 1}:  
	We have $
	{\rm dim}_\bC{\rm Span}\{\eta_1,\ldots,\eta_\ell\}= 1.
	$ 
	
	%By the construction of ${\rm sh}_M$ in \cref{thm:Sha1}, we can find irreducible curves  $T_1,\ldots,T_k\subset R$ defined over $\overline{\bF_p}$ such that ${\rm sh}_M$ is the Stein factorization of 
	%	$$
	%	(s_{T_1},\ldots,s_{T_k}):X\to S_{T_1}\times\cdots\times S_{T_k}.
	%$$  Let  $\btau:= \{\tau_i:\pi_1(X) \to \GL_N\big( \bF_{q_i}((t))\big)\}_{i=1,\ldots,\ell}$ be a family of representations that generate  the reduction maps $s_{T_1},\ldots,s_{T_k}$ as defined in \cref{sec:factor}. Then its reduction map $s_\btau:X\to S_\btau$ is the Shafarevich morphism ${\rm sh}_M$.  By \cref{lem:same bound} we can replace each $\tau_i$  by its semisimplification such that the reduction map $s_\btau$ remains unchanged.  
	
	Let  $G$ be the Zariski closure of $\btau(\pi_1(X))$, which is reductive. Consider the isogeny
	$g:G\to G/Z\times G/\cD G$ 
	where $Z$ is the central torus of $G$ and $\cD G$ is the derived group of $G$. 
	As a result, $G':=G/Z$  is semisimple and $G'':=G/\cD G$ is a torus. 
	Let
	$
	\tau':\pi_1(X)\to G'(\overline{K'}) 
	$ 
	be the composite of $\btau$ with the projection $G\to G'$, and  
	$
	\tau'':\pi_1(X)\to   G''(\overline{K'}) 
	$ 
	be the composite of $\btau$ with the projection $G\to G''$.  Then $\tau'$ and $\tau''$ are both Zariski dense representations.

	%	Next, we consider the partial Albanese morphism $a:\xsp\to A$ induced by $\eta_1$.  If $\dim a(Y)=1$, then the Stein factorization $h:\xsp\to C$ of $a$ is a proper holomorphic fibration   over a smooth projective curve $C$ such that $\eta_1\in h^*H^0(C,\Omega_C^1)$.  We are now in  a situation akin to Case 2.1, and we can apply the same arguments to reach a contradiction. Hence $\dim a(Y)=2$. 
	
	Let $\nu:Y\to \xsp$ be a desingularization and denote $\eta:=\nu^*\eta_1$. Consider the partial Albanese morphism $a:Y\to A$ induced by $\eta$.  Then there exists a one form $\eta'\in H^0(A,\Omega_A^1)$ such that $a^*\eta'=\eta$.  If $\dim a(Y)=1$, then the Stein factorization $h:Y\to C$ of $a$ is a proper holomorphic fibration   over a smooth projective curve $C$ such that $\eta_1\in h^*H^0(C,\Omega_C^1)$.  We are now in  a situation akin to Case 2.1, and we can apply the same arguments to reach a contradiction.  Hence $\dim a(Y)=2$.   Let $\pi_A:\widetilde{A}\to A$ denote the universal covering map. We denote by $Y':=Y\times_{\widetilde{A}}A$  a connected component of the fiber product and let $\pi':Y'\to Y$ be the induced   \'etale cover.   
	It's worth noting that $\pi'^*\eta$ is  exact. Consequently, we can define the following holomorphic map: 
	\begin{align*}
		h: Y'&\to \bC\\
		y&\mapsto \int_{y_0}^{y}\pi'^*\eta.
	\end{align*} We then have the following commutative diagram:
	\begin{equation*}
		\begin{tikzcd}
			\widetilde{Y}\arrow[r, "p"]\arrow[rr, bend left=30, "\pi_Y"]&	Y'\arrow[dd, bend right=30, "h"']\arrow[r, "\pi'"] \arrow[d] & Y\arrow[d, "a"]\\
			&	\widetilde{A}\arrow[d] \arrow[r, "\pi_A"]& A\\
			&		\bC
		\end{tikzcd}
	\end{equation*} 
	The   holomorphic map $\widetilde{A}\to \bC$ in the above diagram is  defined by the linear 1-form $\pi_A^*\eta'$ on $\widetilde{A}$.   
	By Simpson's Lefschetz theorem \cite{Sim93}, for any $t\in \bC$, $h^{-1}(t)$ is connected and $\pi_1(h^{-1}(t))\to \pi_1(Y')$ is surjective.  
	By  
	definition of $h$,  $\pi_Y^*\eta|_{Z}\equiv 0$ where $Z$ is any connected component of $p^{-1}(h^{-1}(t))$. 
	Here $p:\widetilde{Y}\to Y'$ is the natural covering map. 
	
	Consider the  Zariski dense  representation $\tau':\pi_1(X) \to G'\big( \overline{K'}\big)$ as defined previously. 
	Let  $L$ be a finite extension of $K'$ such that $G'$ is defined on $L$ and  $\tau':\pi_1(X)\to G'(L)$. 
	We denote by $\sigma:\pi_1(Y)\to G'\big(L\big)$ the pullback of $\tau'$ via the morphism $Y\to X$.   
	The existence of a $\sigma$-equivariant harmonic mapping $u:\widetilde{Y}\to \Delta(G')$ is guaranteed by \cite{GS92}, where $\Delta(G')$ is the Bruhat-Tits building of $G'$.
	
	We note that $\pi_Y^*\eta$ is the (1,0)-part of the complexified differential of the harmonic mapping $u$ at general points of $\widetilde{Y}$, with $\pi_Y:\widetilde{Y}\to Y$ denoting the universal covering. 
	For  any connected component $Z$ of $p^{-1}(h^{-1}(t))$ for a general $t\in \bC$, since $\pi_Y^*\eta|_{Z}\equiv 0$,   and  all the spectral forms are assumed to be $\bC$-linearly equivalent, it follows that $u(Z)$ is constant. 
	Since $u$ is $\sigma$-equivariant, it follows that $\pi'^*\sigma({\rm Im}[\pi_1(h^{-1}(t))\to \pi_1(Y')])$ is contained in the subgroup of $G'(L)$  fixing the point $u(Z)$. 
	Recall that $\pi_1(h^{-1}(t))\to \pi_1(Y')$  is surjective. 
	Hence $\pi'^*\sigma(\pi_1(Y'))$ is a bounded subgroup of $G'(L)$. 	
	Additionally, note that $$\cD \pi_1(Y)\subset {\rm Im}[\pi_1(Y')\to \pi_1(Y)],$$ and it follows that $\sigma(\cD \pi_1(Y))$ is bounded.  	
	Since $\tau'$ is Zariski dense, and ${\rm Im}[\pi_1(Y)\to \pi_1(X)]$ is a finite index subgroup of $\pi_1(X)$,  according to \cref{claim:same closure}   the Zariski closure  of $\sigma(\pi_1(Y))$ contains the identity component of $G'$, and it is also semisimple.     
	We apply \cref{lem:BT}  to conclude  that $\sigma( \pi_1(Y))$ is bounded.

	Since $\sigma( \pi_1(Y))$  is a finite index subgroup of $\tau'(\pi_1(X))$, it follows that $\tau'(\pi_1(X))$ is also bounded. 
	Then the reduction map $s_{\tau'}$ is the constant map.  
	This implies that the reduction map $s_{\tau}$ is identified with $s_{\tau''}$.
	Recall that $G''$ is a tori.  
	By \cite[Step 6 in Proof of Theorem H]{CDY22}, we know that there exists a morphism $a_X:X\to A_X$ with $A_X$ an abelian variety such that $s_{\tau''}$ is the Stein factorization of $a_X$.

	\begin{claim}
		There exists a morphism $b:X_0\to A_X$ such that $b\circ\mu=a_X$. 
	\end{claim}
	\begin{proof} 
		Recall that  $[\tau]\in M(K')$. 
		Let $F$ be  any fiber of the birational morphism $\mu:X\to X_0$.   Then  $\tau({\rm Im}[\pi_1(F)\to \pi_1(X)])$ is finite since $s_M:X\to S_M$ 
		coincides with $\mu$. 
		It follows that  
		$\tau({\rm Im}[\pi_1(F)\to \pi_1(X)])$ is a finite group. %By our assumption for $\varrho$, %it follows that $\tau_i({\rm Im}[\pi_1(\widetilde{X}_\varrho)\to \pi_1(X)])$ is finite.
		By the definition of $\tau''$, we conclude that $\tau''({\rm Im}[\pi_1(F)\to \pi_1(X)])$ is also finite, and is thus bounded.  According to the property of the reduction map \cref{thm:KZ}, $s_{\tau''}(F)$ is a point. Hence there exists a morphism $b:X_0\to A$ such that $b\circ\mu=a_X$.  
	\end{proof}
	Consider the map $b:X_0\to A_X$. Note that this map is finite. We apply \cref{lem:Stein}  to conclude that the universal covering of $X_0$ is a Stein space.  
\end{proof}

\begin{rem}
	The proof of  \cref{thm:Stein} utilizes techniques similar to those used in the proof of \cite[Theorem C]{DY23}. In order to extend \cref{thm:Stein} to any projective normal variety, we have to establish  Simpson's theory \cite{Sim93b} on \emph{absolutely constructible subsets} for character varieties of representations  in  positive characteristic. We plan to explore this problem in our future work.
\end{rem} 
We recall the following definition by Campana \cite{Cam94}.
\begin{dfn}[$\Gamma$-dimension]\label{def:Gamma}
	Let $X$ be a projective normal variety. The $\Gamma$-dimension of $X$ is  defined to be $\dim {\rm Sh}(X)$, where   ${\rm sh}_X:X\dashrightarrow {\rm Sh}(X)$ is the Shafarevich map constructed by Campana \cite{Cam94} and Koll\'ar \cite{Kol93}.
\end{dfn}

\begin{thm}\label{thm:convexity}
	Let $X$ be a projective normal variety and let $\varrho:\pi_1(X)\to \GL_{N}(K)$ be a faithful representation where $K$ is a field of positive characteristic. If the $\Gamma$-dimension of $X$  is at most two (e.g. when $\dim X\leq 2$), then the universal covering $\widetilde{X}$  of $X$ is holomorphically convex.  
\end{thm}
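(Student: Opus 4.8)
The plan is to reduce the statement to the surface case already settled in \cref{thm:Stein}, using the Shafarevich morphism of $\varrho$ and the factorization result \cref{prop:factor}. First I would make two harmless reductions: the universal covering is unchanged under finite étale covers, so it suffices to prove holomorphic convexity of $\widetilde X=\widetilde{\widehat X}$ for some finite étale $\widehat X\to X$; and if $\pi_1(X)$ is finite then $\widetilde X$ is compact, hence trivially holomorphically convex, so I may assume $\pi_1(X)$ is infinite. Next I would bound the dimension of the Shafarevich variety of $\varrho$. By \cref{thm:Sha22} the Shafarevich morphism $\mathrm{sh}_\varrho\colon X\to\mathrm{Sh}_\varrho(X)$ exists and, since $X$ is projective, $\mathrm{Sh}_\varrho(X)$ is normal projective. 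Let $\mathrm{sh}_X\colon X\dashrightarrow\mathrm{Sh}(X)$ be the Campana--Koll\'ar Shafarevich map of \cref{rem:Kollar}; a general (irreducible) fiber $F$ of $\mathrm{sh}_X$ has $\mathrm{Im}[\pi_1(F^{\mathrm{norm}})\to\pi_1(X)]$ finite, hence $\varrho^{ss}(\mathrm{Im}[\pi_1(F^{\mathrm{norm}})\to\pi_1(X)])$ is finite, so \cref{thm:Sha22} forces $\mathrm{sh}_\varrho(F)$ to be a point. Therefore a general fiber of $\mathrm{sh}_X$ lies in a fiber of $\mathrm{sh}_\varrho$, which gives $\dim\mathrm{Sh}_\varrho(X)\le\dim\mathrm{Sh}(X)=$ the $\Gamma$-dimension of $X$ (see \cref{def:Gamma}), hence $\le 2$; and since $\varrho$ is faithful with $\pi_1(X)$ infinite, $\mathrm{Sh}_\varrho(X)$ is positive-dimensional, so $\dim\mathrm{Sh}_\varrho(X)\in\{1,2\}$.

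Then I would invoke \cref{prop:factor}: after replacing $X$ by a finite étale cover $\mu\colon\widehat X\to X$, there is a large representation $\tau\colon\pi_1(S)\to\GL_N(K)$ with $S:=\mathrm{Sh}_{\mu^*\varrho}(\widehat X)$ normal projective, $g:=\mathrm{sh}_{\mu^*\varrho}\colon\widehat X\to S$, and $g^*\tau=\mu^*\varrho$; moreover $\dim S=\dim\mathrm{Sh}_\varrho(X)\in\{1,2\}$ by compatibility of Shafarevich morphisms with finite étale covers, and $\mu^*\varrho$ is again faithful. The crucial point is that $\tau\circ g_*=\mu^*\varrho$ together with the faithfulness of $\mu^*\varrho$ makes $g_*\colon\pi_1(\widehat X)\to\pi_1(S)$ injective; it is also surjective because $g$ is a proper fibration with connected fibers (\cite[2.10.(2)]{Kol95}), hence an isomorphism. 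Consequently the fiber product $\widehat X\times_S\widetilde S$ is exactly the universal covering $\widetilde{\widehat X}=\widetilde X$, and the projection $\widetilde X\to\widetilde S$ is a base change of the proper morphism $g$, hence proper.

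It then remains to show $\widetilde S$ is a Stein space, after which $\widetilde X$, admitting a proper holomorphic map onto a Stein space, is holomorphically convex, which finishes the proof. If $\dim S=1$, then $S$ is a smooth projective curve with $\pi_1(S)\cong\pi_1(\widehat X)$ infinite, so its genus is positive and $\widetilde S\simeq\bC$ or $\bD$, which is Stein. If $\dim S=2$, then $S$ is a normal projective surface, and the largeness of $\tau$ shows that $M_{\mathrm B}(S,N)_{\bF_p}$ is large in the sense of \cref{thm:Stein}: the single representation $\tau$ already witnesses, for every positive-dimensional closed subvariety $Z\subset S$, that $\tau(\mathrm{Im}[\pi_1(Z^{\mathrm{norm}})\to\pi_1(S)])$ is infinite. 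Hence \cref{thm:Stein} applies and $\widetilde S$ is Stein.

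The main difficulty of the theorem is already absorbed into \cref{thm:Stein}, whose proof runs through the spectral covering / canonical current machinery of Eyssidieux and the explicit structure of the Katzarkov--Eyssidieux reduction in the rank one and rank two cases. Within the present reduction the only genuinely delicate step is the use of \cref{prop:factor} together with faithfulness of $\varrho$ to upgrade $g_*$ to an isomorphism on fundamental groups, so that the universal covering of $X$ is literally the pullback of the (Stein) universal covering of the base surface; one must also verify that passing to the finite étale cover supplied by \cref{prop:factor} does not raise $\dim\mathrm{Sh}_\varrho$ beyond two, which is immediate from the étale-covariance of the Shafarevich construction.
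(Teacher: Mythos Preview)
Your proof is correct and follows essentially the same approach as the paper's: apply \cref{prop:factor} to factor $\varrho$ (after a finite \'etale cover) through a large representation on the Shafarevich base $S$, use faithfulness of $\varrho$ to identify $\pi_1(\widehat X)\cong\pi_1(S)$, and then invoke \cref{thm:Stein} (for $\dim S=2$) or the elementary curve case (for $\dim S=1$) to conclude that $\widetilde S$ is Stein, whence $\widetilde X$ is holomorphically convex via the proper map $\widetilde X\to\widetilde S$. Your write-up is in fact a bit more careful than the paper's in spelling out the bound $\dim\mathrm{Sh}_\varrho(X)\le\Gamma\text{-dimension}(X)$ and in treating the cases $\dim S\le 1$ explicitly.
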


\begin{proof}
	By \cref{prop:factor},  after we replace $X$ by a suitable finite \'etale cover, there exists  a large representation $\tau:\pi_1({\rm Sh}_{\varrho}( {X}))\to \GL_{N}(K)$ such that $({\rm sh}_{ \varrho})^*\tau= \varrho$, where  ${\rm sh}_{ \varrho}: {X}\to {\rm Sh}_{ \varrho}( {X})$ is the Shafarevich morphism of $ \varrho$. 
	Since $\varrho$ is faithful, it follows that $\tau$ is also faithful since the homomorphism $({\rm sh}_\varrho)_*:\pi_1(X)\to \pi_1({\rm sh}_\varrho(X))$ is surjective.  
	As we assume that the $\Gamma$-dimension of $X$  is at most two,  therefore, $\dim {\rm Sh}_{ \varrho}( {X})\leq 2$. 
	We apply \cref{thm:Stein} to conclude that  the universal covering $S$ of  ${\rm Sh}_{ \varrho}( {X})$ 
	is Stein.  
	Note that $({\rm sh}_\varrho)_*:\pi_1(X)\to \pi_1({\rm sh}_\varrho(X))$  is an isomorphism. 
	Hence there exists a proper holomorphic fibration $\widetilde{X}\to S$ between the universal coverings of $X$ and ${\rm Sh}_\varrho(X)$ that lifts ${\rm sh}_\varrho$.  
	It follows that $\widetilde{X}$ is holomorphically convex. 
\end{proof}

\subsection*{Acknowledgment.}
We would like to thank Michel Brion, Beno\^it Claudon,  Philippe Eyssidieux and Andreas H\"oring  for very helpful discussions. We also thank Beno\^it Cadorel and Yuan Liu for  reading    the paper and their  helpful remarks.  Finally, we sincerely thank the referees for  very careful readings and helpful remarks, in particular for pointing out a simpler proof of \cref{main:kollar} (see \cref{rem:referee}). Y.D. acknowledges support from the ANR grant Karmapolis (ANR-21-CE40-0010).  
K.Y. acknowledges support from JSPS Grant-in-Aid for Scientific Research (C) 22K03286.

 % \bibliography{biblio}
% \bibliographystyle{ssmfalpha}
  
 \newcommand{\etalchar}[1]{$^{#1}$}

\end{document}